\def\R{\mathbb R}
\def\E{\mathbb E}
\newtheorem{theorem}{Theorem}
\newtheorem{lemma}{Lemma}[section]
\newtheorem{definition}{Definition}[section]
\newtheorem{corollary}{Corollary}[section]
\newtheorem{remark}{Remark}[section]
\newtheorem{proposition}{Proposition}[section]
\newtheorem{example}{Example}[section]
\newenvironment{assumptionp}[1]{

\assumptionalt
}{\endassumptionalt}
\DeclareMathOperator*{\esssup}{ess\,sup}
\numberwithin{equation}{section}
\begin{document}
\author{}
\date{}
\title[BSDEs with nonlinear Young drivers]{Backward stochastic differential equations with nonlinear Young drivers II}

\author[J. Song]{Jian Song}\address{Research Center for Mathematics and Interdisciplinary Sciences; Frontiers Science Center for Nonlinear Expectations, Ministry of Education, Shandong University, Qingdao 266237, China}
\email{txjsong@sdu.edu.cn}

\author[H. Zhang]{Huilin Zhang}\address{Research Center for Mathematics and Interdisciplinary Sciences; Frontiers Science Center for Nonlinear Expectations (Ministry of Education), Shandong University, Qingdao 266237, China}
\email{huilinzhang@sdu.edu.cn}

\author[K. Zhang]{Kuan Zhang}\address{Research Center for Mathematics and Interdisciplinary Sciences, Shandong University, Qingdao 266237, China}
\email{201911819@mail.sdu.edu.cn}

\begin{abstract}
This paper continues our previous work (Part I, \cite{BSDEYoung-I}) on the well-posedness of backward stochastic differential equations (BSDEs) involving a nonlinear Young integral of the form $\int_{t}^{T}g(Y_r) \eta(dr,X_r)$, with particular focus on the case where the driver $\eta(t,x)$ is unbounded. To address this setting, we develop a new localization method that extends solvability from BSDEs with bounded drivers to those with unbounded ones. As a direct application, we derive a nonlinear Feynman-Kac formula for a class of partial differential equations driven by Young signals (Young PDEs). Moreover, employing the proposed localization method, we obtain error estimates that compare Cauchy-Dirichlet problems on bounded domains with their whole-space Cauchy counterparts, with special attention to non-Lipschitz PDEs.
\end{abstract}
\subjclass{60L20, 60L50, 60H10}

\maketitle
\tableofcontents

\section{Introduction}

Let $(\Omega, \mathcal F, (\mathcal F_t)_{t\in[0,T]}, \mathbb{P})$ be a probability space
supporting a  $d$-dimensional Brownian motion $W$. Given functions $f$, $g$, $\eta$, and a diffusion process $X$, we consider the following backward stochastic differential equation (BSDE) with the solution pair $(Y_t,Z_t)$:
\begin{equation}\label{e:ourBSDE}
Y_{t} = \xi + \int_{t}^{T}f(r,X_r,Y_{r},Z_{r})dr + \sum_{i=1}^{M}\int_{t}^{T}g_{i}(Y_{r})\eta_{i}(dr,X_{r}) - \int_{t}^{T}Z_{r}dW_{r},\ t\in[0,T],
\end{equation}
where  $\int g(Y_r) \eta(dr,X_{r})$ is a {\it nonlinear Young integral} (we refer to \cite{BSDEYoung-I} and the references therein for more details on nonlinear Young integrals). In Part~I~\cite{BSDEYoung-I}, we established the well-posedness of Equation~\eqref{e:ourBSDE}, under the assumption that the driver $\eta(t,x)$ is uniformly bounded, that is, $\sup_{(t,x)\in[0,T]\times\R^d}|\eta(t,x)| < \infty$. In the present work, we aim to go beyond this restriction by allowing $\eta$ to grow polynomially in the spatial variable. This broader framework in particular  encompasses the important case where $\eta$ is realized from a fractional Brownian sheet, thereby capturing a wider class of stochastic signals of interest.

\subsection{Motivations and difficulties}

Our motivations stem from the following two problems.

(a) The first motivation is to establish the Feynman-Kac formula for partial differential equations driven by rough signals (rough PDEs) of the following form:
\begin{equation}\label{e:rough PDE}
\left\{
\begin{aligned}
- du(t,x) &= \mathcal{L} u(t,x) dt + g(u, \nabla u) \mathbf{M}(dt,x), \quad (t,x)\in (0,T)\times\R^d,\\
u(T,x) &= h(x),\quad x\in\R^d,
\end{aligned}
\right.
\end{equation}
where $\mathcal{L}$ is a second-order differential operator and $\mathbf{M} = (\mathrm{M},\mathbb{M})$ is a rough path with spatial parameter (see Nualart-Xia \cite{Nualart-Xia20} for the definition). 

When $\mathrm{M}$ is independent of $x$ and $\mathrm{M}(t)$ is $\alpha$-H\"older continuous for some  $\alpha \in (1/2,1)$, 
Equation~\eqref{e:rough PDE} is referred to as a Young PDE  since the integral $\int \cdots d\mathbf{M}_t$ reduces to a Young integral. Gubinelli et al.~\cite{gubinelli2006young} first proved that Young evolution equations of the form \eqref{e:rough PDE} admit unique mild solutions. Subsequently, Addona et al.~\cite{Addona2022} investigated the regularity of such solutions. The nonlinear Feynman–Kac representation for Young PDEs was derived by Diehl–Zhang~\cite{DiehlZhang} in the case where $\mathrm{M}$ is independent of $x$, while Hu–Lê~\cite{HuLe} proved a (linear) Feynman–Kac formula when $\mathrm{M}$ depends on $x$.

When $\mathbf{M}$ is independent of the spatial parameter, the theory of rough PDEs has primarily been developed in two frameworks. The first treats \eqref{e:rough PDE} as an evolution equation solved in the mild sense (cf. \cite{Gubinelli2010,deya2012non,Deya2019,Hesse2022,Hocquet2024}). Recently, Liang–Tang \cite{Liang2024a} studied the well-posedness of rough stochastic evolution equations driven by mixed signals. The second approach defines the solution $u$ as the limit of a sequence $\{u^n\}_{n\ge 1}$, where each $u^n$ solves \eqref{e:rough PDE} with $\mathbf{M}$ replaced by $\mathbf{M}^n$. Here $\mathbf{M}^n$ is the canonical lift of a smooth path (see, e.g., \cite[Definition 7.2]{FrizVictoir}) such that $\mathbf{M}^n \to \mathbf{M}$ in rough path space. This method allows for more flexible assumptions on $\mathcal{L}$, often relying on tools such as rough flow transformations and the Feynman–Kac formula (see, e.g., \cite{Caruana2009,caruana2011rough,Friz2014a,Diehl2017b}). More recently, Bugini et al. \cite{Bugini2025a} solved \eqref{e:rough PDE} in the strong sense via the Feynman–Kac formula, proving that $u \in C^{\alpha\text{-H\"ol}}([0,T]; C^{2}(\mathbb{R}^d;\mathbb{R}))$ for $\alpha < 1/2$ under mild regularity conditions. Apart from these two approaches, Buckdahn et al.~\cite{BuckdahnZhang} developed a viscosity framework for fully nonlinear rough PDEs via a transformation along the characteristic equations.

There are comparatively few results on Equation~\eqref{e:rough PDE} with a general nonlinear rough driver $\mathbf{M}(t,x)$. Nualart–Xia~\cite{Nualart-Xia20} investigated linear transport-type equations driven by nonlinear rough integrals, employing inverse transformations derived from nonlinear rough flows. A related contribution is due to Coghi–Nilssen~\cite{Coghi-Nilssen21}, who showed that the law of the solution to a McKean–Vlasov SDE with a nonlinear rough driver 
uniquely solves a (linear) Fokker–Planck equation with an unbounded rough driver. For the general case where both $g$ and $\mathbf{M}$ are nonlinear, Equation~\eqref{e:rough PDE} has been rarely studied in the literature.

In this paper, as a continuation of \cite{BSDEYoung-I}, we investigate the well-posedness of Equation~\eqref{e:rough PDE} with $\int \cdots \mathbf{M}(dt,x)$ interpreted as a \emph{nonlinear Young integral}, with particular focus on the case where $g(u,\nabla u)=g(u)$ is nonlinear and $\mathrm{M}$ is unbounded. A natural strategy is to derive a nonlinear Feynman–Kac formula via the associated BSDE, which motivates our study of Equation~\eqref{e:ourBSDE}. We further note that our forthcoming work will be devoted to BSDEs  with nonlinear \emph{rough} integrals, which is related to the \emph{rough} PDE \eqref{e:rough PDE}.

\

(b) The second motivation is to establish error estimates for localized PDEs, which concern the convergence rates of solutions under varying boundary conditions (see \cite{Barles1995,Kangro2000,Matache2004,Cont2005,Lamberton2008,Siska2012,Hilber2013}). Such estimates are crucial for assessing the accuracy of numerical solutions of parabolic PDEs, as finite-difference methods typically require bounded domains. Consequently, they have numerous applications in mathematical finance, particularly in the numerical computation of option prices (cf. Hilber et al.~\cite[Section~4.3]{Hilber2013} and Lamberton and Lapeyre~\cite[Section~5.3.2]{Lamberton2008}). We further note that in the context of stochastic partial differential equations (SPDEs), especially those driven by noise which is white in time, localization errors have been investigated in works such as \cite{Gerencser2017,candil2022localization}.

To the best of our knowledge, no such estimates have been established for non-Lipschitz PDEs. More precisely, assume that the Lipschitz constant of the mapping $(y,z)\mapsto f(t,x,y,z)$ exhibits polynomial growth in $x$. For a bounded domain $D\subseteq \R^d$,  consider the truncated Cauchy-Dirichlet problem:
\begin{equation}\label{e:localized PDEs with D}
\left\{\begin{aligned}
- \partial_{t} u_{D}(t,x) &= \mathcal{L} u_{D}(t,x) + f(t,x,u_{D},\nabla u_{D}), \ (t,x)\in(0,T)\times D,\\
u_{D}(t,x)&=h(x), \text{ for } (t,x)\in \{T\}\times \bar{D}\cup (0,T)\times \partial D.
\end{aligned}\right.
\end{equation}
Our objective is to prove  that
$u_{D}$ converges to a solution $u$ of the associated Cauchy problem as the domain $D$ expands to the whole space $\R^d$, and to derive an estimate for the convergence rate. This, in turn, naturally leads us to the study of BSDEs with unbounded drivers. The non-Lipschitz nature of Equation~\eqref{e:localized PDEs with D} originates from the growth of the coefficient function $f(t,x,y,z)$ in the spatial variable $x$, which directly yields an unbounded driver (see Remark~\ref{rem:local-error}). Furthermore, by the nonlinear Feynman–Kac representation for PDEs with Dirichlet boundary conditions, the BSDE corresponding to Equation~\eqref{e:localized PDEs with D} is terminated at the first exit time of the diffusion process $X$ from 
$D$. Hence, estimating localization errors for non-Lipschitz PDEs reduces to analyzing the convergence of the associated localized BSDEs with unbounded drivers.

\ \

The distinction between our BSDE~\eqref{e:ourBSDE} and related results in the literature—such as rough BSDEs studied by Diehl–Friz~\cite{DiehlFriz} and Young BSDEs considered in Diehl–Zhang~\cite{DiehlZhang}—was already discussed in Part I~\cite{BSDEYoung-I}. In addition, compared with the bounded case studied in~\cite{BSDEYoung-I}, the unboundedness of $\eta$ gives rise to two major challenges: the loss of Lipschitz continuity of the coefficient function and the potential unboundedness of the solution. We elaborate on these two issues below.

First, since the mapping $y \mapsto g(y)\eta(dr,X_r)$ is no longer Lipschitz, the fixed-point (Picard iteration) method used in \cite{BSDEYoung-I} is not applicable. Moreover, existing methods for non-Lipschitz BSDEs (see, e.g., \cite{bender2000bsdes,briand2008bsdes,li2025random}) cannot be applied to our setting either, as $\eta$ possesses only H\"older continuity in time. In fact, when applying the classical linearization method to establish an \emph{a priori} estimate for the solution (as in \cite[Lemma~5]{bender2000bsdes}, \cite[Lemma~7]{briand2008bsdes}, and \cite[Proposition~2.2]{li2025random}), a crucial step is to show the integrability of $\exp\{\int_{0}^{t} a_r \eta(dr,X_r)\}$, where $a_r := (g(Y_r)-g(0))/Y_r$. However, the upper bound of the nonlinear Young integral $\int_{0}^{t} a_r \eta(dr,X_r)$ depends on the $p$-variation of $a_r$, and thus on the $p$-variation of $Y_r$, which remains unknown in this \emph{a priori} estimate.

Second, the \emph{a priori} estimates developed in \cite{DiehlZhang, BSDEYoung-I} require the boundedness of the solution
$Y$. However, this assumption becomes invalid when $\eta$ is unbounded, as demonstrated by the explicit solution $Y_t =\sum_{i=1}^M\E_t\left[
\int_t^T\eta_i(ds, X_s)\right]$ obtained in the special case $\xi = 0$, $f(\cdot)\equiv0$ and $g(\cdot)\equiv 1$, which generally exhibits unboundedness. We stress that this work removes the boundedness requirement on the terminal condition $\xi$, thereby addressing an open problem posed in \cite[Remark~3]{DiehlZhang} regarding the extension of BSDE theory to unbounded terminal values.

\ \ \ \ 

To overcome the aforementioned difficulties, in this paper we  employ a new localization method to solve BSDEs. More precisely, we study the convergence of a sequence of localized BSDEs truncated at time $T_D$, with $T_D$ being the first exit time of the diffusion process $X$ from  $D$. For $t \le T_D$, since the integrator $\eta(dt,X_t)$ depends only on the mapping $x \mapsto \eta(\cdot,x)$ for $x \in D$, the solvability of each localized BSDE reduces to the case of BSDEs with bounded drivers, which was studied in Part~I~\cite{BSDEYoung-I}. The terminal value of a localized BSDE, denoted by $\xi^D$, is then chosen so that $\xi^D \to \xi$ as $D \to \mathbb{R}^d$.

This localization analysis relies primarily on the regularity properties of BSDE solutions. Indeed, for $D \subset D'$ and $t \le T_D$, estimating the difference $Y^D_t - Y^{D'}_t$ reduces to controlling its terminal value
$|\xi^D-Y^{D'}_{T_{D}}|\le |\xi^{D}-\xi^{D'}| + |\xi^{D'}-Y^{D'}_{T_{D}}|$. The convergence of $|\xi^{D}-\xi^{D'}|$ is by assumption. 
Moreover, regularity results show that the $k$-th moment of $\xi^{D'} - Y^{D'}_{T_D}$ can be bounded in terms of moments of $T_D - T_{D'}$, which decays at the rate $\exp\{-C\text{dist}(\partial D,0)^2\}$, assuming $\langle X \rangle_T$ is bounded. By establishing quantitative estimates for BSDE solutions, particularly for the $p$-variation norm of $Y^D$, this exponential decay ensures that $|Y^D - Y^{D'}|$ becomes arbitrarily small as $D \to \mathbb{R}^d$, thereby proving the convergence of $Y^D$. We  remark that this localization method, which appears to be new even in the deterministic BSDE case, also plays a key role in the study of error estimation for localized PDEs (see  Section~\ref{subsec:BSDEs with non-Lip}).

We would like to point out that  this  localization method still relies on the assumption that  $g(\cdot)$ is bounded  (see Remark~\ref{rem:why g is bounded}). 
Indeed, the unboundedness of the diffusion process $X$ would otherwise compromise the integrability of the integral $\int g(Y_r)\eta(dr,X_r)$. Nevertheless, in the typical case $g(y) = y$, the linear BSDE can be solved directly by constructing its explicit solution.


\subsection{A summary of  applications} We conclude this introduction by summarizing two applications.

$(i)$  In Section~\ref{subsec:Nonlinear-FK}, we derive the nonlinear Feynman-Kac formula by establishing the connection between the solution to the nonlinear Young BSDE~\eqref{e:ourBSDE} and the solution to the Young PDE~\eqref{e:rough PDE}, where $\mathbf{M}(t,x)$ is replaced by $\eta(t,x)$. 
The solution to Equation~\eqref{e:rough PDE} is defined as the unique limit of solutions to the PDEs with Dirichlet boundary conditions and smooth drivers. As a specific example of Equation~\eqref{e:rough PDE}, we take $\eta$ to be a realization of a fractional Brownian sheet $B$ with Hurst parameters $H_0$ in time and $(H,\dots, H)$ in space. In this case, Equation~\eqref{e:rough PDE} can be interpreted as an SPDE driven by $\partial_{t}B(t,x)$ in the Stratonovich sense. Consequently, we establish the well-posedness of the equation and derive the nonlinear Feynman-Kac formula for SPDEs under the conditions
$H_0 + H/2>1$ and $dH<2H_0+1$.

$(ii)$ In Section~\ref{subsec:BSDEs with non-Lip}, by using the localization method developed in  the study of BSDEs, we prove that the unique viscosity solution of PDE~\eqref{e:localized PDEs with D} (with $D$ replaced by $D_n = \{x\in\R^d;|x| < n\}$) converges to a solution $u$ of the PDE on the entire space $\R^d$. In particular, the coefficients of the limiting PDE can be non-Lipschitz. Let $u^n$ denote the solution to the PDE on the domain $D_n$. Then, we have the following  error estimates for the localized PDEs
\[
|u^n(t,x) - u(t,x)|\le C \exp\left\{-n^2/C + C|x|^2\right\},\  \text{ for some constant }\ C>0.
\]

\

This paper is organized as follows. For the case when the driver $\eta(t,x)$ is unbounded, the existence and the uniqueness of the solution to nonlinear BSDEs are studied in Section~\ref{subsec:existence} and Section~\ref{subsec:uniqueness}, respectively. The linear BSDE is studied in Section~\ref{subsec:linear case}. Finally, applications are presented in Section~\ref{sec:applications}, and some miscellaneous results are summarized in Appendix~\ref{miscellaneous results}, \ref{append:B}, and \ref{append:C}.

\subsection{Notations}\label{sec:preliminaries} In this subsection, we gather the notations that will be used in this article. 
\ \ 

\begin{itemize}
\item {\it $\lesssim$:} The notation $f\lesssim g$ means $f\le Cg$ for some generic constant $C>0$, and $f \lesssim_{\zeta} g$ represents $f \le C_{\zeta} g$ for some constant $C_{\zeta}>0$ depending on $\zeta$.\\

\item {\it Probability space:} 
Let $T$ be a fixed positive real number. Let $\{W_t\}_{t\in[0,T]}$ be a $d$-dimensional standard
Brownian motion on the probability space $(\Omega,\mathcal{F}, \mathbb{P})$, and let $(\mathcal{F}_{t})_{t\in[0,T]}$ be the augmented filtration generated by $W$. We use the notation $\mathbb{E}_{t}[\cdot]:=\mathbb{E}\left[\cdot|\mathcal{F}_t\right]$.\\

\item{\it $L^k$-norms:}   For a  random vector $\xi$, we write $\|\xi\|_{L^{k}(\Omega)}$ for the $L^{k}$-norm of $\xi$ with $k\ge 1$, and the essential supremum norm is  denoted by 	\begin{equation*}		
\|\xi\|_{L^{\infty}(\Omega)}:=\esssup_{\omega\in\Omega}|\xi(\omega)|. 
\end{equation*}
Here $|\cdot|$ denotes the Euclidean norm for both vectors and matrices.

\item{\it Spaces of functions:} For  functions $g:[s,t]\rightarrow \mathbb{R}^{m}$ and $f:\mathbb{R}^{n}\rightarrow\mathbb{R}^{m}$, the uniform norms are denoted by
\[\|g\|_{\infty;[s,t]} := \esssup_{r\in[s,t]}|g_{r}|\text{ and }\|f\|_{\infty;\mathbb{R}^{n}} := \esssup_{y\in\mathbb{R}^{n}}|f(y)|.\]
For $k\ge 1$, denote by $L^{k}([s,t];\R^m)$ the space of functions on $[s,t]$ equipped with the norm
\begin{equation*}
\|g\|_{L^{k}(s,t)} := \Big\{\int_{s}^{t} |g_{r}|^k dr\Big\}^{\frac{1}{k}}.
\end{equation*}
We write 
$g\in C([s,t];\mathbb{R}^{m})$ if $g$ is continuous; $f\in C^{k}(\mathbb{R}^n;\mathbb{R}^{m})$ if $f$ is $k$-th continuously differentiable. We denote by $f\in C^{k}_{b}(\mathbb{R}^n;\mathbb{R}^{m})$ if $f\in C^{k}(\mathbb{R}^n;\mathbb{R}^{m})$ with bounded partial derivatives up to the $k$-th order; $f\in C^{\text{Lip}}(\mathbb{R}^n;\mathbb{R}^{m})$ if it is Lipschitz continuous. We denote
\[C^{p\text{-var}}([s,t];\mathbb{R}^{m}) := \left\{g\in C([s,t];\mathbb{R}^{m})\text{ such that }\|g\|_{p\text{-var};[s,t]}<\infty \right\},\]
where the $p$-variation is denoted by  
\[\|g\|_{p\text{-var};[s,t]}:= \Big(\sup_{\pi} \sum_{[t_{i},t_{i+1}]\in \pi}|g_{t_{i+1}}-g_{t_{i}}|^p \Big)^{1/p}.\]
Here, the supremum is taken over all partitions $\pi:=\{[t_i,t_{i+1}];s=t_1<t_2<...< t_n=t\}$.

For $\gamma\in(0,1]$, We denote 
$$ C^{\gamma\text{-H\"ol}}([s,t];\R^m):=\left\{g\in C([s,t];\mathbb{R}^{m})\text{ such that }\|g\|_{\gamma\text{-H\"ol};[s,t]}<\infty \right\},$$ 
where the $\gamma$-H\"older norm is defined as
\begin{equation*}
\|g\|_{\gamma\text{-H\"ol};[s,t]} := \sup_{a,b\in[s,t];  a\neq b} \frac{|g_{a} - g_{b}|}{|a-b|^{\gamma}}\,.
\end{equation*}

When $m=1$, we may omit the range space $\R^m=\R$ for the above-mentioned spaces of functions. For instance, we  write $C^{\text{Lip}}([s,t])$  for $C^{\text{Lip}}([s,t];\mathbb{R})$. 
\\

\item {\it Spaces of the driver $\eta
$:} We introduce some seminorms for the driver $\eta:[0,T]\times\mathbb{R}^{n}\rightarrow\mathbb{R}^{m}$ (see also Hu-Lê~\cite{HuLe}).   Given parameters $\tau, \lambda\in(0,1],\  \beta\in[0,\infty)$,  denote
\begin{equation*}
\begin{aligned}
\|\eta\|_{\tau,\lambda; \beta}&:=\sup _{\substack{0 \leq s<t \leq T \\ x, y \in \mathbb{R}^{n} ; x \neq y}} \frac{|\eta(s, x)-\eta(t, x)-\eta(s, y)+\eta(t, y)|}{|t-s|^{\tau}|x-y|^{\lambda}(1+|x|^{ \beta}+|y|^{ \beta})} \\ 
&\quad +\sup _{\substack{0 \leq s<t \leq T\\ x \in \mathbb{R}^{n}}} \frac{|\eta(s, x)-\eta(t, x)|}{|t-s|^{\tau}(1+|x|^{ \beta + \lambda})}+\sup _{\substack{0 \leq t \leq T \\ x, y \in \mathbb{R}^{n} ; x \neq y}} \frac{|\eta(t, y)-\eta(t, x)|}{|x-y|^{\lambda}(1+|x|^{ \beta}+|y|^{ \beta})} ,
\end{aligned}
\end{equation*}
\begin{equation*}
\begin{aligned}
\|\eta\|_{\tau,\lambda}&:=\sup _{\substack{0 \leq s<t \leq T \\ x, y \in \mathbb{R}^{n} ; x \neq y}} \frac{|\eta(s, x)-\eta(t, x)-\eta(s, y)+\eta(t, y)|}{|t-s|^{\tau}|x-y|^{\lambda}} \\ 
&\quad +\sup _{\substack{0 \leq s<t \leq T \\ x \in \mathbb{R}^{n}}} \frac{|\eta(s, x)-\eta(t, x)|}{|t-s|^{\tau}}+\sup _{\substack{0 \leq t \leq T \\ x, y \in \mathbb{R}^{n} ; x \neq y}} \frac{|\eta(t, y)-\eta(t, x)|}{|x-y|^{\lambda}}.
\end{aligned}
\end{equation*}
Here, $\tau$ and $\lambda$ are (weighted) H\"older  exponents in time and in space, respectively. We introduce the following spaces of functions
\begin{equation*}
\begin{split}
&\ \ \ \ \ \ \ \ \ \ \  C^{\tau,\lambda}([0,T]\times\mathbb{R}^{n};\mathbb{R}^{m}):=\left\{\eta: [0,T]\times \mathbb{R}^{n}\rightarrow \mathbb{R}^{m}\text{ such that }\|\eta\|_{\tau,\lambda}<\infty\right\},\\
&\ \ \ \ \ \ \ \ \ \ \  C^{\tau,\lambda;\beta}([0,T]\times\mathbb{R}^{n};\mathbb{R}^{m}):=\left\{\eta: [0,T]\times \mathbb{R}^{n}\rightarrow \mathbb{R}^{m}\text{ such that }\|\eta\|_{\tau,\lambda;\beta}<\infty\right\}.   
\end{split}    
\end{equation*}
Clearly, $C^{\tau,\lambda}([0,T]\times\mathbb{R}^{n};\mathbb{R}^{m})\subseteq C^{\tau,\lambda;\beta}([0,T]\times\mathbb{R}^{n};\mathbb{R}^{m}).$

\begin{remark}\label{rem:eta}
In particular, if we assume  $\sup_{x\in \R}|\eta(0,x)|< \infty$, then $ \|\eta\|_{\tau,\lambda}<\infty$ implies that $|\eta(t,x)|$ is uniformly bounded for all $(t,x)\in[0,T]\times \R^d$ (see \cite[Lemma~A.2]{BSDEYoung-I}). Letting $\tilde \eta(t,x):=\eta(t, x)-\eta(0, x)$,  we have $\tilde \eta(0,x)\equiv0$ and  $\tilde \eta(dt, x_t) =\eta(dt, x_t)$. Thus, we may assume throughout the paper that $\eta(0,x)\equiv 0$ for Equation~\eqref{e:ourBSDE}, and then the condition $\|\eta\|_{\tau, \lambda}<\infty$ yields the uniform boundedness of $|\eta(t,x)|$. We also remark that $\|\cdot\|_{\tau,\lambda;0}$ differs from $\|\cdot\|_{\tau,\lambda}$, with  $\|\cdot\|_{\tau,\lambda;0}\le \|\cdot\|_{\tau,\lambda}$.
\end{remark}

\begin{remark}\label{rem:fBs}
A typical example of $\eta$ one may keep in mind is a realization of a fractional Brownian sheet $\{B(t,x), (t,x)\in[0,T]\times \mathbb{R}^{n}\}$    with Hurst parameters $H_i\in (0,1),i=0,1, \dots,n$, which is a centered Gaussian random field  with covariance
\begin{align*}
&\mathbb{E}\left[B(t,x)B(s,y)\right] \\
&= \frac{1}{2^{n+1}}\left(|t|^{2H_0} + |s|^{2H_0} - |t-s|^{2H_0} \right)\prod_{i=1}^{n}\left(|x_{i}|^{2H_i}+|y_{i}|^{2H_i} - |x_i - y_i|^{2H_i}\right).
\end{align*}
More details can be found in, e.g., 
\cite[Lemma~A.4]{BSDEYoung-I}.
\end{remark}

\item{\it Seminorms for right continuous with left limits (RCLL) processes:}
Let $[s,t]\subset[0,T]$ be a fixed interval, and $T_{1},T_{0}$ be two stopping times that satisfy $s\le T_{1}\le T_{0}\le t$. Assume that $X:\Omega\times[s,t]\rightarrow \R^n$ is an RCLL adapted process. For $p\ge 1$ and $k\ge 1$, denote
\begin{equation*}
m^{(s,t)}_{p,k}(X;[T_{1},T_{0}]):=\left\{\esssup_{\omega\in\Omega, u\in[s,t]}\mathbb{E}_{u}\left[\|X\|^{k}_{p\text{-var};[T_{1}\vee (u\land T_{0}),T_{0}]}\right]\right\}^{\frac{1}{k}}.
\end{equation*}
In the following, we will omit the superscripts $s$ and $t$, if there is no ambiguity.\\

\item{\it Spaces of $(Y,Z)$:} Now, we introduce the norm that will be used for the solution $(Y,Z)$ of Eq.~\eqref{e:ourBSDE}. Let $[s,t]\subseteq [0,T]$ be a fixed interval. Denote 
\begin{equation*}
\begin{aligned}
\mathfrak{B}([s,t]):=\big\{(y,z) \text{ such that } &y:\Omega\times[s,t]\rightarrow\mathbb{R}^{n}\text{ is continuous and adapted, and }\\
&z:\Omega\times[s,t]\rightarrow\mathbb{R}^{m}\text{ is  progressively measurable}\big\},
\end{aligned}
\end{equation*}
where the dimensions $n$ and $m$ may vary depending on the context. For  $k\ge 1 $ and $(y,z)\in\mathfrak{B}([s,t])$, we define the norm 
\begin{equation*}
\|(y,z)\|_{p,k;[s,t]} := m_{p, k}(y;[s,t])+ \|z\|_{k\text{-BMO};[s,t]} + \|y_{t}\|_{L^{\infty}(\Omega)},
\end{equation*}
where
\begin{equation*}
\|z\|_{k\text{-BMO};[s,t]} := \left\{\esssup_{\omega\in\Omega, u\in[s,t]}\mathbb{E}_{u}\left[\left(\int_{u}^{t}\left|z_{r}\right|^{2} dr\right)^{\frac{k}{2}}\right]\right\}^{\frac{1}{k}}.
\end{equation*}
In Part I \cite{BSDEYoung-I}, the metric space for the solution of \eqref{e:ourBSDE} with a bounded driver is 
\begin{equation*}
\mathfrak B_{p,k}(s, t):=\Big\{ (y,z) \in \mathfrak{B}([s,t]):\  \|(y,z)\|_{p,k;[s,t]}<\infty \Big\},
\end{equation*}
equipped with the norm $\|(\cdot, \cdot)\|_{p,k;[s,t]}$. When the driver is unbounded, we use the following metric space for the solution of \eqref{e:ourBSDE} in this paper:
\begin{equation*}
\mathfrak{H}_{p,k}(s,t) := \left\{ (y,z)\in\mathfrak{B}([s,t]): \ \|(y,z)\|_{\mathfrak{H}_{p,k};[s,t]} < \infty \right\},
\end{equation*}
where 
\begin{equation}\label{e:for Hpk}
\|(y,z)\|_{\mathfrak{H}_{p,k};[s,t]} := \|y_{t}\|_{L^{k}(\Omega)} + \left\|\|y\|_{p\text{-var};[s,t]}\right\|_{L^{k}(\Omega)} + \Big\|\big|\int_{s}^{t}|z^{2}_{r}|dr\big|^{\frac{1}{2}}\Big\|_{L^{k}(\Omega)}.
\end{equation}

\begin{remark}\label{rem:boundedness of y}
The norm $\|(\cdot,\cdot)\|_{p,k;[s,t]}$ is rather stronger than the usual $L^2$-norm in the sense that  $\|(y,z)\|_{p,k;[s,t]}<\infty$ implies the boundedness of $y$, i.e., $\esssup\limits_{\omega\in\Omega, r\in[s,t]}|y_{r}|<\infty$. Indeed, 
\begin{equation}\label{eq:ess-mpk}
\esssup\limits_{\omega\in\Omega, r\in[s,t]}|y_{r}\mathbf{1}_{[T_1,T_0]}(r)| \le \|y_{T_0}\|_{L^{\infty}(\Omega)}+m_{p,1}(y;[T_1, T_0])\le \|(y,z)\|_{p,1;[T_1,T_0]}.
\end{equation}
\end{remark}

\item {\it Malliavin derivative:} For a Malliavin differentiable random variable $\xi\in\mathcal{F}_{T}$, we denote by  $D_{\cdot}(\xi)$ its Malliavin derivative (see, e.g., Nualart~\cite{nualart2006malliavin}). Denote 
\begin{align*}
\mathbb{D}^{1,2}:=&\bigg\{\xi\in\mathcal{F}_T: \xi \text{ is Malliavin differentiable}, \text{ and } \\
&\qquad \|\xi\|_{\mathbb{D}^{1,2}}:= \|\xi\|_{L^{2}(\Omega)} + \left\| \|D_{\cdot}(\xi)\|_{L^2([0,T])}\right\|_{L^{2}(\Omega)}<\infty\bigg\}.
\end{align*}	
\end{itemize}

\section{BSDEs with unbounded drivers}\label{sec:BSDE with unbounded X}

In this section, we investigate BSDE~\eqref{e:ourBSDE} with an unbounded driver $\eta \in C^{\tau,\lambda;\beta}([0,T]\times\mathbb{R}^{d};\mathbb{R}^M)$. The existence and uniqueness of the solution is established in Section~\ref{subsec:existence} and Section~\ref{subsec:uniqueness}, respectively. The linear case is treated separately in Section~\ref{subsec:linear case}, as it requires weaker assumptions and exhibits more refined properties due to its linear structure.

We assume $\eta(0,x)\equiv 0$ without loss of generality (see Remark~\eqref{rem:eta}). We make the following assumption to guarantee the existence of the nonlinear Young integral $\int \eta(dr, x_r)$ (see \cite[Section~2]{BSDEYoung-I}), where $x_{\cdot}\in C^{p\text{-var}}([0,T];\R^d)$ for some $p>2$.
\begin{assumptionp}{(H0)}\label{(H0)}
Let $p, \tau, \lambda$ be parameters satisfying  
\[p>2, \ \tau\in(1/2,1],\ \lambda\in(0,1], \text{ and }\tau+\frac{\lambda}{p}>1.\]
\end{assumptionp}
In many situations, one can choose $p>2$ for the path $x_t$ to be as close to $2$ as we want (e.g., when $x_t$ is a realization of a semi-martingale), and thus \ref{(H0)} can be replaced by the following weaker assumption:
\begin{assumptionp}{(H0')}\label{(H0')}
Let $\tau,\lambda$ be parameters satisfying  
\[\tau\in(1/2,1],\ \lambda\in(0,1], \text{ and }\tau+\frac{\lambda}{2}>1.\]
\end{assumptionp}


Now we impose conditions on $X, \xi, \eta$, and the coefficient functions below. 

\begin{assumptionp}{(A0)}\label{(A0)}
Let $x = (x^{1},...,x^{d})^{\top}\in\R^d$. For BSDE~\eqref{e:ourBSDE}, we assume  $X$ is given by 
\begin{equation}\label{e:X_t = x + ...}
X_{t} = x + \int_{0}^{t}\sigma(r,X_{r})dW_{r} + \int_{0}^{t} b(r,X_{r}) dr,\ t\in[0,T],
\end{equation}
where $\sigma(t,x):[0,T]\times\mathbb{R}^{d}\to\mathbb{R}^{d\times d}$ and $b(t,x):[0,T]\times\mathbb{R}^{d}\to\mathbb{R}^{d}$ are measurable functions that are globally Lipschitz in $x$ and satisfy the following condition:
\[\sup_{t\in[0,T],x\in \mathbb{R}^{d}}\left\{|\sigma(t,x)| \vee |b(t,x)|\right\}\le L,\]
for some positive constant $L$.
\end{assumptionp}

\begin{assumptionp}{(A1)}\label{(A1)} Let $\tau,\lambda,p$ be the parameters satisfying Assumption~\ref{(H0)}.

\begin{itemize}
\item[$(1)$] There exists $\varepsilon\in(0,1)$  such that 
\[ \beta\ge 0,\  \tau+\frac{1-\varepsilon}{p}>1, \text{ and }  \lambda+\beta<\frac{2 \varepsilon}{1+\varepsilon}\tau.\] 

\item[$(2)$]$\eta\in C^{\tau,\lambda;\beta}([0,T]\times\mathbb{R}^{d};\mathbb{R}^{M})$ and $g\in C^{2}_{b}(\mathbb{R}^{N};\mathbb{R}^{M})$ with  
\[\|g\|_{\infty;\mathbb{R}^{N}}\vee\|\nabla g\|_{\infty;\mathbb{R}^{N}}\vee \|\nabla^2 g\|_{\infty;\mathbb{R}^{N}} \le C_{1}.\] 

\item[$(3)$] $f(t,x,y,z):[0,T]\times\mathbb{R}^{d}\times\mathbb{R}^{N}\times\mathbb{R}^{N\times d}\rightarrow \mathbb{R}^{N}$ is a function satisfying  \begin{equation}\label{e:assump of f}
\left\{
\begin{aligned}
&|f(t,x,0,0)| \le C_{1}( 1 + |x|^{\frac{\lambda+\beta}{\varepsilon}\vee 1}),\\
&|f(t,x_1,y,z) - f(t,x_2,y,z)| \le C_1 (1 + |x_1|^{\mu'} + |x_2|^{\mu'}) |x_1 - x_2|^{\mu} ,\\
&|f(t,x,y_1,z_1) - f(t,x,y_2,z_2)| \le C_{\mathrm{Lip}} \left(|y_1 - y_2| + |z_1 - z_2|\right),\
\end{aligned}\right.
\end{equation}
for all $  (t,x_i,y_i,z_i)\in [0,T]\times\mathbb{R}^{d}\times\mathbb{R}^{N}\times\mathbb{R}^{N\times d}$, $i=1,2$, where $C_{1}$, $C_{\mathrm{Lip}}$, $\mu$, and $\mu'$ are positive constants with $\mu\in(0,1]$.
\end{itemize}
\end{assumptionp}		

\begin{remark}\label{rem:tau>3/4}
If we assume  \ref{(H0')} and condition (1) in \ref{(A1)} with $p>2$, we have
\begin{equation}\label{lambda+}
\lambda+\beta<2\tau-1 \text{ and } \tau>\frac34.
\end{equation} 

\begin{proof}[Proof of \eqref{lambda+}]
First, it is clear that $\tau+\frac{\lambda}{2}>1$ and $\tau + \frac{(1-\varepsilon)}{2}>1$. Next, since $\frac{(1+\varepsilon)(\lambda+\beta)}{\varepsilon \tau}$ is decreasing in $\varepsilon$ and, noting that $\varepsilon<2\tau-1$ by $\tau + \frac{1-\varepsilon}{2}>1$, we have $\frac{2(\lambda+\beta)}{2\tau-1}<\frac{(1+\varepsilon)(\lambda+\beta)}{\varepsilon \tau}<2$,
which yields that $\lambda+\beta<2\tau-1$. Also, noting that $2-2\tau<\lambda\le\lambda+\beta$, we have
$2-2\tau<2\tau-1$ and hence $\tau>\frac{3}{4}$.
\end{proof}

\end{remark}

\begin{remark}\label{rem:Hurst condition} In Remark \ref{rem:tau>3/4}, we have shown that given \ref{(H0')},   condition (1) in \ref{(A1)} implies \eqref{lambda+}. In this remark, we prove the converse, that is, given \ref{(H0')}, \eqref{lambda+} yields  condition (1) in \ref{(A1)} with $p>2$ to be determined.  This indicates that, if $p>2$ can be chosen arbitrarily close to $2$, then assuming \ref{(H0')}, \eqref{lambda+} is equivalent to condition (1) in \ref{(A1)} with $p=2$.

Now, assuming that  $\tau,\lambda\in(0,1]$ and $\beta\ge 0$ satisfy $\tau+\frac{\lambda}{2} > 1$ and $\lambda + \beta < 2\tau - 1$, we aim to  find $p>2$ and $\varepsilon\in(0,1)$ such that $(\tau,\lambda,\beta,\varepsilon,p)$ satisfies condition (1) in Assumption~\ref{(A1)}. Let $\delta\in(0, 2\tau-1)$ be a constant.  Thus, we can choose $p$ sufficiently close to $2$ such that  $\tau+\frac{1-\varepsilon}{p}>1$ with $\varepsilon=2\tau - 1-\delta$. Moreover,  the last condition of condition~$(1)$ in \ref{(A1)} holds if we choose $\delta$ sufficiently small, since $\lambda+\beta<2\tau-1$ and 
\begin{equation*}
\limsup_{\delta\downarrow 0}\frac{(1 + \varepsilon)(\lambda + \beta)}{\varepsilon\tau}= \limsup_{\delta\downarrow 0} \frac{(2\tau-\delta)(\lambda+\beta)}{(2\tau-1-\delta)\tau} <
 \lim_{\delta\downarrow 0} \frac{(2\tau-\delta)(2\tau-1)}{(2\tau-1-\delta)\tau} = 2.
\end{equation*}
\end{remark}

\begin{remark}\label{rem:tau lambda beta}
In the proof of Remark~\ref{rem:tau>3/4},  we have $\lambda+\beta<2\tau-1$, and hence $\frac{\lambda + \beta + 1}{\tau}<2.$
This fact, together with the condition $\frac{(1+\varepsilon)(\lambda+\beta)}{\varepsilon\tau}<2$ in Assumption~\ref{(A1)}, implies that 
\begin{equation*}
\frac{\lambda+\beta + (\frac{\lambda+\beta}{\varepsilon}\vee 1)}{\tau} < 2,
\end{equation*}
which will be used in  \nameref{proof:3} of Theorem~\ref{thm:existence of solution of unbounded BSDEs}. As a consequence, the term $\frac{\lambda+\beta}{\varepsilon}\vee 1$ also appears in Assumption~\ref{(A1)} for $f(t,x,0,0)$ and in the increment conditions for $|\Xi_t|$ in \ref{(T)} below.
\end{remark}

\begin{assumptionp}{(T)}\label{(T)}   
Let $X$ satisfy \ref{(A0)}, and let $(\tau, \lambda, \beta, \varepsilon,p)$ satisfy \ref{(A1)}. For the terminal condition $\xi$, we assume that there is a continuous adapted  process $\Xi:[0,T]\times\Omega\rightarrow\mathbb{R}^{N}$ with $ \Xi_{T}=\xi$ such that   for any stopping time $S\le T$, 
\begin{equation}\label{e:conditions of xi}
\left\{\begin{aligned}
&|\Xi_{t}|\le C_1 (1 + \|X\|^{\frac{\lambda+\beta}{\varepsilon}\vee 1}_{\infty;[0,t]}),\ \text{ for all }t\in[0,T]\text{ a.s.,}\\
&\mathbb{E}\left[\left\|\mathbb{E}_{\cdot}\left[\Xi_{T}\right]\right\|^{k}_{p\text{-var};[S,T]}\right]\le C_1 \left\{\mathbb{E}\left[|T-S|\right]\right\}^{l},\\
&\mathbb{E}\left[\left|\Xi_{T} - \Xi_{S}\right|^{k}\right]\le C_1 \left\{\mathbb{E}\left[|T-S|\right]\right\}^{l},
\end{aligned}\right.
\end{equation}
for some $k\in(1,2]$, $l>0$, and $C_1>0$  independent of $S$.
\end{assumptionp}	
\begin{remark}

When $N=1$, typical examples are given in Example~\ref{ex:assump (T)}. In particular, the terminal value $\xi$ is not necessarily bounded as assumed in \cite{DiehlFriz,DiehlZhang} as well as in \cite[Assumption (H1)]{BSDEYoung-I}.  
\end{remark}

For simplicity of notation, we denote $\Theta_{2} := (\tau,\lambda,\beta,p,k,l,\varepsilon,T, C_{1},C_{\text{Lip}},L,\|\eta\|_{\tau,\lambda;\beta}),$ where all parameters
appear in assumptions \ref{(H0)}, \ref{(A0)}, \ref{(A1)}, and \ref{(T)}. These assumptions are made throughout Sections~\ref{subsec:existence} and Section~\ref{subsec:uniqueness}, and without loss of generality, the proofs presented in this section  assume  $M=d=1$,  unless otherwise specified.

\subsection{The existence}\label{subsec:existence}

\begin{lemma}\label{lem:m_p,q of X}
Assume $X$ satisfies Assumption~\ref{(A0)}. Then for any $p>2$ and $q\ge 1,$ 
\begin{equation}\label{e:m_p,q X is bounded}
m_{p,q}(X;[0,T]) \lesssim_{p,q,L,T} 1.
\end{equation}
\end{lemma}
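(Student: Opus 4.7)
The plan is to reduce the conditional $p$-variation moment to a standard H\"older-type bound on the increments of $X$, and then invoke a Kolmogorov-type criterion for $p$-variation.

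First, I would establish the following conditional increment bound: for any $q' \geq 2$ and $0 \leq u \leq s \leq t \leq T$,
\begin{equation*}
\mathbb{E}_u\bigl[|X_t - X_s|^{q'}\bigr] \lesssim_{q', L, T} |t-s|^{q'/2}.
\end{equation*}
Indeed, using the decomposition $X_t - X_s = \int_s^t \sigma(r, X_r)\, dW_r + \int_s^t b(r, X_r)\, dr$ from Assumption \ref{(A0)}, the conditional Burkholder-Davis-Gundy inequality combined with $|\sigma| \leq L$ bounds the stochastic integral by a constant times $|t-s|^{q'/2}$, while the drift term is pathwise bounded by $L^{q'} |t-s|^{q'} \leq L^{q'} T^{q'/2} |t-s|^{q'/2}$.

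Second, I would apply a Kolmogorov-type criterion for the $p$-variation norm, which is a standard consequence of the Garsia-Rodemich-Rumsey inequality (see, e.g., Friz-Victoir). Concretely, for fixed $p > 2$, choose $q' > 2p/(p-2)$ (so that $1/2 - 1/q' > 1/p$); then the increment bound from the first step yields
\begin{equation*}
\mathbb{E}_u\bigl[\|X\|_{p\text{-var};[u,T]}^{q'}\bigr] \lesssim_{p, q', L, T} 1,
\end{equation*}
with a constant that is uniform in $u$ and $\omega$ since the constant in the first step is itself uniform. The conditional version of this criterion is obtained by applying the pathwise Garsia-Rodemich-Rumsey inequality to the process $X$ restricted to $[u,T]$ and using the tower property; alternatively, one may invoke the Markov property together with the fact that the coefficients $\sigma, b$ are bounded by $L$ uniformly in the initial condition.

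Finally, for general $q \geq 1$, choosing $q' \geq q \vee (2p/(p-2) + 1)$ and applying the conditional Jensen inequality gives $\mathbb{E}_u[\|X\|_{p\text{-var};[u,T]}^{q}] \leq (\mathbb{E}_u[\|X\|_{p\text{-var};[u,T]}^{q'}])^{q/q'} \lesssim_{p, q, L, T} 1$, and taking the $q$-th root yields \eqref{e:m_p,q X is bounded}. The only subtle ingredient is the conditional $p$-variation Kolmogorov criterion, but this is routine in the rough-path literature; all other steps are standard moment estimates for It\^o diffusions with bounded coefficients.
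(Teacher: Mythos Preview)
Your argument is correct, but it takes a different route than the paper. The paper decomposes $X_t = x + M_t + A_t$ into martingale and drift parts and then invokes directly the BDG inequality for $p$-variation (from Part~I of the series, \cite[Lemma~A.1]{BSDEYoung-I}), which controls $\mathbb{E}_u[\|M\|_{p\text{-var};[u,T]}^q]$ by a constant times $\mathbb{E}_u[(\langle M\rangle_T - \langle M\rangle_u)^{q/2}]$; since $\langle M\rangle_T \le L^2 T$ is deterministically bounded and the drift has total variation at most $LT$, the result is immediate. Your approach instead establishes increment moment bounds and then passes through a Kolmogorov/Garsia--Rodemich--Rumsey criterion to obtain H\"older (hence $p$-variation) regularity. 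Both are valid: the paper's proof is shorter because it cites a ready-made lemma tailored to $p$-variation of martingales, whereas your argument is more self-contained and would work even without that lemma in hand. In fact the BDG-for-$p$-variation result is itself typically proved via GRR-type estimates, so the two approaches are closely related under the hood.
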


\begin{proof}
Note that $X_t$ has the representation $X_t = x + M_t + A_t$, where $M_t$
is the diffusion and $A_t$ is the drift. Then, since the quadratic variation process $\langle M\rangle$ is bounded by $L^2 T,$ \eqref{e:m_p,q X is bounded} follows directly from the BDG inequality for $p$-variation (see, e.g., \cite[Lemma~A.1]{BSDEYoung-I}).
\end{proof}

For $n\ge 1$, define the stopping time $T_n$ as follows (we stipulate that $\inf\{\varnothing\} := \infty$):
\begin{equation}\label{e:Tn}
T_{n} := T\land\inf\big\{t>0;\ |X_{t}| > n\big\}.
\end{equation}	

\begin{lemma}\label{lem:existence of X}
Assume that $X$ satisfies Assumption~\ref{(A0)}. Let $\{T_{n}\}_{n\ge 1}$ be  a sequence of stopping times given by \eqref{e:Tn}. Then for every $q>0$, there exists $C>0$ such that for all $n\ge |x|,$
\begin{equation}\label{e:moment-T-Tn}
\mathbb{E}\big[|T - T_{n}|^{q}\big] \le C \exp\left\{-(n-|x|)^{2}/C\right\}.
\end{equation}
Moreover, for any $h\in C^{\mathrm{Lip}}(\mathbb{R}^{d};\mathbb{R})$ and $q'>1$, there exists $C'>0$ such that for all $n\ge |x|,$
\begin{equation}\label{e:h(Tn) - h(T)}
\mathbb{E}\left[|h(X_{T}) - h(X_{T_{n}})|^{q'}\right]\le C' \exp\left\{-(n-|x|)^{2}/C'\right\}.
\end{equation}

\begin{proof}
 
By \cite[Corollary~4.1]{Siska2012}, with $(x_{t},\xi)$ there replaced by $(X_{t} - x,0)$, there exists a constant $C^{*}>0$ such that the following inequality holds for all $n\ge |x|,$
\begin{equation}\label{e:from siska}
\mathbb{P}\left\{T^{*}_{n - |x|} \le T\right\}\le 6 \exp\left\{-(n - |x|)^{2}/C^{*}\right\},
\end{equation}
where $T^{*}_{n - |x|} := \inf\{t > 0;|X_{t} - x|\ge n - |x|\}.$ Noting that $\mathbb{P}\{T_{n}<T\} = \mathbb{P}\{\sup_{t\in[0,T]} |X_{t}| > n\}$ and $\mathbb{P}\{T^{*}_{n - |x|} \le T\} = \mathbb{P}\{\sup_{t\in[0,T]} |X_{t}-x| \ge n - |x|\},$ by the triangular inequality $|X_{t} - x| + |x| \ge |X_{t}|$ we have $\mathbb{P}\{T_{n}<T\} \le \mathbb{P}\{T^{*}_{n - |x|} \le T\}$. Then, from \eqref{e:from siska} it follows that
\begin{equation*}
\mathbb{E}\big[|T-T_{n}|^{q}\big]\le T^{q}\mathbb{P}\left\{T_{n}<T\right\}\le T^{q}\mathbb{P}\left\{T^{*}_{n-|x|} \le T\right\} \le 6T^{q}\exp\left\{-(n - |x|)^2/C^{*}\right\},
\end{equation*}
and then \eqref{e:moment-T-Tn} holds by taking $C := (6T^{q})\vee C^{*}.$

In addition, denoting by $C'_{\text{Lip}}$ the 
Lipschitz constant of $h$,  we have 
\begin{equation*}
\begin{aligned}
\mathbb{E}\left[|h(X_{T}) - h(X_{T_{n}})|^{q'}\right]&\lesssim_{C'_{\text{Lip}},q'} \mathbb{E}\left[\|X\|_{p\text{-var};[T_n,T]}^{q'}\right]\\
&\lesssim_{\Theta_{2},q'} \mathbb{E}\bigg[\Big|\int_{T_n}^{T}|\sigma(r,X_r)|^2 dr\Big|^{\frac{q'}{2}}\bigg] + 2\mathbb{E}\bigg[\Big|\int_{T_n}^{T}b(r,X_{r})dr\Big|^{q'}\bigg]\\
&\lesssim_{\Theta_{2},q'} \mathbb{E}\left[|T_{n} - T|^{\frac{q'}{2}}\right] + \mathbb{E}\left[|T_n - T|^{q'}\right],
\end{aligned}
\end{equation*}
where the second inequality is due to the BDG inequality for $p$-variation. Then \eqref{e:h(Tn) - h(T)} holds by \eqref{e:moment-T-Tn}. 
\end{proof}
\end{lemma}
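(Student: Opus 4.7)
The plan is to exploit the boundedness of $\sigma$ and $b$ in Assumption~\ref{(A0)}, which forces $X_t - x$ to behave like a diffusion with bounded coefficients starting at the origin, so that the exit time from a ball of radius $n - |x|$ has Gaussian-type tail decay. The two estimates \eqref{e:moment-T-Tn} and \eqref{e:h(Tn) - h(T)} will then follow, the first essentially directly and the second through a path-regularity argument that reduces it to the first.

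For \eqref{e:moment-T-Tn}, the key observation is that $T - T_n$ vanishes unless $T_n < T$, so
\[
\mathbb{E}\big[|T - T_n|^q\big] \le T^q\,\mathbb{P}\{T_n < T\}.
\]
By the definition of $T_n$ in \eqref{e:Tn} and the triangle inequality $|X_t| \le |x| + |X_t - x|$, one has $\{T_n < T\} \subseteq \{\sup_{t\le T}|X_t - x| > n - |x|\}$. Since $X_t - x$ is a diffusion with bounded coefficients starting from $0$, I would invoke a standard Gaussian tail bound for exit times of such diffusions (the cited \cite[Corollary~4.1]{Siska2012} is tailor-made for this) to obtain $\mathbb{P}\{\sup_{t\le T}|X_t - x| > n - |x|\} \lesssim \exp\{-(n - |x|)^2/C_0\}$ for some $C_0 > 0$ depending only on $L$ and $T$. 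Combining yields \eqref{e:moment-T-Tn}.

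For \eqref{e:h(Tn) - h(T)}, since $h$ is Lipschitz and $T_n \le T$,
\[
|h(X_T) - h(X_{T_n})| \lesssim |X_T - X_{T_n}| \le \|X\|_{p\text{-var};[T_n,T]}.
\]
The BDG inequality for $p$-variation from \cite[Lemma~A.1]{BSDEYoung-I}, applied to the decomposition $X = x + M + A$ with $\langle M\rangle_t \le L^2 t$ and $|A_t - A_s| \le L|t - s|$, controls the right-hand side by a constant multiple of $(T - T_n)^{1/2} + (T - T_n)$. Raising to the $q'$-th power, taking expectations, and applying \eqref{e:moment-T-Tn} with $q = q'/2$ and $q = q'$ delivers \eqref{e:h(Tn) - h(T)}.

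The main (and essentially only) obstacle is the sub-Gaussian exit-time bound; the cleanest route is to cite \cite[Corollary~4.1]{Siska2012}. If one preferred a self-contained derivation, the estimate could alternatively be obtained by applying the exponential Markov inequality to $\exp(\theta |M_t|)$ after a time change, but appealing to the reference is considerably cleaner and makes the constants transparent. Once the tail bound is in hand, the rest of the lemma is essentially bookkeeping.
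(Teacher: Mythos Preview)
Your proposal is correct and follows essentially the same approach as the paper: both invoke \cite[Corollary~4.1]{Siska2012} for the Gaussian exit-time tail bound, use the triangle inequality and the trivial estimate $\mathbb{E}[|T-T_n|^q]\le T^q\,\mathbb{P}\{T_n<T\}$ for \eqref{e:moment-T-Tn}, and then combine the Lipschitz property of $h$ with the BDG inequality for $p$-variation (\cite[Lemma~A.1]{BSDEYoung-I}) applied to the decomposition $X=x+M+A$ to reduce \eqref{e:h(Tn) - h(T)} to \eqref{e:moment-T-Tn}.
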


The following proposition shows that the limit of solutions on random intervals $[0,S_n]$ is a solution on $[0,T]$ when $S_{n}\uparrow T$ as $n\to \infty.$ 

\begin{proposition}\label{prop:limit of solutions is also a solution}
Assume \ref{(H0)} and \ref{(A0)}, and suppose that $(\beta,\varepsilon,\eta,g,f)$ satisfies the conditions in \ref{(A1)} except for $\frac{(1 + \varepsilon)(\lambda + \beta)}{\varepsilon\tau}<2$ in  (1). 
Let $\left\{S_{n}\right\}_{n=1}^{\infty}$ be a sequence of stopping times increasing to $T$ such that $X$ is bounded on  $[0,S_{n}]$ (i.e. $\esssup_{(t,\omega)\in[0,T]\times \Omega}|X_{t\land S_{n}}|<\infty$) for each $n\ge 1$. Assume that there is a sequence of processes $\{(Y^n,Z^n)\}_{n\ge 1}\subset\mathfrak{B}_{p,k'}(0,T)$  with some $k'>1$, such that $\sup_{n\ge 1}\|Y^{n}_{S_n}\|_{L^q(\Omega)}<\infty$ for some $q>\frac{1}{\varepsilon}$. Further, assume that for each $n\ge 1$ and $t\in[0,T]$, 
\begin{equation*}
\begin{cases}
\displaystyle Y^{n}_{t\land S_n} = Y^{n}_{S_n} + \int_{t\land S_{n}}^{S_{n}}f(r,X_r,Y^{n}_r,Z^{n}_r)dr + \sum_{i=1}^{M}\int_{t\land S_n}^{S_n}g_{i}(Y^{n}_{r})\eta_{i}(dr,X_{r}) - \int_{t\land S_{n}}^{S_n} Z^{n}_{r} dW_r,\\[5pt]
Y^{n}_{t} = Y^{n}_{S_{n}},\  Z^{n}_{t} = 0,\ \text{ for }\ t>S_n,
\end{cases}
\end{equation*}
and that there exists a continuous adapted  process $Y$ such that 
\begin{equation}\label{eq:conver.Y}
\lim_{n\rightarrow \infty}\mathbb{E}\left[\|Y^{n} - Y\|^{q}_{\infty;[0,S_{n}]}\right] = 0.
\end{equation} 
Then, there exists a progressively measurable process $Z$  such that $(Y,Z)\in\mathfrak{H}_{p,q}(0,T)$  and satisfies \eqref{e:ourBSDE} with $\xi = Y_{T}$.

If additionally $\lim_{n\rightarrow\infty}\mathbb{P}\left[S_{n} = T\right] = 1,$  then we have
\begin{equation}\label{e:con-pq'}
\lim_{n\rightarrow\infty}\left\|(Y^{n},Z^{n}) - (Y,Z)\right\|_{\mathfrak{H}_{p,q'};[0,T]} = 0,\text{ for every }q'\in(1,q).
\end{equation}
\end{proposition}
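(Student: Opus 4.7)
The overall plan is to construct $Z$ on each stopped interval via a Cauchy argument, assemble a global $Z$ on $[0,T)$ by consistency, and then pass to the limit in the BSDE relation. First I would fix $m$ and observe that on $[0,S_m]$ the driver $\eta(\cdot,X_\cdot)$ is bounded (since $X$ is bounded on $[0,S_m]$ by assumption), so the stability theory of Part~I~\cite{BSDEYoung-I} applies. For $n,n'\ge m$, localizing the BSDEs on $[0,S_n]$ and $[0,S_{n'}]$ at $S_m$, both $(Y^n,Z^n)$ and $(Y^{n'},Z^{n'})$ solve the same Lipschitz Young BSDE on $[0,S_m]$ with terminal values $Y^n_{S_m}$ and $Y^{n'}_{S_m}$. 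The standard stability estimate should yield
\[
\|(Y^n-Y^{n'},\,Z^n-Z^{n'})\|_{\mathfrak{H}_{p,q};[0,S_m]} \lesssim_{m} \|Y^n_{S_m}-Y^{n'}_{S_m}\|_{L^q(\Omega)},
\]
and the right-hand side vanishes as $n,n'\to\infty$ by \eqref{eq:conver.Y} (applied at the bounded stopping time $S_m\le S_n\wedge S_{n'}$). Hence $(Y^n,Z^n)$ is Cauchy in $\mathfrak{H}_{p,q};[0,S_m]$; uniqueness of limits forces the Cauchy limits on nested intervals to agree, giving a progressively measurable $Z$ on $[0,T)$ with $Y^n\to Y$ in $p$-variation on each $[0,S_m]$.

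Second, I would pass to the limit $n\to\infty$ in the BSDE on $[t\wedge S_m,S_m]$: the Lipschitz assumption in \ref{(A1)}(3) handles the drift, BDG handles the stochastic integral, and continuity of the nonlinear Young integral in the $p$-variation of the integrand (via $g\in C^{1}_{b}$) handles $\int g(Y^n)\eta(dr,X_r)\to\int g(Y)\eta(dr,X_r)$. This yields the BSDE for $(Y,Z)$ on $[t\wedge S_m,S_m]$. Letting $m\to\infty$, $S_m\uparrow T$ a.s. and continuity of $Y$ gives $Y_{t\wedge S_m}\to Y_t$ and $Y_{S_m}\to Y_T$. The remaining key task is to establish $(Y,Z)\in\mathfrak{H}_{p,q}(0,T)$, so that the integrals on $[t\wedge S_m,S_m]$ converge to those on $[t,T]$ by dominated convergence. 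For this I would rearrange the global BSDE to express $\int_\cdot^T Z_r\,dW_r$ in terms of $Y_t$, $\xi:=Y_T$, the Lipschitz drift, and the single Young integral $\int_0^T g(Y_r)\eta(dr,X_r)$; the latter is bounded by $C\big(1+\|X\|_{p\text{-var};[0,T]}^{\lambda+\beta}\big)$ using $\eta\in C^{\tau,\lambda;\beta}$ and $\|g\|_\infty\le C_{1}$, and has finite $L^q$-moments by Lemma~\ref{lem:m_p,q of X}. A BDG estimate plus a Gr\"onwall-type absorption of the $Z$-dependence of $f$ then delivers $\big\|\big(\int_0^T|Z_r|^2 dr\big)^{1/2}\big\|_{L^q(\Omega)}<\infty$, while the $p$-variation bound on $Y$ follows from the same BSDE relation together with Lemma~\ref{lem:m_p,q of X}.

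For the refined convergence \eqref{e:con-pq'} under $\mathbb{P}[S_n=T]\to 1$, I would split $\Omega=\{S_n=T\}\cup\{S_n<T\}$. On $\{S_n=T\}$ the two BSDEs share their horizon, so the bounded-driver stability estimate from Part~I, applied on $[0,S_n]=[0,T]$, controls the $\mathfrak{H}_{p,q}$-norm of $(Y^n-Y,Z^n-Z)$ by $\|Y^n_T-Y_T\|_{L^q}$, which goes to zero. On $\{S_n<T\}$, H\"older's inequality with exponent $q/q'>1$ together with the uniform $L^q$-bounds on $(Y^n,Z^n)$ and $(Y,Z)$ obtained above and $\mathbb{P}\{S_n<T\}^{1-q'/q}\to 0$ absorbs the rare-event contribution in the $\mathfrak{H}_{p,q'}$-norm.

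The main obstacle will be the global integrability step $(Y,Z)\in\mathfrak{H}_{p,q}(0,T)$: the Part~I stability constants scale with $\|\eta(\cdot,X_\cdot)\|_{\infty;[0,S_m]}$, which diverges as $m\to\infty$ under unbounded $\eta$, so the a priori bounds along the localizing sequence cannot be passed to $m=\infty$ directly. The rearrangement of the BSDE sketched above is designed precisely to circumvent this blow-up, because the \emph{single} global Young integral $\int_0^T g(Y)\eta(dr,X_r)$ is controlled by moments of $X$ alone (since $g$ is bounded), which remain uniform in the localization.
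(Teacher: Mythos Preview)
Your overall architecture—localize on $[0,S_m]$, run a Cauchy argument, then pass to the limit—matches the paper, but two steps fail as written. The central error is the bound $\big|\int_0^T g(Y_r)\,\eta(dr,X_r)\big|\lesssim C\big(1+\|X\|_{p\text{-var};[0,T]}^{\lambda+\beta}\big)$: a (nonlinear) Young integral is controlled by the $p$-variation of its integrand, not only its supremum, and the estimate from \cite[Proposition~2.1]{BSDEYoung-I} carries a factor $\|g(Y)\|_{p\text{-var}}\le\|\nabla g\|_\infty\|Y\|_{p\text{-var}}$, which is exactly the unknown you are trying to control—your ``rearrangement $+$ Gr\"onwall'' route is therefore circular (and, incidentally, it uses the global BSDE before you have established it). The paper obtains $(Y,Z)\in\mathfrak{H}_{p,q}$ differently: it first records the uniform bound $H:=\sup_m\|(Y^m,Z^m)\|_{\mathfrak{H}_{p,q}}<\infty$ from \cite[Proposition~4.2]{BSDEYoung-I} (\emph{this} is where boundedness of $g$ actually enters, via the interpolation $\|g(Y)\|_\infty^\varepsilon\|g(Y)\|_{p\text{-var}}^{1-\varepsilon}$; see Remark~\ref{rem:why g is bounded}), and then applies Fatou's lemma after the Cauchy limit is constructed.

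The ``standard stability estimate'' in $\mathfrak{H}_{p,q}$ is likewise not available off-the-shelf: Part~I's stability lives in the $\mathfrak{B}_{p,k}$/$L^\infty$-terminal framework, and the Young term $\int[g(Y^n)-g(Y^{n'})]\,\eta(dr,X)$ in the difference BSDE again forces a $p$-variation dependence on $\delta Y$. The paper derives the Cauchy estimate from scratch via the same $\varepsilon$-interpolation (see \eqref{e:Zn+1<=Yn+1'}--\eqref{e:Z <= Y}), combining H\"older with the uniform bound $H$ and, crucially, the hypothesis $q>1/\varepsilon$; this yields convergence only in $L^\kappa$ for $\kappa$ slightly above $1$, not in $L^q$. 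The upgrade to $\mathfrak{H}_{p,q'}$ for $q'<q$ then goes through uniform integrability of $\|Y^m-Y\|_{p\text{-var}}^{q'}$ (again from $H<\infty$) together with $\mathbb{P}[S_n<T]\to 0$, rather than via the event splitting $\{S_n=T\}\cup\{S_n<T\}$ that you propose.
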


\begin{proof}
\textbf{Step 1. }
In this step, we will prove the existence of $Z.$ Denote 
\begin{equation}\label{e:finite H}
H := \sup\limits_{m\ge 1}\|(Y^{m},Z^{m})\|_{\mathfrak{H}_{p,q};[0,T]},
\end{equation}
which is finite by the estimate obtained in  \cite[Proposition~4.2]{BSDEYoung-I}, 
with $f(\omega,t,y,z)$ there replaced by $f(t,X_{t}(\omega),y,z)$. Denote $\delta Y^{n,m}_r := Y^{n}_r - Y^{m}_r$, $\delta Z^{n,m}_r := Z^{n}_r - Z^{m}_r$, $\delta f^{n,m}_r := f(r,X_r,Y^n_r,Z^n_r) - f(r,X_r,Y^m_r,Z^m_r)$, and $\delta g^{n,m}_r := g(Y^{n}_r) - g(Y^{m}_r)$. Split $[0,T]$ into $[t_{i},t_{i+1}],$ $i=1,2,...,v,$ evenly and denote $\theta := \frac{T}{v}.$ Let $\tau^{n}_{i} := t_{i}\land S_{n}.$ By the same calculation leading to 
\cite[(4.9)]{BSDEYoung-I}, for each $\kappa>1$ and $i=1,2,...,v,$
\begin{equation}\label{e:Zn+1<=Yn+1'}
\begin{aligned}
&\mathbb{E}\left[\left\|\delta Y^{n,m}\right\|^{\kappa}_{p\text{-var};[\tau^{n}_{i},\tau^{n}_{i+1}]}\right] + \mathbb{E}\bigg[\Big|\int_{\tau^{n}_{i}}^{\tau^{n}_{i+1}}|\delta Z^{n,m}_{r}|^{2}dr\Big|^{\frac{\kappa}{2}}\bigg]\\
&\lesssim_{\Theta_{2},\kappa}  \mathbb{E}\bigg[\Big\|\int_{\cdot}^{\tau^{n}_{i+1}}\delta g^{n,m}_{r}\eta(dr,X_r)\Big\|_{p\text{-var};[\tau^{n}_{i},\tau^{n}_{i+1}]}^{\kappa}\bigg]  + \mathbb{E}\bigg[\Big|\int_{\tau^{n}_{i}}^{\tau^{n}_{i+1}}|\delta f^{n,m}_{r}|dr\Big|^{\kappa}\bigg] + \mathbb{E}\left[|\delta Y^{n,m}_{\tau^{n}_{i+1}}|^{\kappa}\right]\\
&\lesssim_{\Theta_{2},\kappa} \mathbb{E}\bigg[\Big|\left(1 + \|X\|^{\lambda+\beta}_{\infty;[0,T]}+\|X\|^{\lambda+\beta}_{p\text{-var};[0,T]}\right)\left(1 + \|Y^{n}\|_{p\text{-var};[0,S_{n}]} + \|Y^{m}\|_{p\text{-var};[0,S_{n}]}\right)^{1-\varepsilon}\\
&\qquad\qquad  \times \left(\|\delta Y^{n,m}\|_{p\text{-var};[\tau^{n}_{i},\tau^{n}_{i+1}]} + \|\delta Y^{n,m}\|_{\infty;[0,S_{n}]} \right)^{1-\varepsilon} \|\delta Y^{n,m}\|_{\infty;[0,S_{n}]}^{\varepsilon}\Big|^{\kappa}\bigg] \\
&\qquad  + \theta^{\frac{\kappa}{2}}\mathbb{E}\bigg[\Big|\int_{\tau^{n}_{i}}^{\tau^{n}_{i+1}}|\delta Z^{n,m}_{r}|^{2}dr\Big|^{\frac{\kappa}{2}}\bigg] + \mathbb{E}\left[\|\delta Y^{n,m}\|^{\kappa}_{\infty;[0,S_{n}]}\right],
\end{aligned}
\end{equation}
where the second inequality follows from \cite[Lemma~2.1, Lemma~2.2]{BSDEYoung-I}, 
and the fact $|\delta f^{n,m}_r|\lesssim |\delta Z^{n,m}_{r}| + |\delta Y^{n,m}_{r}|.$ 
Since $q>\frac{1}{\varepsilon}$, we can always choose an $\kappa>1$ sufficiently close to $1$ such that $q>\frac{\kappa}{\varepsilon}.$ Thus, for such $\kappa$ we claim that the right-hand side of \eqref{e:Zn+1<=Yn+1'} is bounded by (up to a multiplicative constant)
\begin{equation}\label{e:bound ABCD}
\begin{aligned}
&\left\{\mathbb{E}\Big[\left(\|\delta Y^{n,m}\|_{p\text{-var};[\tau^{n}_{i},\tau^{n}_{i+1}]} + \|\delta Y^{n,m}\|_{\infty;[0,S_{n}]}\right)^{\kappa}\Big]\right\}^{1-\varepsilon}\bigg\{\mathbb{E}\left[\|\delta Y^{n,m}\|_{\infty;[0,S_{n}]}^{q}\right]\bigg\}^{\frac{\kappa\varepsilon}{q}} \\
&\quad  + \theta^{\frac{\kappa}{2}}\mathbb{E}\bigg[\Big|\int_{\tau^{n}_{i}}^{\tau^{n}_{i+1}}|\delta Z^{n,m}_{r}|^{2}dr\Big|^{\frac{\kappa}{2}}\bigg] + \left\|\|\delta Y^{n,m}\|_{\infty;[0,S_{n}]}\right\|^{\kappa}_{L^{\kappa}(\Omega)}.
\end{aligned}
\end{equation}
The above claim can be verified by H\"older's inequality as follows (noting $\frac{\varepsilon \kappa}{1-\varepsilon + \varepsilon^{2}} \cdot \frac{1-\varepsilon + \varepsilon^{2}}{\varepsilon}= \kappa <q$), 
\begin{equation*}
\begin{aligned}
&\mathbb{E}\bigg[\Big|(AB^{1-\varepsilon})(C + D)^{1-\varepsilon}\|\delta Y^{n,m}\|_{\infty;[0,S_{n}]}^{\varepsilon}\Big|^{\kappa}\bigg]\\
&\le \bigg\{\mathbb{E}\bigg[\left|(AB^{1-\varepsilon})^{\frac{1}{\varepsilon-\varepsilon^2}}\right|^{\kappa}\bigg]\bigg\}^{\varepsilon - \varepsilon^2} \bigg\{\mathbb{E}\Big[(C + D)^{\frac{(1-\varepsilon)\kappa}{1-\varepsilon + \varepsilon^2}}\|\delta Y^{n,m}\|_{\infty;[0,S_{n}]}^{\frac{\varepsilon \kappa}{1-\varepsilon + \varepsilon^2}}\Big]\bigg\}^{1-\varepsilon + \varepsilon^2}\\
&\lesssim_{\Theta_{2},x,H,\kappa,q} \bigg\{\mathbb{E}\Big[(C + D)^{\kappa}\Big]\bigg\}^{1-\varepsilon}\bigg\{\mathbb{E}\Big[\|\delta Y^{n,m}\|_{\infty;[0,S_{n}]}^{q}\Big]\bigg\}^{\frac{\kappa\varepsilon}{q}},
\end{aligned}
\end{equation*}
where $A := (1 + \|X\|^{\lambda + \beta}_{\infty;[0,T]} + \|X\|^{\lambda+\beta}_{p\text{-var};[0,T]}),$ $B := (1 + \|Y^{n}\|_{p\text{-var};[0,S_{n}]} + \|Y^{m}\|_{p\text{-var};[0,S_{n}]}),$ $C := \|\delta Y^{n,m}\|_{p\text{-var};[\tau^{n}_{i},\tau^{n}_{i+1}]},$ $D := \|\delta Y^{n,m}\|_{\infty;[0,S_{n}]},$ 
and the second inequality follows from $\E[|AB^{1-\varepsilon}|^{\frac{\kappa}{\varepsilon - \varepsilon^2}}]\le \{\E[|A|^{\frac{q\kappa}{(q\varepsilon-\kappa)(1-\varepsilon)}}]\}^{\frac{q\varepsilon - \kappa}{q\varepsilon}} \{\E[|B|^{q}]\}^{\frac{\kappa}{q\varepsilon}}$ (noting $\frac{\kappa}{\varepsilon}<q$) and Assumption~\ref{(A0)}. For the first term of \eqref{e:bound ABCD}, by Young's inequality, we have 
\begin{equation}\label{est:for5.8}
\begin{aligned}
&\left\{\mathbb{E}\left[\left(\|\delta Y^{n,m}\|_{p\text{-var};[\tau^{n}_{i},\tau^{n}_{i+1}]} + \|\delta Y^{n,m}\|_{\infty;[0,S_{n}]}\right)^{\kappa}\right]\right\}^{1-\varepsilon}\left\{\mathbb{E}\left[\|\delta Y^{n,m}\|_{\infty;[0,S_{n}]}^{q}\right]\right\}^{\frac{\kappa\varepsilon}{q}}\\
&\lesssim \theta^{\frac{\kappa}{1-\varepsilon}} \mathbb{E}\left[\left(\|\delta Y^{n,m}\|_{p\text{-var};[\tau^{n}_{i},\tau^{n}_{i+1}]} + \|\delta Y^{n,m}\|_{\infty;[0,S_{n}]}\right)^{\kappa}\right] + \theta^{-\frac{\kappa}{\varepsilon}} \left\|\|\delta Y^{n,m}\|_{\infty;[0,S_{n}]}\right\|^{\kappa}_{L^{q}(\Omega)}.
\end{aligned}
\end{equation}
Then, combining \eqref{e:Zn+1<=Yn+1'}, \eqref{e:bound ABCD} and \eqref{est:for5.8}, we obtain 
\begin{equation*}
\begin{aligned}
&\left\|\left\|\delta Y^{n,m}\right\|_{p\text{-var};[\tau^{n}_{i},\tau^{n}_{i+1}]}\right\|_{L^{\kappa}(\Omega)} + \Big\|\big|\int_{\tau^{n}_{i}}^{\tau^{n}_{i+1}}|\delta Z^{n,m}_{r}|^{2}dr\big|^{\frac{1}{2}}\Big\|_{L^{\kappa}(\Omega)}\\ 
&\lesssim_{\Theta_{2},x,H,\kappa,q} \theta^{\frac{1}{1-\varepsilon}}\left\|\left\|\delta Y^{n,m}\right\|_{p\text{-var};[\tau^{n}_{i},\tau^{n}_{i+1}]}\right\|_{L^{\kappa}(\Omega)} + \theta^{-\frac{1}{\varepsilon}}\left\|\|\delta Y^{n,m}\|_{\infty;[0,S_{n}]}\right\|_{L^{q}(\Omega)}\\
&\quad + \theta^{\frac{1}{2}}\Big\|\big|\int_{\tau^{n}_{i}}^{\tau^{n}_{i+1}}|\delta Z^{n,m}_{r}|^{2}dr\big|^{\frac{1}{2}}\Big\|_{L^{\kappa}(\Omega)} + (1 + \theta^{\frac{1}{1-\varepsilon}})\left\|\|\delta Y^{n,m}\|_{\infty;[0,S_n]}\right\|_{L^{\kappa}(\Omega)}.
\end{aligned}
\end{equation*}
Choosing $\theta>0$ sufficiently small (the choice of $\theta$ does not depend on $n$ and $m$), we have
\[\left\|\|\delta Y^{n,m}\|_{p\text{-var};[\tau^{n}_{i},\tau^{n}_{i+1}]}\right\|_{L^{\kappa}(\Omega)} + \Big\|\big|\int_{\tau^{n}_{i}}^{\tau^{n}_{i+1}}|\delta Z^{n,m}_{r}|^{2}dr\big|^{\frac{1}{2}}\Big\|_{L^{\kappa}(\Omega)} \lesssim_{\Theta_{2},x,H,\kappa,q} \left\|\|\delta Y^{n,m}\|_{\infty;[0,S_n]}\right\|_{L^{q}(\Omega)}.\]
Noting that the above inequality holds for all $i=1,2,...,\frac{T}{\theta}$, we get 
\begin{equation}\label{e:Z <= Y}
\left\|\|\delta Y^{n,m}\|_{p\text{-var};[0,S_n]}\right\|_{L^{\kappa}(\Omega)} + \Big\|\big|\int_{0}^{S_{n}}|\delta Z^{n,m}_{r}|^{2}dr\big|^{\frac{1}{2}}\Big\|_{L^{\kappa}(\Omega)} \lesssim_{\Theta_{2},x,H,\kappa,q} \left\|\|\delta Y^{n,m}\|_{\infty;[0,S_n]}\right\|_{L^{q}(\Omega)}.
\end{equation}

On the other hand, by \eqref{eq:conver.Y} and Fatou's Lemma, we see that $\left\|\|Y\|_{\infty;[0,T]}\right\|_{L^{q}(\Omega)}$ is finite. Therefore, 
as $n\rightarrow \infty$, 
\begin{equation*}
\left\|\|Y^{n} \mathbf{1}_{[0,S_{n}]} - Y\|_{\infty;[0,T]}\right\|_{L^{q}(\Omega)} \le \left\|\|Y^{n} - Y\|_{\infty;[0,S_n]}\right\|_{L^{q}(\Omega)} + \left\|\|Y\|_{\infty;[S_n,T]}\right\|_{L^{q}(\Omega)}\longrightarrow 0,
\end{equation*}
which implies that $\{ Y^{n} \mathbf{1}_{[0,S_{n}]} \}_n$ is a Cauchy sequence with respect to the norm $\left\|\|\cdot\|_{\infty;[0,T]}\right\|_{L^{q}(\Omega)}$. In addition, by \eqref{e:Z <= Y}, we have that for $n<m'<m$,
\begin{equation*}
\begin{aligned}
& \left\|\|\delta Y^{m',m}\|_{p\text{-var};[0,S_n]}\right\|_{L^{\kappa}(\Omega)}  + \Big\|\big|\int_{0}^{S_{n}}|\delta Z^{m',m}_{r}|^{2}dr\big|^{\frac{1}{2}}\Big\|_{L^{\kappa}(\Omega)} \\
&\lesssim_{\Theta_{2},x,H,\kappa,q} \left\|\|\delta Y^{m',m}\|_{\infty;[0,S_{m'}]}\right\|_{L^{q}(\Omega)}\\
&\le \left\|\| Y^{m'} \mathbf{1}_{[0,S_{m'}]} - Y^{m} \mathbf{1}_{[0,S_m]}\|_{\infty;[0,T]}\right\|_{L^{q}(\Omega)}.
\end{aligned}
\end{equation*}
Hence,  $\{Z^{m}_{t}\mathbf{1}_{[0,S_n]}(t)\}_{m\ge 1}$ is a Cauchy sequence for each $n$, and we denote by $Z^{(n)}$ the limit. Now let 
$$
Z_{t} := Z^{(1)}_t \mathbf{1}_{[0,S_{1}]}(t) + \sum_{n\ge 2}Z^{(n)}_{t} \mathbf{1}_{(S_{n-1},S_n]}(t).
$$
Then, for each $n\ge 1$,
\begin{equation}\label{e:m,n converge}
\lim_{m\rightarrow\infty}\Big\{\left\|\left\|Y - Y^{m}\right\|_{p\text{-var};[0,S_{n}]}\right\|_{L^{\kappa}(\Omega)} + \Big \|\big|\int_{0}^{S_{n}} |Z_{r} - Z^{m}_{r}|^{2}dr\big|^{\frac{1}{2}}\Big\|_{L^{\kappa}(\Omega)}\Big\}  = 0.
\end{equation}
Furthermore, by Fatou's Lemma and the finiteness of $H,$ we have $(Y,Z)\in \mathfrak{H}_{p,q}(0,T)$. \\

\textbf{Step 2.}
We will show that $(Y,Z)$ is a solution. Note that for any $m>n$,
\begin{equation}\label{eq:mn}
Y^{m}_{t\land S_{n}} = Y^{m}_{S_{n}} + \int_{t\land S_{n}}^{S_{n}}f(r,X_r,Y^{m}_r,Z^{m}_r)dr + \int_{t\land S_{n}}^{S_{n}}g(Y^{m}_{r})\eta(dr,X_{r}) - \int_{t\land S_{n}}^{S_{n}} Z^{m}_{r} dW_r,\ t\in[0,T]. 
\end{equation}
By \eqref{e:m,n converge}, \eqref{eq:conver.Y}, and \cite[Lemma~2.2]{BSDEYoung-I}, we have $\lim_{m}\|g(Y^{m}) - g(Y)\|_{p\text{-var};[0,S_{n}]}=0$ and $\lim_{m}\|g(Y^m)-g(Y)\|_{\infty;[0,S_n]} = 0$ in probability.
Hence, by the estimate for Young integral in \cite[Proposition~2.1]{BSDEYoung-I}, letting $m$ in \eqref{eq:mn} go to infinity, we obtain
\[Y_{t\land S_{n}} = Y_{S_{n}} + \int_{t\land S_{n}}^{S_{n}}f(r,X_r,Y_r,Z_r)dr + \int_{t\land S_{n}}^{S_{n}}g(Y_{r})\eta(dr,X_{r}) - \int_{t\land S_{n}}^{S_{n}} Z_{r} dW_r,\ t\in[0,T].\]
Furthermore, letting $n$ go to infinity we have
\[Y_{t} = Y_{T} + \int_{t}^{T}f(r,X_r,Y_r,Z_r)dr + \int_{t}^{T}g(Y_{r})\eta(dr,X_{r}) - \int_{t}^{T} Z_{r} dW_r,\ t\in[0,T].\]

\textbf{Step 3.}
Finally, we will prove \eqref{e:con-pq'} assuming $\lim_{n\to \infty}\mathbb{P}\left[S_{n} = T\right] = 1$. Note that $\big\{\|Y^{m}\|^{q'}_{p\text{-var};[0,T]}\big\}_{m\ge 1}$ is uniformly integrable, since $\sup_{m\ge 1}\|(Y^{m},Z^{m})\|_{\mathfrak{H}_{p,q};[0,T]}<\infty$. Then, by \eqref{e:m,n converge} we have
\[\lim_{m\rightarrow\infty}\mathbb{E}\left[\|Y^{m}-Y\|^{q'}_{p\text{-var};[0,S_{n}]}\right] = 0\text{, for each }n\ge 1.\]
Therefore,  we have
\begin{equation*}
\begin{aligned}
&\lim_{m\rightarrow\infty}\mathbb{E}\left[\left\|Y^{m} - Y\right\|^{q'}_{p\text{-var};[0,T]}\right]\\
&\le \lim_{n\rightarrow\infty}\lim_{m\rightarrow\infty}\mathbb{E}\left[\left\|Y^{m} - Y\right\|^{q'}_{p\text{-var};[0,S_{n}]}\right] + \lim_{n\rightarrow\infty}\sup_{m\ge 1}\E\left[\|Y^{m} - Y\|^{q'}_{p\text{-var};[0,T]}\mathbf{1}_{[S_{n}<T]}\right]=0,
\end{aligned}
\end{equation*}
where the second term on the right hand side converges  to 0 due to the uniform integrability of $\|Y^{m} - Y\|^{q'}_{p\text{-var};[0,T]}$ and the assumption $\lim
_{n\to\infty}\mathbb{P}\left[S_{n} < T\right] = 0$. Similarly, we can show $\lim_{m\to \infty}\E[|\int_{0}^{T}|Z^{m}_{r} - Z_{r}|^{2}dr|^{\frac{q'}{2}}] = 0$, and this completes the proof. 
\end{proof}

\begin{remark}\label{rem:why g is bounded}
In Proposition~\ref{prop:limit of solutions is also a solution}, the boundedness of $g(\cdot)$ is required, in order to ensure the uniform boundedness of $\|(Y^m,Z^m)\|_{\mathfrak{H}_{p,q};[0,T]}$ (see \eqref{e:finite H}). Indeed, to estimate the $L^q$-type norm as in \cite[Proposition~4.2]{BSDEYoung-I}, an interpolation argument is applied, which requires $g(\cdot)$ to be bounded. \end{remark}

\begin{lemma}\label{lem:estimate for Z}
Let $f:[0,T]\times\Omega\rightarrow \R^{N}$ be a progressively measurable process, and let $X:[0,T]\times\Omega\rightarrow \R^{d}$ and $g_{i}:[0,T]\times\Omega\rightarrow \R^{N}$, $i=1,2,...,M$, be continuous adapted  processes. Assume that \ref{(H0)} holds and $\eta\in C^{\tau,\lambda}_{\mathrm{loc}}([0,T]\times\R^d;\R^{M}).$ Suppose that $ \mathbb{P}(d \omega)$-a.s. $\|f\|_{L^{1}(0,T)}(\omega)<\infty,$ $\|X\|_{p\text{-}\mathrm{var};[0,T]}(\omega)<\infty,$ and $\|g_{i}\|_{p\text{-}\mathrm{var};[0,T]}(\omega)<\infty$ for $i=1,2,...,M$. Let $S\in[0,T]$ be a stopping time, and $\xi\in\mathcal{F}_{S}$ with $\|\xi\|_{L^{2}(\Omega)}<\infty$. Assume $(Y,Z)\in\mathfrak{H}_{p,2}(0,T)$ satisfies
\begin{equation*}
Y_{t\land S} = \xi + \int_{t\land S}^{S} f_{r} dr + \sum_{i=1}^{M}\int_{t\land S}^{S} g_{i}(r) \eta_{i}(dr,X_{r}) - \int_{t\land S}^{S} Z_{r} dW_{r},\ t\in[0,T].
\end{equation*} 
Then, for any $q>1$,  
\begin{equation*}
\begin{aligned}
\mathbb{E}_{t}\left[\Big\|\int_{\cdot}^{S}Z_r dW_{r}\Big\|^{q}_{p\text{-}\mathrm{var};[t\land S,S]}\right]&\lesssim_{p,q}  \mathbb{E}_{t}\left[\Big\|\int_{\cdot}^{S} f_r dr + \sum_{i=1}^{M}\int_{\cdot}^{S}g_{i}(r)\eta_{i}(dr,X_{r})\Big\|^{q}_{\infty;[t\land S,S]}\right]\\
&\quad + \mathbb{E}_{t}\left[\|\xi - \mathbb{E}_{\cdot}[\xi]\|^{q}_{\infty;[t\land S,S]}  \right].
\end{aligned}
\end{equation*}
\end{lemma}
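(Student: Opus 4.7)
The plan is to derive a clean explicit formula for the martingale increment $\int_{t\land S}^S Z_r\,dW_r$ by taking the conditional expectation of the BSDE, and then to lift the resulting pointwise identity to a $p$-variation estimate via the Lepingle-type BDG inequality together with Doob's $L^q$ maximal inequality.

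For the first step, since $(Y,Z)\in\mathfrak{H}_{p,2}(0,T)$, the process $u\mapsto\int_{t\land S}^{u\land S}Z_r\,dW_r$ is a genuine square-integrable martingale; in particular $\mathbb{E}_t\bigl[\int_{t\land S}^{S}Z_r\,dW_r\bigr]=0$ by optional stopping. Taking $\mathbb{E}_t[\cdot]$ of the BSDE and using that $Y_{t\land S}$ is $\mathcal{F}_{t\land S}\subseteq\mathcal{F}_t$-measurable, I obtain $Y_{t\land S}=\mathbb{E}_t[\xi+B_{t\land S}]$, where I abbreviate $B_u:=\int_u^S f_r\,dr+\sum_{i=1}^{M}\int_u^S g_i(r)\eta_i(dr,X_r)$. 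Subtracting this identity from the BSDE yields the clean decomposition
\begin{equation*}
\int_{t\land S}^{S}Z_r\,dW_r=\bigl(\xi-\mathbb{E}_t[\xi]\bigr)+\bigl(B_{t\land S}-\mathbb{E}_t[B_{t\land S}]\bigr).
\end{equation*}

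For the second step, set $M_u:=\int_{t\land S}^{u\land S}Z_r\,dW_r$ and note that $\int_u^S Z_r\,dW_r=M_S-M_u$ differs from $-M_u$ by a constant (in $u$), so the $p$-variations coincide:
\begin{equation*}
\Bigl\|\int_\cdot^S Z_r\,dW_r\Bigr\|_{p\text{-var};[t\land S,S]}=\|M\|_{p\text{-var};[t\land S,S]}.
\end{equation*}
For $p>2$, successively applying the (conditional) Lepingle-type $p$-variation BDG inequality (\cite[Lemma~A.1]{BSDEYoung-I}), the classical BDG, and Doob's $L^q$ maximal inequality ($q>1$) to the martingale $M$ on $[t\land S,S]$ gives
\begin{equation*}
\mathbb{E}_t\bigl[\|M\|^q_{p\text{-var};[t\land S,S]}\bigr]\lesssim_{p,q}\mathbb{E}_t\bigl[\langle M\rangle_S^{q/2}\bigr]\lesssim_{q}\mathbb{E}_t\Bigl[\sup_{u\in[t\land S,S]}|M_u|^q\Bigr]\lesssim_{q}\mathbb{E}_t\bigl[|M_S|^q\bigr].
\end{equation*}

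For the final step, I insert the Step 1 identity and use $|a+b|^q\lesssim_q|a|^q+|b|^q$. The first piece $|\xi-\mathbb{E}_t[\xi]|^q$ equals the value of $u\mapsto|\xi-\mathbb{E}_u[\xi]|^q$ at the endpoint $u=t\land S$, hence is dominated by $\|\xi-\mathbb{E}_\cdot[\xi]\|^q_{\infty;[t\land S,S]}$. The second piece $|B_{t\land S}-\mathbb{E}_t[B_{t\land S}]|^q$ is controlled, via the triangle inequality and conditional Jensen, by $\mathbb{E}_t[|B_{t\land S}|^q]$, and $|B_{t\land S}|$ is the value at $u=t\land S$ of $u\mapsto|B_u|$, hence bounded by $\|\int_\cdot^S f_r\,dr+\sum_i\int_\cdot^S g_i(r)\eta_i(dr,X_r)\|_{\infty;[t\land S,S]}$. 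Combining these bounds delivers the claimed inequality. The only (minor) obstacle will be the correct conditional formulation of the Lepingle estimate on the random interval $[t\land S,S]$; this is handled by considering the martingale $M$ already stopped at $S$ and invoking the unconditional statement from \cite[Lemma~A.1]{BSDEYoung-I} as in analogous arguments of Part~I.
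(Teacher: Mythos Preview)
Your proof is correct and follows essentially the same approach as the paper: the $p$-variation BDG inequality (\cite[Lemma~A.1]{BSDEYoung-I}), Doob's maximal inequality, and the representation of $\int_{t\land S}^S Z_r\,dW_r$ obtained by conditioning the BSDE. The only organizational difference is that you first collapse to the terminal value $|M_S|$ via Doob and then substitute the identity once at $s=t\land S$, whereas the paper bounds $p$-var by the sup norm, substitutes the identity at every $s\in[t\land S,S]$, and applies Doob to the resulting $Y_\cdot-\mathbb{E}_\cdot[\xi]$ term; your detour through $\langle M\rangle$ is unnecessary since Lemma~A.1 already yields $p$-var $\lesssim$ sup directly.
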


\begin{proof} 
By \cite[Lemma~A.1]{BSDEYoung-I} with $M_{t}:=\int_{0}^{t} Z_{r} \mathbf{1}_{[0,S]}(r)dW_{r},$ 
\begin{equation}\label{e:|Z|_p-var <= |Z|_infty}
\mathbb{E}_{t}\left[\Big\|\int_{\cdot}^{S}Z_{r} dW_{r}\Big\|^{q}_{p\text{-var};[t\land S,S]}\right]\lesssim_{p,q} \mathbb{E}_{t}\left[\Big\|\int_{\cdot}^{S}Z_{r} dW_{r}\Big\|^{q}_{\infty;[t\land S,S]}\right].
\end{equation}
Note that for all $t\in[0,T],$ the following equations hold a.s.,
\begin{equation}\label{e:equality Z}
\int_{t\land S}^{S}Z_{r}dW_{r} = \xi - \mathbb{E}_{t\land S}[\xi] - (Y_{t\land S} - \mathbb{E}_{t\land S}[\xi]) + \int_{t\land S}^{S}f_{r}dr + \sum_{i=1}^{M}\int_{t\land S}^{S}g_{i}(r) \eta_{i}(dr,X_{r}),
\end{equation}
and
\begin{equation}\label{e:represent of Y}
Y_{t\land S} - \mathbb{E}_{t\land S}\left[\xi\right] = \mathbb{E}_{t\land S}\left[\int_{t\land S}^{S}f_{r}dr + \sum_{i=1}^{M}\int_{t\land S}^{S}g_{i}(r) \eta(dr,X_{r})\right].
\end{equation}
By \eqref{e:represent of Y} and Doob's maximal inequality, we get for $t\in[0,T]$,
\begin{equation*}
\begin{aligned}
\quad\mathbb{E}_{t}\left[\left\|Y_{\cdot} - \mathbb{E}_{\cdot}\left[\xi\right]\right\|^{q}_{\infty;[t\land S,S]}\right]
&\le \mathbb{E}_{t}\left[\sup_{s\in[t,T]}\left|\mathbb{E}_{s}\bigg[\Big\|\int_{\cdot}^{S}f_{r}dr + \sum_{i=1}^{M}\int_{\cdot}^{S}g_{i}(r)\eta_{i}(dr,X_{r}) \Big\|_{\infty;[t\land S,S]}\bigg]\right|^{q}\right]\\
&\lesssim_{q} \mathbb{E}_{t}\left[\Big\| \int_{\cdot}^{S}f_{r}dr + \sum_{i=1}^{M}\int_{\cdot}^{S}g_{i}(r)\eta_{i}(dr,X_{r}) \Big\|^{q}_{\infty;[t\land S,S]}\right].
\end{aligned}
\end{equation*}
This together with \eqref{e:|Z|_p-var <= |Z|_infty} and \eqref{e:equality Z} gives the desired estimate. 
\end{proof}

To apply the localization argument, we need a uniform estimate for the localized solutions, which is provided by the following lemma. For each $n\ge 1$, consider the following equation:
\begin{equation}\label{e:Yn}
\begin{cases}
dY^{n}_{t} = - f(t,X_{t},Y^{n}_{t},Z^{n}_{t})dt -  \sum_{i=1}^{M}g_{i}(Y^{n}_{t})\eta_{i}(dt,X_{t}) + Z^{n}_{t} dW_{t},\quad t\in[0,T_n],\\
Y^{n}_{T_{n}} = \Xi_{T_{n}},
\end{cases}
\end{equation}
where $T_n$ is defined in \eqref{e:Tn} and $\Xi_{\cdot}$ satisfies \ref{(T)}.

\begin{lemma}\label{lem:growth of (Y^n,Z^n)}
Assume \ref{(A0)}, \ref{(T)}, and \ref{(A1)}. Let $(Y^n,Z^n)\in \bigcap_{q>1}\mathfrak{B}_{p,q}(0,T)$ be the unique solution of Eq.~\eqref{e:Yn} (see \cite[Section~4]{BSDEYoung-I} for its well-posedness). Then for every $q>1,$  
\begin{equation}\label{e:Gronwall to m_p,q (Y)}
m_{p,q}(Y^{n};[0,T]) + \|Z^{n}\|_{q\text{-}\mathrm{BMO};[0,T]} \lesssim_{\Theta_{2},q} n^{\frac{\lambda+\beta}{\varepsilon}\vee 1}\ \text{ for all }\ n\ge |x|.
\end{equation}
\end{lemma}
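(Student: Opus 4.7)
The strategy is to reduce to the bounded-driver \emph{a priori} estimate of Part I (\cite[Proposition~4.2]{BSDEYoung-I}) by working on the random interval $[0,T_n]$, and then to track how each input to that estimate scales with $n$. The key observation is that by the very definition of $T_n$ we have $|X_t|\le n$ for every $t\le T_n$, which makes the restricted driver, terminal value, and free term all \emph{quantitatively} controlled in $n$.

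Concretely, Assumption~\ref{(T)} yields $\|\Xi_{T_n}\|_{L^\infty(\Omega)}\le C_1(1+n^{\frac{\lambda+\beta}{\varepsilon}\vee 1})$, and the first line of \eqref{e:assump of f} in Assumption~\ref{(A1)}(3) gives the pointwise bound $\esssup_{t\le T_n}|f(t,X_t,0,0)|\le C_1(1+n^{\frac{\lambda+\beta}{\varepsilon}\vee 1})$. The three seminorms defining $\|\eta\|_{\tau,\lambda;\beta}$, applied with both spatial points in $\bar B_n$, bound the space--time increments of $\eta(\cdot,X_\cdot)$ on $[0,T_n]$ by $\|\eta\|_{\tau,\lambda;\beta}(1+n^{\lambda+\beta})$ multiplied by $p$-variation quantities of $X$, whose moments are finite and independent of $n$ by Lemma~\ref{lem:m_p,q of X}. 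Since $g$ is bounded, the Young integral $\int g(Y^n_r)\eta(dr,X_r)$ enters the estimate additively (not through a Gronwall exponentiation), and Assumption~\ref{(A1)}(1) together with $\varepsilon<1$ guarantees $\lambda+\beta\le \frac{\lambda+\beta}{\varepsilon}\vee 1$, so the Young contribution $\lesssim n^{\lambda+\beta}$ is dominated by the terminal/free-term contributions $\lesssim n^{\frac{\lambda+\beta}{\varepsilon}\vee 1}$.

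Feeding these bounds into Part I's estimate on $[0,T_n]$ (with terminal time $T_n$ and terminal value $\Xi_{T_n}$) produces
\begin{equation*}
m_{p,q}(Y^n;[0,T_n])+\|Z^n\|_{q\text{-BMO};[0,T_n]}\lesssim_{\Theta_2,q} n^{\frac{\lambda+\beta}{\varepsilon}\vee 1},
\end{equation*}
with the implicit constant independent of $n\ge|x|$. To pass from $[0,T_n]$ to $[0,T]$ it suffices to extend $Y^n$ by $Y^n_t:=Y^n_{T_n}$ and $Z^n_t:=0$ for $t\in(T_n,T]$; this extension leaves both $\|Y^n\|_{p\text{-var};[0,T]}$ and the running BMO norm unchanged and is consistent with the convention under which \eqref{e:Yn} is written. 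This yields \eqref{e:Gronwall to m_p,q (Y)}.

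The main obstacle is bookkeeping the dependence of the Part~I estimate on $\|\eta\|_{\tau,\lambda}$ restricted to the compact set $[0,T]\times\bar B_n$: naive application could produce a factor exponential in $n^{\lambda+\beta}$. The resolution relies on two structural facts exploited above — the boundedness of $g$, which prevents the Young driver from amplifying $Y^n$ multiplicatively, and the inequality $\lambda+\beta\le\frac{\lambda+\beta}{\varepsilon}\vee 1$ coming from \ref{(A1)}(1) — together ensuring that the final bound is polynomial of the claimed order.
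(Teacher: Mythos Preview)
Your proposal has a genuine gap at the step ``Feeding these bounds into Part I's estimate on $[0,T_n]$ \ldots\ produces $\lesssim_{\Theta_2,q} n^{\frac{\lambda+\beta}{\varepsilon}\vee 1}$.'' The black-box \emph{a priori} estimate \cite[Proposition~4.2]{BSDEYoung-I} depends on the driver norm $\|\eta\|_{\tau,\lambda}$ through a subdivision--iteration argument: one chooses subintervals of length $\theta$ small enough that $\|\eta\|_{\tau,\lambda}\theta^\tau$ is below a fixed threshold, and then iterates over $T/\theta$ intervals. With $\|\eta\|_{\tau,\lambda}$ on $\bar B_n$ of order $n^{\lambda+\beta}$, this forces $\theta\sim n^{-(\lambda+\beta)/\tau}$ and hence $T/\theta\sim n^{(\lambda+\beta)/\tau}$ iterations, producing a multiplicative constant that is \emph{exponential} in $n^{(\lambda+\beta)/\tau}$. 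You flag exactly this danger in your final paragraph, but the ``resolution'' you offer --- boundedness of $g$ and the trivial inequality $\lambda+\beta\le\frac{\lambda+\beta}{\varepsilon}\vee 1$ --- does not by itself undo the exponential blow-up of the black box.

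Relatedly, your statement that ``the Young contribution $\lesssim n^{\lambda+\beta}$'' is not right as written: the nonlinear Young integral $\int g(Y^n_r)\eta(dr,X_r)$ depends on $Y^n$ through $\|g(Y^n)\|_{p\text{-var}}$, so its $p$-variation norm is bounded by something like $n^{\lambda+\beta}(1+m_{p,q}(Y^n))$, not by $n^{\lambda+\beta}$ alone. The paper's proof does not invoke \cite[Proposition~4.2]{BSDEYoung-I}; it reopens the estimate. The key point is an interpolation: because $g$ is bounded, \cite[Lemma~2.1]{BSDEYoung-I} gives the Young term a \emph{sublinear} dependence $n^{\lambda+\beta}\bigl(1+m_{p,q}(Y^n;[t,T])^{1-\varepsilon}\bigr)$; Young's inequality then converts $n^{\lambda+\beta}m_{p,q}^{1-\varepsilon}$ into $C' n^{(\lambda+\beta)/\varepsilon}+\tfrac{1}{3C}m_{p,q}$, so the absorption threshold for $\theta$ involves only $C$ (depending on $\Theta_2,q$) and is \emph{independent of $n$}. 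With $\theta$ fixed, the number of subintervals $\lfloor T/\theta\rfloor+1$ is also $n$-independent, and the iteration yields the polynomial bound $n^{\frac{\lambda+\beta}{\varepsilon}\vee 1}$. This interpolation--then--Young--inequality step, together with the $n$-independent subdivision, is precisely the missing idea in your proposal.
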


\begin{proof}
For $r\in[0,T]$, denote $f^{n}_{r} := f(r,X_r,Y^{n}_r,Z^{n}_{r})$. By \eqref{e:assump of f} in \ref{(A1)}, we have
\begin{equation}\label{e:frn}
\begin{aligned}
&\bigg\{\mathbb{E}_{t}\Big[\Big|\int_{t\land T_{n}}^{T_{n}}|f^{n}_{r}|dr\Big|^{q}\Big]\bigg\}^{\frac{1}{q}}\\
&\le C_{1}|T-t|\left( 1 + n^{\frac{\lambda+\beta}{\varepsilon}\vee 1}\right) + C_{\text{Lip}}|T-t|\left\{\mathbb{E}_{t}\left[\|Y^{n}\|^{q}_{\infty;[t,T]}\right]\right\}^{\frac{1}{q}} + C_{\text{Lip}}\Big\{\mathbb{E}_{t}\Big[\Big|\int_{t}^{T}|Z^{n}_{r}|dr\Big|^{q}\Big]\Big\}^{\frac{1}{q}}\\
&\lesssim_{\Theta_{2}} n^{\frac{\lambda+\beta}{\varepsilon}\vee 1} + \|Y^{n}_{T_n}\|_{L^{\infty}(\Omega)} + |T-t| m_{p,q}(Y^{n};[t,T]) + |T-t|^{\frac{1}{2}}\left\|Z^{n}\right\|_{q\text{-BMO};[t,T]},
\end{aligned}
\end{equation}
where in the second step we use the fact 
$\|Y^{n}\|_{\infty;[r,T]}\le \|Y^{n}\|_{p\text{-var};[r,T]} + |Y_{T_n}^n|.$ For every $t\in[0,T]$ and $n\ge |x|$, by the estimate for nonlinear Young integral established in \cite[Lemma~2.1]{BSDEYoung-I}, we have 
\begin{equation}\label{e:gyrn}
\begin{aligned}
&\bigg\{\mathbb{E}_{t}\left[\Big\|\int_{\cdot}^{T_{n}}g(Y^{n}_{r})\eta(dr,X_{r})\Big\|^{q}_{p\text{-var};[t\land T_{n+1},T_{n+1}]}\right]\bigg\}^{\frac{1}{q}}\\
&\lesssim_{\Theta_2,q} T^{\tau}\|\eta\|_{\tau,\lambda;\beta}\Big\{\mathbb{E}_{t}\Big[ \left(1 + \|X\|^{q\beta}_{\infty;[0,T_{n}]}\right) \|g(Y^{n}_{\cdot})\|^{q}_{\infty;[0,T_{n}]}\|X\|^{q\lambda}_{p\text{-var};[t\land T_{n},T_{n}]}\\ 
&\quad+ \left(1 + \|X\|^{q(\lambda + \beta)}_{\infty;[0,T_{n}]}\right)\Big(\|g(Y^{n}_{\cdot})\|^{q}_{\infty;[0,T_{n}]} + \|g(Y^{n}_{\cdot})\|^{q\varepsilon}_{\infty;[0,T_{n}]}\|g(Y^{n}_{\cdot})\|^{q(1-\varepsilon)}_{p\text{-var};[t\land T_{n},T_{n}]}\Big) \Big]\Big\}^{\frac{1}{q}}\\
&\lesssim_{\Theta_2,q} \left(1+n^{\beta}\right)+\left(1+n^{\lambda + \beta}\right)\left(1 + m_{p,q}(Y^{n};[t,T])^{1-\varepsilon}\right)\\
&\lesssim n^{\lambda + \beta}\left(1 + m_{p,q}(Y^{n};[t,T])^{1-\varepsilon}\right),
\end{aligned}
\end{equation}
where in the second step, we use the fact  $\|X\|_{\infty;[0,T_n]}\le n,$ the finiteness of $m_{p,q}(X;[0,T])$ obtained in Lemma~\ref{lem:m_p,q of X}, and the finiteness and Lipschitzness of $g(\cdot).$
By Lemma~\ref{lem:estimate for Z} and the following equation
\begin{equation*}
Y^{n}_{t} = \Xi_{T_{n}} + \int_{0}^{t\land T_{n}}f^{n}_{r}dr + \int_{0}^{t\land T_{n}} g(Y^{n}_{r}) \eta(dr,X_{r}) - \int_{0}^{t\land T_{n}} Z^{n}_{r} dW_{r},
\end{equation*}
we get
\begin{equation*}
\begin{aligned}
&\left\{\mathbb{E}_{t}\left[\|Y^{n}\|^{q}_{p\text{-var};[t\land T_{n},T_{n}]}\right]\right\}^{\frac{1}{q}} + \bigg\{\mathbb{E}_{t}\left[\Big\|\int_{\cdot}^{T_{n}}Z^{n}_{r}dW_{r}\Big\|^{q}_{p\text{-var};[t\land T_{n},T_{n}]}\right]\bigg\}^{\frac{1}{q}}\\	
&\lesssim_{\Theta_{2},q} \|Y^{n}_{T_{n}}\|_{L^{\infty}(\Omega)} + \bigg\{\mathbb{E}_{t}\left[\Big\|\int_{\cdot}^{T_{n}}g(Y^{n}_{r})\eta(dr,X_{r})\Big\|^{q}_{p\text{-var};[t\land T_{n},T_{n}]}\right]\bigg\}^{\frac{1}{q}} + \bigg\{\mathbb{E}_{t}\left[\Big|\int_{t\land T_{n}}^{T_{n}}|f^{n}_{r}|dr\Big|^{q}\right]\bigg\}^{\frac{1}{q}}\\
& \lesssim_{\Theta_{2},q} \|\Xi_{T_{n}}\|_{L^{\infty}(\Omega)} + n^{\lambda + \beta}m_{p,q}(Y^{n};[t,T])^{1-\varepsilon} + n^{\frac{\lambda+\beta}{\varepsilon}\vee 1} + |T-t| m_{p,q}(Y^{n};[t,T])\\
&\quad + |T-t|^{\frac{1}{2}}\left\|Z^{n}\right\|_{q\text{-BMO};[t,T]},
\end{aligned}
\end{equation*} 
where the second inequality follows from \eqref{e:frn}
and \eqref{e:gyrn}. Then, there exists a constant $C>0$ such that
\begin{equation}\label{e:to use Young ineq}
\begin{aligned}
&m_{p,q}(Y^{n};[t,T]) + \left\|Z^{n}\right\|_{q\text{-BMO};[t,T]}\\
&\le C\Big(\|\Xi_{T_{n}}\|_{L^{\infty}(\Omega)} + n^{\lambda + \beta}m_{p,q}(Y^{n};[t,T])^{1-\varepsilon} + n^{\frac{\lambda+\beta}{\varepsilon}\vee 1} + |T-t| m_{p,q}(Y^{n};[t,T])\\
&\qquad + |T-t|^{\frac{1}{2}}\left\|Z^{n}\right\|_{q\text{-BMO};[t,T]}\Big).
\end{aligned}
\end{equation}
Note that by Young's inequality,
\begin{equation*}
n^{\lambda + \beta}m_{p,q}(Y^{n};[t,T])^{1-\varepsilon}\le (3C)^{\frac{1 - \varepsilon}{\varepsilon}}n^{\frac{\lambda+\beta}{\varepsilon}} + \frac{1}{3C} m_{p,q}(Y^n;[t,T]).
\end{equation*}
Thus, we can choose $\theta:= T-t$ sufficiently small so that $C(\theta^{\frac{1}{2}}\vee \theta) \le \frac{1}{3}, and $  
then \eqref{e:to use Young ineq} yields
\begin{equation}\label{e:iteration 0}
m_{p,q}(Y^{n};[t,T]) +
\|Z^{n}\|_{q\text{-BMO};[t,T]}\\	
\lesssim_{\Theta_{2},q} \|\Xi_{T_{n}}\|_{L^{\infty}(\Omega)} + n^{\frac{\lambda+\beta}{\varepsilon}\vee 1}.
\end{equation}
Similarly, setting $T^{n}_{i} := \left(T - (i-1)\theta\right)\vee 0,$ we have for $i = 1,2,...,\lfloor \frac{T}{\theta}\rfloor + 1,$
\begin{equation}\label{e:iteration 1}
m_{p,q}(Y^{n};[T^{n}_{i+1},T^{n}_{i}]) + \|Z^{n}\|_{q\text{-BMO};[T^{n}_{i+1},T^{n}_{i}]}	
\lesssim_{\Theta_{2},q}\|Y^{n}_{T_{n}\land T^{n}_{i}}\|_{L^{\infty}(\Omega)} + n^{\frac{\lambda+\beta}{\varepsilon}\vee 1}.
\end{equation}
From \eqref{eq:ess-mpk} it follows that 
\begin{equation}\label{e:iteration 2}
\|Y^{n}_{T_{n}\land T^{n}_{i}}\|_{L^{\infty}(\Omega)}\le \sum_{2\le j\le i}m_{p,q}(Y^{n};[T^{n}_{j},T^{n}_{j-1}]) + \|\Xi_{T_{n}}\|_{L^{\infty}(\Omega)}.
\end{equation}
By iterating \eqref{e:iteration 1} and \eqref{e:iteration 2}, and by the estimate~\eqref{e:iteration 0}, we have
\begin{equation*}
\begin{aligned}
m_{p,q}(Y^{n};[T^{n}_{i+1},T^{n}_{i}]) + \|Z^{n}\|_{q\text{-BMO};[T^{n}_{i+1},T^{n}_{i}]} &\lesssim_{\Theta_{2},q} m_{p,q}(Y^{n};[T^{n}_{2},T]) + \|\Xi_{T_{n}}\|_{L^{\infty}(\Omega)} + n^{\frac{\lambda+\beta}{\varepsilon}\vee 1}\\
&\lesssim_{\Theta_{2},q} \|\Xi_{T_{n}}\|_{L^{\infty}(\Omega)} + n^{\frac{\lambda+\beta}{\varepsilon}\vee 1}.
\end{aligned}
\end{equation*}
In addition, by the bound $\|\Xi_{T_{n}}\|_{L^{\infty}(\Omega)}\le C_1 n^{\frac{\lambda+\beta}{\varepsilon}\vee 1}$ which follows from Assumption~\ref{(T)}, we have
\begin{equation*}
m_{p,q}(Y^{n};[T^{n}_{i+1},T^{n}_{i}]) + \|Z^{n}\|_{q\text{-BMO};[T^{n}_{i+1},T^{n}_{i}]} \lesssim_{\Theta_2,q} n^{\frac{\lambda+\beta}{\varepsilon}\vee 1}.
\end{equation*}
Summing up the above inequalities for $i=1,2,...,\lfloor\frac{T}{\theta}\rfloor+1$, we obtain the desired \eqref{e:Gronwall to m_p,q (Y)}.
\end{proof}

The existence  of the solution to Eq.~\eqref{e:ourBSDE} is obtained in  Theorem~\ref{thm:existence of solution of unbounded BSDEs} below.

\begin{theorem}\label{thm:existence of solution of unbounded BSDEs}
Assume \ref{(A0)}, \ref{(T)}, and \ref{(A1)}. Then, Eq.~\eqref{e:ourBSDE} admits a solution $(Y,Z)\in\bigcap_{q>1}\mathfrak{H}_{p,q}(0,T)$. In addition, for $n\ge 1$, let $T_n$ be the stopping time defined in \eqref{e:Tn} and $(Y^n,Z^n)\in\bigcap_{q>1}\mathfrak{B}_{p,q}(0,T)$ be  the unique solution of Eq.~\eqref{e:Yn} (see \cite[Section~4]{BSDEYoung-I} for its well-posedness). Then we have for each $ q>1 $,
\begin{equation}\label{e:con-0}
\lim_{n\to \infty}	\left\|(Y^{n},Z^{n}) - (Y,Z)\right\|_{\mathfrak{H}_{p,q};[0,T]}=0. 
\end{equation}
\end{theorem}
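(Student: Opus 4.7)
The plan is to apply Proposition~\ref{prop:limit of solutions is also a solution} with $S_n := T_n$. Since $|X_t|\le n$ on $[0,T_n]$, the driver $\eta$ is effectively bounded on the relevant space-time region, and each $(Y^n,Z^n)$ is produced by the bounded-driver theory of Part~I. The hypothesis $\sup_n\|Y^n_{T_n}\|_{L^q(\Omega)}<\infty$ for every $q>1$ is immediate from \ref{(T)}, namely $|\Xi_{T_n}|\le C_1(1+\|X\|_{\infty;[0,T]}^{(\lambda+\beta)/\varepsilon\vee 1})$, combined with the standard moment bounds on $\|X\|_{\infty;[0,T]}$ guaranteed by \ref{(A0)}. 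It then remains to exhibit a continuous adapted process $Y$ with $\mathbb{E}[\|Y^n-Y\|_{\infty;[0,T_n]}^q]\to 0$, which I achieve by establishing the Cauchy property of $\{Y^n\}$ in this norm for arbitrarily large $q$.

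For $|x|\le n<m$, the difference $\delta Y := Y^n-Y^m$ solves on $[0,T_n]$ a BSDE with coefficients that are Lipschitz in $(y,z)$, a Young driver $(g(Y^n)-g(Y^m))\eta(dr,X_r)$, and terminal condition $\delta Y_{T_n} = \Xi_{T_n}-Y^m_{T_n}$. Because $|X|\le n$ on $[0,T_n]$, the restriction of $\eta$ to this region has finite $\|\cdot\|_{\tau,\lambda}$-seminorm of size $\lesssim (1+n^{\lambda+\beta})\|\eta\|_{\tau,\lambda;\beta}$, so the a priori estimate of \cite[Proposition~4.2]{BSDEYoung-I} applies and reduces the bound on $\|\delta Y\|_{\infty;[0,T_n]}$ in $L^q(\Omega)$ to controlling $\|\delta Y_{T_n}\|_{L^q(\Omega)}$ (with a constant that is only polynomial in $n$). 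Using $Y^m_{T_m}=\Xi_{T_m}$ I split
\begin{equation*}
\Xi_{T_n}-Y^m_{T_n} = (\Xi_{T_n}-\Xi_{T_m}) + (Y^m_{T_m}-Y^m_{T_n}).
\end{equation*}
The first summand is controlled by \ref{(T)} and the triangle inequality: $\mathbb{E}[|\Xi_{T_n}-\Xi_{T_m}|^k] \lesssim \{\mathbb{E}[T-T_n]\}^l$, which by Lemma~\ref{lem:existence of X} is of order $\exp\{-(n-|x|)^2/C\}^l$. For the second summand, noting that $T_n<T_m$ forces $T_n<T$,
\begin{equation*}
|Y^m_{T_m}-Y^m_{T_n}|\le \|Y^m\|_{p\text{-var};[T_n,T_m]}\mathbf{1}_{\{T_n<T\}},
\end{equation*}
and H\"older's inequality together with the polynomial bound $m_{p,q'}(Y^m;[0,T])\lesssim m^{(\lambda+\beta)/\varepsilon\vee 1}$ from Lemma~\ref{lem:growth of (Y^n,Z^n)} and the Gaussian tail $\mathbb{P}[T_n<T]\lesssim\exp\{-(n-|x|)^2/C\}$ from Lemma~\ref{lem:existence of X} again yield Gaussian decay, multiplied only by a polynomial factor in $m$.

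Since the Gaussian decay dominates all polynomial factors in $n$ and $m$, I obtain $\|\delta Y_{T_n}\|_{L^q}\to 0$ uniformly in $m\ge n$, and hence $\{Y^n\mathbf{1}_{[0,T_n]}\}$ is Cauchy in $\|\|\cdot\|_{\infty;[0,T]}\|_{L^q(\Omega)}$ for every $q>1$. Proposition~\ref{prop:limit of solutions is also a solution} then delivers a solution $(Y,Z)\in\bigcap_{q>1}\mathfrak{H}_{p,q}(0,T)$. Finally, since $\mathbb{P}[T_n=T]\to 1$ by Lemma~\ref{lem:existence of X}, the convergence~\eqref{e:con-0} is precisely the second conclusion of the same proposition. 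I expect the main obstacle to be the accounting in the second paragraph: one must track the polynomial-in-$n$ constant coming from the $\beta$-weighted seminorm of $\eta$ together with the polynomial-in-$m$ growth of $\|Y^m\|_{p\text{-var}}$, and then verify that both are absorbed by the Gaussian tail of $\{T_n<T\}$ while keeping $q>1/\varepsilon$ as required by Proposition~\ref{prop:limit of solutions is also a solution}.
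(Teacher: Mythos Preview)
There is a genuine gap in your argument, and it lies precisely where you yourself flag the ``accounting'' issue. Your claim that the stability constant relating $\|\delta Y\|_{\infty;[0,T_n]}$ to $\|\delta Y_{T_n}\|$ is \emph{polynomial} in $n$ is incorrect. On $[0,T_n]$ the localized driver has $\|\eta\|_{\tau,\lambda}$-seminorm $\sim n^{\lambda+\beta}$, and the stability constant for a Young BSDE with that driver is not polynomial in this seminorm: the linearized equation reads $d\delta Y_t = -\alpha^n_t\,\delta Y_t\,\eta(dt,X_t)+\delta Z_t\,dW_t$, whose Feynman--Kac representation gives $\delta Y_t=\mathbb{E}_t[A^{t;n}_{T_n}\delta Y_{T_n}]$ with $A^{t;n}$ the exponential-type solution to a Young ODE driven by $\alpha^n\eta(dr,X_r)$. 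Bounding $\mathbb{E}_t[|A^{t;n}_{T_n}|^{k/(k-1)}]$ is the whole content of Step~1 in the paper's proof: one shows it is at most $\exp\{C^*n^{\gamma}\}$ with $\gamma=\frac{(1+\varepsilon)(\lambda+\beta)}{\tau\varepsilon}\vee\frac{\lambda+\beta+1}{\tau}$, and the condition $\lambda+\beta<\frac{2\varepsilon}{1+\varepsilon}\tau$ in \ref{(A1)} is imposed precisely so that $\gamma<2$ and this exponential is beaten by the Gaussian tail $\exp\{-cn^2\}$. Neither \cite[Proposition~4.2]{BSDEYoung-I} nor any other result in Part~I gives a polynomial stability constant here: that proposition bounds the $\mathfrak{H}_{p,q}$-norm of a \emph{solution} using the boundedness of $g$, but for the \emph{difference} $\delta g^n=g(Y^{n+1})-g(Y^n)$ one has only $\|\delta g^n\|_\infty\lesssim\|\delta Y\|_\infty$, and the interpolation trick that yields the polynomial bound in Lemma~\ref{lem:growth of (Y^n,Z^n)} does not close. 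If the constant were polynomial, condition~(1) of \ref{(A1)} would be superfluous.

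A second, smaller issue: you bound $|Y^m_{T_m}-Y^m_{T_n}|$ via $\|Y^m\|_{p\text{-var}}\lesssim m^{(\lambda+\beta)/\varepsilon\vee 1}$ against the Gaussian tail in $n$. For arbitrary $m\ge n$ this does \emph{not} go to zero uniformly in $m$. The paper avoids this by working with the consecutive increments $\delta Y^n=Y^{n+1}-Y^n$, so that all polynomial factors are in $n$ alone and the telescoping sum $\sum_n\mathbb{E}[\|\delta Y^n\|_\infty]$ converges. Fixing both points recovers exactly the paper's strategy; the exponential stability estimate cannot be bypassed.
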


\begin{proof}[Proof of Theorem~\ref{thm:existence of solution of unbounded BSDEs}]\makeatletter\def\@currentlabelname{the simplified version of the proof}\makeatother\label{proof:3}
We provide a detailed proof for the case $N=1$ and $f \equiv 0$ here. The reader is referred to Appendix~\ref{append:C} for a complete proof which basically follows the strategy presented here   while is  more involved.

Note that \cite[Assumption (H2)]{BSDEYoung-I}  with $S_0=T_n$ holds for each $n\in\mathbb N$. By the well-posedness of BSDEs with stopping terminal times established in \cite[Proposition~4.1]{BSDEYoung-I}, we have $\|(Y^{n},Z^{n})\|_{p,q;[0,T]}<\infty$ for every $n\ge 1$ and $q>1$. For $n\ge 1$
denote 
\begin{equation}\label{e:def of alpha}
\delta Y^{n}_t := Y^{n+1}_{t} - Y^{n}_{t}
,\ \delta g^{n}_{t} := g(Y^{n+1}_{t}) - g(Y^{n}_{t}),\text{ and }
\alpha^{n}_{t}:= \frac{\delta g^{n}_{t}}{\delta Y^n_{t}}\mathbf{1}_{\{\delta Y^{n}_{t}\neq 0\}}.
\end{equation}
It follows that		
\begin{equation}\label{e:deltaY}
\begin{cases}
d \delta Y^{n}_{t} = -  \alpha^{n}_{t}\delta Y^{n}_{t}\eta(dt,X_{t}) + \delta Z^{n}_{t} dW_{t},\ t\in[0,T_{n}],\\
\delta Y^{n}_{T_{n}} = Y^{n+1}_{T_{n}} - \Xi_{T_n}.
\end{cases}
\end{equation}	
Our goal is to show that $\{Y^n=Y^1+\sum_{i=1}^{n-1}\delta Y^i, n\ge 2\}$ is a Cauchy sequence in a proper Banach space, and for this purpose, we shall estimate $\delta Y^n$. This will be done in two steps.

\textbf{Step 1.} 
Let $A^{t;n}_s$ be the unique solution of the following Young differential equation 
\begin{equation}\label{e:def of A^t;n}
A^{t;n}_{s} = 1 + \int_{t}^{s} \alpha^{n}_r A^{t;n}_{r} \eta(dr,X_r),\quad s\in[t,T].
\end{equation} 
In this step, we will estimate the conditional $\frac{k}{k-1}$-moments of $A^{t\land T_{n};n}_{T_n}$
(recall that $k\in(1,2]$ is a given constant in Assumption~\ref{(T)}). This estimate will be used in the next step to provide an estimate for $\delta Y^{n}_{t},$ in view of the representation $\delta Y^{n}_{t} = \mathbb{E}_{t\land T_n}[A^{t\land T_{n};n}_{T_{n}}\cdot\delta Y^{n}_{T_{n}}].$

Clearly, we have $\esssup\limits_{t\in[0,T],\omega\in\Omega}|\alpha^{n}_{t}|\lesssim_{\Theta_2} 1$. In addition, since $\nabla g$ is bounded, we have (following the calculus as in \cite[(3.41)]{BSDEYoung-I})
\begin{equation*}
\|\alpha^{n}\|_{p\text{-var};[t,T]}\lesssim_{\Theta_2} \left(\|Y^{n}\|_{p\text{-var};[t,T]} + \|Y^{n+1}\|_{p\text{-var};[t,T]}\right).
\end{equation*}
Then, by the above inequality and \eqref{e:Gronwall to m_p,q (Y)}, we have
\begin{equation}\label{e:m_p2 alpha''}
m_{p,2}(\alpha^{n};[0,T])  \lesssim_{\Theta_2} n^{\frac{\lambda + \beta}{\varepsilon}\vee 1}\text{ for all }n\ge |x|. 
\end{equation}
Denote 
\begin{equation*}
\Gamma^{n}_{t} := \mathbb{E}_{t\land T_{n}}\left[|A^{t\land T_{n};n}_{T_n}|^{\frac{k}{k-1}}\right].
\end{equation*}
Since $A^{t;n}_{s} = \exp\{\int_{t}^{s}\alpha^{n}_{r}\eta(dr,X_{r})\},$ we have 
\begin{equation}\label{e:Gamma vs. tilde Gamma}
\Gamma^{n}_t = \mathbb{E}_{t}\left[\exp\Big\{\int_{t\land T_n}^{T_n}\frac{k}{k-1}\alpha^{n}_{r}\eta(dr,X_r)\Big\}\right]. 
\end{equation}
By the well-posedness for linear BSDEs with bounded drivers \cite[Corollary~3.3]{BSDEYoung-I}, there exists a process $\gamma^{n}$ such that $(\Gamma^n,\gamma^n)$ satisfies the following linear BSDE
\footnote{
When $N>1,$ the representation~\eqref{e:Gamma vs. tilde Gamma} fails in general. However, following the same line of proof as in \cite[Proposition~3.5]{BSDEYoung-I}, we can show that $\Gamma^{n}_{t}$ is bounded by the solution of some linear BSDE with terminal time $T_{n}$. Consequently, the estimate for $\Gamma^{n} $ will be the same as for $N=1$. See Remark~\ref{rem:difficulties in existence} and \nameref{proof:3'} of Theorem~\ref{thm:existence of solution of unbounded BSDEs} for further discussion.},
\begin{equation}\label{e:Gamma = 1 + int...}
\begin{cases}
d\Gamma^{n}_{t} = - \frac{k}{k-1}\alpha^{n}_{t} \Gamma^{n}_{t} \eta(dt,X_{t}) + \gamma^{n}_{t} dW_{t},\ t\in[0,T_{n}],\\
\Gamma^{n}_{T_n}= 1.
\end{cases}
\end{equation}
Moreover, by \cite[Proposition~3.3 and Remark~4.1]{BSDEYoung-I}, we have 
$\|(\Gamma^{n},\gamma^{n})\|_{p,2;[0,T]}<\infty$, which implies that $\esssup\limits_{t\in[0,T],\omega\in\Omega}\left|\Gamma^{n}_{t}\right|<\infty.$ Then apply the estimate for nonlinear Young integrals \cite[Proposition~2.1]{BSDEYoung-I} to get that for $t\in[0,T]$,
\begin{equation}\label{e:unbounded BSDEs-4-1}
\begin{aligned}
&\mathbb{E}_{t}\Big[\Big\|\int_{\cdot}^{T_{n}}\alpha^{n}_{r}\Gamma^{n}_{r}\eta(dr,X_{r})\Big\|^{2}_{p\text{-var};[t\land T_{n},T_{n}]}\Big]\\
&\lesssim_{\Theta_2} |T-t|^{2\tau}\mathbb{E}_{t}\Big[\Big(\|\alpha^{n}\Gamma^{n}\|^{2}_{\infty;[t\land T_{n},T_{n}]} + \|\alpha^{n}\Gamma^{n}\|^{2}_{p\text{-var};[t\land T_{n},T_{n}]} \Big)\left(1 + \|X\|^{2(\lambda + \beta)}_{\infty;[0,T_{n}]}\right)\\
&\qquad + \|\alpha^{n}\Gamma^{n}\|^{2}_{\infty;[t\land T_{n},T_{n}]}\|X\|^{2\lambda}_{p\text{-var};[t\land T_{n},T_{n}]}\left(1 + \|X\|^{2\beta}_{\infty;[0,T_{n}]}\right)\Big].
\end{aligned}
\end{equation}
We estimate each term on the right-hand side of the above inequality. In view of the fact $\Gamma_{T_n}^n=1,$ 
\begin{equation}\label{eq-5.1}
\mathbb{E}_{t}\left[\|\alpha^{n}\Gamma^{n}\|^{2}_{\infty;[t\land T_{n},T_{n}]}\right]\lesssim_{\Theta_{2}} \esssup_{s\in[t,T],\omega\in\Omega}\left|\Gamma^{n}_{s}\right|^{2}\lesssim 1+m_{p,2}(\Gamma^{n};[t,T])^{2}.
\end{equation}
Furthermore, by the above inequality, we have 
\begin{equation}\label{eq-5.2}
\begin{split}
&\mathbb{E}_{t}\left[\|\alpha^{n}\Gamma^{n}\|^{2}_{p\text{-var};[t\land T_{n},T_{n}]}\right]\\
&\lesssim\mathbb{E}_{t}\left[\|\Gamma^{n}\|^{2}_{p\text{-var};[t\land T_{n},T_{n}]}\right]\esssup_{s\in[t,T],\omega\in\Omega}\left|\alpha^{n}_{s}\right|^{2} + \mathbb{E}_{t}\left[\|\alpha^{n}\|^{2}_{p\text{-var};[t\land T_{n},T_{n}]}\right]\esssup_{s\in[t,T],\omega\in\Omega}\left|\Gamma^{n}_{s}\right|^{2}\\
&\lesssim_{\Theta_{2}} m_{p,2}(\Gamma^{n};[t,T])^{2} + m_{p,2}(\alpha^{n};[t,T])^{2}\left(1+m_{p,2}(\Gamma^{n};[t,T])^{2}\right).
\end{split}
\end{equation}
In addition, by Lemma~\ref{lem:m_p,q of X}, we have 
\begin{equation}\label{eq-5.3}
\mathbb{E}_{t}\left[\|\alpha^{n}\Gamma^{n}\|^{2}_{\infty;[t\land T_{n},T_{n}]}\|X\|^{2\lambda}_{p\text{-var};[t\land T_{n},T_{n}]}\right] \lesssim_{\Theta_{2}} \esssup_{s\in[t,T],\omega\in\Omega}\left|\Gamma^{n}_{s}\right|^{2}\cdot m_{p,2}(X;[0,T])^{2\lambda} \lesssim_{\Theta_{2}} 1 + m_{p,2}(\Gamma^{n};[t,T])^{2}.
\end{equation}
In view of \eqref{e:unbounded BSDEs-4-1}, \eqref{eq-5.1}, \eqref{eq-5.2}, \eqref{eq-5.3}, and the fact that $\|X\|_{\infty;[0,T_n]}\le n$ for $n\ge |x|$, we obtain 
\begin{equation}\label{e:unbounded BSDEs-4}
\begin{split}
&\mathbb{E}_{t}\Big[\Big\|\int_{\cdot}^{T_{n}}\alpha^{n}_{r}\Gamma^{n}_{r}\eta(dr,X_{r})\Big\|^{2}_{p\text{-var};[t\land T_{n},T_{n}]}\Big]\\
&\lesssim_{\Theta_2} |T-t|^{2\tau}n^{2(\lambda + \beta)}\left(1 + m_{p,2}(\alpha^{n};[0,T])^{2}\right)\left(1+ m_{p,2}(\Gamma^{n};[t,T])^{2}\right).    
\end{split}
\end{equation}
On the other hand, in view of \eqref{e:Gamma = 1 + int...} and Lemma~\ref{lem:estimate for Z},  
\begin{equation*}
\begin{aligned}
&\mathbb{E}_{t}\left[\Big\|\int_{\cdot}^{T_{n}}\gamma^{n}_{r}dW_{r}\Big\|^{2}_{p\text{-var};[t\land T_{n},T_{n}]}\right]\lesssim_{\Theta_{2}} \mathbb{E}_{t}\left[\Big\|\int_{\cdot}^{T_{n}}\alpha^{n}_{r}\Gamma^{n}_{r}\eta(dr,X_{r})\Big\|^{2}_{p\text{-var};[t\land T_{n},T_{n}]}\right],
\end{aligned}
\end{equation*}
which, by \eqref{e:unbounded BSDEs-4}, implies
\begin{equation}\label{e:unbounded BSDEs-5}
\begin{aligned}		
&\mathbb{E}_{t}\left[\Big\|\int_{\cdot}^{T_{n}}\gamma^{n}_{r}dW_{r}\Big\|^{2}_{p\text{-var};[t\land T_{n},T_{n}]}\right]\\
&\lesssim_{\Theta_{2}} |T-t|^{2\tau}n^{2(\lambda + \beta)}\left(1 + m_{p,2}(\alpha^{n};[0,T])^{2} \right)\left( 1+ m_{p,2}(\Gamma^{n};[t,T])^{2}\right).
\end{aligned}
\end{equation}
Combining \eqref{e:Gamma = 1 + int...}, \eqref{e:unbounded BSDEs-4}, \eqref{e:unbounded BSDEs-5}, and noting $m_{p,2}(\alpha^{n};[0,T])\lesssim_{\Theta_{2}} n^{\frac{\lambda + \beta}{\varepsilon}\vee 1}$, there exists $C>0$ such that for every $t\in[0,T]$ and $n\ge |x|$, 	
\begin{equation}\label{e:unbounded BSDEs-6}
m_{p,2}(\Gamma^n;[t,T])\le C |T-t|^{\tau}\left(n^{\frac{(1+\varepsilon)(\lambda + \beta)}{\varepsilon}\vee (\lambda + \beta + 1)} + n^{\frac{(1+\varepsilon)(\lambda + \beta)}{\varepsilon}\vee (\lambda + \beta + 1)}m_{p,2}(\Gamma^n;[t,T])\right).
\end{equation}
Thus, for any $t\in[0,T]$ satisfying $|T-t| \le \delta$ with 
\begin{equation}\label{e:delta value}
\delta := (2C)^{-\frac{1}{\tau}}  n^{-(\frac{(1+\varepsilon)(\lambda + \beta)}{\tau\varepsilon}\vee\frac{\lambda+\beta+1}{\tau})},
\end{equation}
the inequality \eqref{e:unbounded BSDEs-6} yields that $m_{p,2}(\Gamma^{n};[t,T])\le 1$, which then, according to \eqref{eq:ess-mpk}, implies the following local estimate
\begin{equation}\label{e:Gamma^n <= 2}
\esssup_{s\in[T-\delta,T],\omega\in\Omega}|\Gamma^{n}_{s}|\le 2.
\end{equation} 
To obtain a global estimate, for $ 0\le j\le \left\lfloor T/\delta\right\rfloor$, denote by $T^{j}_{n}:=T_{n}\land (T-j\delta),$ and $(\Gamma^{n;j}_t,\gamma^{n;j}_t)$ the unique solution of
\begin{equation*}
\begin{cases}
d\Gamma^{n;j}_{s} = - \alpha^{n}_{s} \Gamma^{n;j}_{s} \eta(ds,X_{s}) + \gamma^{n;j}_{s} dW_{s},\ s\in[0,T^{j}_{n}],\\
\Gamma^{n;j}_{T^j_n}= 1.
\end{cases}
\end{equation*}
Similar to \eqref{e:Gamma^n <= 2}, we have (let $T-(j+1)\delta = 0$ if $(j+1)\delta > T$)
\begin{equation}\label{e:Gamma<=2'}
\esssup_{s\in[T-(j+1)\delta,T-j\delta],\omega\in\Omega}\left|\Gamma^{n;j}_s\right|\le 2.
\end{equation}
Denoting $\widetilde{\Gamma}^{t;n}_{s}:=|A^{t;n}_{s}|^{\frac{k}{k-1}},$ we have $\Gamma^{n;j}_{t} = \mathbb{E}_{t}\left[\widetilde{\Gamma}^{t\land T^{j}_{n};n}_{T^{j}_{n}}\right]$ and $\widetilde{\Gamma}^{t;n}_{s} = \exp\left\{\int_{t}^{s}  \frac{k}{k-1}\alpha^{n}_{r}\eta(dr,X_r)\right\}.$
By \eqref{e:Gamma<=2'} and the equality $\widetilde{\Gamma}^{T^{j+1}_{n};n}_{T^{j}_{n}}\widetilde{\Gamma}^{T^{j+2}_{n};n}_{T^{j+1}_{n}} = \widetilde{\Gamma}^{T^{j+2}_{n};n}_{T^{j}_{n}},$ we have
\[\left|\Gamma^{n;j}_{T^{j+2}_{n}}\right| = \left|\mathbb{E}_{T^{j+2}_{n}}\left[\widetilde{\Gamma}^{T^{j+2}_{n};n}_{T^{j}_{n}}\right]\right| = \mathbb{E}_{T^{j+2}_{n}}\left[\widetilde{\Gamma}^{T^{j+2;n}_{n}}_{T^{j+1}_{n}}\mathbb{E}_{T^{j+1}_{n}}\left[\widetilde{\Gamma}^{T^{j+1}_{n};n}_{T^{j}_{n}}\right]\right]\le 4,\]
and then \begin{equation}\label{e:e:existence-7'}
\esssup_{s\in[0,T],\omega\in\Omega}|\Gamma^{n}_{s}|\le \prod_{j=0}^{\left\lfloor T/\delta\right\rfloor}\sup_{t\in[T-(j+1)\delta,T-j\delta]}\left\|\mathbb{E}_{t\land T^{j}_{n}}\left[ \widetilde{\Gamma}^{t\land T^{j}_{n};n}_{T^{j}_{n}}\right]\right\|_{L^{\infty}(\Omega)}\le 2^{\frac{T}{\delta} + 1}.
\end{equation}
Thus, in view of \eqref{e:delta value} and \eqref{e:e:existence-7'}, there exists a constant $C^*>0$ such that
\begin{equation}\label{e:exis unbounded BSDE-7}
\esssup_{s\in[0,T],\omega\in\Omega}|\Gamma^{n}_{s}|\le \exp\left\{C^* n^{\frac{(1+\varepsilon)(\lambda + \beta)}{\tau\varepsilon}\vee\frac{\lambda+\beta+1}{\tau}}\right\},\text{ for all }n\ge |x|.
\end{equation} 

\textbf{Step 2.} We are ready to estimate $\delta Y^{n}=Y^{n+1}-Y^{n}$ which satisfies the linear BSDE~\eqref{e:deltaY}. By the product rule shown in \cite[Corollary~2.1]{BSDEYoung-I}, we have
\begin{equation*}
\delta Y^{n}_{t\land T_{n}} = A^{t\land T_{n};n}_{T_{n}}\left(Y^{n+1}_{T_{n}} - \Xi_{T_{n}}\right) - \int_{t\land T_{n}}^{T_{n}}A^{t\land T_{n};n}_{r}\delta Z^{n}_{r} dW_{r},\  t\in[0,T].
\end{equation*}
Taking conditional expectation $\mathbb{E}_{t\land T_{n}}[\cdot]$ on both sides of the above equation leads to 
\begin{equation}\label{e:delta Y = A (Y - Xi)}
\|\delta Y^{n}_{\cdot}\|_{\infty;[0,T_{n}]} = \left\|\mathbb{E}_{\cdot\land T_{n}}\left[A^{\cdot\land T_{n};n}_{T_{n}}\left(Y^{n+1}_{T_{n}} - \Xi_{T_{n}}\right)\right]\right\|_{\infty;[0,T_{n}]}.
\end{equation}
From the regularity results obtained in \cite[Proposition~4.3]{BSDEYoung-I}, it follows that
\begin{equation}\label{e:Y-h}
\begin{aligned}
&\|Y^{n+1}_{T_{n}} - \Xi_{T_{n}}\|_{L^{k}(\Omega)} \\
&\le \|Y^{n+1}_{T_{n}} - \Xi_{T_{n+1}}\|_{L^{k}(\Omega)} + \|\Xi_{T_{n+1}} - \Xi_{T_{n}}\|_{L^{k}(\Omega)}\\
&\lesssim_{\Theta_{2},x}  \left\||T_{n+1} - T_{n}|^{\tau}\right\|_{L^{4k}(\Omega)} + \|\|\mathbb{E}_{\cdot}\left[\Xi_{T_{n+1}}\right]\|_{p\text{-var};[T_{n},T_{n+1}]}\|_{L^{k}(\Omega)} + \|\Xi_{T_{n+1}} - \Xi_{T_{n}}\|_{L^{k}(\Omega)}.
\end{aligned}
\end{equation}
In addition, by the triangular inequality, the BDG inequality for $p$-variation, and condition~\eqref{e:conditions of xi}, we have 
\begin{equation}\label{e:SPDE-estimate-Stoping 3}
\begin{aligned}
&\mathbb{E}\left[\|\mathbb{E}_{\cdot}\left[\Xi_{T_{n+1}}\right]\|^{k}_{p\text{-var};[T_{n},T_{n+1}]}\right] \\
&\lesssim_{\Theta_{2}} \mathbb{E}\left[\|\mathbb{E}_{\cdot}\left[\Xi_{T_{n+1}} - \Xi_{T}\right]\|^{k}_{p\text{-var};[T_{n},T_{n+1}]}\right] + \mathbb{E}\left[\|\mathbb{E}_{\cdot}\left[\Xi_{T}\right]\|^{k}_{p\text{-var};[T_{n},T_{n+1}]}\right]\\
&\lesssim_{\Theta_{2}} \mathbb{E}\left[|\Xi_{T_{n+1}} - \Xi_{T}|^{k}\right] + \mathbb{E}\left[\|\mathbb{E}_{\cdot}\left[\Xi_{T}\right]\|^{k}_{p\text{-var};[T_{n},T_{n+1}]}\right]\\
&\lesssim_{\Theta_{2}} \left\{\mathbb{E}\left[|T - T_{n}|\right]\right\}^{l}.
\end{aligned}
\end{equation}
Thus, combining \eqref{e:Y-h}, \eqref{e:SPDE-estimate-Stoping 3}, and Lemma~\ref{lem:existence of X}, there exists a constant $C'$ such that
\begin{equation}\label{e:<= exp -1/C n^2}
\|Y^{n+1}_{T_{n}} - \Xi_{T_{n}}\|_{L^{k}(\Omega)} \lesssim_{\Theta_{2},x} \exp\left\{-\frac{1}{C'}n^{2}\right\}.
\end{equation}

Recall $\Gamma_t^n = \mathbb{E}_{t\land T_{n}}\left[|A^{t\land T_{n};n}_{T_n}|^{\frac{k}{k-1}}\right]$.
Applying H\"older's inequality to \eqref{e:delta Y = A (Y - Xi)}, by \eqref{e:exis unbounded BSDE-7} and \eqref{e:<= exp -1/C n^2}, we have 
\begin{equation}\label{e:dY<=h(Y)Gamma}
\begin{aligned}
\mathbb{E}\left[\|\delta Y^{n}_{\cdot}\|_{\infty;[0,T_{n}]}\right] &\le \mathbb{E}\left[\Big\{\sup_{t\in[0,T]}\left|\mathbb{E}_{t\land T_{n}}\left[\left|Y^{n+1}_{T_{n}} - \Xi_{T_{n}}\right|^{k}\right]\right|\Big\}^{\frac{1}{k}}\Big\{\sup_{t\in[0,T]}\left|\Gamma^{n}_{t}\right|\Big\}^{\frac{k-1}{k}}\right]\\
&\lesssim_{\Theta_{2}} \|Y^{n+1}_{T_{n}} - \Xi_{T_{n}}\|_{L^{k}(\Omega)}\Big\{\esssup_{t\in[0,T],\omega\in\Omega}|\Gamma^{n}_{t}|\Big\}^{\frac{k-1}{k}}\\
&\lesssim_{\Theta_{2},x} \exp\left\{-\frac{1}{C'}n^{2} + \frac{(k-1)C^*}{k}n^{\frac{(1+\varepsilon)(\lambda + \beta)}{\tau\varepsilon}\vee\frac{\lambda+\beta+1}{\tau}}\right\}.
\end{aligned}
\end{equation}
By Assumption~\ref{(A1)} and Remark~\ref{rem:tau lambda beta}, $\frac{(1+\varepsilon)(\lambda + \beta)}{\tau\varepsilon}\vee\frac{\lambda+\beta+1}{\tau}<2$. This fact, combined with \eqref{e:dY<=h(Y)Gamma}, implies that 
\begin{equation*}
\lim_{n\to\infty}\sum_{m\ge n}\mathbb{E}\left[\|\delta Y^{m}\|_{\infty;[0,T_{n}]}\right] = 0.
\end{equation*}
Hence, for each fixed $n\ge 1,$ we have
\begin{equation}\label{e:cauchy Y^m}
\E\Big[\lim_{n'\to \infty}\sum_{m\ge n'}\|\delta Y^{m}\|_{\infty;[0,T_{n}]}\Big] = 0.
\end{equation}
Let $Y_{t} := \limsup_{m\to\infty} Y^{m}_{t}$. Then $Y$ is adapted, and by \eqref{e:cauchy Y^m}, $\lim_{m\to\infty}\|Y^{m} - Y\|_{\infty;[0,T_n]} = 0$ a.s. for each  $n\ge 1$. Since $\lim_{n\to\infty}\mathbb{P}\{T_{n} = T\} = 1$ due to the fact that $\left[T_{n} < T\right] = [\sup_{t\in[0,T]}|X_{t}|> n ]$, it follows that $\lim_{m\to\infty}\|Y^{m} - Y\|_{\infty;[0,T]} = 0$ a.s. Thus, the adapted process $Y$ is a.s. continuous on $[0,T].$ Furthermore, by Fatou's Lemma, we have that
\begin{equation}\label{e:Y^n - Y ->0 in L1}
\lim_{n\to \infty}\mathbb{E}\left[\|Y^{n} - Y\|_{\infty;[0,T_{n}]}\right]\le \lim_{n\to\infty}\sum_{m\ge n}\mathbb{E}\left[\|\delta Y^{m}\|_{\infty;[0,T_{n}]}\right] = 0.
\end{equation}
According to \cite[Proposition~4.2]{BSDEYoung-I}, we have for every $q>1$,
\begin{equation*}
\sup_{n}\|(Y^{n},Z^{n})\|_{\mathfrak{H}_{p,q + \varepsilon};[0,T]}<\infty.
\end{equation*}
Then, for all $q>1$, $\E[\|Y^n\|^{q}_{\infty;[0,T_n]}]$ is uniformly bounded in $n$, and hence  $\mathbb{E}\big[\|Y\|^{q}_{\infty;[0,T]}\big]<\infty$  by Fatou's Lemma. Since $\|Y^n - Y\|^{q}_{\infty;[0,T_n]}$ is uniformly integrable in $n$, from \eqref{e:Y^n - Y ->0 in L1} it follows that, for all $q>1$,
\begin{equation*}
\lim_{n\rightarrow \infty}\mathbb{E}\left[\|Y^{n} - Y\|^{q}_{\infty;[0,T_{n}]}\right] = 0.
\end{equation*}
Thus, by Proposition~\ref{prop:limit of solutions is also a solution} (with $S_n = T_n$), there exists a process $Z$ such that $(Y,Z)$ is a solution of \eqref{e:ourBSDE} with $(Y,Z)\in\mathfrak{H}_{p,q}(0,T).$  Furthermore, noting that $\lim\limits_{n\rightarrow\infty}\mathbb{P}\left[T_{n} = T\right] = 1$, by Proposition~\ref{prop:limit of solutions is also a solution} again, we have 
\begin{equation}\label{e:(Y^n,Z^n) -> (Y,Z)}
\lim_{n\rightarrow\infty}\left\|(Y^{n},Z^{n}) - (Y,Z)\right\|_{\mathfrak{H}_{p,q'};[0,T]} = 0,\text{ for every }q'\in \left(1,q\right).
\end{equation}
This yields the desired \eqref{e:con-0}, since $q$ can be arbitrarily large. The  proof is concluded. 
\end{proof}

\begin{remark}\label{rem:difficulties in existence}
In the above proof, when proving the key estimate~\eqref{e:dY<=h(Y)Gamma}, one critical step is to estimate $\Gamma^{n}_{t}$, i.e., to obtain \eqref{e:exis unbounded BSDE-7}. This is achieved, in Step 1 of the proof, by analyzing BSDE~\eqref{e:Gamma = 1 + int...} in stead of  estimating directly  the explicit expression~\eqref{e:Gamma vs. tilde Gamma} for the following reasons:

\begin{enumerate}
  
\item When $N>1$, in general $\Gamma^{n}_{t}$ does not admit an explicit expression (see Remark~\ref{rem:N=1 vs. N>1}),  while the estimates \eqref{e:unbounded BSDEs-4}--\eqref{e:exis unbounded BSDE-7} of BSDE~\eqref{e:Gamma = 1 + int...} can be extended to the multi-dimensional case in a straightforward way (see \eqref{e:unbounded BSDEs-4'}--\eqref{e:exis unbounded BSDE-7'} in \nameref{proof:3'}).

\medskip

\item Even for the case $N=1$, the expression~\eqref{e:Gamma vs. tilde Gamma} 
\begin{equation*}
\Gamma_t^n=\mathbb{E}_{t}\left[\exp\left\{\int_{t\land T_n}^{T_{n}}\frac{k}{k-1}\alpha^{n}_{r}\eta(dr,X_{r})\right\}\right] = \sum_{j=0}^{\infty}\frac{1}{j!}\mathbb{E}_{t}\left[\left|\int_{t\land T_n}^{T_{n}}\frac{k}{k-1}\alpha^{n}_{r}\eta(dr,X_{r})\right|^{j}\right]
\end{equation*}
does not make the analysis easier due to the nonlinear Young integral. More precisely, letting $\Lambda_{t}:= \int_{t\land T_n}^{T_{n}}\frac{k}{k-1}\alpha^{n}_{r}\eta(dr,X_{r})$ and following the argument used in \cite[p.~12]{DiehlZhang}, although we still have
\begin{equation*}
\left|\Lambda_{t}\right|^{j+1} = (j+1)\int_{t\land T_n}^{T_{n}}\frac{k}{k-1} \left|\Lambda_{r}\right|^{j}\alpha^{n}_{r}\eta(dr,X_{r}),
\end{equation*}
the extra term $X_{r}$ makes the growth of $j\mapsto\mathbb{E}_{t}[\left|\Lambda_{t}\right|^{j+1}]$ much more involved (see \cite[(2.1)]{BSDEYoung-I} with $(y_r,x_r)$ replaced by $(|\Lambda_{r}|^{j}\alpha^n_r, X_r)$). 
\end{enumerate}
\end{remark}

\begin{example}\label{ex:assump (T)}
We provide some examples of $\Xi=\{\Xi_t, t\in[0,T]\}$ which satisfies Assumption~\ref{(T)} with $k=2$. Let $(\tau,\lambda, \beta,\varepsilon,p)$ be parameters  given in \ref{(A1)} and  suppose that  $X$ satisfies Assumption~\ref{(A0)}. 
Assume  $\Xi_{T} \in \mathbb{D}^{1,2}$ and 
\begin{equation}\label{e:condition for C-O formula}
\esssup_{t\in[0,T]}\mathbb{E}\left[|D_t(\Xi_T)|^{q}\right]<\infty, \text{ for some } q>2, 
\end{equation}
where  $D$ is the Malliavin derivative. By Clark-Ocone's formula (see Clark \cite{clark1970representation} or Nualart \cite[Proposition~1.3.14]{nualart2006malliavin}), the BDG inequality (see \cite[Corollary~A.1]{BSDEYoung-I}), and H\"older's inequality,  we get
\begin{equation*}
\begin{aligned}
\mathbb{E}\left[\left\|\mathbb{E}_{\cdot}\left[ \Xi_T \right]\right\|^{2}_{p\text{-}\mathrm{var};[S,T]}\right]&\lesssim_{p} \mathbb{E}\Big[\int_{S}^{T}\left|\mathbb{E}_{t}\left[D_t(\Xi_T)\right]\right|^{2}dt\Big]\\
&\le \esssup_{t\in[0,T]}\left\{\mathbb{E}\left[|D_t(\Xi_T)|^{q}\right]\right\}^{\frac{2}{q}} \cdot T^{\frac{2}{q}} \cdot \left\{\mathbb{E}\left[|T-S|\right]\right\}^{\frac{q-2}{q}} .
\end{aligned}
\end{equation*} 
Assume further that for all $t\in[0,T]$
\begin{equation}\label{e:E. xi'}
|\Xi_{t}|\lesssim \left( 1 + \|X\|_{\infty;[0,t]}\right),\ \ \mathbb{E}\left[|\Xi_{S}-\Xi_{T}|^{2}\right]\lesssim \mathbb{E}\left[|T-S|\right],
\end{equation}
for all stopping time $S\le T$. Then, $\Xi$ satisfies Assumption~\ref{(T)} with $l = \frac{q-2}{q}$. Below we provide three examples of  $\Xi$ satisfying \eqref{e:condition for C-O formula} and \eqref{e:E. xi'}. 
\begin{enumerate}[label=(\alph*)]
\item 
Let $\Xi_{t} := h(X_{t_1\land t},X_{t_2\land t},...,X_{t_m\land t}),$ where $h\in C^{\mathrm{Lip}}(\R^{d\times m};\R).$ Now we briefly justify the two conditions \eqref{e:condition for C-O formula} and \eqref{e:E. xi'}. First, we refer to \cite[Theorem~2.2.1]{nualart2006malliavin} for the following fact,
\begin{equation}\label{e:Malliavin deri of X}
\sup_{t\in[0,T]}\mathbb{E}\Big[\sup_{s\in[0,T]}|D_t(X_s)|^{q'}\Big]\lesssim 1,\ \text{for every}\ q'\ge 1.
\end{equation}
Noting this inequality and the Lipschitzness of $h$, by \cite[Proposition~1.2.4]{nualart2006malliavin} we have $
|D_{t}(\Xi_{T})|\lesssim  \sum_{j=1}^{m}|D_{t}(X_{t_{j}})|$,
and hence \eqref{e:condition for C-O formula} follows from \eqref{e:Malliavin deri of X} for all $q>2$. Second, the representation~\eqref{e:X_t = x + ...} and the boundedness of $(\sigma,b)$ imply \eqref{e:E. xi'}.\\
\item Let $\Xi_{t} := h(W_{\cdot\land t})$ where $h\in C^{\mathrm{Lip}}(C([0,T];\R^{d});\R)$ with $C([0,T];\R^{d})$ equipped with the uniform metric. According to Cheridito-Nam~\cite[Proposition~3.2]{Cheridito2014}, the Malliavin derivative of $\Xi_{T} = h(W_{\cdot})$ is bounded, and therefore \eqref{e:condition for C-O formula} holds for every $q>2.$ In addition,  \eqref{e:E. xi'} is satisfied, since by the BDG inequality for $p$-variation (see, e.g., \cite[Corollary~A.1]{BSDEYoung-I}) we have
\[\E[|\Xi_{S} - \Xi_{T}|^2] \lesssim \E[\|W_{\cdot\vee S} - W_{S}\|^2_{\infty;[0,T]}] \le \E[\|W_{\cdot}\|^2_{p\text{-}\mathrm{var};[S,T]}]
\lesssim_{p} \E[|T-S|].\]

\item Let $\Xi_t := \sup_{s\le t}X^{1}_{s}$, where $X^{1}$ is the first component of $X$. Clearly we have $|\Xi_{t}|\le \|X\|_{\infty;[0,t]}$ and
\[\mathbb{E}\left[\left|\Xi_{S} - \Xi_{T}\right|^{2}\right]\le\mathbb{E}\left[\left\|X\right\|^{2}_{p\text{-}\mathrm{var};[S,T]}\right]\lesssim_{L,p,T} \mathbb{E}\left[|T-S|\right],\]
where the last inequality is from the BDG inequality for $p$-variation.  The condition \eqref{e:E. xi'} is verified. For the condition \eqref{e:condition for C-O formula}, it can be proved by adapting the proof of \cite[Proposition~2.1.10]{nualart2006malliavin}.


\end{enumerate}

\end{example}


Proposition~\ref{prop:independent of choice of stopping times} below indicates that the solution of \eqref{e:ourBSDE} is independent of the  approximating stopping times $\{T_n\}_n$ given in  \eqref{e:Tn}, i.e., one can replace $\{T_n\}_n$ by any other sequence of stopping times $\{S_n\}_n$ increasing to $T$ such that $X$ is bounded on $[0,S_n]$. This convergence result will play a key role in the study of Young PDEs in Section \ref{subsec:Nonlinear-FK}.

Let $S$ be a stopping time such that $S\le T$ and $X$ is bounded on $[0,S]$. By \cite[Proposition~4.1]{BSDEYoung-I}, there exists  $(Y^{S},Z^{S})\in\mathfrak{B}_{p,2}(0,T)$ uniquely solving BSDE
\begin{equation}\label{e:YS}
Y_{t\wedge S} = \Xi_S + \int_{t\land S}^{S} f(r,X_r,Y_r,Z_r) dr + \sum_{i=1}^{M}\int_{t\wedge S}^{S} g_{i}(Y_{r})\eta_{i}(dr,X_r) -\int_{t\wedge S}^S Z_rdW_r ,\ t\in[0,T].
\end{equation}

\begin{proposition}\label{prop:independent of choice of stopping times}
Assume \ref{(A0)}, \ref{(T)}, and \ref{(A1)} are satisfied. We also assume $|x|\le C_{X}$ for some constant $C_{X}>0$ where we recall $X_0=x$ by \eqref{e:X_t = x + ...}.  Let $\left\{S_{j}\right\}_{j\ge 1}$ be a sequence of stopping times increasing to $T$ such that $X$ is bounded on each $[0,S_{j}]$. Let $(Y^{S_{j}}, Z^{S_{j}})$ be the unique solution to \eqref{e:YS} with $S=S_j$, and let $(Y,Z)$ be the solution to \eqref{e:ourBSDE} by Theorem~\ref{thm:existence of solution of unbounded BSDEs}. Then, for every $t\in[0,T]$, $\{Y^{S_{j}}_{S_{j}\land t}\}_{j\ge 1}$ is a Cauchy sequence in $L^{1}(\Omega)$, converging to $Y_t$. 

If we further assume  $\mathbb{P}\{S_{j}\neq T\} \le p_{j}$ for some sequence $\{p_{j}\}_{j\ge 1}$ with $p_{j}\downarrow 0$,  then for every $\delta>0$, there exists $J_{\delta}\in \mathbb N$ such that for all $j\ge J_{\delta},$
\begin{equation}\label{e:local-uniform-conv}
\|Y^{S_{j}}_{S_{j}\land t} - Y_{t}\|_{L^{1}(\Omega)}\le \delta,
\end{equation}
where $J_{\delta}$ only depends on $\delta$, $C_{X}$, $\{p_{j}\}_{j\ge 1}$, and $\Theta_{2}.$
\end{proposition}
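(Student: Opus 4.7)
The plan is to adapt the localization strategy of Theorem~\ref{thm:existence of solution of unbounded BSDEs} by comparing $Y^{S_j}$ with the canonical approximations $\{Y^{T_n}\}$ from \eqref{e:Yn}. Fix $t\in[0,T]$ and, for each $j$, set $N_j:=\esssup_{(s,\omega):\,s\le S_j(\omega)}|X_s(\omega)|<\infty$ so that $S_j\le T_n$ a.s.\ whenever $n\ge N_j$. For such $n$, the triangle inequality gives
\begin{equation*}
Y^{S_j}_{S_j\wedge t}-Y_t \;=\; \underbrace{\bigl(Y^{S_j}_{S_j\wedge t}-Y^{T_n}_{S_j\wedge t}\bigr)}_{\mathrm{(I)}}\;+\;\underbrace{\bigl(Y^{T_n}_{S_j\wedge t}-Y^{T_n}_t\bigr)}_{\mathrm{(II)}}\;+\;\underbrace{\bigl(Y^{T_n}_t-Y_t\bigr)}_{\mathrm{(III)}}.
\end{equation*}
Term (III) vanishes in $L^1$ as $n\to\infty$ by \eqref{e:con-0}. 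Term (II) vanishes in $L^1$ as $j\to\infty$ for each fixed $n$, using that $S_j\wedge t\to t$ a.s.\ since $S_j\uparrow T$, the a.s.\ continuity of $Y^{T_n}$, and the polynomial-in-$n$ majorant $\|Y^{T_n}\|_{\infty;[0,T]}\lesssim n^{\frac{\lambda+\beta}{\varepsilon}\vee 1}$ from Lemma~\ref{lem:growth of (Y^n,Z^n)} as an integrable dominator.

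For term (I), both $Y^{S_j}$ and $Y^{T_n}$ solve the same BSDE on $[0,S_j]$ with terminal values $\Xi_{S_j}$ and $Y^{T_n}_{S_j}$ respectively, so their difference satisfies a linear BSDE of the form \eqref{e:deltaY}. Reapplying the Young-exponential analysis of Step~1 of the proof of Theorem~\ref{thm:existence of solution of unbounded BSDEs}---with the role of $n$ there played by $N_j$, since $|X|\le N_j$ on $[0,S_j]$---yields the bound $\esssup_{[0,S_j]}|\Gamma^{(j,n)}|\le \exp(CN_j^\gamma)$ with $\gamma:=\frac{(1+\varepsilon)(\lambda+\beta)}{\tau\varepsilon}\vee\frac{\lambda+\beta+1}{\tau}<2$ by Assumption~\ref{(A1)}. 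By H\"older's inequality,
\begin{equation*}
\mathbb{E}\bigl[|Y^{S_j}_{S_j\wedge t}-Y^{T_n}_{S_j\wedge t}|\bigr] \;\lesssim\; \|Y^{T_n}_{S_j}-\Xi_{S_j}\|_{L^k(\Omega)}\cdot\exp(CN_j^\gamma).
\end{equation*}
Letting $n\to\infty$ with $j$ fixed and invoking the $L^k$-convergence $Y^{T_n}_{S_j}\to Y_{S_j}$ from Theorem~\ref{thm:existence of solution of unbounded BSDEs} reduces the problem to showing $\|Y_{S_j}-\Xi_{S_j}\|_{L^k}\exp(CN_j^\gamma)\to 0$. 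The terminal difference is split as $\|Y_{S_j}-Y_T\|_{L^k}+\|\Xi_T-\Xi_{S_j}\|_{L^k}$ and bounded by $\mathbb{E}[|T-S_j|]^{l/k}$-type expressions using Assumption~\ref{(T)} and the BSDE equation for $Y$.

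The main obstacle is balancing the blow-up $\exp(CN_j^\gamma)$ against the decay of $\mathbb{E}[|T-S_j|]^{l/k}$. For the qualitative first assertion, one exploits the structural constraint $S_j\le T_{N_j}$ together with the uniform-in-$j$ integrability of $\{Y^{S_j}_{S_j\wedge t}\}$ coming from the polynomial bound $\|Y^{S_j}\|_\infty\lesssim N_j^{\frac{\lambda+\beta}{\varepsilon}\vee 1}$ (analogue of Lemma~\ref{lem:growth of (Y^n,Z^n)} applied to $Y^{S_j}$); passing to a subsequence along which either $N_j$ stays bounded (which reduces to the bounded-driver setting of Part~I and direct stability) or grows slowly enough that $\exp(CN_j^\gamma)$ is dominated by the decay above establishes the Cauchy property, with the limit identified as $Y_t$ by uniqueness. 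For the quantitative second assertion, the hypothesis $\mathbb{P}(S_j\ne T)\le p_j$ yields $\mathbb{E}[|T-S_j|]\le T p_j$ by Markov's inequality, and tracing all constants through the estimates above---together with Lemma~\ref{lem:existence of X}'s Gaussian tail $\mathbb{P}(T_{N_j}<T)\lesssim\exp(-(N_j-C_X)^2/C)$ applied to control the interaction between $N_j$ and $p_j$---produces an explicit upper bound of the form $C'\exp(C\,N_j^{\gamma})p_j^{l/k}+C'' p_j$ that becomes smaller than $\delta$ for $j\ge J_\delta(\delta,C_X,\{p_j\},\Theta_2)$, giving \eqref{e:local-uniform-conv}.
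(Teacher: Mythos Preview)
Your handling of term~(I) contains a genuine gap. The linearization coefficient $\alpha^{(j,n)}$ has $p$-variation controlled by $\|Y^{S_j}\|_{p\text{-var};[0,S_j]}+\|Y^{T_n}\|_{p\text{-var};[0,S_j]}$, and the contribution of $Y^{T_n}$ is governed by $m_{p,2}(Y^{T_n};[0,T])\lesssim n^{\frac{\lambda+\beta}{\varepsilon}\vee 1}$ from Lemma~\ref{lem:growth of (Y^n,Z^n)}, \emph{not} by $N_j$. Running the argument of \eqref{e:unbounded BSDEs-6}--\eqref{e:exis unbounded BSDE-7} therefore yields $\esssup|\Gamma^{(j,n)}|\le\exp(Cn^{\gamma})$ rather than $\exp(CN_j^{\gamma})$, and this factor blows up as $n\to\infty$; you cannot send $n\to\infty$ in the terminal difference while keeping the multiplier in~(I) under control.

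Even if you bypass this by fixing $n=\lceil N_j\rceil$, the resulting bound $\exp(CN_j^\gamma)\cdot p_j^{l/k}$ still fails to deliver the quantitative conclusion, because $N_j$ is not determined by $(\delta,C_X,\{p_j\},\Theta_2)$. Given any prescribed $p_j\downarrow 0$, the choice $S_j:=T_{R_j}$ with $R_j$ so large that $\mathbb{P}(T_{R_j}<T)\ll p_j$ still satisfies $\mathbb{P}(S_j\ne T)\le p_j$, yet has $N_j=R_j$ arbitrarily large; then $\exp(CN_j^\gamma)p_j^{l/k}\to\infty$. Your subsequence sketch for the qualitative part has the same defect and does not produce convergence of the full sequence.

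The paper decouples the two scales by inserting an auxiliary, $j$-independent truncation: set $\tau_{n,j}:=T_n\wedge S_j$ and compare through the intermediate solution $Y^{n,j}:=Y^{\tau_{n,j}}$. Linearizing $Y^{n}-Y^{n,j}$ on $[0,\tau_{n,j}]$---where $|X|\le n$ and $m_{p,2}(\alpha^{n,j})\lesssim n^{\frac{\lambda+\beta}{\varepsilon}\vee 1}$ \emph{uniformly in $j$}---gives a bound of the form
\[
\|Y^{n}_{t\wedge\tau_{n,j}}-Y^{n,j}_{t\wedge\tau_{n,j}}\|_{L^1}\;\lesssim\;\exp\{-C^\star n^2\}+C_n\big(\|T-S_j\|_{L^1}^{l/2}+\||T-S_j|^\tau\|_{L^8}\big),
\]
with $C_n$ depending only on $n$ and $\Theta_2$. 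A separate telescoping estimate (summing $\widetilde{\delta Y}^{n,j}:=Y^{n+1,j}-Y^{n,j}$ over $n$, with bounds uniform in $j$) then passes from $Y^{n,j}$ to $Y^{S_j}$. One chooses $n$ first, depending only on $\delta,\Theta_2,C_X$, and afterward $j$, depending on $n$ and $\{p_j\}$; the quantity $N_j$ never enters the argument.
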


\begin{proof}
We shall prove for the case $k=2,$ $f\equiv 0,$ and $N=1$. The general case can be proved in a similar way as in \nameref{proof:3'} of Theorem~\ref{thm:existence of solution of unbounded BSDEs}. 

\textbf{Step 1.} We first prove  \eqref{e:local-uniform-conv}, assuming $|x|\le C_{X}$ and   $\mathbb{P}\{S_{j}\neq T\} \le p_j$ with $p_j \downarrow 0$. Let $T_{n}$ be defined as in \eqref{e:Tn}, and $Y^n$ be the unique solution of \eqref{e:Yn}. 
Let $(Y^{n,j},Z^{n,j}):=(Y^{\tau_{n,j}},Z^{\tau_{n,j}})\in \mathfrak{B}_{p,2}(0,T)$ be the unique solution of \eqref{e:YS} with $S = \tau_{n,j}:= T_n\wedge S_j$ for $n,j\ge 1$. Hence, we have
\begin{equation*}
Y_{t\wedge \tau_{n,j}}^{n,j} = \Xi_{\tau_{n,j}} + \int_{t\wedge \tau_{n,j}}^{\tau_{n,j}}g(Y^{n,j}_{r})\eta(dr,X_{r}) -\int_{t\wedge \tau_{n,j}}^{\tau_{n,j}}Z^{n,j}_r dW_{r},\ t\in[0,T]. 
\end{equation*}
The difference between $(Y^{n}, Z^{n})$ and $(Y^{n,j}, Z^{n,j})$ satisfies the following linear BSDE:
\begin{equation*}
\delta Y^{n,j}_{t\wedge \tau_{n,j}} = Y^{n}_{\tau_{n,j}} - \Xi_{\tau_{n,j}} + \int_{t\wedge \tau_{n,j}}^{\tau_{n,j}} \alpha^{n,j}_{r}\delta Y^{n,j}_{r}\eta(dr,X_{r}) -\int_{t\wedge \tau_{n,j}}^{\tau_{n,j}} \delta Z^{n,j}_{r} dW_{r},\ t\in[0,T],
\end{equation*}	
where $\delta Y^{n,j}_{t} := Y^{n}_{t} - Y^{n,j}_{t}$, $\delta Z^{n}_{t} := Z^n_{t} - Z^{n,j}_{t}$, and  $\alpha^{n,j}_{t} := \frac{g(Y^{n}_{t}) - g(Y^{n,j}_{t})}{\delta Y^{n,j}_{t}}\mathbf{1}_{\{\delta Y^{n,j}_{t} = 0\}}(t)$, and thus we have the Feynman-Kac formula:
\begin{equation}\label{e:deltaYni}
\delta Y^{n,j}_{t\wedge\tau_{n,j}}=\mathbb{E}_{t\land \tau_{n,j}}\bigg[ \left(Y^{n}_{\tau_{n,j}} - \Xi_{\tau_{n,j}}\right) \exp\Big\{\int_{t\land \tau_{n,j}}^{\tau_{n,j}}\alpha^{n,j}_{r}\eta(dr,X_{r})\Big\}\bigg],\ t\in[0,T]. 
\end{equation}

We shall estimate $\|Y_{S_j\wedge t}^{S_j}-Y_t\|_{L^1(\Omega)}$ in three steps. 

\

{\bf (i)} In the first step, we estimate $\|\delta Y^{n,j}_{t\wedge\tau_{n,j}}\|_{L^1(\Omega)} = \|Y^{n}_{t\wedge\tau_{n,j}} - Y^{n,j}_{t\wedge\tau_{n,j}}\|_{L^1(\Omega)}$. 
In view of \eqref{e:deltaYni}, we first estimate $\|Y^{n}_{\tau_{n,j}} - \Xi_{\tau_{n,j}}\|_{L^{2}(\Omega)}$. By \cite[Proposition~4.3]{BSDEYoung-I} with $q=2$ and $(S_0,S_{1})$ replaced by $(T_n,\tau_{n,j})$, we have for $|x|\le C_{X}$, 
\begin{equation}\label{e:Y^n - Xi on tau_n,i}
\begin{aligned}
\|\Xi_{T_{n}} - Y^{n}_{\tau_{n,j}}\|_{L^{2}(\Omega)}&\lesssim_{\Theta_{2},C_{X}} \left\||T_{n} - \tau_{n,j}|^{\tau}\right\|_{L^{8}(\Omega)} + \left\|\|\mathbb{E}_{\cdot}[\Xi_{T_{n}}]\|_{p\text{-var};[\tau_{n,j},T_{n}]}\right\|_{L^{2}(\Omega)}\\
&\le\left\||T - S_{j}|^{\tau}\right\|_{L^{8}(\Omega)} + \left\|\|\mathbb{E}_{\cdot}[\Xi_{T_{n}}]\|_{p\text{-var};[\tau_{n,j},T_{n}]}\right\|_{L^{2}(\Omega)}.
\end{aligned}
\end{equation}
To estimate the second term on the right-hand side,  
note that  Assumption~\ref{(T)} yields $$\left\|\Xi_{T_n} - \Xi_{T}\right\|_{L^2(\Omega)}\lesssim_{\Theta_{2}} \|T - T_n\|^{\frac{l}{2}}_{L^{1}(\Omega)}$$ and 
\[\left\|\|\mathbb{E}_{\cdot}[\Xi_{T}]\|_{p\text{-var};[\tau_{n,j},T_n]}\right\|_{L^{2}(\Omega)}\le \left\|\|\mathbb{E}_{\cdot}[\Xi_{T}]\|_{p\text{-var};[S_j,T]}\right\|_{L^{2}(\Omega)} \lesssim_{\Theta_{2}} \|T - S_j\|^{\frac{l}{2}}_{L^{1}(\Omega)},\]
and thus by  the same argument leading to \eqref{e:SPDE-estimate-Stoping 3}, we get
\begin{equation}\label{e:E[Xi]}
\begin{aligned}
\left\|\|\mathbb{E}_{\cdot}[\Xi_{T_n}]\|_{p\text{-var};[\tau_{n,j},T_{n}]}\right\|_{L^{2}(\Omega)}&\lesssim_{\Theta_{2}} \left\|\Xi_{T_n} - \Xi_{T}\right\|_{L^{2}(\Omega)} +  \left\| 
\|\mathbb{E}_{\cdot}[\Xi_{T}]\|_{p\text{-var};[\tau_{n,j},T_n]} \right\|_{L^{2}(\Omega)}\\
&\lesssim_{\Theta_{2}}  \|T - T_n\|^{\frac{l}{2}}_{L^{1}(\Omega)} + \|T - S_j\|^{\frac{l}{2}}_{L^{1}(\Omega)}.
\end{aligned}
\end{equation}
Using Assumption~\ref{(T)} again, we get 
\begin{equation}\label{e:tau-T}
\|\Xi_{\tau_{n,j}} - \Xi_{T_{n}}\|_{L^{2}(\Omega)}\lesssim_{\Theta_{2}} \|T-T_n\|_{L^{1}(\Omega)}^{\frac{l}{2}} + \|T-S_{j}\|_{L^{1}(\Omega)}^{\frac{l}{2}}.
\end{equation}
Thus, by \eqref{e:Y^n - Xi on tau_n,i}, \eqref{e:E[Xi]} and, \eqref{e:tau-T}, it follows from the triangular inequality and Lemma~\ref{lem:existence of X} that	for some $C>0$,	
\begin{equation}\label{e:h-Y}
\|Y^{n}_{\tau_{n,j}} - \Xi_{\tau_{n,j}}\|_{L^{2}(\Omega)} \lesssim_{\Theta_{2},C_{X}} \exp\{-Cn^2\} + \|T - S_j\|^{\frac{l}{2}}_{L^{1}(\Omega)} +  \left\||T - S_{j}|^{\tau}\right\|_{L^{8}(\Omega)}.
\end{equation}

\

Denote
\begin{equation*}
\Gamma^{n,j}_{t} := \mathbb{E}_{t}\bigg[\exp\Big\{\int_{t\land \tau_{n,j}}^{\tau_{n,j}}2\alpha^{n,j}_{r}\eta(dr,X_{r})\Big\}\bigg],\ t\in[0,T]. 
\end{equation*}
In view of the fact that $\tau_{n,j}\le T_n,$ by the  procedure leading to \eqref{e:exis unbounded BSDE-7}, there exists $C^{\diamond}>0$ such that for all $n\ge |x|,$
\begin{equation}\label{e:Gamma-ni}
\esssup\limits_{\omega\in\Omega, t\in[0,T]} |\Gamma^{n,j}_{t\wedge \tau_{n,j}}| \le \exp\left\{C^{\diamond}n^{\frac{(1+\varepsilon)(\lambda+\beta)}{\tau\varepsilon}\vee \frac{\lambda+\beta+1}{\tau}}\right\},\  \text{ for all } j\ge 1.
\end{equation}
By \eqref{e:deltaYni} we have for $t\in[0,T]$, 
\begin{equation}\label{e:deltaY-L2}
\|\delta Y^{n,j}_{t\wedge\tau_{n,j}}\|_{L^1(\Omega)}=\|Y^n_{t\wedge\tau_{n,j}}-Y^{n,j}_{t\wedge\tau_{n,j}}\|_{L^1(\Omega)}\le \esssup\limits_{\omega\in\Omega, t\in[0,T]} |\Gamma^{n,j}_{t\wedge \tau_{n,j}}|^{\frac{1}{2}}  \|Y^{n}_{\tau_{n,j}} - \Xi_{\tau_{n,j}}\|_{L^2(\Omega)}.
\end{equation}
Denote $C_n := \exp\left\{\frac{C^{\diamond}}{2}n^{\frac{(1+\varepsilon)(\lambda+\beta)}{\tau\varepsilon}\vee \frac{\lambda+\beta+1}{\tau}}\right\}$. While by Remark~\ref{rem:tau lambda beta}, $\frac{(1+\varepsilon)(\lambda+\beta)}{\tau\varepsilon}\vee \frac{\lambda+\beta+1}{\tau} < 2.$ Thus, by \eqref{e:h-Y}, \eqref{e:Gamma-ni}, and \eqref{e:deltaY-L2}, there exists $C^{\star}>0$ such that
\begin{equation}\label{e:ineq-0}
\|\delta Y^{n,j}_{t\wedge\tau_{n,j}}\|_{L^{1}(\Omega)}\lesssim_{\Theta_{2},C_{X}} \exp\left\{-C^{\star} n^{2}\right\} + C_n\left(\|T - S_j\|^{\frac{l}{2}}_{L^{1}(\Omega)} +  \left\||T - S_{j}|^{\tau}\right\|_{L^{8}(\Omega)}\right).
\end{equation}

\

{\bf (ii)} In this step, we estimate $\|Y_{t\wedge\tau_{n,j}}^n-Y_t\|_{L^1(\Omega)}$ and then $\|Y_{t\wedge\tau_{n,j}}^{n,j}-Y_t\|_{L^1(\Omega)}$.   Noting that \cite[Proposition~4.2]{BSDEYoung-I} yields the uniform boundedness of $\mathbb{E}\left[\|Y^{n}\|^{2}_{p\text{-var};[0,T_{n}]}\right]$, we get 
\begin{equation}\label{e:ineq-1}
\|Y^{n}_{t\land \tau_{n,j}} - Y^{n}_{t\land T_{n}}\|_{L^{1}(\Omega)} \le \mathbb{P}\left\{T_{n} \neq \tau_{n,j}\right\}^{\frac{1}{2}}\sup_{n}\left\|\|Y^{n}\|_{p\text{-var};[0,T_{n}]}\right\|_{L^{2}(\Omega)}\lesssim_{\Theta_{2},C_{X}} \mathbb{P}\left\{T \neq S_{j}\right\}^{\frac{1}{2}}.
\end{equation}
Moreover, by the estimate for the $\mathfrak{H}_{p,q}$ norm of $(Y,Z)$ established in \cite[Corollary~4.1]{BSDEYoung-I}, we have $\left\|\|Y\|_{p\text{-var};[0,T]}\right\|_{L^{2}(\Omega)}\lesssim_{\Theta_{2},C_{X}} 1$ (note that by Theorem~\ref{thm:existence of solution of unbounded BSDEs} $(Y,Z)\in\mathfrak{H}_{p,q}(0,T)$ for every $q>1$). By the proof of Lemma~\ref{lem:existence of X}, there exists $\hat{C}>0$ such that $$\mathbb{P}\{T_n < T\}^{\frac{1}{2}}\lesssim_{\Theta_{2},C_{X}}\exp\{-\hat{C}n^{2}\}.$$ Thus, by \eqref{e:dY<=h(Y)Gamma}, the following inequality holds for some constant $\check{C}>0,$ 
\begin{equation}\label{e:ineq-2}
\begin{aligned}
\|Y^{n}_{t\land T_{n}} - Y_{t}\|_{L^{1}(\Omega)} &\le \|Y^{n}_{t\land T_{n}} - Y_{t\land T_{n}}\|_{L^{1}(\Omega)} + \mathbb{P}\left\{T \neq T_{n}\right\}^{\frac{1}{2}}\left\|\|Y\|_{p\text{-var};[0,T]}\right\|_{L^{2}(\Omega)}\\
&\lesssim_{\Theta_{2},C_{X}} \exp\left\{-\check{C} n^{2}\right\} + \exp\left\{-\hat{C}n^{2}\right\}.
\end{aligned}
\end{equation}
 
Combining \eqref{e:ineq-0}, \eqref{e:ineq-1}, and \eqref{e:ineq-2} with the assumption $\mathbb{P}\{S_{j}\neq T\}\le p_{j}$  with $p_{j}\downarrow 0$, we get that for every $\delta > 0,$ there exists an $n_0\ge 1$ such that for every $n\ge n_0$, there exists an $I_{n}\ge 1$ depending on $C_n$ and $\{p_{j}\}_{j\ge 1}$ such that 
\begin{equation}\label{e:Y^n,i - Y}
\|Y^{n,j}_{t\land \tau_{n,j}} - Y_{t}\|_{L^{1}(\Omega)}<\frac{\delta}{2},\text{ for all }j\ge I_{n}.
\end{equation}

\

{\bf (iii)} Finally, we prove the desired estimation \eqref{e:local-uniform-conv}. Fix $j\ge 1$. Since $X$ is bounded on $[0,S_j]$,  we can find an $n\ge 1$ such that $\tau_{n,j} = S_{j}$, and hence $Y^{n,j} = Y^{S_{j}}$ when $n$ is sufficiently large. In view of \eqref{e:Y^n,i - Y}, to calculate the difference between $Y^{n,j}$ and $Y^{S_{j}}$, it suffices to establish a uniform (in $j$) estimate for \[\widetilde{\delta Y}^{n,j} := Y^{n+1,j} - Y^{n,j}.\] 
Following the same argument leading to \eqref{e:deltaY-L2}, there exist constants $\tilde{C},C'>0$ such that 
\begin{equation}\label{e:estimate for tilde delta Y}
\|\widetilde{\delta Y}^{n,j}_{t\land \tau_{n,j}}\|_{L^{2}(\Omega)}\lesssim_{\Theta_{2},C_{X}} \exp\left\{\tilde{C}n^{\frac{(1+\varepsilon)(\lambda + \beta)}{\tau\varepsilon}\vee \frac{\lambda+\beta+1}{\tau}}\right\}\exp\left\{- C' n^{2}\right\}.
\end{equation}
Notice that the right-hand side of \eqref{e:estimate for tilde delta Y} is independent of $j\ge 1$. Since $\frac{(1+\varepsilon)(\lambda + \beta)}{\tau\varepsilon}\vee \frac{\lambda+\beta + 1}{\tau} < 2$, for every $\delta>0$ there exists an integer $n_{1}\ge 1$ such that 
\begin{equation}\label{e:sum tilde delta Y}
\sum_{n\ge n_{1}}\|\widetilde{\delta Y}^{n,j}_{t\land \tau_{n,j}}\|_{L^{2}(\Omega)}\le \frac{\delta}{4}.
\end{equation}
Also, by the moments estimate \cite[Proposition~4.3]{BSDEYoung-I} and Lemma~\ref{lem:existence of X}, for each  $m\ge 1$, we have for $0<C^*<C'$
\begin{equation*}
\sum_{n\ge m}\|Y^{n+1,j}_{t\land \tau_{n+1, j}} - Y^{n+1,j}_{t\land \tau_{n,j}}\|_{L^{1}(\Omega)}\lesssim_{C_{X}} \sum_{n\ge m}\exp\left\{ - C^* n^{2}\right\}.
\end{equation*}
Then, for each $\delta>0$, there exists $n_{2}\ge 1$ such that
\begin{equation}\label{e:sum Y^{n+1,i} - Y^{n+1,i}}
\sum_{n\ge n_{2}}\|Y^{n+1,j}_{t\land  \tau_{n+1, j}} - Y^{n+1,j}_{t\land \tau_{n,j}}\|_{L^{1}(\Omega)}\le \frac{\delta}{4}.
\end{equation}
Combining \eqref{e:sum tilde delta Y} and \eqref{e:sum Y^{n+1,i} - Y^{n+1,i}}, we have
\[\sum_{n\ge n_{1}\vee n_{2}}\|Y^{n+1,j}_{t\land  \tau_{n+1, j}} - Y^{n,j}_{t\land \tau_{n,j}}\|_{L^{1}(\Omega)}\le \frac{\delta}{2},\]
and hence by Fatou's Lemma and the fact that $\lim_{n}Y^{n,j} = Y^{S_{j}}$, we have  for all $j\ge 1, n\ge n_{1}\vee n_{2}$,
\begin{equation}\label{e:Y^{n,i} - Y^{S_{i}}}
\|Y^{n,j}_{t\land \tau_{n,j}} - Y^{S_{j}}_{t\land S_{j}}\|_{L^{1}(\Omega)}\le \frac{\delta}{2}.
\end{equation}
Combining \eqref{e:Y^n,i - Y} and \eqref{e:Y^{n,i} - Y^{S_{i}}}, for every $\delta>0$, letting $J_\delta := I_{n_{0} \vee n_1\vee n_2}$ we have for every $j\ge J_\delta$,
\[\|Y^{S_{j}}_{t\land S_{j}} - Y_{t}\|_{L^{1}(\Omega)}\le \delta.\]

\textbf{Step 2.} Now we consider the case without assuming $\mathbb{P}\{S_{j}\neq T\} \le p_j$ with $p_j \downarrow 0$. That is, we assume $\lim_{j\rightarrow \infty}\mathbb{P}\{ S_{j} = T\}<1.$
The estimates \eqref{e:ineq-0}, \eqref{e:ineq-2}, and \eqref{e:Y^{n,i} - Y^{S_{i}}} still hold. Hence, for every $\delta>0,$ there exists $n'\ge 1$ such that
\begin{equation}\label{e:lim of three terms}
\limsup_{j\rightarrow\infty}\|Y^{n'}_{t\land \tau_{n',j}} - Y^{n',j}_{t\land \tau_{n',j}}\|_{L^{1}(\Omega)} + \|Y_{t} - Y^{n'}_{t\land T_{n'}}\|_{L^{1}(\Omega)} + \limsup_{j\rightarrow\infty}\|Y^{n',j}_{t\land \tau_{n',j}} - Y^{S_{j}}_{t\land S_j}\|_{L^{1}(\Omega)}\le \delta.
\end{equation}
In addition, since $\|\|Y^{n'}\|_{p\text{-var};[0,T]}\|_{L^{2}(\Omega)}<\infty$, by the dominated convergence theorem, we have 
\begin{equation}\label{e:lim of one term}
\lim_{j\rightarrow \infty}\|Y^{n'}_{t\land T_{n'}} - Y^{n'}_{t\land \tau_{n',j}}\|_{L^{1}(\Omega)}\le \lim_{j\rightarrow \infty}\|Y^{n'}_{t\land \tau_{n',j}} - Y^{n'}_{t\land T_{n'}}\|_{L^{2}(\Omega)} = 0,
\end{equation}
Then, by \eqref{e:lim of three terms} and \eqref{e:lim of one term}, we have $\limsup_{j\rightarrow\infty}\|Y_{t} - Y^{S_{j}}_{t\land S_{j}}\|_{L^{1}(\Omega)}<\delta$ for every $\delta>0,$ which implies the $L^1(\Omega)$-convergence of $Y^{S_j}_{S_{j}\land t}\rightarrow Y_{t}$. This completes the proof.
\end{proof}

In Theorem~\ref{thm:existence of solution of unbounded BSDEs}, we get the existence of the solution $(Y,Z)\in\mathfrak{H}_{p,q}(0,T)$ to \eqref{e:ourBSDE}. Let $S$ be a stopping time such that $X$ is bounded on $[0,S]$. The following result shows that the solution restricted on the random interval $[0,S]$ belongs to $\mathfrak B_{p,q}(0,T)$ if and only if  $Y$ is bounded on $[0,S]$. 

\begin{proposition}\label{prop:coincidence of two BSDEs}
Assume \ref{(A0)}, \ref{(T)}, and \ref{(A1)} are satisfied. Suppose $(Y,Z)\in\mathfrak{H}_{p,q}(0,T)$ is a solution to \eqref{e:ourBSDE}, with $q>1$. Let $S$ be a stopping time not greater than $T$ such that $X$ is bounded on $[0,S]$. Then, $(\bar{Y}_t,\bar{Z}_t):=(Y_{t\land S}, Z_{t}\mathbf{1}_{[0,S]}(t))\in \bigcap_{q'\ge1}\mathfrak B_{p,q'}(0,T)$ if  $\bar{Y}$ is bounded.  Moreover, 
$(\bar{Y},\bar{Z})\in\mathfrak B_{p,1}(0,T)$  only if  $\bar{Y}$ is bounded. 
\end{proposition}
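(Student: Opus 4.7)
The converse (``only if'') direction is immediate from Remark~\ref{rem:boundedness of y}. If $(\bar Y,\bar Z)\in\mathfrak{B}_{p,1}(0,T)$, then applying \eqref{eq:ess-mpk} with $[T_1,T_0]=[0,T]$ gives
\[
\esssup_{\omega\in\Omega,\,r\in[0,T]}|\bar Y_r|\le \|\bar Y_T\|_{L^\infty(\Omega)}+m_{p,1}(\bar Y;[0,T])\le \|(\bar Y,\bar Z)\|_{p,1;[0,T]}<\infty,
\]
so $\bar Y$ is essentially bounded. Note $\bar Y_T=Y_S$ is bounded because the $\|y_t\|_{L^\infty(\Omega)}$ term is one of the three summands of $\|(\cdot,\cdot)\|_{p,1;[0,T]}$.

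For the forward (``if'') direction, assume $\bar Y$ is essentially bounded. Restricting the BSDE \eqref{e:ourBSDE} to the random interval $[0,S]$ shows that $(\bar Y,\bar Z)$ satisfies
\[
\bar Y_{t\wedge S}=Y_S+\int_{t\wedge S}^S f(r,X_r,\bar Y_r,\bar Z_r)\,dr+\sum_{i=1}^M\int_{t\wedge S}^S g_i(\bar Y_r)\,\eta_i(dr,X_r)-\int_{t\wedge S}^S \bar Z_r\,dW_r,
\]
with stopping terminal time $S$ and terminal value $Y_S\in L^\infty(\Omega)$. Since $X$ is bounded on $[0,S]$, say $|X_{t\wedge S}|\le L_S$ a.s.\ uniformly, the Young integrals depend only on the values of $\eta(\cdot,x)$ for $|x|\le L_S$. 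Introducing a smooth compactly supported cutoff $\chi\in C^\infty_c(\R^d;[0,1])$ with $\chi\equiv 1$ on $\{|x|\le L_S\}$ and setting $\tilde\eta_i(t,x):=\chi(x)\eta_i(t,x)$, one obtains $\tilde\eta\in C^{\tau,\lambda}([0,T]\times\R^d;\R^M)$ (in particular, $\tilde\eta$ is a bounded driver by Remark~\ref{rem:eta}), and $\tilde\eta_i(dr,X_r)=\eta_i(dr,X_r)$ on $[0,S]$. Thus $(\bar Y,\bar Z)$ solves a BSDE with bounded driver and bounded terminal value on $[0,S]$, which is precisely the setting of \cite[Assumption~(H2)]{BSDEYoung-I} with $S_0=S$.

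I now invoke Part~I: \cite[Proposition~4.1]{BSDEYoung-I} yields a unique solution $(\tilde Y,\tilde Z)\in\mathfrak{B}_{p,2}(0,T)$ of this truncated BSDE, and the a priori estimates \cite[Proposition~4.2]{BSDEYoung-I} upgrade $(\tilde Y,\tilde Z)$ to $\mathfrak{B}_{p,q'}(0,T)$ for every $q'\ge 1$. It remains to identify $(\bar Y,\bar Z)$ with $(\tilde Y,\tilde Z)$, for which it suffices to verify $(\bar Y,\bar Z)\in\mathfrak{B}_{p,2}(0,T)$ and appeal to uniqueness. The norm $\|\bar Y_T\|_{L^\infty}=\|Y_S\|_{L^\infty}$ is finite by hypothesis; the finiteness of $m_{p,2}(\bar Y;[0,T])$ and $\|\bar Z\|_{2\text{-BMO};[0,T]}$ follows from a straightforward application of Lemma~\ref{lem:estimate for Z} together with the nonlinear Young-integral estimate \cite[Lemma~2.1]{BSDEYoung-I} applied to the truncated equation, where the boundedness of $\bar Y$, $\tilde\eta$, and $g$, the Lipschitz property of $f$ in $(y,z)$, and Lemma~\ref{lem:m_p,q of X} combine to close the estimate on sufficiently small time intervals, followed by iteration as in the proof of Lemma~\ref{lem:growth of (Y^n,Z^n)}. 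Uniqueness then forces $(\bar Y,\bar Z)=(\tilde Y,\tilde Z)\in\bigcap_{q'\ge 1}\mathfrak{B}_{p,q'}(0,T)$.

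The main technical step is the identification in the last paragraph: one cannot quote uniqueness until $(\bar Y,\bar Z)$ is known to lie in $\mathfrak{B}_{p,2}$, so a little care is needed to push the a priori bounds through. An equivalent route that avoids uniqueness altogether would be to apply the Part~I a priori estimates directly to $(\bar Y,\bar Z)$, bootstrapping from the $\mathfrak{H}_{p,q}$-information and the boundedness of $\bar Y$, $Y_S$, and $\tilde\eta$ to every $q'\ge 1$; either route terminates at the same conclusion.
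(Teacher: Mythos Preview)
Your ``only if'' direction is fine and matches the paper. For the ``if'' direction, the identification step has a genuine gap: the a~priori bound you propose to extract from Lemma~\ref{lem:estimate for Z} and the iteration in the proof of Lemma~\ref{lem:growth of (Y^n,Z^n)} is of the absorption type
\[
m_{p,2}(\bar Y;[t,T])+\|\bar Z\|_{2\text{-BMO};[t,T]}\le C\Big(\cdots+|T-t|\,m_{p,2}(\bar Y;[t,T])+|T-t|^{1/2}\|\bar Z\|_{2\text{-BMO};[t,T]}\Big),
\]
and absorbing the right-hand side into the left requires $m_{p,2}(\bar Y;[t,T])$ and $\|\bar Z\|_{2\text{-BMO};[t,T]}$ to be finite \emph{a priori}. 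You only have $(\bar Y,\bar Z)\in\mathfrak{H}_{p,q}(0,T)$, which controls $L^q$-norms of $\|\bar Y\|_{p\text{-var}}$ and $(\int|\bar Z|^2)^{1/2}$, not the $\esssup$-over-$\omega$ conditional quantities defining $m_{p,q}$ and the BMO norm. So the estimate cannot be closed, and you cannot then invoke uniqueness in $\mathfrak{B}_{p,2}$ to identify $(\bar Y,\bar Z)$ with the Part~I solution. Your ``alternative route'' has the same defect, since the Part~I a~priori estimates are stated for solutions already in $\mathfrak{B}_{p,q}$.

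The paper resolves exactly this circularity by a further localization: it sets
\[
S_j:=\inf\Big\{t>0:\ \|Y\|_{p\text{-var};[0,t]}+\int_0^t|Z_r|^2\,dr=j\Big\}\wedge S,
\]
so that by construction $\|\bar Y^{j}\|_{p\text{-var};[0,T]}\le j$ and $\int_0^T|\bar Z^{j}_r|^2\,dr\le j$ almost surely, hence $(\bar Y^{j},\bar Z^{j})\in\mathfrak{B}_{p,q}(0,T)$ trivially. One can then apply \cite[Proposition~4.1]{BSDEYoung-I} and run the argument of Lemma~\ref{lem:growth of (Y^n,Z^n)} to obtain $\sup_{j}\|(\bar Y^{j},\bar Z^{j})\|_{p,q';[0,T]}<\infty$ (the bounds depend only on $\|Y_S\|_{L^\infty}$ and the uniform bound for $X$ on $[0,S]$, not on $j$), and conclude by Fatou's lemma. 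This extra stopping-time truncation is the missing idea in your argument.
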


\begin{proof}
Assume $\bar{Y}$ is a bounded process. For $j\ge 1,$ denote 
\[S_{j}:=\inf\Big\{t>0;\|Y\|_{p\text{-var};[0,t]} + \int_{0}^{t}|Z_{r}|^{2}dr = j\Big\}\land S.\] 
Then, we have $S_{j}\uparrow S$ as $j\rightarrow\infty$ and $(\bar{Y}^{j},\bar{Z}^{j})\in \mathfrak{B}_{p,q}(0,T)$, where 
$(\bar{Y}^{j}_t,\bar{Z}^{j}_t):=(Y_{t\land S_j}, Z_{t}\mathbf{1}_{[0,S_{j}]}(t)),\ t\in[0,T].$
Moreover, by \cite[Proposition~4.1]{BSDEYoung-I}, $(\bar{Y}^{j},\bar{Z}^{j})$ is the unique solution in $\bigcap_{q'\ge 1}\mathfrak B_{p,q'}(0,T)$ to 
\begin{equation*}
\begin{cases}
d\bar{Y}^{j}_t = -f(t,X_t,\bar{Y}^{j}_t,\bar{Z}^{j}_t)dt - \sum_{i=1}^{M}g_{i}(\bar{Y}^{j}_{t})\eta_{i}(dt,X_{t}) + \bar{Z}^{j}_{t}dW_{t}, ~ t\in[0,S_j],\\[5pt]
\bar{Y}^{j}_{S_j}=Y_{S_j},
\end{cases}
\end{equation*}
noting that \cite[Assumption~(H2)]{BSDEYoung-I} is satisfied for every $k=q'\ge 1$, since $m_{p,q'}(X;[0,T])<\infty$ for every $q'\ge 1$  by Lemma~\ref{lem:m_p,q of X}. Note that $X_{t\land S_{j}}$ and $Y_{S_j}$ are bounded  uniformly in $j\ge 1$ since $S_j\le S$. By the same proof of Lemma~\ref{lem:growth of (Y^n,Z^n)}, we can show
$\sup_{j\ge 1}\|(\bar{Y}^{j},\bar{Z}^{j})\|_{p,q';[0,T]}<\infty$  for all $q'\ge 1$, which together with  Fatou's lemma implies $\|(\bar{Y},\bar{Z})\|_{p,q';[0,T]}<\infty$ for all $q'\ge 1$.

On the other hand, 
assuming $(\bar{Y},\bar{Z})\in \mathfrak B_{p,1}(0,T)$, it follows directly from \eqref{eq:ess-mpk}  that $\bar Y$ is bounded. 
\end{proof}

\subsection{The uniqueness}\label{subsec:uniqueness}

Let $X^{t,x}$ solve the SDE
\begin{equation}\label{e:forward system}
X^{t,x}_{s} = x + \int_{t}^{s}b(X^{t,x}_{r})dr + \int_{t}^{s}\sigma(X^{t,x}_{r})dW_{r},\ (s,x)\in[t,T]\times\mathbb{R}^{d},\end{equation} 
where we stress that $b(\cdot)$ and $\sigma(\cdot)$ are independent of the time variable, and consider the following Markovian type BSDE:
\begin{equation}\label{e:backward system}
 Y_{s}  = h(X_T^{t,x})+\int_{s}^{T}f(r,X^{t,x}_{r}, Y_{r},  Z_{r} )dr + \sum_{i=1}^{M}\int_{s}^{T}g_{i}(  Y_{r} )\eta_{i}(dr,X^{t,x}_{r}) -\int_{s}^{T}  Z_{r}  dW_{r}, \  s\in[t,T].
\end{equation}

The uniqueness of the solution to Eq.~\eqref{e:backward system} is stated below:

\begin{theorem}\label{thm:uniqueness of the unbounded BSDE} Consider the forward-backward SDE system \eqref{e:forward system}-\eqref{e:backward system} and assume  \ref{(A0)} and \ref{(A1)}. Suppose that  the function $h(x)$ in \eqref{e:backward system} belongs to $C^{\mathrm{Lip}}(\mathbb{R}^{d};\mathbb{R}^{N})$. Assume further that for every $n\ge 1$, the set of regular points
\begin{equation}\label{e:regular set}
\Lambda_n:=\left\{x\in\mathbb{R}^{d};|x| = n\text{ and }\mathbb{P}\left\{T^{0,x}_n>0\right\}=0\right\}\text{ is closed,}
\end{equation}
where $T^{t,x}_n:=T\land\inf\left\{s>t;|X^{t,x}_{s}| > n\right\}$. Then, the following two statements hold.
\begin{itemize}
\item[(i)] Assume  $(\widetilde{Y}^{t,x},\widetilde{Z}^{t,x})$ and $(\widehat{Y}^{t,x},\widehat{Z}^{t,x})$  are two solutions in $\mathfrak{H}_{p,q}(t,T)$ of Eq.~\eqref{e:backward system} with $q>1$, satisfying 
\begin{equation}\label{e:assumption of widetilde Y}
|\widetilde{Y}^{t,x}_{s}|\vee|\widehat{Y}^{t,x}_{s}|\le C_{t,x}\left(1+|X^{t,x}_{s}|^{\frac{\lambda+\beta}{\varepsilon}\vee 1}\right) \text{ a.s.,  for all } s\in[t,T],
\end{equation}
for some constant $C_{t,x}>0$. Then $(\widetilde{Y}^{t,x},\widetilde{Z}^{t,x}) = (\widehat{Y}^{t,x},\widehat{Z}^{t,x})$.\\

\item[(ii)] Let $(Y^{t,x},Z^{t,x})\in\bigcap_{v>1}\mathfrak{H}_{p,v}(t,T)$ be the solution  of Eq.~\eqref{e:backward system} obtained in Theorem~\ref{thm:existence of solution of unbounded BSDEs}. Then there exists a constant $C>0$ independent of $(t,x)$ such that
\begin{equation*}
|Y^{t,x}_{s}|\le C\left(1+|X^{t,x}_{s}|^{\frac{\lambda+\beta}{\varepsilon}\vee 1}\right) \text{ a.s., for all }\, s\in[t,T].
\end{equation*}
\end{itemize}
\end{theorem}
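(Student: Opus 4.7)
The plan is to localize by the exit times $T^{t,x}_n$ of $X^{t,x}$ from the ball of radius $n$. On the random interval $[t, T^{t,x}_n]$, the diffusion is bounded by $n$, so $\eta(\cdot, X^{t,x}_\cdot)$ acts as a bounded driver, and by the growth assumption~\eqref{e:assumption of widetilde Y} both $\widetilde Y$ and $\widehat Y$ are uniformly bounded by $C_{t,x}(1 + n^{\frac{\lambda+\beta}{\varepsilon} \vee 1})$. Hence Proposition~\ref{prop:coincidence of two BSDEs} places the restrictions of both solutions to $[t, T^{t,x}_n]$ in $\bigcap_{q'\ge 1}\mathfrak B_{p,q'}(t,T)$, giving in particular finite moments of all orders for $\|\widetilde Y\|_{p\text{-var};[t,T^{t,x}_n]}$ and $\|\widehat Y\|_{p\text{-var};[t,T^{t,x}_n]}$. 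Introduce the linearization coefficients $\alpha^{n,g}_r := \frac{g(\widetilde Y_r) - g(\widehat Y_r)}{\delta Y_r} \mathbf{1}_{\{\delta Y_r \ne 0\}}$ (bounded by $C_1$ with $p$-variation controlled by the above) and $(\alpha^{n,f}_r, \beta^{n,f}_r)$ with $|\alpha^{n,f}|,|\beta^{n,f}| \le C_{\text{Lip}}$ arising from the Lipschitz bound on $f$.

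Applying the Young product rule of \cite[Corollary~2.1]{BSDEYoung-I} after a Girsanov change of measure $\widetilde{\mathbb P}$ absorbing the $\beta^{n,f}_r \cdot \delta Z_r$ term yields the Feynman--Kac representation
\[
\delta Y_{s\wedge T^{t,x}_n} \;=\; \widetilde{\mathbb E}_{s\wedge T^{t,x}_n}\!\left[A^{s\wedge T^{t,x}_n;n}_{T^{t,x}_n}\,\delta Y_{T^{t,x}_n}\right],
\]
where $A^{s;n}$ solves the linear Young equation $dA = \alpha^{n,f}_r A\,dr + \alpha^{n,g}_r A\,\eta(dr, X^{t,x}_r)$ with $A^{s;n}_s = 1$. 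The conditional $\tfrac{k}{k-1}$-moment of $A^{s\wedge T^{t,x}_n;n}_{T^{t,x}_n}$ can then be controlled by repeating verbatim Step~1 of the proof of Theorem~\ref{thm:existence of solution of unbounded BSDEs}, which produces the upper bound $\exp\{C^* n^\nu\}$ with $\nu := \frac{(1+\varepsilon)(\lambda+\beta)}{\tau\varepsilon}\vee\frac{\lambda+\beta+1}{\tau} < 2$ thanks to Remark~\ref{rem:tau lambda beta}. For the terminal factor $\delta Y_{T^{t,x}_n}$, the common terminal condition $h(X^{t,x}_T)$ forces vanishing on $\{T^{t,x}_n = T\}$, while on $\{T^{t,x}_n < T\}$ the growth bound yields $|\delta Y_{T^{t,x}_n}| \le 2 C_{t,x}(1 + n^{\frac{\lambda+\beta}{\varepsilon}\vee 1})$; the assumption~\eqref{e:regular set} guarantees that we may apply the Gaussian-type decay $\mathbb P\{T^{t,x}_n < T\} \lesssim \exp\{-(n-|x|)^2/C\}$ from Lemma~\ref{lem:existence of X} (transferred to $\widetilde{\mathbb P}$ via a final H\"older step). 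Combining these, $\|\delta Y_{s\wedge T^{t,x}_n}\|_{L^1}$ is bounded by $\exp\{C^* n^\nu - n^2/C'\}\to 0$, so $\widetilde Y = \widehat Y$ on $[t,T]$, and Lemma~\ref{lem:estimate for Z} forces $\widetilde Z = \widehat Z$.

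\textbf{Part (ii):} The plan is to first establish $Y^{t,x}_s = u(s, X^{t,x}_s)$ a.s.\ with $u(s,y) := Y^{s,y}_s$ deterministic, by combining the uniqueness result of part~(i) with the Markov property of the forward--backward system (which follows in the usual way from the time-homogeneity of $(b,\sigma)$ and the fact that $Y^{s,y}_s$ is $\mathcal F_s$-measurable while depending only on $W_\cdot - W_s$; the regularity assumption~\eqref{e:regular set} ensures the requisite continuity in the initial datum). It then suffices to prove $|u(s,y)| \le C(1 + |y|^{\frac{\lambda+\beta}{\varepsilon}\vee 1})$ with $C$ independent of $(s,y)$. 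Since $u(s,y)$ is deterministic and equals the $L^1$-limit of the stopped approximations $Y^{n;s,y}_s$ provided by Proposition~\ref{prop:independent of choice of stopping times}, the task reduces to a uniform-in-$n$ bound on $|Y^{n;s,y}_s|$. Linearizing $Y^{n;s,y}$ at zero and applying a Girsanov change of measure followed by the Young product rule gives an explicit Feynman--Kac representation of $Y^{n;s,y}_s$ as an expectation of $h(X^{s,y}_{T^{s,y}_n}) + \int f(r,X^{s,y}_r,0,0)\,dr + \int g(0)\,\eta(dr,X^{s,y}_r)$ weighted by the density $A^{s;n}$. Combining the linear growth of $h$, the bound $|f(r,x,0,0)| \le C_1(1 + |x|^{\frac{\lambda+\beta}{\varepsilon}\vee 1})$ from \ref{(A1)}, the diffusion moment bound $\E[\|X^{s,y}\|_{\infty;[s,T]}^q] \lesssim 1 + |y|^q$, and splitting the expectation over $\{T^{s,y}_n = T\}$ and $\{T^{s,y}_n < T\}$, the Gaussian decay $\mathbb P\{T^{s,y}_n < T\} \lesssim \exp\{-(n-|y|)^2/C\}$ of Lemma~\ref{lem:existence of X} compensates the moment blow-up $\exp\{C^* n^\nu\}$ of $A^{s;n}$.

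The main obstacle in both parts is the moment control of the Young-equation density $A^{s;n}$, whose bound $\exp\{C^* n^\nu\}$ grows in $n$ and must be compensated by either the terminal vanishing (part~(i)) or the Gaussian exit-time decay (part~(ii)). The condition $\nu < 2$ from Remark~\ref{rem:tau lambda beta} is precisely what allows this compensation. The additional subtlety in part~(ii) is that the compensation must yield a \emph{polynomial} bound in $|y|$, not merely an $n$-uniform constant; this is achieved by the polynomial growth in $|y|$ of the moments of $\|X^{s,y}\|_\infty$ together with a judicious optimization of $n$ as a function of $|y|$.
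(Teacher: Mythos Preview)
Your treatment of part~(i) is essentially correct and close in spirit to the paper's, with two pleasant simplifications: you compare the two arbitrary solutions $\widetilde Y$ and $\widehat Y$ directly rather than routing both through the localized reference solutions $Y^{n;t,x}$, and for the terminal discrepancy you exploit that $\delta Y_{T^{t,x}_n}=0$ on $\{T^{t,x}_n=T\}$ together with the growth bound on the complement, whereas the paper invokes a regularity estimate (time-continuity of the solution, \cite[Corollary~4.2]{BSDEYoung-I}) to control $\|\widetilde Y^{t,x}_{T^{t,x}_n}-h(X^{t,x}_{T^{t,x}_n})\|_{L^2}$. Both routes lead to the same exponential competition $\exp\{C^*n^{\nu}-n^2/C'\}\to0$. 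One point you gloss over: to feed Step~1 of Theorem~\ref{thm:existence of solution of unbounded BSDEs} you need $m_{p,2}(\widetilde Y_{\cdot\wedge T^{t,x}_n})\lesssim n^{\frac{\lambda+\beta}{\varepsilon}\vee 1}$ (and likewise for $\widehat Y$); the paper obtains this by rerunning the proof of Lemma~\ref{lem:growth of (Y^n,Z^n)} with terminal value $\widetilde Y^{t,x}_{T^{t,x}_n}$, whose $L^\infty$ bound is supplied by~\eqref{e:assumption of widetilde Y}. Your Girsanov device is a legitimate alternative to the forward-SDE bookkeeping of Appendix~\ref{append:C}, since $|\beta^{n,f}|\le C_{\mathrm{Lip}}$ keeps the density in every $L^q$.

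Part~(ii), however, has a genuine gap. Your Feynman--Kac representation weights $h(X^{s,y}_{T^{s,y}_n})$, $\int f(r,X^{s,y}_r,0,0)\,dr$ and $\int g(0)\,\eta(dr,X^{s,y}_r)$ by the density $A^{s;n}$, whose moments are only controlled by $\exp\{C^*n^{\nu}\}$. On the event $\{T^{s,y}_n=T\}$ these source terms are \emph{not} small---they carry polynomial moments in $|y|$ but no decay in $n$---so the product is of order $(1+|y|^{\cdots})\exp\{C^*n^{\nu}\}$, which is not uniform in $n$ and cannot be rescued by ``optimizing $n$ as a function of $|y|$'' since you need $n\to\infty$ to reach the limit $u(s,y)$. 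The paper avoids this entirely: once the Markov identity $Y^{t,x}_s=u(s,X^{t,x}_s)$ is in place, it simply applies the $\mathfrak H_{p,2}$ a~priori estimate \cite[Corollary~4.1]{BSDEYoung-I} to the limit solution, giving $\big\|\|Y^{t,x}\|_{p\text{-var};[t,T]}\big\|_{L^2}\lesssim 1+|x|^{\frac{\lambda+\beta}{\varepsilon}\vee 1}$ and hence $|u(t,x)|\le \big\|\|Y^{t,x}\|_{p\text{-var}}\big\|_{L^2}+\|h(X^{t,x}_T)\|_{L^2}\lesssim 1+|x|^{\frac{\lambda+\beta}{\varepsilon}\vee 1}$ directly. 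You should also be aware that the Markov step itself is not ``the usual way'' here: because the driver is a nonlinear Young integral, the paper first mollifies $\eta$ in time, invokes Lemma~\ref{lem:flow property} for the resulting classical BSDEs on bounded domains, and then passes to the limit via stability---this is where the closedness assumption~\eqref{e:regular set} enters.
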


\begin{proof}[Proof of Theorem~\ref{thm:uniqueness of the unbounded BSDE}]\makeatletter\def\@currentlabelname{the proof}\makeatother\label{proof:4}
For simplicity, we assume that $N=1$ and $f=0$. The proof can be adapted for the  situation $N>1$ and $f\neq 0$  by using the same argument as in \nameref{proof:3'} of Theorem~\ref{thm:existence of solution of unbounded BSDEs}. Let $(Y^{t,x},Z^{t,x})$ be the solution of Eq.~\eqref{e:backward system}  obtained in Theorem~\ref{thm:existence of solution of unbounded BSDEs}. 

First, we prove part (i), the uniqueness of the solution. Let $(\widetilde{Y}^{t,x},\widetilde{Z}^{t,x})\in\mathfrak{H}_{p,q}(t,T)$ be a solution to Eq.~\eqref{e:backward system} for some $q>1.$ Additionally, we assume $\widetilde{Y}^{t,x}$ satisfies \eqref{e:assumption of widetilde Y} and hence $\widetilde{Y}^{t,x}$ is bounded on $[0,T^{t,x}_{n}]$. The uniqueness will be proved by showing that for every fixed $(t,x)$, $Y^{n;t,x} \rightarrow \widetilde{Y}^{t,x}$ as $n\rightarrow \infty$.

By Proposition~\ref{prop:coincidence of two BSDEs} we have
$\big(\widetilde{Y}^{t,x}_{\cdot\land T^{t,x}_{n}}, \widetilde{Z}^{t,x}_{\cdot}\mathbf{1}_{[t,T^{t,x}_{n}]}(\cdot)\big)\in\mathfrak{B}_{p,q}(t,T).$ Moreover, by \cite[Proposition~4.1]{BSDEYoung-I}, $(\widetilde{Y}^{t,x}_{s\land T^{t,x}_{n}}, \widetilde{Z}^{t,x}_{s}\mathbf{1}_{[t,T^{t,x}_{n}]}(s))$ is the unique solution in $\mathfrak{B}_{p,q}(t,T)$ to 
\begin{equation*}
\begin{cases}
dY_s = - f(s,X^{t,x}_s,Y_s,Z_s)ds -g(Y_{s})\eta(ds,X^{t,x}_{s}) + Z_{s}dW_{s}, ~ s\in[t,T^{t,x}_n],\\[3pt]
Y_{T^{t,x}_n}=\widetilde{Y}^{t,x}_{T^{t,x}_n}.
\end{cases}
\end{equation*}
Furthermore, \eqref{e:assumption of widetilde Y} and Lemma \ref{lem:m_p,q of X} imply that
\[\sup_{n\ge 1}\left\|\widetilde{Y}^{t,x}_{T^{t,x}_n}\right\|_{L^{q'}(\Omega)}<\infty,\text{ for every }q'>1.\]
Combining the above estimate with \cite[Proposition~4.2]{BSDEYoung-I}, we obtain the following estimate,
\[\sup_{n\ge 1}\left\|\left(\widetilde{Y}^{t,x}_{\cdot\land T^{t,x}_{n}},\widetilde{Z}^{t,x}_{\cdot}\mathbf{1}_{[t,T^{t,x}_{n}]}(\cdot)\right)\right\|_{\mathfrak{H}_{p,q'};[t,T]}<\infty\ \text{for every }q' > 1,\]
which indicates  that the norms of the truncated processes $(\widetilde{Y}^{t,x}_{\cdot\land T^{t,x}_{n}},\widetilde{Z}^{t,x}_{\cdot}\mathbf{1}_{[t,T^{t,x}_{n}]}(\cdot))$ are uniformly bounded in $n.$ By Fatou's Lemma, we get,
\begin{equation*}
(\widetilde{Y}^{t,x},\widetilde{Z}^{t,x})\in \mathfrak{H}_{p,q'}(t,T),\text{ for every }q'>1,
\end{equation*}
which allows us to apply the regularity result in \cite[Corollary~4.2]{BSDEYoung-I} to the get the following estimate: 
\begin{equation}\label{e:apply Cor 4.2}
\begin{aligned}
&\E\left[|\widetilde{Y}^{t,x}_{T} - \widetilde{Y}^{t,x}_{T^{t,x}_{n}}|^{2}\right]\\
&\lesssim_{\Theta_{2}} \left\{\E\left[|T-T^{t,x}_{n}|^{8\tau}\right]\right\}^{\frac{1}{4}}\left(1 + |x|^{\frac{2(\lambda+\beta)}{\varepsilon}\vee 2} + \|h(X^{t,x}_{T})\|^{2}_{L^{4}(\Omega)}\right) + \E\left[\|\E_{\cdot}[h(X^{t,x}_{T})]\|^{2}_{p\text{-var};[T^{t,x}_{n},T]}\right]\\
&\lesssim_{\Theta_2,x} \left\{\E\left[|T-T^{t,x}_{n}|\right]\right\}^{\frac{1}{4}} + \E\left[\|\E_{\cdot}[h(X^{t,x}_{T})])\|^{2}_{p\text{-var};[T^{t,x}_{n},T]}\right].
\end{aligned}
\end{equation}
In addition, by Example~\ref{ex:assump (T)} (A0) with $\Xi_{s} = h(X^{t,x}_{s\land T})$ and $q=4$, we also have 
\begin{equation}\label{e:apply Example 5.1}
\E\left[\|\E_{\cdot}[h(X^{t,x}_{T})]\|^{2}_{p\text{-var};[T^{t,x}_{n},T]}\right]\lesssim_{\Theta_{2}} \left\{\mathbb{E}\left[|T - T^{t,x}_{n}|\right]\right\}^{\frac{1}{2}}.
\end{equation}
Combining \eqref{e:apply Cor 4.2} and \eqref{e:apply Example 5.1}, we get
\begin{equation}\label{e:Y - Y}
\|h(X^{t,x}_{T}) - \widetilde{Y}^{t,x}_{T^{t,x}_{n}}\|_{L^{2}(\Omega)} = \|\widetilde{Y}^{t,x}_{T} - \widetilde{Y}^{t,x}_{T^{t,x}_{n}}\|_{L^{2}(\Omega)}\lesssim_{\Theta_{2},x}  \left\{\E[|T-T_n^{t,x}|] \right\}^{\frac{1}{8}} + \left\{\E[|T-T_n^{t,x}|] \right\}^{\frac{1}{4}}.
\end{equation}
Furthermore, by Example~\ref{ex:assump (T)} (A0) again, we have 
\begin{equation}\label{e:h - h}
\left\|h(X^{t,x}_{T}) - h(X^{t,x}_{T^{t,x}_{n}})\right\|_{L^{2}(\Omega)}\lesssim_{\Theta_{2}} \left\{\E[|T-T^{t,x}_{n}|]\right\}^{\frac{1}{2}}.
\end{equation}
Moreover, by \eqref{e:moment-T-Tn} with $q=2$ there, there exists a constant $C'>0$ such that for all $n>|x|$,
\begin{equation}\label{e:T - T}
\E\left[|T - T^{t,x}_{n}|\right]\lesssim_{\Theta_{2}} \exp\Big\{-\frac{(n-|x|)^2}{C'}\Big\}\le \exp\Big\{\frac{|x|^2}{C'} - \frac{n^2}{2C'}\Big\} \lesssim_{x} \exp\Big\{-\frac{n^2}{2C'}\Big\}.
\end{equation}
Consequently, by \eqref{e:Y - Y}, \eqref{e:h - h}, and \eqref{e:T - T}, there exists a constant $C>0$ such that
\begin{equation}\label{e:uniqueness tilde C}
\begin{aligned}
\|Y^{n;t,x}_{T^{t,x}_{n}} - \widetilde{Y}^{t,x}_{T^{t,x}_{n}} \|_{L^{2}(\Omega)}&\le \|h(X^{t,x}_{T}) - h(X^{t,x}_{T^{t,x}_n})\|_{L^{2}(\Omega)}+ \|h(X^{t,x}_{T}) - \widetilde{Y}^{t,x}_{T^{t,x}_{n}} \|_{L^{2}(\Omega)}\\
&\lesssim_{\Theta_{2},x} \exp\left\{-Cn^{2}\right\}.
\end{aligned}
\end{equation}

On the other hand, for $n>|x|$, by the definition of $T^{t,x}_{n}$, we have $\big\|X^{t,x}_{\cdot\land T^{t,x}_{n}}\big\|_{\infty;[t,T]}\le n$. This together with condition~\eqref{e:assumption of widetilde Y} yields the following estimate for the terminal value:
\[\Big\|\widetilde{Y}^{t,x}_{T^{t,x}_{n}}\Big\|_{L^{\infty}(\Omega)} \lesssim_{\Theta_{2},t,x} \Big\|1 + \big|X^{t,x}_{T^{t,x}_{n}}\big|^{\frac{\lambda+\beta}{\varepsilon}\vee 1}\Big\|_{L^{\infty}(\Omega)}\lesssim n^{\frac{\lambda+\beta}{\varepsilon}\vee 1}\text{ for all }n> |x|.\]  
This estimate allows us to repeat the proof of  Lemma~\ref{lem:growth of (Y^n,Z^n)} with the terms $T_n,X_{\cdot},\Xi_{T_n},Y^n_{\cdot},Z^n_{\cdot}$ replaced respectively by $T^{t,x}_{n},X^{t,x}_{\cdot}$, $\widetilde{Y}^{t,x}_{T^{t,x}_{n}}$, $\widetilde{Y}^{t,x}_{\cdot\land T^{t,x}_{n}}$, $\widetilde{Z}^{t,x}_{\cdot}\mathbf{1}_{[t,T^{t,x}_{n}]}(\cdot)$, and we get
\begin{equation}\label{e:mp2 tilde Y}
m_{p,2}(\widetilde{Y}^{t,x}_{\cdot \land T^{t,x}_{n}};[t,T])\lesssim_{\Theta_{2},t,x} n^{\frac{\lambda+\beta}{\varepsilon}\vee 1}\text{ for all }n> |x|.
\end{equation}
For $s\in[t,T]$, denote
\[\widetilde{\alpha}^{n}_{s}:=\frac{g(Y^{n;t,x}_{s\land T^{t,x}_{n}}) - g(\widetilde{Y}^{t,x}_{s\land T^{t,x}_{n}})}{Y^{n;t,x}_{s\land T^{t,x}_{n}} - \widetilde{Y}^{t,x}_{s\land T^{t,x}_{n}}} 
\mathbf{1}_{\left\{Y^{n;t,x}_{s\land T^{t,x}_{n}} - \widetilde{Y}^{t,x}_{s\land T^{t,x}_{n}}\neq 0\right\}}\  \text{ and }\widetilde{\Gamma}^{n}_{s} := \E_{s}\left[\exp\left\{\int_{s\land T^{t,x}_{n}}^{T^{t,x}_{n}} 2\widetilde{\alpha}^{n}_{r}\eta(dr,X^{t,x}_{r})\right\}\right].\]
Similar to \eqref{e:m_p2 alpha''}, by \eqref{e:mp2 tilde Y} we have
\begin{equation*}
m_{p,2}(\widetilde{\alpha}^{n};[t,T])\lesssim_{\Theta_{2},t,x}n^{\frac{\lambda+\beta}{\varepsilon}\vee 1} \text{ for all }n> |x|.
\end{equation*}
Therefore, by the calculation leading to \eqref{e:exis unbounded BSDE-7}, there exists a constant $C^*>0$ such that
\begin{equation}\label{e:Gamma in unique}
\esssup_{s\in[t,T],\omega\in\Omega}|\widetilde{\Gamma}^n_{s}|\le \exp\left\{C^{*}n^{\frac{(1+\varepsilon)(\lambda+\beta)}{\tau\varepsilon}\vee \frac{\lambda+\beta+1}{\tau}}\right\},\text{ for all }n>|x|.
\end{equation}

Note that for $s\in[t,T]$,
\begin{equation*}
\begin{aligned}
Y^{n;t,x}_{s} - \widetilde{Y}^{t,x}_{s} = Y^{n;t,x}_{T^{t,x}_{n}} - \widetilde{Y}^{t,x}_{T^{t,x}_{n}} + \int_{s\land T^{t,x}_{n}}^{T^{t,x}_{n}}\widetilde{\alpha}^{n}_{r}\left(Y^{n;t,x}_{r} - \widetilde{Y}^{t,x}_{r}\right) \eta(dr,X^{t,x}_{r})  - \int_{s\land T^{t,x}_{n}}^{T^{t,x}_{n}}\left[Z^{n;t,x}_{r} - \widetilde{Z}^{t,x}_{r}\right]dW_{r}.
\end{aligned}
\end{equation*}
Then, similar to \eqref{e:delta Y = A (Y - Xi)}, by the product rule proved in \cite[Corollary~2.1]{BSDEYoung-I} we have 
\begin{equation*}
\|Y^{n;t,x}_{\cdot} - \widetilde{Y}^{t,x}_{\cdot}\|_{\infty;[t,T^{t,x}_{n}]} = \bigg\|\mathbb{E}_{\cdot\land T^{t,x}_{n}}\bigg[\exp\Big\{\int_{\cdot\land T^{t,x}_{n}}^{T^{t,x}_{n}}\widetilde{\alpha}^{n}_{r}\eta(dr,X^{t,x}_{r})\Big\}\left( Y^{n;t,x}_{T^{t,x}_{n}} - \widetilde{Y}^{t,x}_{T^{t,x}_{n}}\right)\bigg]\bigg\|_{\infty;[t,T^{t,x}_{n}]}.
\end{equation*}
Hence, by \eqref{e:uniqueness tilde C} and \eqref{e:Gamma in unique}, there exists a constant $\widetilde{C}>0$ such that
\begin{equation*}
\begin{aligned}
\mathbb{E}\left[\| Y^{n;t,x}_{\cdot} - \widetilde{Y}^{t,x}_{\cdot}\|_{\infty;[t,T^{t,x}_{n}]}\right]&\le \mathbb{E}\left[\Big\{\sup_{s\in[t,T]}\Big|\widetilde{\Gamma}^{n}_{s}\Big|\Big\}^{\frac{1}{2}}\bigg\{\sup_{s\in[t,T]}\bigg|\mathbb{E}_{s\land T^{t,x}_{n}}\Big[\left|Y^{n;t,x}_{T^{t,x}_{n}} - \widetilde{Y}^{t,x}_{T^{t,x}_{n}}\right|^{2}\Big]\bigg|\bigg\}^{\frac{1}{2}}\right]\\
&\lesssim_{\Theta_{2},t,x} \exp\left\{ \widetilde{C}n^{\frac{(1+\varepsilon)(\lambda + \beta)}{\tau\varepsilon}\vee\frac{\lambda+\beta+1}{\tau}} -\frac{1}{\widetilde{C}}n^{2}\right\}.
\end{aligned}
\end{equation*}

Thus, by Remark~\ref{rem:tau lambda beta}, the fact that $\frac{(1+\varepsilon)(\lambda + \beta)}{\tau\varepsilon}\vee\frac{\lambda+\beta+1}{\tau} < 2$ implies that
\[\lim_{n\rightarrow\infty}\mathbb{E}\left[\|Y^{n;t,x}_{\cdot} - \widetilde{Y}^{t,x}_{\cdot}\|_{\infty;[t,T^{t,x}_{n}]}\right] = 0.\]
On the other hand, by Theorem~\ref{thm:existence of solution of unbounded BSDEs}, it follows that
\[\lim_{n\rightarrow\infty}\mathbb{E}\left[\|Y^{n;t,x}_{\cdot} - Y^{t,x}_{\cdot}\|_{\infty;[t,T^{t,x}_{n}]}\right] = 0,\]
which gives $\widetilde{Y}^{t,x}=Y^{t,x}$. 
Finally, $\widetilde{Z}^{t,x} = Z^{t,x}$ follows from the following equality: 
\begin{equation*}
\int_{t}^{s}\left(\widetilde{Z}^{t,x}_{r} - Z^{t,x}_{r}\right)dW_{r} = 0,\ s\in[t,T].
\end{equation*}

\

Now we prove part (ii), for which it suffices to construct a function $u:[0,T]\times\R^d\rightarrow \R$ such that  
\begin{equation}\label{e:con-u}
u(s,X^{t,x}_{s}) = Y^{t,x}_{s} \text{ a.s., for } s\in[t,T]~ \text{ and } \sup_{t\in[0,T]}|u(t,x)| \lesssim 1 + |x|^{\frac{\lambda+\beta}{\varepsilon}\vee 1}. 
\end{equation}

For $(t,x)\in[0,T]\times \R^d,$ denote $T^{t,x}_{n}:=\inf\{s>t;|X^{t,x}_{s}|> n\}\land T$ for $n\ge 1$. Then, by \cite[Proposition~4.1]{BSDEYoung-I}, the equation
\begin{equation*}
\begin{cases}
dY_{s} = - f(s,X^{t,x}_s,Y_s,Z_{s})ds - g(Y_{s})\eta(ds,X^{t,x}_{s}) + Z_{s}dW_{s},\quad s\in[t,T^{t,x}_{n}],\\
Y_{T^{t,x}_{n}} = h(X^{t,x}_{T^{t,x}_{n}}),
\end{cases}
\end{equation*}
has  a unique solution in $\mathfrak{B}_{p,2}(t,T)$ which is now denoted by $(Y^{n;t,x},Z^{n;t,x})$. Let $F_{n}(x)$ be a smooth cutoff function such that $F_{n}(x)=1$ for $|x|\le n$ and $F_{n}(x)=0$ for $|x|>n+1$. Denote \[\eta^{\delta}(t,x):=\int_{\mathbb{R}}\rho^{\delta}(t-s)\eta(s,x)ds,\ t\in[0,T],x\in\mathbb{R}^{d},\]
where $\rho\in C^{\infty}_{c}(\mathbb{R})$ is a mollifier and $\rho^{\delta}(t): = \rho(t/\delta)/\delta$. By the proof of \cite[Lemma~A.2]{BSDEYoung-I}, we have $\lim_{\delta\downarrow 0}\|F_{n}(x)\eta(t,x) - F_{n}(x)\eta^{\delta}(t,x)\|_{\tau',\lambda} = 0$ for every $\tau'\in(0,\tau).$ Choose $(\tau',\lambda)$ such that Assumption~\ref{(A1)} holds with $(\tau,\lambda)$ replaced by $(\tau',\lambda)$. By \cite[Proposition~4.1]{BSDEYoung-I} again, the following BSDE admits a unique solution in $\mathfrak{B}_{p,2}(t,T)$: 
\begin{equation}\label{e:BSDE-Y^n,delta}
\begin{cases}
dY_{s} = - f(s,X^{t,x}_s,Y_s,Z_{s})ds - g(Y_{s})\partial_{s}\eta^{\delta}(s,X^{t,x}_{s})ds + Z_{s}dW_{s},\quad s\in[t,T^{t,x}_{n}],\\
Y_{T^{t,x}_{n}} = h(X^{t,x}_{T^{t,x}_{n}}),
\end{cases}
\end{equation} 
which is denoted by $(Y^{n,\delta;t,x},Z^{n,\delta;t,x})$.  
Clearly, 
$Y^{n,\delta;t,x}_{t}$ and  $Y^{n;t,x}_{t}$ are both constants a.s., and we denote them by two deterministic functions 
$$ u^{n}(t,x) := Y^{n;t,x}_{t} \text{ and } u^{n,\delta}(t,x) := Y^{n,\delta;t,x}_{t}, \text{ for } t\in[0,T], |x|\le n.$$

Next, we will give the representation of $Y^{n,\delta;t,x}$ by $u^{n,\delta}.$ For any $\zeta\in\mathcal{F}_{t}$, denote by $\mathscr{Y}^{n,\delta;t,\zeta}$ the unique solution to Eq.~\eqref{e:BSDE-Y^n,delta} with $X^{t,x}$ replaced by $X^{t,\zeta}$, where $X^{t,\zeta}$ is the unique solution of \eqref{e:forward system} with $x$ replaced by $\zeta$. By Lemma~\ref{lem:flow property}, there exists a progressively measurable modification of $Y^{n,\delta;t,x}_{s}(\omega)$ (which we also denote by $Y^{n,\delta;t,x}_{s}(\omega)$, for simplicity) such that 
\[\mathscr{Y}^{n,\delta;s,X^{t,x}_{ s \wedge  T^{t,x}_n  } }_s = Y^{n,\delta;s,X^{t,x}_{s\land T^{t,x}_n}}_s \text{ a.s. for } s\in[t,T].\] 
The uniqueness of the solution to \eqref{e:BSDE-Y^n,delta}  implies 
\[\mathbf{1}_{[s<T^{t,x}_{n}]}\mathscr{Y}^{n,\delta;t,x}_{s\land T^{t,x}_{n}} = \mathbf{1}_{[s<T^{t,x}_{n}]}\mathscr{Y}^{n,\delta;s,X^{t,x}_{s\land T^{t,x}_{n}}}_{s} \text{ a.s. for } s\in[t,T].\] 
Hence, we get
\begin{equation}\label{e:YnYn}
Y^{n,\delta;t,x}_s =\left\{
\begin{aligned}
&Y^{n,\delta;s,X^{t,x}_{s}}_{s}, & s<T^{t,x}_{n};\\
&h(X^{t,x}_{T^{t,x}_n}), & s\ge  T^{t,x}_{n}
\end{aligned}\right.\text{\ \ \ a.s. for } s\in[t,T],|x|\le n.
\end{equation} 
By Lemma~\ref{lem:flow property}, $u^{n,\delta}(s,X^{t,x}_{s\land T^{t,x}_{n}}) = Y^{n,\delta;s,X^{t,x}_{s\land T^{t,x}_{n}}}_{s}$ a.s., and thus \eqref{e:YnYn} yields
\begin{equation}\label{e:unYn}
Y^{n,\delta;t,x}_{s} = \left\{
\begin{aligned}
&u^{n,\delta}(s,X^{t,x}_{s}),\ & s<T^{t,x}_{n};\\
&h(X^{t,x}_{T^{t,x}_n}),\ & s\ge  T^{t,x}_{n}
\end{aligned}\right. \text{\ \ \ a.s. for } s\in[t,T],|x|\le n.
\end{equation}

Noting $\lim_{\delta\downarrow 0}\|F_{n}(x)\eta(t,x) - F_{n}(x)\eta^{\delta}(t,x)\|_{\tau',\lambda} = 0,$ by the stability result proved in \cite[Corollary~3.1]{BSDEYoung-I}, we have $u^{n,\delta}(t,x)\rightarrow u^{n}(t,x)$ and $Y^{n,\delta;t,x}_{s}\rightarrow Y^{n;t,x}_{s}$ for $s\in[t,T]$, as $\delta\rightarrow 0$. This convergence and \eqref{e:unYn} yield that
\begin{equation}\label{e:unYn'}
Y^{n;t,x}_{s} = \left\{
\begin{aligned}
&u^{n}(s,X^{t,x}_{s}),\ &s<T^{t,x}_{n};\\
&h(X^{t,x}_{T^{t,x}_n}),\ &s\ge  T^{t,x}_{n}
\end{aligned}\right. \text{\ \ \ a.s. for every } s\in[t,T],|x|\le n.
\end{equation}
On the other hand, by Theorem~\ref{thm:existence of solution of unbounded BSDEs}, $Y^{n;t,x}_{s}\rightarrow Y^{t,x}_{s}$ as $n\rightarrow\infty.$ Hence, $Y^{t,x}_{t}=\lim_n Y^{n;t,x}_t$ is a constant a.s. and we denote it by a deterministic function function $u(t,x) := Y^{t,x}_{t} =\lim u^{n}(t,x)$ for $t\in[0,T], x\in\R^d$. As $n\rightarrow \infty,$ noting that $\lim_{n}T^{t,x}_{n} = T,$ by \eqref{e:unYn'} and the fact that $\lim_{n}Y^{n;t,x}_{T} = \lim_{n}h(X^{t,x}_{T^{t,x}_{n}}) = h(X^{t,x}_{T}) = u(T,X^{t,x}_{T})$ a.s., we get 
\begin{equation}\label{e:cond1-u}
Y^{t,x}_{s} = u(s,X^{t,x}_{s})\text{ a.s. for } s\in[t,T],x\in\mathbb{R}^{d}.
\end{equation}

Noting that \[\mathbb{E}\left[\|Y^{t,x}\|^{2}_{p\text{-var};[t,T]}\right]\lesssim_{\Theta_{2}} 1 + |x|^{\frac{2(\lambda+\beta)}{\varepsilon}\vee 2} + \mathbb{E}\left[\|X^{t,x}\|^{2}_{p\text{-var};[t,T]}\right]\lesssim_{\Theta_{2}} 1 + |x|^{\frac{2(\lambda+\beta)}{\varepsilon}\vee 2},\]
where the first inequality follows from \cite[Corollary~4.1]{BSDEYoung-I} and 
the second one holds since $|h(X^{t,x}_{T})|\lesssim_{\Theta_{2}} 1 + |x| + \|X^{t,x}\|_{p\text{-var};[t,T]}$,  we have\begin{equation}\label{e:cond2-u}
|u(t,x)| = \|Y^{t,x}_{t}\|_{L^{2}(\Omega)}\le \left\|\|Y^{t,x}\|_{p\text{-var};[t,T]}\right\|_{L^{2}(\Omega)} + \|h(X^{t,x}_{T})\|_{L^{2}(\Omega)} \lesssim_{\Theta_{2}} 1 + |x|^{\frac{\lambda+\beta}{\varepsilon}\vee 1}.
\end{equation}

Combining \eqref{e:cond1-u} and \eqref{e:cond2-u}, we get \eqref{e:con-u} and the proof of part (ii) is concluded.
\end{proof}

\begin{remark}\label{rem:exit time} Condition~\eqref{e:regular set} ensures the continuity of $x\mapsto
T^{t,x}_{n}$, (see, e.g., Lemma~\ref{lem:T t,zeta}). Moreover, this continuity will be used to prove the Markov property for the forward-backward SDE (see Lemma~\ref{lem:flow property}), which will play a critical role in the proof of Theorem~\ref{thm:uniqueness of the unbounded BSDE}.

A sufficient condition for \eqref{e:regular set}, e.g., is as follows. Denote $D := \{x\in\R^d ;|x| < n\}$. Assume the strict uniform ellipticity for $\sigma(x)\sigma^{\top}(x)$ (equivalently, $\sigma^{\top}(x)\sigma(x)$) on $\bar D$, i.e., there exists $\rho>0$ such that $
a^{\top}\sigma(x)\sigma^{\top}(x)a\ge \rho|a|^{2}$ for all $x\in\bar{D}, a\in\mathbb{R}^{d}.$
Then by Lions-Menaldi \cite[Lemma~2.4]{lions1982optimal} (see also Buckdahn-Nie \cite[Lemma~3.1]{buckdahn2016generalized}), $\Lambda_{n}=D$ and hence is closed. 

\end{remark}

By the proof of part (ii) in Theorem~\ref{thm:uniqueness of the unbounded BSDE}, we have the following corollary.
 
\begin{corollary}\label{cor:u(t,x)}
Under the assumptions of Theorem~\ref{thm:uniqueness of the unbounded BSDE}, define $u(t,x):=Y^{t,x}_{t}$ for $(t,x)\in[0,T]\times\mathbb{R}^{d}$. Then, we have $Y^{t,x}_{s} = u(s,X^{t,x}_{s})$ a.s. for every  $(s,x)\in[t,T]\times\mathbb{R}^{d},$  and 
\[|u(t,x)|\le C\left(1+|x|^{\frac{\lambda+\beta}{\varepsilon}\vee 1}\right), \text{ for all }\, t\in[0,T],x\in\mathbb{R}^{d},\]
where $C>0$ is a constant independent of $(t,x)$. 
\end{corollary}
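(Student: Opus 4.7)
The plan is to observe that both conclusions were essentially already obtained during the proof of Theorem~\ref{thm:uniqueness of the unbounded BSDE}(ii), so the corollary is a repackaging. I will indicate which estimates to invoke and how to assemble them cleanly.

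First, the fact that $u(t,x) := Y^{t,x}_t$ is a well-defined deterministic function follows because $Y^{t,x}_t$ is $\mathcal{F}_t$-measurable in the Markovian FBSDE \eqref{e:forward system}--\eqref{e:backward system} with a deterministic initial point, hence constant a.s. (the trivial initial $\sigma$-field). For the identity $Y^{t,x}_s = u(s,X^{t,x}_s)$, I would reuse the approximation scheme from the proof of Theorem~\ref{thm:uniqueness of the unbounded BSDE}(ii): mollify $\eta$ to $\eta^{\delta}$, localize by the first exit time $T^{t,x}_n$ through the cutoff $F_n$, and consider the BSDE \eqref{e:BSDE-Y^n,delta} whose unique $\mathfrak{B}_{p,2}(t,T)$-solution I denote by $(Y^{n,\delta;t,x},Z^{n,\delta;t,x})$. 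Setting $u^{n,\delta}(t,x) := Y^{n,\delta;t,x}_t$, the flow property (Lemma~\ref{lem:flow property}) together with the uniqueness of \eqref{e:BSDE-Y^n,delta} gives exactly \eqref{e:unYn}, i.e.
\[
Y^{n,\delta;t,x}_s = u^{n,\delta}(s,X^{t,x}_s) \text{ for } s<T^{t,x}_n, \qquad Y^{n,\delta;t,x}_s = h(X^{t,x}_{T^{t,x}_n}) \text{ for } s\geq T^{t,x}_n.
\]
Sending $\delta \downarrow 0$ using the stability result of \cite[Corollary~3.1]{BSDEYoung-I} (available because $\|F_n(\cdot)\eta^{\delta} - F_n(\cdot)\eta\|_{\tau',\lambda}\to 0$ for some admissible $\tau'<\tau$) yields the corresponding identity \eqref{e:unYn'} with $u^n(t,x) := Y^{n;t,x}_t$. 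Then sending $n\to\infty$ using the convergence $Y^{n;t,x}_s \to Y^{t,x}_s$ from Theorem~\ref{thm:existence of solution of unbounded BSDEs}, together with $T^{t,x}_n \uparrow T$ and the continuity of $h$, gives $Y^{t,x}_s = u(s,X^{t,x}_s)$ a.s. on $[t,T]$ with $u = \lim_n u^n$.

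For the growth bound, I would combine the $p$-variation estimate of \cite[Corollary~4.1]{BSDEYoung-I} (applied along the approximating sequence and then passed to the limit via Fatou) with the Lipschitz growth of $h$ and Lemma~\ref{lem:m_p,q of X}. Concretely,
\[
|u(t,x)| = \|Y^{t,x}_t\|_{L^2(\Omega)} \leq \bigl\|\|Y^{t,x}\|_{p\text{-var};[t,T]}\bigr\|_{L^2(\Omega)} + \|h(X^{t,x}_T)\|_{L^2(\Omega)} \lesssim_{\Theta_2} 1 + |x|^{\tfrac{\lambda+\beta}{\varepsilon}\vee 1},
\]
where the exponent comes from the term $|x|^{(\lambda+\beta)/\varepsilon \vee 1}$ appearing in the moment bound of $\|Y^{t,x}\|_{p\text{-var};[t,T]}$ via the assumption \eqref{e:assump of f} on $f(t,x,0,0)$ and the terminal value $h(X^{t,x}_T)$.

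Since there is no hidden difficulty here beyond what was already executed in Theorem~\ref{thm:uniqueness of the unbounded BSDE}(ii), the only point requiring care is the justification that the limiting function $u$ is independent of the representative of $Y^{t,x}_t$ (handled by the triviality of $\mathcal{F}_t^t$) and that the identity $Y^{t,x}_s = u(s,X^{t,x}_s)$ survives the double limit $\delta\downarrow 0$, $n\to\infty$ in the exceptional set. This is controlled by the $\mathfrak{H}_{p,q}$-convergence and the monotone convergence $T^{t,x}_n \uparrow T$, which together with continuity of sample paths of $X^{t,x}$ and continuity of $h$ handle the right-hand side on $\{s\geq T^{t,x}_n\}$ in the limit.
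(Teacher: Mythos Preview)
Your proposal is correct and matches the paper's approach exactly: the paper states the corollary as an immediate consequence of the proof of Theorem~\ref{thm:uniqueness of the unbounded BSDE}(ii), and the steps you outline (the mollification-localization scheme leading to \eqref{e:unYn'}, the passage to the limit via Theorem~\ref{thm:existence of solution of unbounded BSDEs}, and the growth bound via \cite[Corollary~4.1]{BSDEYoung-I}) are precisely the equations \eqref{e:cond1-u} and \eqref{e:cond2-u} established there.
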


\subsection{Linear BSDEs}\label{subsec:linear case}

In view of the importance of the linear case, for instance, in Pontryagin's stochastic maximum principle (see the first motivation in Part I \cite[Section~1.1]{BSDEYoung-I}), we consider the following linear BSDE,
\begin{equation}\label{e:BSDE-linear}
Y_{t} = \xi + \sum_{i=1}^{M}\int_{t}^{T} \alpha^{i}_{r}Y_{r}\eta_{i}(dr,X_{r}) + \int_{t}^{T} \left( Z_{r}G_{r} + f_{r}\right) dr - \int_{t}^{T}Z_{r}dW_{r},\ t\in[0,T],
\end{equation}
where $\xi\in\mathcal{F}_{T}$, $X_t$ and 
$\alpha_t$ are continuous adapted  processes, and $f$ and $G$ are progressively measurable functions. We make the following (weaker) assumption for this linear case.

\begin{assumptionp}{(A2)}\label{(A2)} 
Let $\tau,\lambda,p$ be the parameters  satisfying Assumption~\ref{(H0)}.
\begin{itemize}
\item[$(1)$] There exist $\rho,\chi,q\in(0,\infty)$ such that
\begin{equation}\label{e:X-exp}
\mathbb{E}\left[\exp\left\{\rho\left(\|X\|^{\chi}_{p\text{-var};[0,T]} + |X_0|^{\chi}\right)\right\}\right]<\infty;\ \mathbb{E}\left[\exp\left\{\|\alpha\|^{q}_{p\text{-var};[0,T]} + |\alpha_0|^{q}\right\}\right]<\infty.
\end{equation}

\item[$(2)$] There exists $\beta\ge 0$ such that $\eta\in C^{\tau,\lambda;\beta}([0,T]\times\mathbb{R}^{d};\mathbb{R}^{M})$. 

\item[$(3)$] $(\tau,\lambda,\beta,q)$ satisfies (recall that $N$ is the dimension of  $\xi$ and $Y_t$)
\begin{equation*}
\left\{\begin{aligned}
\lambda+\beta<\chi\text{ and }q>\frac{\chi}{\chi-(\lambda+\beta)},\quad \text{if } N=1;\\
\frac{\lambda+\beta}{\tau}<\chi\text{ and }q>\frac{\chi}{\chi\tau-(\lambda+\beta)},\quad\text{if } N>1.
\end{aligned}\right.
\end{equation*}

\item[$(4)$] Denote \begin{equation}\label{e:exp martingale}
M_{t} := \exp\left\{\int_{0}^{t}(G_{r})^{\top} dW_{r} - \frac{1}{2}\int_{0}^{t} |G_{r}|^{2} dr\right\}.
\end{equation} Assume that there exists $k>1$ such that $(\xi,f)$ satisfies
$$\E\left[\left|\xi\right|^{k} M_{T}\right]\text{ and }\E\left[\left|\int_{0}^{T}|f_{r}|dr\right|^{k} M_{T}\right]<\infty.$$

\item[$(5)$]  
Assume $G$ satisfies 
\begin{equation}\label{e:G-exp}
\E\left[\exp\left\{\gamma\int_{0}^{T} |G_{r}|^{2} dr \right\}\right] < \infty, 
\end{equation}
where $\gamma$ is a positive constant such that
\begin{equation}\label{e:for the gamma}
\gamma > \frac{1}{2}\left[\left(\frac{\sqrt{v}}{\sqrt{k} 
- \sqrt{v}}\right)^{2} \vee \left(\frac{\sqrt{v}}{\sqrt{v} - 1}\right)^{2}\right] ~\text{ for some }
 v\in(1,k).\end{equation}

\end{itemize}
\end{assumptionp}

\begin{remark}\label{rem:kappa = 2}
As an example, when $X=W$ is a Brownian motion, we can choose $\chi=2$ in Condition \eqref{e:X-exp}, and then $\lambda+\beta<2$ (resp. $\frac{\lambda+\beta}{\tau}<2$) is required when $N=1$ (resp. when $N>1$) in order to  satisfy condition (3) in  Assumption~\ref{(A2)}.
\end{remark}

In \cite[Section~3.2]{BSDEYoung-I}, we studied the well-posedness of the linear BSDE~\eqref{e:BSDE-linear} assuming that $\eta$, $ \alpha$, and $\xi$ are bounded. In this section, we will remove this  boundedness restriction. To this end, we shall derive an explicit formula for the solution, which is known as the Feynman-Kac formula for the associated linear Young PDE. For simplicity of notation, we assume $M=1$ without loss of generality throughout this section.

We first consider the case with a null drift, i.e., $(f,G)=0$ in \eqref{e:BSDE-linear}. To get an explicit formula for the solution, it suffices to prove that the process
\begin{equation*}
Y_t := \E_t \left[(\Gamma_T^t)^{\top}\xi\right], t\in[0,T],
\end{equation*}
is a unique solution to \eqref{e:BSDE-linear}, where $(\Gamma_{s}^{t}, s\ge t)$ uniquely solves the following Young differential equation,
\begin{equation}\label{e:gamma'}
\begin{cases}
d\Gamma_s^t = (\alpha_{s})^{\top}\Gamma_s^t \eta(ds, X_s),\ s>t,\\
\Gamma_t^t=I_{N},
\end{cases}
\end{equation}
with $I_{N}$ denoting an $N\times N$ unit  matrix. We briefly explain the idea here and the detailed argument will be provided in \nameref{proof:5} of Theorem~\ref{thm:linear BSDE} afterwards. 

\

We introduce the following property of $\Gamma$ which plays a key role in the study of linear BSDEs.
\begin{lemma}\label{lem:For Gamma}
Assume \ref{(A2)} holds. Then for each $t\in[0,T]$, there exists a unique solution  $\Gamma^{t}_{\cdot}\in C^{\tau\text{-}\mathrm{var}}([t,T])$  to \eqref{e:gamma'}. Furthermore, we have $\Gamma^{t}_{T} = \Gamma^{s}_{T}\Gamma^{t}_{s}$ a.s.  for $s\in[t,T]$.
\end{lemma}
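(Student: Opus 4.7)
The plan is to argue pathwise. By \ref{(A2)} the random variables $\|\alpha\|_{p\text{-var};[0,T]}$ and $\|X\|_{p\text{-var};[0,T]}$ are a.s.\ finite, and $\eta\in C^{\tau,\lambda;\beta}$ together with \ref{(H0)} ensures that the nonlinear Young integral $\int \phi_{r}\,\eta(dr,X_{r})$ is well-defined for any $\phi\in C^{p\text{-var}}([t,T])$, with the quantitative bounds from \cite[Proposition~2.1, Lemma~2.1]{BSDEYoung-I}. Everything in what follows is to be read $\omega$-by-$\omega$; measurability of $\Gamma^{t}$ will come for free from the Picard construction.

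For existence and uniqueness, I would run a Picard iteration $\Gamma^{(0)}\equiv I_{N}$, $\Gamma^{(n+1)}_{s}=I_{N}+\int_{t}^{s}(\alpha_{r})^{\top}\Gamma^{(n)}_{r}\,\eta(dr,X_{r})$ on a short interval $[t,t+\delta]$. Using \cite[Lemma~2.1]{BSDEYoung-I} together with the fact that the integrand $\phi\mapsto \alpha^{\top}\phi$ is linear in $\phi$ with coefficient controlled by $\|\alpha\|_{\infty}+\|\alpha\|_{p\text{-var};[0,T]}$, the Picard map is a strict contraction on the Banach space $C^{p\text{-var}}([t,t+\delta];\R^{N\times N})$ provided $\delta$ is small enough depending on $\|\alpha\|_{p\text{-var};[0,T]}$, $\|X\|_{p\text{-var};[0,T]}$, $\|X\|_{\infty;[0,T]}$, and $\|\eta\|_{\tau,\lambda;\beta}$. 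This gives unique local solvability. Because the equation is linear, the $p$-variation of the local solution on $[t,t+\delta]$ is controlled by a constant multiple of its value at the left endpoint, so no blow-up can occur, and finitely many local pieces patch together into a unique global solution $\Gamma^{t}\in C^{p\text{-var}}([t,T])$. The claimed $\tau$-regularity in the lemma then follows from \cite[Proposition~2.1]{BSDEYoung-I}, which shows that $s\mapsto\int_{t}^{s}(\alpha_{r})^{\top}\Gamma^{t}_{r}\,\eta(dr,X_{r})$ inherits the $\tau$-time-regularity of $\eta$.

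For the flow property, I would invoke uniqueness together with the linearity of \eqref{e:gamma'}. Fix $s\in[t,T]$ and consider the process $\Psi_{u}:=\Gamma^{s}_{u}\Gamma^{t}_{s}$ for $u\in[s,T]$, where the factor $\Gamma^{t}_{s}$ is a constant matrix in $u$. Right-multiplication by a constant commutes with the integral, so
\[
\Psi_{u}=\Gamma^{t}_{s}+\int_{s}^{u}(\alpha_{r})^{\top}\Psi_{r}\,\eta(dr,X_{r}),\qquad u\in[s,T].
\]
The restriction of $\Gamma^{t}$ to $[s,T]$ satisfies the identical linear YDE with the identical initial value $\Gamma^{t}_{s}$ at $u=s$, so by the uniqueness established above (applied on $[s,T]$ with initial datum $\Gamma^{t}_{s}$), one concludes $\Gamma^{t}_{u}=\Gamma^{s}_{u}\Gamma^{t}_{s}$ for all $u\in[s,T]$, and in particular for $u=T$. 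The main technical nuisance I anticipate is the verification of the contraction estimate in the Picard step under the $\beta$-weighted seminorm: the bound from \cite[Lemma~2.1]{BSDEYoung-I} carries the factor $1+\|X\|_{\infty;[0,T]}^{\lambda+\beta}$, and one must check that the contraction constant can still be driven below $1$ on a deterministic (path-dependent but $\omega$-measurable) short interval, so that a finite chain of such steps covers $[t,T]$.
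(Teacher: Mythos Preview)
Your argument is correct, and the flow-property half is essentially identical to the paper's: you both produce a second solution (your $\Psi_{u}=\Gamma^{s}_{u}\Gamma^{t}_{s}$ on $[s,T]$, the paper's piecewise-defined $\hat\Gamma$ on $[t,T]$) and invoke uniqueness.

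For existence and uniqueness, the paper takes a slightly different and cleaner route. Rather than running a Picard iteration directly on the nonlinear Young integral, it first computes the one-dimensional driver $M_{r}:=\int_{0}^{r}\eta(d\rho,X_{\rho})$ and rewrites \eqref{e:gamma'} as the \emph{ordinary} linear Young ODE $\Gamma^{t}_{\nu}=I_{N}+\int_{t}^{\nu}(\alpha_{r})^{\top}\Gamma^{t}_{r}\,dM_{r}$, then cites a standard well-posedness result (Lejay \cite[Proposition~7]{lejay2010controlled}). This reduction absorbs the $\beta$-weighted dependence on $X$ into the finite $\frac{1}{\tau}$-variation of $M$, so the ``technical nuisance'' you flag---carrying the factor $1+\|X\|_{\infty}^{\lambda+\beta}$ through a contraction step---simply never arises. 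Your direct Picard approach works too and is self-contained, but the paper's reduction to a classical YDE is shorter and avoids re-deriving well-posedness from scratch.
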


\begin{proof}
 Eq.~\eqref{e:gamma'} can be written as, for $\nu\in[t,T],$
\begin{equation}\label{e:YDE}
\Gamma^{t}_{\nu} = I_{N} + \int_{t}^{\nu} (\alpha_{r})^{\top} \Gamma^{t}_{r} dM_{r},
\end{equation}
where $M_{t} := \int_{0}^{t}\eta(dr,X_{r})$, and the integral is Young integral. Then, by \cite[Proposition~7]{lejay2010controlled}, almost surely, \eqref{e:YDE} admits a unique solution $\Gamma^t_.$ in $C^{\tau\text{-var}}([t,T])$. 
Fix $s\in[t,T]$  and define
\begin{equation*}
\hat{\Gamma}_{\nu}:=\left\{
\begin{aligned}
&\Gamma^{t}_{\nu},\ \nu\in[t,s],\\
&\Gamma^{s}_{\nu}\Gamma^{t}_{s},\ \nu\in[s,T].
\end{aligned}
\right.
\end{equation*}
Clearly, $\hat \Gamma_\nu$ satisfies \eqref{e:YDE} for  ${\nu}\in[t,T]$. The uniqueness of the solution
 implies $\hat{\Gamma}_{\nu} = \Gamma^{t}_{\nu}$ a.s. for $\nu\in[t,T]$ and the desired result follows when $\nu = T$.
\end{proof}

The uniqueness of the solution to \eqref{e:BSDE-linear} is a consequence of  It\^o's formula. Indeed, given a solution  $(Y',Z')$  of \eqref{e:BSDE-linear}, applying  the product rule (\cite[Corollary~2.1]{BSDEYoung-I}) to $ (\Gamma_s^t)^{\top} Y'_s$ on  $[t,T]$, we have $d((\Gamma_s^t)^{\top} Y'_s)= (\Gamma_s^t)^{\top} Z_s dW_s$, and hence 
\[(\Gamma_T^t)^{\top} \xi = Y'_t + \int_t^T (\Gamma_s^t)^{\top} Z_sdW_s,\]
which implies that $Y'_t = \E_t \left[(\Gamma_T^t)^{\top}\xi\right].$

To prove the existence,  we construct a solution to Eq.~\eqref{e:BSDE-linear}.  
For any fixed $t\in[0,T]$, by the product rule, clearly the inverse of $\Gamma^t_r$ for any $r\in [t,T]$, denoted by $(\Gamma_r^t)^{-1}$, exists. Moreover, the flow $(\Gamma_\cdot^t)^{-1}$ satisfies
\begin{equation*}
(\Gamma_r^t)^{-1}=I_N -\int_t^r (\Gamma_s^t)^{-1}(\alpha_s)^\top \eta(ds, X_s), ~ r\in[t,T].
\end{equation*}
In particular, when $r=T$ we have
\begin{equation*}
(\Gamma_T^t)^{-1}=I_N -\int_t^T (\Gamma_s^t)^{-1}(\alpha_s)^\top \eta(ds, X_s).
\end{equation*}
Denote $\tilde \Gamma^t_s:=((\Gamma^t_s)^{\top})^{-1}$. Then
\begin{equation}\label{e:tilde Gamma, Gamma^-1}
\tilde \Gamma^t_T = I_{N} - \int_t^T \alpha_{s} \tilde \Gamma^t_s  \eta(ds, X_s). 
\end{equation}
Multiplying \eqref{e:tilde Gamma, Gamma^-1} by $(\Gamma_T^t)^{\top} \xi$, we obtain (noting that $(\Gamma^{t}_{T})^{\top} = (\Gamma^{t}_s)^{\top} (\Gamma^{s}_{T})^{\top}$ by Lemma~\ref{lem:For Gamma})
\[\xi = (\Gamma_T^t)^{\top} \xi - \int_t^T  \alpha_{s}(\Gamma^s_T)^{\top} \xi \eta(ds, X_s). \]
Taking the conditional expectation of the above equality, we have
\begin{equation}\label{e:product}
\E_t\left[(\Gamma^t_T)^{\top}\xi\right] = \E_t[\xi] + \E_t\left[\int_t^T  \alpha_{s}(\Gamma^s_T)^{\top} \xi \eta(ds, X_s)\right]. 
\end{equation}
Thus, assuming the following tower rule,
\begin{equation}\label{e:tower}
\E_t\left[\int_t^T  \alpha_{s} (\Gamma^s_T)^{\top} \xi \eta(ds, X_s)\right] = \mathbb{E}_{t}\left[\int_{t}^{T}\alpha_{s}\mathbb{E}_{s}\left[(\Gamma^s_T)^{\top} \xi\right]\eta(ds,X_{s})\right],
\end{equation}
the equality \eqref{e:product} implies 
\begin{equation}\label{e:BSDE-alternative}
\E_t\left[(\Gamma^t_T)^{\top} \xi\right] = \E_t\left[\xi\right] + \mathbb{E}_{t}\left[\int_{t}^{T}\alpha_{s}\mathbb{E}_{s}\left[(\Gamma^s_T)^{\top} \xi\right]\eta(ds,X_{s})\right],
\end{equation}
and hence $Y_t=\E_t[(\Gamma^t_T)^{\top}\xi]$ is a solution to \eqref{e:BSDE-linear} by the martingale representation theorem.

As indicated above, one key step for the well-posedness is to validate the tower rule~\eqref{e:tower}.  Therefore, it is sufficient to verify the conditions of Lemma~\ref{lem:tower-law 2}, which requires an estimate on $\Gamma^{t}_{T}$.

\begin{lemma}\label{lem:estim of Gamma}
Assume \ref{(A2)} holds and let $(\Gamma_s^t, s\ge t)$ be given by \eqref{e:gamma'} with $t\in[0,T]$. Then we have for all  $\kappa \ge 0$,
\begin{equation}\label{e:forward Gamma}
\mathbb{E}\left[\|\Gamma^{0}_{\cdot}\|^{\kappa}_{\frac{1}{\tau}\text{-}\mathrm{var};[0,T]}\right]\vee \mathbb{E}\left[\|(\Gamma^{0}_{\cdot})^{-1}\|^{\kappa}_{\frac{1}{\tau}\text{-}\mathrm{var};[0,T]}\right] <\infty,
\end{equation}
and
\begin{equation}\label{e:backward Gamma}
\mathbb{E}\left[\|\Gamma^{\cdot}_{T}\|^{\kappa}_{\frac{1}{\tau}\text{-}\mathrm{var};[0,T]}\right]\vee \mathbb{E}\left[\|(\Gamma^{\cdot}_{T})^{-1}\|^{\kappa}_{\frac{1}{\tau}\text{-}\mathrm{var};[0,T]}\right] <\infty.
\end{equation}
\end{lemma}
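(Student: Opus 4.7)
The plan is to derive \eqref{e:forward Gamma} through a pathwise exponential-type estimate for the linear Young ODE \eqref{e:gamma'}, and then obtain \eqref{e:backward Gamma} from the multiplicative flow relation in Lemma~\ref{lem:For Gamma}. Writing \eqref{e:gamma'} as $\Gamma^{0}_{s} = I_{N} + \int_{0}^{s}\Gamma^{0}_{r}\,dM^{\alpha}_{r}$ with nonlinear Young integrator $M^{\alpha}_{s} := \int_{0}^{s}\alpha_{r}^{\top}\eta(dr,X_{r})$, the bound for nonlinear Young integrals in \cite[Lemma~2.1]{BSDEYoung-I}, together with $\|X\|_{\infty;[0,T]}\le|X_{0}|+\|X\|_{p\text{-var};[0,T]}$, gives the pathwise estimate
\begin{equation*}
\|M^{\alpha}\|_{\frac{1}{\tau}\text{-var};[0,T]} \lesssim \|\eta\|_{\tau,\lambda;\beta}\,(|\alpha_{0}|+\|\alpha\|_{p\text{-var};[0,T]})\bigl(1+|X_{0}|^{\lambda+\beta}+\|X\|^{\lambda+\beta}_{p\text{-var};[0,T]}\bigr).
\end{equation*}
When $N=1$ the explicit formula $\Gamma^{0}_{s}=\exp(M^{\alpha}_{s})$ immediately yields $\|\Gamma^{0}\|_{\frac{1}{\tau}\text{-var}}\le \|M^{\alpha}\|_{\frac{1}{\tau}\text{-var}}\exp(2\|M^{\alpha}\|_{\infty})$. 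When $N>1$ the matrix flow cannot be exponentiated, so I use a greedy partition $0=t_{0}<\cdots<t_{K}=T$ chosen so that $\|M^{\alpha}\|_{\frac{1}{\tau}\text{-var};[t_{i},t_{i+1}]}$ equals a small fixed constant; the standard Young-ODE contraction estimate on each piece gives $\|\Gamma^{0}\|_{\frac{1}{\tau}\text{-var};[t_{i},t_{i+1}]}\lesssim|\Gamma^{0}_{t_{i}}|$ and $|\Gamma^{0}_{t_{i+1}}|\le 2|\Gamma^{0}_{t_{i}}|$, while superadditivity of $\frac{1}{\tau}$-variation bounds $K\lesssim\|M^{\alpha}\|^{1/\tau}_{\frac{1}{\tau}\text{-var};[0,T]}$. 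In both cases one arrives at
\begin{equation*}
\|\Gamma^{0}\|_{\frac{1}{\tau}\text{-var};[0,T]}\le \exp\bigl(C\|M^{\alpha}\|^{\nu}_{\frac{1}{\tau}\text{-var};[0,T]}\bigr),\qquad \nu=1\ \text{if }N=1,\quad \nu=1/\tau\ \text{if }N>1.
\end{equation*}
The inverse satisfies $d(\Gamma^{0}_{s})^{-1}=-(\Gamma^{0}_{s})^{-1}\alpha_{s}^{\top}\eta(ds,X_{s})$ and obeys the same bound.

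For \eqref{e:forward Gamma}, fix $\kappa\ge 0$ and apply Young's inequality to split
\begin{equation*}
\kappa C\|M^{\alpha}\|^{\nu}\lesssim\,(|\alpha_{0}|+\|\alpha\|_{p\text{-var}})^{\nu p}+\bigl(1+|X_{0}|^{\lambda+\beta}+\|X\|^{\lambda+\beta}_{p\text{-var}}\bigr)^{\nu p'},
\end{equation*}
with $1/p+1/p'=1$. The two pieces have finite exponential moments of every rate provided $\nu p<q$ and $\nu p'(\lambda+\beta)<\chi$, the strict inequalities ensuring that each exponent is strictly below the tail rates of $\alpha$ and $X$ supplied by \eqref{e:X-exp}. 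These two requirements reduce to $p\in\bigl(\chi/(\chi-\nu(\lambda+\beta)),\,q/\nu\bigr)$, a non-empty interval precisely by the strict inequality in condition~(3) of \ref{(A2)}: for $N=1$ ($\nu=1$) this amounts to $q>\chi/(\chi-(\lambda+\beta))$, while for $N>1$ ($\nu=1/\tau$) it amounts to $q>\chi/(\chi\tau-(\lambda+\beta))$. Cauchy--Schwarz then combines the two pieces to give $\mathbb{E}[\exp(\kappa C\|M^{\alpha}\|^{\nu})]<\infty$ and hence $\mathbb{E}[\|\Gamma^{0}\|^{\kappa}_{\frac{1}{\tau}\text{-var}}]<\infty$; the same argument applies to $(\Gamma^{0})^{-1}$, proving \eqref{e:forward Gamma}.

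Finally, by Lemma~\ref{lem:For Gamma}, $\Gamma^{s}_{T}=\Gamma^{0}_{T}(\Gamma^{0}_{s})^{-1}$ and $(\Gamma^{s}_{T})^{-1}=\Gamma^{0}_{s}(\Gamma^{0}_{T})^{-1}$, which yields
\begin{equation*}
\|\Gamma^{\cdot}_{T}\|_{\frac{1}{\tau}\text{-var};[0,T]}\le |\Gamma^{0}_{T}|\cdot\|(\Gamma^{0}_{\cdot})^{-1}\|_{\frac{1}{\tau}\text{-var};[0,T]},\quad \|(\Gamma^{\cdot}_{T})^{-1}\|_{\frac{1}{\tau}\text{-var};[0,T]}\le |(\Gamma^{0}_{T})^{-1}|\cdot\|\Gamma^{0}_{\cdot}\|_{\frac{1}{\tau}\text{-var};[0,T]}.
\end{equation*}
Cauchy--Schwarz combined with \eqref{e:forward Gamma} then delivers \eqref{e:backward Gamma}. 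The principal obstacle is the matrix case of Step~1: the absence of a commutative exponential formula forces the greedy-partition iteration, which introduces the extra $1/\tau$ power in the pathwise bound. This extra factor is precisely what weakens the requirement in condition~(3) of \ref{(A2)} from $q>\chi/(\chi-(\lambda+\beta))$ to $q>\chi/(\chi\tau-(\lambda+\beta))$, and verifying that the strict inequality in~(3) leaves enough room to choose the Young exponent so as to capture moments of \emph{every} order is the technical crux of the argument.
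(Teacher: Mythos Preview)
Your proof is correct and follows essentially the same strategy as the paper: obtain a pathwise exponential bound for $\Gamma$ by iterating the local linear Young--ODE estimate over small subintervals, then split the product $(|\alpha_{0}|+\|\alpha\|_{p\text{-var}})\,(1+|X_{0}|^{\lambda+\beta}+\|X\|^{\lambda+\beta}_{p\text{-var}})$ via Young's inequality so that each factor matches the exponential-moment hypotheses in \ref{(A2)}. The only organizational difference is that you prove \eqref{e:forward Gamma} first and then deduce \eqref{e:backward Gamma} from the flow identity $\Gamma^{s}_{T}=\Gamma^{0}_{T}(\Gamma^{0}_{s})^{-1}$ of Lemma~\ref{lem:For Gamma}, whereas the paper derives and analyzes the backward equation for $(\Gamma^{\cdot}_{T})^{-1}$ directly; your reduction is slightly cleaner but the underlying estimates are the same.
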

\begin{proof}
We will only prove \eqref{e:backward Gamma}, and \eqref{e:forward Gamma} can be shown in a similar way.

{\bf Case $N>1$.}  We first prove $\mathbb{E}\left[\|(\Gamma^{\cdot}_{T})^{-1}\|^{\kappa}_{\frac{1}{\tau}\text{-var};[0,T]}\right] <\infty$. Clearly  \eqref{e:gamma'} is equivalent to
\begin{equation}\label{e:Gamma_T}
\Gamma^{t}_{T} = I_{N} + \int_{t}^{T}(\alpha_{r})^{\top}\Gamma^{t}_{r}\eta(dr,X_r), ~t\in[0,T].
\end{equation}
Noting $\Gamma^{r}_{T}\Gamma^{t}_{r} = \Gamma^{t}_{T}$ by Lemma~\ref{lem:For Gamma}, we get the following equation of $(\Gamma_T^\cdot)^{-1}$:
\begin{equation}\label{e:Gamma-1}
I_{N} = (\Gamma^{t}_{T})^{-1} + \int_{t}^{T} (\alpha_{r})^{\top}(\Gamma^{r}_{T})^{-1}\eta(dr,X_r).
\end{equation}
Since $(\Gamma^{r}_{T})^{-1} = \Gamma^{0}_r (\Gamma^{0}_{T})^{-1}$ and $\Gamma^{0}_{\cdot}\in C^{\frac{1}{\tau}\text{-var}}([0,T];\mathbb{R}^{N\times N})$ a.s., we have $(\Gamma^{\cdot}_{T})^{-1}\in C^{\frac{1}{\tau}\text{-var}}([0,T];\mathbb{R}^{N\times N})$ a.s. Applying the estimate for nonlinear Young integrals (\cite[Proposition~2.1]{BSDEYoung-I}) to $\int_{\cdot}^{T}(\alpha_r)^{\top}(\Gamma^{r}_{T})^{-1}\eta(dr,X_r)$ on the right-hand side of \eqref{e:Gamma-1}, we can get
\begin{equation}\label{e:eq-Gamma-1}
\begin{aligned}
\|(\Gamma^{\cdot}_{T})^{-1}\|_{\frac{1}{\tau}\text{-var};[t,T]}\le C&|T-t|^{\tau}\|\eta\|_{\tau,\lambda;\beta}\left(1 + \|X\|_{p\text{-var};[0,T]} + |X_{0}|\right)^{\lambda+\beta}\\
&\times \left(\|\alpha\|_{p\text{-var};[0,T]} + |\alpha_{0}|\right)\left(1 + \|(\Gamma^{\cdot}_{T})^{-1}\|_{\frac{1}{\tau}\text{-var};[t,T]}\right),
\end{aligned}
\end{equation}
where $C$ is a positive constant depending on the parameters in \ref{(A2)}.  

Firstly we estimate $\|(\Gamma^{\cdot}_{T})^{-1}\|_{\frac{1}{\tau}\text{-var}}$ on small intervals. 
Denote the length of small intervals by
\begin{equation}\label{e:theta}
\theta:=\left(2C\|\eta\|_{\tau,\lambda;\beta}\left(1 + \|X\|_{p\text{-var};[0,T]} + |X_{0}|\right)^{\lambda+\beta}\left(\|\alpha\|_{p\text{-var};[0,T]} + |\alpha_0|\right)\right)^{-\frac{1}{\tau}}\wedge T.
\end{equation}
Then, we get by \eqref{e:eq-Gamma-1},  
\begin{equation*}
\|(\Gamma^{\cdot}_{T})^{-1}\|_{\frac{1}{\tau}\text{-var};[t,T]} \le 1\ \text{  and  }\ \|(\Gamma^{\cdot}_{T})^{-1}\|_{\infty;[t,T]} \le \bar C:= 1 + \sqrt{N}, ~ \text{ for } t\in [T-\theta, T],
\end{equation*}
where the second inequality is due to the fact  $\|(\Gamma^{\cdot}_{T})^{-1}\|_{\infty;[t,T]}\le \|(\Gamma^{\cdot}_{T})^{-1}\|_{\frac{1}{\tau}\text{-var};[t,T]} + |I_{N}|$. Similarly, we have, for every $t\in [T-(n+1)\theta, T-n\theta]$ (we stipulate $T-n\theta=0$ if $T-n\theta<0$), 
\begin{equation}\label{e:Gamma^-1}
\|(\Gamma^{\cdot}_{T-n\theta})^{-1}\|_{\frac{1}{\tau}\text{-var};[t,T-n\theta]} \le 1\ \text{  and  }\ \|(\Gamma^{\cdot}_{T-n\theta})^{-1}\|_{\infty;[t,T-n\theta]} \le  \bar C.
\end{equation}

We are ready to estimate $\|(\Gamma^{\cdot}_{T})^{-1}\|_{\frac{1}{\tau}\text{-var};[0,T]}$. 
By \eqref{e:Gamma^-1} and the fact that $(\Gamma^{a}_{c})^{-1} = (\Gamma^{a}_{b})^{-1}(\Gamma^{b}_{c})^{-1}$ for $a\le b\le c$, we have
\begin{equation}\label{e:Gamma^-1'}
\|(\Gamma^{\cdot}_{T})^{-1}\|_{\infty;[T-n\theta,T]} \le {\bar C}^{n}.
\end{equation}
Thus, 
\begin{equation*}
\begin{aligned}
&\|(\Gamma^{\cdot}_{T})^{-1}\|_{\frac{1}{\tau}\text{-var};[T-(n+1)\theta,T]} \\
&\le \|(\Gamma^{\cdot}_{T-n\theta })^{-1}(\Gamma^{T-n\theta}_{T})^{-1}\|_{\frac{1}{\tau}\text{-var};[T-(n+1)\theta,T-n\theta]} + \|(\Gamma^{\cdot}_{T})^{-1}\|_{\frac{1}{\tau}\text{-var};[T-n\theta,T]}\\
&\le  | (\Gamma^{T-n\theta}_{T})^{-1}|\cdot \|(\Gamma^{\cdot}_{T-n\theta })^{-1}\|_{\frac{1}{\tau}\text{-var};[T-(n+1)\theta,T-n\theta]}+\|(\Gamma^{\cdot}_{T})^{-1}\|_{\frac{1}{\tau}\text{-var};[T-n\theta,T]}\\
&\le \bar{C}^{n} + \|(\Gamma^{\cdot}_{T})^{-1}\|_{\frac{1}{\tau}\text{-var};[T-n\theta,T]},
\end{aligned}
\end{equation*}
where the last step follows from \eqref{e:Gamma^-1} and \eqref{e:Gamma^-1'}. 
Multiplying the above inequality  by $(\bar{C} - 1)$  recursively yields 
\[(\bar{C} - 1)\|(\Gamma^{\cdot}_{T})^{-1}\|_{\frac{1}{\tau}\text{-var};[T-(n+1)\theta,T]} - \bar{C}^{n + 1}\le (\bar{C} - 1)\|(\Gamma^{\cdot}_{T})^{-1}\|_{\frac{1}{\tau}\text{-var};[T-n\theta,T]} - \bar{C}^{n}\le \cdots \le -1,\]
and then 
\[\|(\Gamma^{\cdot}_{T})^{-1}\|_{\frac{1}{\tau}\text{-var};[0,T]} \le (\bar{C}^{\frac{T}{\theta}+1} - 1)/(\bar{C} - 1).\]
Recalling that $\theta$ is given in \eqref{e:theta},  we can find a constant $C'>0$ such that
\begin{equation}\label{e:Gamma 1/tau}
\|(\Gamma^{\cdot}_{T})^{-1}\|_{\frac{1}{\tau}\text{-var};[0,T]} \le C'\exp\Big\{C' \Big(1 + \|X\|^{\frac{\lambda+\beta}{\tau}}_{p\text{-var};[0,T]} + |X_{0}|^{\frac{\lambda+\beta}{\tau}}\Big)\Big(\|\alpha\|^{\frac{1}{\tau}}_{p\text{-var};[0,T]} + |\alpha_0|^{\frac{1}{\tau}}\Big)\Big\},
\end{equation}
and thus by Young's inequality we have for some constant $C''>0$,
\begin{equation*}
\begin{aligned}
&\mathbb{E}\left[\|(\Gamma^{\cdot}_{T})^{-1}\|^{\kappa}_{\frac{1}{\tau}\text{-var};[0,T]}\right]\\
&\qquad \le C'' \mathbb{E}\left[\exp\Big\{C''\Big(\|X\|^{\frac{q(\lambda+\beta)}{q\tau - 1}}_{p\text{-var};[0,T]} + |X_{0}|^{\frac{q(\lambda+\beta)}{q\tau - 1}}\Big) + \frac{1}{2}\left(\|\alpha\|^{q}_{p\text{-var};[0,T]} + |\alpha_0|^{q}\right)\Big\}\right],
\end{aligned}
\end{equation*}
which is finite due to the condition \eqref{e:X-exp} and the fact $\frac{q(\lambda+\beta)}{q\tau - 1} < \chi$.

Secondly, we show that $\mathbb{E}\left[\|\Gamma^{\cdot}_{T}\|^{\kappa}_{\frac{1}{\tau}\text{-var};[0,T]}\right]<\infty$. For any fixed $t\in[0,T]$, by \eqref{e:tilde Gamma, Gamma^-1}, we have
$$(\Gamma_T^t)^{-1}=I_N -\int_t^T (\Gamma_r^t)^{-1}(\alpha_r)^\top \eta(dr, X_r), ~ t\in[0,T].
$$
Left-multiplying both sides by $\Gamma_T^t$ and noting that $\Gamma_T^t=\Gamma_T^r\Gamma_r^t$ for $r\in[t,T]$, we get the equation which $\Gamma_T^\cdot$ satisfies
$$\Gamma_T^t=I_N+\int_t^T \Gamma_T^r (\alpha_r)^\top \eta(dr,X_r), ~ t\in[0,T].$$
Then, we can prove $\mathbb{E}\left[\|\Gamma^{\cdot}_{T}\|^{\kappa}_{\frac{1}{\tau}\text{-var};[0,T]}\right]<\infty$ in the same way as for $(\Gamma^{\cdot}_{T})^{-1}$.

{\bf Case $N=1$.} Note that $\Gamma^{t}_{T}$ and $(\Gamma^{t}_{T})^{-1}$ have explicit expressions respectively
\begin{equation*}
\Gamma^{t}_{T} = \exp\Big\{\int_{t}^{T}\alpha_{r}\eta(dr,X_r)\Big\}\ \text{ and }\  (\Gamma^{t}_{T})^{-1} = \exp\Big\{\int_{t}^{T}-\alpha_{r}\eta(dr,X_r)\Big\},
\end{equation*}
which enables us to relax the condition for $N=1$ (see condition (3) in Assumption \ref{(A2)}). 

By the mean value theorem, for any partition $\pi$ on $[0,T]$ we have
\begin{equation*}
\begin{aligned}					
&\sum_{[t_{i},t_{i+1}]\in\pi}\big|\Gamma_T^{t_i}-\Gamma_T^{t_{i+1}}\big|^{\frac{1}{\tau}}\le \sup_{s\in[0,T]}\exp\left\{\frac{1}{\tau}\int_{s}^{T}\alpha_{r}\eta(dr,X_{r})\right\} \sum_{[t_{i},t_{i+1}]\in\pi} \left|\int_{t_{i}}^{t_{i+1}}\alpha_{r}\eta(dr,X_{r})\right|^{\frac{1}{\tau}},
\end{aligned}
\end{equation*}
and hence
\begin{equation}\label{e:Gamma-kappa}
\begin{aligned}
&\mathbb{E}\left[\left\|\Gamma_T^{\cdot}\right\|^{\kappa}_{\frac{1}{\tau}\text{-var};[0,T]}\right] \\
& \le \mathbb{E}\left[\sup_{s\in[0,T]}\exp\Big\{\kappa\int_{s}^{T}\alpha_{r}\eta(dr,X_{r})\Big\} \Big\| \int_{\cdot}^{T}\alpha_{r}\eta(dr,X_{r})\Big\|^{\kappa}_{\frac{1}{\tau}\text{-var};[0,T]}\right]\\
& \le \left\{\mathbb{E}\bigg[\sup_{s\in[0,T]}\exp\Big\{2\kappa\int_{s}^{T}\alpha_{r}\eta(dr,X_{r})\Big\}\bigg]\right\}^{\frac{1}{2}} \cdot \Bigg\{\mathbb{E}\left[\Big\| \int_{\cdot}^{T}\alpha_{r}\eta(dr,X_{r})\Big\|^{2\kappa}_{\frac{1}{\tau}\text{-var};[0,T]}\right]\Bigg\}^{\frac{1}{2}}.
\end{aligned}
\end{equation}
By \cite[Proposition~2.1]{BSDEYoung-I}, there exists $C'>0$ such that
\[\Big\|\int_{\cdot}^{T}\alpha_{r}\eta(dr,X_r)\Big\|_{\frac{1}{\tau}\text{-var};[0,T]}\le C'\left(1  + \|X\|^{\lambda+\beta}_{p\text{-var};[0,T]} + |X_0|^{\lambda+\beta}\right)\left(\|\alpha\|_{p\text{-var};[0,T]} + |\alpha_0|\right).\]
Plugging this into \eqref{e:Gamma-kappa}, we have that
\begin{equation*}
\begin{aligned}
&\mathbb{E}\left[\left\|\Gamma_T^{\cdot}\right\|^{\kappa}_{\frac{1}{\tau}\text{-var};[0,T]}\right]\\ &\le \left\{\mathbb{E}\left[\exp\left\{2\kappa C' \left(1 + \|X\|^{\lambda+\beta}_{p\text{-var};[0,T]} + |X_{0}|^{\lambda+\beta}\right)\left(\|\alpha\|_{p\text{-var};[0,T]} + |\alpha_0|\right)\right\}\right]\right\}^{\frac{1}{2}}\\
&\quad \times (C')^{\kappa}\left\{\mathbb{E}\left[\left(1 + \|X\|^{\lambda+\beta}_{p\text{-var};[0,T]}+|X_0|^{\lambda+\beta}\right)^{2\kappa}\left(\|\alpha\|_{p\text{-var};[0,T]}+|\alpha_0|\right)^{2\kappa}\right]\right\}^{\frac{1}{2}}\\
&\lesssim_{\kappa,C'} \left\{\mathbb{E}\left[\exp\Big\{C''\left( \|X\|^{\frac{q(\lambda+\beta)}{q-1}}_{p\text{-var};[0,T]} + |X_0|^{\frac{q(\lambda+\beta)}{q-1}}\right)\Big\}\right]\right\}^{\frac{1}{4}}\left\{\mathbb{E}\left[\exp\left\{\|\alpha\|^{q}_{p\text{-var};[0,T]} + |\alpha_0|^{q}\right\}\right]\right\}^{\frac{1}{4}}\\
&\quad \times \left\{\mathbb{E}\left[\left(1 + \|X\|^{\lambda+\beta}_{p\text{-var};[0,T]} + |X_{0}|^{\lambda+\beta}\right)^{2\kappa}\left(\|\alpha\|_{p\text{-var};[0,T]} + |\alpha_0|\right)^{2\kappa}\right]\right\}^{\frac{1}{2}}
\end{aligned}
\end{equation*}
is finite, where the second inequality follows from Young's inequality and  H\"older's inequality, and the last one is due to Assumption~\ref{(A2)}. 

The finiteness of $\mathbb{E}\left[\|(\Gamma^{\cdot}_{T})^{-1}\|^{\kappa}_{\frac{1}{\tau}\text{-var};[0,T]}\right]$ can be proven in the same way. 
\end{proof}

Now we are ready to establish the well-posedness of the linear BSDE \eqref{e:BSDE-linear}. 

\begin{theorem}\label{thm:linear BSDE}
Assume \ref{(A2)} holds. Then, there exists a unique solution $(Y,Z)$ to BSDE~\eqref{e:BSDE-linear} in $\mathfrak{H}_{p,v}(0,T)$ where $v$ is a parameter appearing in \eqref{e:for the gamma}.
Moreover, 
\begin{equation}\label{e:in Theorem 5}
Y_{t} = \mathbb{E}_t\left[\left((\Gamma^{t}_{T})^{\top}\xi + \int_{t}^{T}\Gamma^{t}_{s} f_{s} ds \right) M_{T}\right]M^{-1}_{t}\ \text{a.s.} ~ \text{ for }~ t\in[0,T],
\end{equation}
where $M_{t}$ is defined in \eqref{e:exp martingale},
and $\Gamma^{t}_\cdot$ is the  unique solution to the following Young differential equation,
\begin{equation}\label{eq:Gamma}
\Gamma^{t}_s = I_{N} + \sum_{i=1}^{M}\int_{t}^{s}(\alpha^{i}_r)^{\top}\Gamma^{t}_{r}\eta_{i}(dr,X_r),\quad s\in[t,T],
\end{equation}
with $I_{N}$ denoting the $N\times N$ identity matrix.			
\end{theorem}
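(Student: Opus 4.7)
The plan is to construct the explicit solution~\eqref{e:in Theorem 5}, verify it solves BSDE~\eqref{e:BSDE-linear}, and argue uniqueness via the product rule, following the strategy already sketched in the excerpt but executing it in three stages: (i) a Girsanov reduction to the driftless case; (ii) a tower-rule construction based on $\Gamma^t_\cdot$; (iii) a posteriori integrability in $\mathfrak{H}_{p,v}(0,T)$.

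First, I would introduce the equivalent measure $d\widetilde{\mathbb{P}}/d\mathbb{P}|_{\mathcal{F}_T}=M_T$. Novikov's condition holds because of~\eqref{e:G-exp}, so $\widetilde W_t:=W_t-\int_0^t G_r\,dr$ is a Brownian motion under $\widetilde{\mathbb{P}}$, and BSDE~\eqref{e:BSDE-linear} takes the driftless form
\begin{equation*}
Y_t=\xi+\int_t^T\alpha_r Y_r\,\eta(dr,X_r)+\int_t^T f_r\,dr-\int_t^T Z_r\,d\widetilde W_r.
\end{equation*}
This reduces the problem to the case $G\equiv 0$ under $\widetilde{\mathbb{P}}$, at the cost of having to control everything with respect to the new measure.

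Second, for the driftless BSDE I would define
\begin{equation*}
Y_t:=\widetilde{\mathbb{E}}_t\Big[(\Gamma^t_T)^{\top}\xi+\int_t^T\Gamma^t_s f_s\,ds\Big],
\end{equation*}
where $\Gamma^t_\cdot$ solves~\eqref{eq:Gamma} (well-posed pathwise by Lemma~\ref{lem:For Gamma}). The heuristic computation in~\eqref{e:product}--\eqref{e:BSDE-alternative} shows that, modulo the tower rule~\eqref{e:tower} for the nonlinear Young integral, $Y_t$ satisfies the BSDE, with $Z$ then produced by the martingale representation theorem applied to the $\widetilde{\mathbb{P}}$-martingale $\widetilde{\mathbb{E}}_t[(\Gamma^0_T)^{\top}\xi+\int_0^T\Gamma^0_s f_s\,ds]$ after multiplying through by $((\Gamma^0_t)^{\top})^{-1}$. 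Validating~\eqref{e:tower} amounts to checking the hypotheses of Lemma~\ref{lem:tower-law 2}, and the needed integrability of $s\mapsto\alpha_s(\Gamma^s_T)^{\top}\xi$ in $\tfrac{1}{\tau}$-variation is exactly what Lemma~\ref{lem:estim of Gamma} delivers (for any $\kappa\ge 0$). Reverting to $\mathbb{P}$ by Bayes' formula yields~\eqref{e:in Theorem 5}. Uniqueness is then a direct consequence of the product rule \cite[Corollary~2.1]{BSDEYoung-I}: applying it to $(\Gamma^t_s)^{\top}Y'_s$ for any other solution $(Y',Z')\in\mathfrak{H}_{p,v}(0,T)$ turns~\eqref{e:BSDE-linear} into a martingale equation whose conditional expectation forces $Y'_t$ to coincide with the formula above.

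Third, the main obstacle is verifying $(Y,Z)\in\mathfrak{H}_{p,v}(0,T)$. The delicate point is that after Bayes' rule, bounds on $Y$ pick up an extra $M_T M_t^{-1}$ factor. The condition~\eqref{e:for the gamma} is engineered precisely so that, for some $v\in(1,k)$, one can apply Hölder's inequality with conjugate exponents tuned to the gap between $v$ and $k$ and absorb the $M_T^{-1}$ contribution into a finite moment of $\exp\{\gamma\int_0^T|G_r|^2\,dr\}$ (via~\eqref{e:G-exp}). Combined with the uniform moment bounds on $\|\Gamma^\cdot_T\|_{\frac{1}{\tau}\text{-var};[0,T]}$ from Lemma~\ref{lem:estim of Gamma} and the standing hypothesis that $|\xi|^k M_T$ and $|\int_0^T|f_r|\,dr|^k M_T$ are integrable, this will produce the required finite $L^v$-bounds on $\|Y\|_{p\text{-var}}$, $Y_T$, and the quadratic-variation norm of $Z$ (for the latter via Lemma~\ref{lem:estimate for Z}). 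Packaging these estimates carefully is the only place where the proof is more than a bookkeeping exercise.
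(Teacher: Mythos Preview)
Your proposal is correct and follows essentially the same strategy as the paper: Girsanov reduction to $G\equiv 0$, explicit construction via the flow $\Gamma^t_\cdot$ with the tower rule validated through Lemma~\ref{lem:estim of Gamma} and Lemma~\ref{lem:tower-law 2}, and uniqueness via the product rule. The paper orders the steps differently (first $f=G=0$, then $G=0$, then Girsanov in Step~4), but the content is the same.

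One point to sharpen: your uniqueness sketch is slightly too quick. After the product rule, $(\Gamma^t_s)^\top Y'_s$ has increment $-(\Gamma^t_s)^\top f_s\,ds + (\Gamma^t_s)^\top Z'_s\,d\widetilde W_s$, so to take conditional expectation you need $\int_t^T(\Gamma^t_s)^\top Z'_s\,d\widetilde W_s$ to be a true $\widetilde{\mathbb{P}}$-martingale. This requires the embedding $\mathfrak{H}_{p,v}(0,T)\subset\tilde{\mathfrak{H}}_{p,v'}(0,T)$ for some $v'>1$, which is where the \emph{second} branch of condition~\eqref{e:for the gamma} (the $(\sqrt v/(\sqrt v-1))^2$ term) enters. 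The paper establishes both embeddings $\tilde{\mathfrak{H}}_{p,k'}\subset\mathfrak{H}_{p,v}$ (for existence, using the first branch) and $\mathfrak{H}_{p,v}\subset\tilde{\mathfrak{H}}_{p,v'}$ (for uniqueness, using the second) via H\"older estimates against $M_T^{\pm 1}$; you should make both directions explicit.
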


\begin{proof}[Proof of Theorem~\ref{thm:linear BSDE}]\makeatletter\def\@currentlabelname{the proof}\makeatother\label{proof:5}
We split our proof into four steps. In step 1, we  establish some moment estimations. In step 2, we  prove the well-posedness 
for the case $f,G\equiv 0$.  In step 3, we consider the case $G\equiv 0$,  and  in the step 4, we complete our proof for the general case.

\textbf{Step 1.} Recall that $\Gamma^{t}_{\cdot}$ solves Eq.~\eqref{e:gamma'}, and $k>1$ is a constant such that $\|\xi\|_{L^{k}(\Omega)}<\infty$. In this step, we shall prove the following estimates: for each $ k'\in(1,k)$,
\begin{equation}\label{e:exp-int3'}
\mathbb{E}\left[\left\|\mathbb{E}_{\cdot}\left[(\Gamma_T^0)^{\top}\xi\right]
\right\|^{k'}_{p\text{-var};[0,T]}\right]<\infty,
\end{equation}
\begin{equation}\label{e:exp-int4}
\mathbb{E}\left[\left\|(\Gamma_T^\cdot)^{\top}\xi \right\|_{p\text{-var};[0,T]}^{k'}\right]<\infty,
\end{equation}
and 
\begin{equation}\label{e:exp-int4'}
\mathbb{E}\left[\left\|\mathbb{E}_{\cdot}\left[(\Gamma_T^\cdot)^{\top}\xi\right]\right\|_{p\text{-var};[0,T]}^{k'}\right]<\infty.
\end{equation}

By the BDG inequality for $p$-variation and Doob’s maximal inequality, we have, for $1<k'<k$,
\begin{equation}\label{e:BDG for Gamma xi}
\E\left[\left\|\mathbb{E}_{\cdot}\left[(\Gamma_T^0)^{\top}\xi\right]\right\|^{k'}_{p\text{-var};[0,T]}\right]\lesssim_{p,k'}\E\left[\left|(\Gamma_T^0)^{\top}\xi\right|^{k'}\right].
\end{equation}
Then, by \eqref{e:BDG for Gamma xi}, H\"older's inequality, and Lemma~\ref{lem:estim of Gamma}, we have
\begin{equation*}
\begin{aligned}
&\mathbb{E}\left[\left\|\mathbb{E}_{\cdot}\left[(\Gamma_T^0)^{\top}\xi\right]\right\|^{k'}_{p\text{-var};[0,T]}\right]  \lesssim_{p,k'} \left\{\mathbb{E}\left[\left|\xi\right|^{k}\right]\right\}^{\frac{k'}{k}} \Big\{\mathbb{E}\Big[\left|(\Gamma_T^0)^{\top}\right|^{\frac{kk'}{k-k'}}\Big]\Big\}^{\frac{k-k'}{k}}< \infty,
\end{aligned}
\end{equation*}
which proves \eqref{e:exp-int3'}.

Noting $\frac{1}{\tau} < 2 < p$, by Lemma~\ref{lem:estim of Gamma}, all moments of $\|(\Gamma^{\cdot}_{T})^{\top}\|_{p\text{-var};[0,T]}$ are finite. In addition, since $\|\xi\|_{L^{k}(\Omega)}<\infty$ and $k>k'$, \eqref{e:exp-int4} follows directly from H\"older's inequality.

Regarding \eqref{e:exp-int4'}, noting that $\mathbb{E}_{t}\left[(\Gamma_T^t)^{\top}\xi\right] = ((\Gamma^{0}_{t})^{-1})^{\top}\mathbb{E}_{t}\left[(\Gamma_T^0)^{\top}\xi\right]$, we have, for $1<k'<k$, 
\begin{equation*}
\begin{aligned}
\mathbb{E}\left[\left\|\mathbb{E}_{\cdot}\left[(\Gamma_T^\cdot)^{\top} \xi\right]\right\|^{k'}_{p\text{-var};[0,T]}\right] 
& \lesssim_{k'} \mathbb{E}\left[\left\|((\Gamma^{0}_{\cdot})^{-1})^{\top}\right\|^{k'}_{p\text{-var};[0,T]}\left\|\mathbb{E}_{\cdot}\left[(\Gamma_T^0)^{\top}\xi\right]\right\|^{k'}_{\infty;[0,T]}\right]\\
& \quad +  \mathbb{E}\left[\left\|((\Gamma^{0}_{\cdot})^{-1})^{\top}\right\|^{k'}_{\infty;[0,T]}\left\|\mathbb{E}_{\cdot}\left[(\Gamma_T^0)^{\top}\xi\right]\right\|^{k'}_{p\text{-var};[0,T]}\right],
\end{aligned}
\end{equation*}
the right-hand side of which is finite due to  H\"older's inequality, \eqref{e:exp-int3'}, and Lemma~\ref{lem:estim of Gamma}.

\textbf{Step 2.} 
In this step, we will prove the existence and uniqueness of the solution to \eqref{e:BSDE-linear} when $f,G\equiv 0$. By \eqref{e:exp-int4}, for $1<k'<k$ we have (noting $\|B\|_{\infty;[0,T]} \le \|B\|_{p\text{-var};[0,T]} + |B_{T}|$ for $B\in C^{p\text{-var}}$),
\begin{equation}\label{e:to use tower-law1}
\mathbb{E}\left[\left\|(\Gamma_T^\cdot)^{\top}\xi\right\|_{\infty;[0,T]}^{k'}\right] \vee \mathbb{E}\left[\left\|(\Gamma_T^\cdot)^{\top}\xi\right\|_{p\text{-var};[0,T]}^{k'}\right]<\infty;
\end{equation}
and similarly by \eqref{e:exp-int4'},
\begin{equation}\label{e:to use tower-law2}
\mathbb{E}\left[\left\|\mathbb{E}_{\cdot}\left[(\Gamma_T^\cdot)^{\top}\xi\right]\right\|_{\infty;[0,T]}^{k'}\right] \vee \mathbb{E}\left[\left\|\mathbb{E}_{\cdot}\left[(\Gamma_T^\cdot)^{\top}\xi\right]\right\|_{p\text{-var};[0,T]}^{k'}\right]<\infty.
\end{equation}
In view of \eqref{e:X-exp}, \eqref{e:to use tower-law1}, and \eqref{e:to use tower-law2}, by Lemma~\ref{lem:tower-law 2} we get
\begin{equation}\label{e:fubini}
\mathbb{E}_{t}\left[\int_{t}^{T} \alpha_r(\Gamma_T^r)^{\top}\xi\eta(dr,X_{r})\right] = \mathbb{E}_{t}\left[\int_{t}^{T} \alpha_r\mathbb{E}_{r}\left[(\Gamma_T^r)^{\top} \xi\right]\eta(dr,X_{r})\right],
\end{equation}
and thus \eqref{e:BSDE-alternative} holds, i.e.,
\begin{equation}\label{e:BSDE-alternative'}
\E_t\left[(\Gamma^t_T)^{\top}\xi\right] = \E_t[\xi]+\mathbb{E}_{t}\left[\int_{t}^{T}\alpha_{r}\mathbb{E}_{r}\left[(\Gamma^r_T)^{\top}\xi\right]\eta(dr,X_{r})\right].
\end{equation}
By \cite[Proposition~2.1]{BSDEYoung-I}, we have, for $1 < k^{*} < k'< k,$
\begin{equation*}
\begin{aligned}
&\E\left[\Big|\int_0^T \alpha_t \E_t\left[(\Gamma_T^t)^{\top}\xi\right]\eta(dt,X_t)\Big|^{k^*}\right]\\
&\lesssim_{\tau,\lambda,\beta,p,k^*}\|\eta\|_{\tau,\lambda;\beta}^{k^*}T^{k^*\tau}\E\bigg[\left(1 + \left\|X\right\|^{k^*(\lambda+\beta)}_{p\text{-var};[0,T]} + |X_0|^{k^*(\lambda+\beta)} \right)\left(\|\alpha\|^{k^{*}}_{p\text{-var};[0,T]} + |\alpha_0|^{k^{*}}\right)\\
&\qquad\qquad\qquad\qquad\qquad\qquad\qquad\qquad\qquad\qquad\quad  \times \Big(\left\|\E_{\cdot}\left[(\Gamma_T^\cdot)^{\top}\xi\right]\right\|^{k^*}_{p\text{-var};[0,T]} + |\xi|^{k^*}\Big)\bigg]
\end{aligned}
\end{equation*}
is finite, which follows from H\"older's inequality, \eqref{e:exp-int4'}, \eqref{e:X-exp} and the fact $\|xy\|_{p\text{-var}}\le \|x\|_{p\text{-var}}\|y\|_{\infty} + \|y\|_{p\text{-var}}\|x\|_{\infty}$.
Thus, 
\begin{equation*}
\Big\|\xi+\int_0^T \alpha_{t}\E_t\left[ (\Gamma_T^t)^{\top}\xi\right]\eta(dt, X_t)\Big\|_{L^{k^*}(\Omega)}<\infty.
\end{equation*} 
Then, by the martingale representation theorem (as same as the proof in \cite[Lemma~3.1]{BSDEYoung-I}), there exists a unique progressively measurable process $Z$ such that 
\begin{equation}\label{e:Z k*}
\mathbb{E}\bigg[\Big|\int_{0}^{T}|Z_{r}|^{2}dr\Big|^{\frac{k^{*}}{2}}\bigg]<\infty,
\end{equation}
and for every $t\in[0,T]$,
\begin{equation}\label{e:MRT}
\xi+\int_t^T \alpha_{s}\E_s\left[(\Gamma_T^s)^{\top}\xi\right]\eta(ds, X_s) - \mathbb{E}_{t}\left[\xi+\int_t^T \alpha_{s} \E_s\left[(\Gamma_T^s)^{\top}\xi\right]\eta(ds, X_s)\right]=\int_t^T Z_s dW_s.
\end{equation}
By \eqref{e:BSDE-alternative'} and \eqref{e:MRT}, we obtain that $Y_t := \E_t[(\Gamma_T^t)^{\top}\xi]$ together with $Z_t$ satisfies BSDE~\eqref{e:BSDE-linear} with $f,G\equiv 0$. In addition, by \eqref{e:exp-int4'} and \eqref{e:Z k*}, we have $(Y,Z)\in\mathfrak{H}_{p,k^{*}}(0,T)$.

The uniqueness of the solution can be shown by It\^o's formula.  Indeed, if $(Y,Z)\in\mathfrak{H}_{p,k^{*}}(0,T)$ is a solution of Eq.~\eqref{e:BSDE-linear}, applying the product rule (\cite[Corollary~2.1]{BSDEYoung-I}) to $(\Gamma_s^t)^{\top}Y_s $ on $[t,T]$ and then taking conditional expectation, we have 
\[Y_{t} = \mathbb{E}_{t}\left[(\Gamma_{T}^t)^{\top}\xi\right] + \mathbb{E}_t\left[\int_{t}^{T}(\Gamma^{t}_{r})^{\top} Z_r  dW_r\right].\]
While by Lemma~\ref{lem:estim of Gamma} (noting $\Gamma^{t}_{r} = (\Gamma^{r}_{T})^{-1}\Gamma^{t}_{T}$) and $\mathbb{E}\big[\big|\int_{0}^{T}|Z_r|^{2}dr\big|^{\frac{k^*}{2}}\big]<\infty$, we have
\begin{equation*}
\mathbb{E}\left[\Big|\int_{t}^{T}|(\Gamma^{t}_{r})^{\top}Z_r |^2 dr\Big|^{\frac{1}{2}}\right]<\infty.
\end{equation*}
Thus, we have $\mathbb{E}_t\big[\int_{t}^{T}(\Gamma^{t}_{r})^{\top} Z_r  dW_r\big]=0$, and then $Y_{t} = \mathbb{E}_{t}[(\Gamma_{T}^t)^{\top}\xi]$.

\textbf{ Step 3.}
In this step, we will prove the well-posedness assuming $G\equiv 0$.

Recall that $\tilde{\Gamma}^{t}_{T} = ((\Gamma^{t}_{T})^{\top})^{-1}$ satisfies \eqref{e:tilde Gamma, Gamma^-1}.
Multiplying \eqref{e:tilde Gamma, Gamma^-1} by $(\Gamma^{t}_{T})^{\top}(\xi + \int_{t}^{T}\tilde{\Gamma}^{s}_{T}f_{s}ds),$ and using the equality $(\Gamma^{t}_{T})^{\top}\tilde{\Gamma}^{s}_{T} = (\Gamma^{t}_{s})^{\top}$ and the equality $\int_{t}^{T}\alpha_{s}\tilde{\Gamma}^{t}_{s}\eta(ds,X_{s}) = I_{N} - \tilde{\Gamma}^{t}_{T}$, we have
\begin{equation}\label{e:xi + int_t^T tilde Gamma...}
\begin{aligned}
\xi + \int_{t}^{T}\tilde{\Gamma}^{s}_{T}f_{s}ds &= (\Gamma^{t}_{T})^{\top}\xi + \int_{t}^{T}(\Gamma^{t}_{s})^{\top}f_{s} ds - \int_{t}^{T}\alpha_{s}(\Gamma^{s}_{T})^{\top}\xi \eta(ds,X_{s}) \\
&\quad + \left(I_{N} - (\Gamma^{t}_{T})^{\top}\right)\int_{t}^{T}\tilde{\Gamma}^{s}_{T}f_{s}ds.
\end{aligned}
\end{equation}
In addition, for $s\in[t,T],$ since $(\Gamma^{s}_{T})^{\top} =  \tilde{\Gamma}^{t}_{s}(\Gamma^{t}_{T})^{\top}$ we have $d\left(I_{N} - (\Gamma^{s}_{T})^{\top}\right) = - d\tilde{\Gamma}^{t}_{s} (\Gamma^{t}_{T})^{\top}.$ Then in view of \eqref{e:tilde Gamma, Gamma^-1} and integration by parts formula, we have
\begin{equation*}
\begin{aligned}
\left(I_{N} - (\Gamma^{t}_{T})^{\top}\right)\int_{t}^{T}\tilde{\Gamma}^{s}_{T} f_{s} ds &= - \int_{t}^{T}\alpha_{s}\tilde{\Gamma}^{t}_{s}(\Gamma^{t}_{T})^{\top}\Big(\int_{s}^{T}\tilde{\Gamma}^{r}_{T}f_{r}dr\Big)\eta(ds,X_{s}) + \int_{t}^{T}\tilde{\Gamma}^{s}_{T}f_{s}ds - \int_{t}^{T}f_{s}ds\\
& = - \int_{t}^{T}\alpha_{s}\Big(\int_{s}^{T}(\Gamma^{s}_{r})^{\top}f_{r}dr\Big)\eta(ds,X_{s}) + \int_{t}^{T}\tilde{\Gamma}^{s}_{T}f_{s}ds - \int_{t}^{T}f_{s}ds.
\end{aligned}
\end{equation*}
By the above equality and \eqref{e:xi + int_t^T tilde Gamma...}, we have
\begin{equation}\label{e:Gamma xi} 
(\Gamma^{t}_{T})^{\top}\xi + \int_{t}^{T}(\Gamma^{t}_{s})^{\top} f_{s}ds = \xi + \int_{t}^{T}\alpha_{s}\Big( (\Gamma^{s}_{T})^{\top}\xi + \int_{s}^{T}(\Gamma^{s}_{r})^{\top}f_{r}dr\Big)\eta(ds,X_{s}) + \int_{t}^{T}f_{s}ds.
\end{equation}
In view of \eqref{e:Gamma xi}, if we have the following tower rule:
\begin{equation}\label{e:tower rule for f}
\mathbb{E}_{t}\left[\int_{t}^{T}\alpha_{s} \Big(\int_{s}^{T}(\Gamma^{s}_{r})^{\top}f_{r}dr\Big)\eta(ds,X_{s})\right] = \mathbb{E}_{t}\left[\int_{t}^{T}\alpha_{s} \mathbb{E}_{s}\Big[\int_{s}^{T}(\Gamma^{s}_{r})^{\top}f_{r}dr\Big]\eta(ds,X_{s})\right],
\end{equation}
then $Y_{t} := \E_{t}\left[ (\Gamma^{t}_{T})^{\top}\xi + \int_{t}^{T}(\Gamma^{t}_{r})^{\top}f_{r}dr\right]$
satisfies
\begin{equation*}
Y_{t} = \mathbb{E}_{t}\left[\xi + \int_{t}^{T}\alpha_{r}Y_{r} \eta(dr,X_{r}) + \int_{t}^{T}f_{r}dr\right],\quad t\in[0,T],
\end{equation*}
and by the same procedure as in step 2, for each $k^*\in(1,k)$, 
there exists a progressively measurable process $Z$ such that $(Y,Z)\in\mathfrak{H}_{p,k^*}(0,T)$, and this pair uniquely solves Eq.~\eqref{e:BSDE-linear} with $G\equiv 0$.

To complete the proof, it suffices to prove the tower rule \eqref{e:tower rule for f}. In order to apply Lemma~\ref{lem:tower-law 2}, we will establish estimates  for  the moments of the uniform norm (resp. $p$-variation norm) of $s\mapsto\int_{s}^{T}(\Gamma^{s}_{r})^{\top}f_{r}dr$ (resp. $s\mapsto\E_{s}[\int_{s}^{T}(\Gamma^{s}_{r})^{\top}f_{r}dr]$). 
Note that 
\begin{equation}\label{e:L^k norm for f}
\int_{s}^{T}(\Gamma^{s}_{r})^{\top} f_{r} dr = \tilde{\Gamma}^{0}_{s}\int_{s}^{T}(\Gamma^{0}_{r})^{\top} f_{r} dr, \quad s\in[0,T].
\end{equation}
We have 
\begin{equation*}
\begin{aligned}
\Big\|\int_{\cdot}^{T}(\Gamma^{\cdot}_{r})^{\top} f_{r} dr\Big\|_{p\text{-var};[0,T]} &\le \|\tilde{\Gamma}^{0}_{\cdot}\|_{p\text{-var};[0,T]} \int_{0}^{T}\left|(\Gamma^{0}_{r})^{\top} f_{r}\right| dr + \|\tilde{\Gamma}^{0}_{\cdot}\|_{\infty;[0,T]} \int_{0}^{T}\left|(\Gamma^{0}_{r})^{\top} f_{r}\right| dr\\
&\le\left(\|\tilde{\Gamma}^{0}_{\cdot}\|_{p\text{-var};[0,T]} + \|\tilde{\Gamma}^{0}_{\cdot}\|_{\infty;[0,T]}\right) \left\|\Gamma^{0}_{\cdot}\right\|_{\infty;[0,T]}\int_{0}^{T}|f_{r}|dr\\
&\lesssim \left(\|\tilde{\Gamma}^{0}_{\cdot}\|_{p\text{-var};[0,T]} + 1\right) \left(\|\Gamma^{0}_{\cdot}\|_{p\text{-var};[0,T]} + 1\right)\int_{0}^{T}|f_{r}|dr;
\end{aligned}
\end{equation*}
and hence by Lemma~\ref{lem:estim of Gamma} and the fact $\E[\big|\int_{0}^{T}|f_{r}|dr\big|^{k}]<\infty$, we have for $1<k''<k$,
\begin{equation}\label{e:for the drift1}
\mathbb{E}\Big[ \Big\|\int_{\cdot}^{T}(\Gamma^{\cdot}_{r})^{\top} f_{r} dr\Big\|^{k''}_{\infty;[0,T]} \Big]\le \mathbb{E}\Big[ \Big\|\int_{\cdot}^{T}(\Gamma^{\cdot}_{r})^{\top} f_{r} dr\Big\|^{k''}_{p\text{-var};[0,T]} \Big]<\infty.
\end{equation}
In addition, noting 
\begin{equation*}
\mathbb{E}_{s}\Big[\int_{s}^{T}(\Gamma^{s}_{r})^{\top}f_{r}dr\Big] = \tilde{\Gamma}^{0}_{s}\left(  \mathbb{E}_{s}\Big[ \int_{0}^{T} (\Gamma^{0}_{r})^{\top}f_{r} dr - \int_{0}^{s}(\Gamma^{0}_{r})^{\top}f_{r} dr \Big]\right), 
\end{equation*}
we have
\begin{equation}\label{e:decomposition of E_{s}[int_s^T]}
\begin{aligned}
&\Big\|\mathbb{E}_{\cdot}\Big[\int_{\cdot}^{T}(\Gamma^{\cdot}_{r})^{\top}f_{r}dr\Big]\Big\|_{p\text{-var};[0,T]} \\
&\le \|\tilde{\Gamma}^{0}_{\cdot}\|_{p\text{-var};[0,T]} \Big\|\E_{\cdot}\Big[\int_{0}^{T}(\Gamma^{0}_{r})^{\top} f_{r}dr\Big]\Big\|_{\infty;[0,T]} + \|\tilde{\Gamma}^{0}_{\cdot}\|_{\infty;[0,T]} \Big\|\E_{\cdot}\Big[\int_{0}^{T}(\Gamma^{0}_{r})^{\top} f_{r}dr\Big]\Big\|_{p\text{-var};[0,T]}\\
& \quad + \|\tilde{\Gamma}^{0}_{\cdot}\|_{p\text{-var};[0,T]} \int_{0}^{T}\left|(\Gamma^{0}_{r})^{\top} f_{r}\right|dr + \|\tilde{\Gamma}^{0}_{\cdot}\|_{\infty;[0,T]} \int_{0}^{T}\left|(\Gamma^{0}_{r})^{\top} f_{r}\right|dr .
\end{aligned}
\end{equation}
Thus, by \eqref{e:decomposition of E_{s}[int_s^T]}, H\"older's inequality, and the BDG inequality for $p$-variation, we have for $1 < k'' < k'< k$,
\begin{align}\nonumber
& \mathbb{E}\left[\Big\|\mathbb{E}_{\cdot}\Big[\int_{\cdot}^{T}(\Gamma^{\cdot}_{r})^{\top}f_{r}dr\Big]\Big\|^{k''}_{p\text{-var};[0,T]}\right]\\ \nonumber
&\lesssim_{k''} \mathbb{E}\left[\|\tilde{\Gamma}^{0}_{\cdot}\|^{k''}_{p\text{-var};[0,T]} \Big\|\E_{\cdot}\Big[\int_{0}^{T}(\Gamma^{0}_{r})^{\top} f_{r}dr\Big]\Big\|^{k''}_{\infty;[0,T]}\right] + \mathbb{E}\left[\|\tilde{\Gamma}^{0}_{\cdot}\|^{k''}_{\infty;[0,T]} \Big\|\E_{\cdot}\Big[\int_{0}^{T}(\Gamma^{0}_{r})^{\top} f_{r}dr\Big]\Big\|^{k''}_{p\text{-var};[0,T]}\right]\\ \label{e:Gamma f <= Gamma f}
&\quad + \mathbb{E}\left[\left(\|\tilde{\Gamma}^{0}_{\cdot}\|^{k''}_{p\text{-var};[0,T]} + \|\tilde{\Gamma}^{0}_{\cdot}\|^{k''}_{\infty;[0,T]}\right) \Big|\int_{0}^{T}(\Gamma^{0}_{r})^{\top} f_{r}dr\Big|^{k''}\right]\\ \nonumber
&\lesssim_{p,k'} \bigg(\left\{\mathbb{E}\Big[\|\tilde{\Gamma}^{0}_{\cdot}\|^{\frac{k''k'}{k' - k''}}_{p\text{-var};[0,T]}\Big]\right\}^{\frac{k' - k''}{k'}} + \left\{\mathbb{E}\Big[\|\tilde{\Gamma}^{0}_{\cdot}\|^{\frac{k'' k'}{k' - k''}}_{\infty;[0,T]}\Big]\right\}^{\frac{k' - k''}{k'}}\bigg) \left\{\mathbb{E}\Big[ \Big|\int_{0}^{T}(\Gamma^{0}_{r})^{\top} f_{r}dr\Big|^{k'}\Big]\right\}^{\frac{k''}{k'}}.
\end{align}
We claim that the right-hand side of the above inequality is finite. Indeed, by Lemma~\ref{lem:estim of Gamma}, we have 
\begin{equation}\label{e:tilde Gamma < infty}
\E\left[\|\tilde{\Gamma}^{0}_{\cdot}\|^{\frac{k''k'}{k' - k''}}_{\infty;[0,T]}\right] \lesssim 1 +  \E\left[\|\tilde{\Gamma}^{0}_{\cdot}\|^{\frac{k''k'}{k' - k''}}_{p\text{-var};[0,T]}\right]<\infty,
\end{equation}
and 
\begin{equation}\label{e:Gamma f < infty}
\begin{aligned}
\mathbb{E}\left[ \Big|\int_{0}^{T}(\Gamma^{0}_{r})^{\top} f_{r}dr\Big|^{k'}\right]&\le \E\left[ \left\|\Gamma^{0}_{\cdot}\right\|^{k'}_{\infty;[0,T]} \Big|\int_{0}^{T}|f_{r}| dr \Big|^{k'} \right]\\
&\le \left\{\E\Big[ \left\|\Gamma^{0}_{\cdot}\right\|^{\frac{kk'}{k-k'}}_{\infty;[0,T]}\Big]\right\}^{\frac{k-k'}{k}} \left\{\E\Big[\Big|\int_{0}^{T}\left|f_{r}\right| dr \Big|^{k} \Big]\right\}^{\frac{k'}{k}} < \infty.
\end{aligned}
\end{equation}
Combining \eqref{e:Gamma f <= Gamma f}, \eqref{e:tilde Gamma < infty}, and \eqref{e:Gamma f < infty}, we get 
\begin{equation}\label{e:for the drift2}
\mathbb{E}\left[\Big\|\mathbb{E}_{\cdot}\Big[\int_{\cdot}^{T}(\Gamma^{\cdot}_{r})^{\top}f_{r}dr\Big]\Big\|^{k''}_{\infty;[0,T]}\right]\le \mathbb{E}\left[\Big\|\mathbb{E}_{\cdot}\Big[\int_{\cdot}^{T}(\Gamma^{\cdot}_{r})^{\top}f_{r}dr\Big]\Big\|^{k''}_{p\text{-var};[0,T]}\right]<\infty.
\end{equation}

Now, the desired tower rule \eqref{e:tower rule for f} follows from  Lemma~\ref{lem:tower-law 2} in view of \eqref{e:X-exp}, \eqref{e:for the drift1}, and \eqref{e:for the drift2}.

\

\textbf{Step 4.}
In this last step, we will prove the well-posedness for  \eqref{e:BSDE-linear}.

Denote $\tilde{W}_{t} := W_t - \int_{0}^{t} G_{r} dr$. Then, by the Girsanov theorem (see, e.g., \cite[Theorem~3.5.1]{karatzas1991brownian}), $\tilde{W}$ is a $d$-dimensional Brownian motion on $(\Omega,(\mathcal{F}_{t})_{t\in[0,T]},\tilde{\mathbb{P}})$, where $\tilde{\mathbb{P}}$ is defined by
\begin{equation*}
\tilde{\mathbb{P}}(A) := \E\left[\mathbf{1}_{A}M_{T}\right],\quad A\in\mathcal{F}_{T},
\end{equation*}
with $M$ being defined in \eqref{e:exp martingale}. Hence, Eq.~\eqref{e:BSDE-linear} can be rewritten as 
\begin{equation}\label{e:new equation}
Y_{t} = \xi + \sum_{i=1}^{M}\int_{t}^{T}\alpha^{i}_{r}Y_{r}\eta(dr,X_{r}) + \int_{t}^{T}f_{r}dr - \int_{t}^{T}Z_{r}d\tilde{W}_{r},\quad t\in[0,T].
\end{equation}

Let   $\tilde{\mathbb{E}}$ be the expectation under probability $\tilde{\mathbb{P}}$ and   denote $\tilde{\mathbb{E}}_{t}[\cdot]:=\tilde{\E}[\cdot|\mathcal F_t]$. Denote, for $k^{*}>1$,
\begin{equation*}
\tilde{\mathfrak{H}}_{p,k^{*}}(0,T) := \left\{ (y,z)\in\mathfrak{B}([0,T]): \ \|(y,z)\|_{\tilde{\mathfrak{H}}_{p,k^{*}};[0,T]} < \infty \right\},
\end{equation*}
where $\|(y,z)\|_{\tilde{\mathfrak{H}}_{p,k^{*}};[0,T]}$ is defined by \eqref{e:for Hpk} with $\E[\cdot]$ replaced by $\tilde{\E}[\cdot]$. Then, noting that $\tilde{\E}\left[|\xi|^{k}\right]<\infty$ and $\tilde{\E}\left[\left|\int_{0}^{T}|f_{r}|dr\right|^{k}\right]<\infty$, by Step 3, we see that 
\begin{equation}\label{e:Y_s = tilde E}
Y_{t} := \tilde{\E}_{t}\left[(\Gamma^{t}_{T})^{\top}\xi + \int_{t}^{T}(\Gamma^{t}_{s})^{\top} f_{s} ds\right],\quad t\in[0,T],
\end{equation}
together with some process $Z$ 
uniquely solves \eqref{e:new equation} in the space $\bigcup_{1\le k^*<k}\tilde{\mathfrak{H}}_{p,k^*}(0,T)$ with 
$(Y,Z)\in \bigcap_{1\le k^*<k}\tilde{\mathfrak{H}}_{p,k^*}(0,T)$, and hence $(Y,Z)$ satisfies Eq.~\eqref{e:BSDE-linear}. Then, the Feynman-Kac representation \eqref{e:in Theorem 5} follows from applying Bayes's rule (see, e.g., \cite[Lemma~3.5.3]{karatzas1991brownian}) to  \eqref{e:Y_s = tilde E}.

To complete the proof, we still need to show that $(Y,Z)$ is the unique solution  in the space $\mathfrak{H}_{p,v}(0,T)$, where $v\in(1,k)$  is the parameter in the condition \eqref{e:for the gamma}.

We first prove that $(Y,Z)\in \mathfrak{H}_{p,v}(0,T)$, for which it suffices to verify that $\tilde{\mathfrak{H}}_{p,k'}(0,T) \subset \mathfrak{H}_{p,v}(0,T)$ for $k'\in (1,k)$ sufficiently close to $k$. 
By \eqref{e:for the gamma}, there exists $k'\in(v,k)$ such that 
\begin{equation}\label{e:ine for gamma}
\gamma > \frac{1}{2} \Big(\frac{\sqrt{v}}{\sqrt{k'} 
- \sqrt{v}}\Big)^{2}.
\end{equation}
For any random variable $\zeta\in\mathcal{F}_{T}$ with $\tilde{\E}[|\zeta|^{k'}]<\infty$, by H\"older's inequality we have
\begin{align}\nonumber
&\E\left[|\zeta|^{v}\right] \\ \nonumber
&= \E\bigg[|\zeta|^{v} \exp\Big\{\frac{v}{k'}\int_{0}^{T}(G_{r})^{\top}dW_{r} - \frac{v}{2k'} \int_{0}^{T} |G_{r}|^{2} dr - \frac{v}{k'}\int_{0}^{T} (G_{r})^{\top}dW_{r} + \frac{v}{2k'}\int_{0}^{T} |G_{r}|^{2} dr\Big\}\bigg]\\ \label{e:for the zeta^v}
&\le \left\{\tilde{\mathbb{E}}\left[|\zeta|^{k'}\right]\right\}^{\frac{v}{k'}} \left\{\E\left[ \exp\Big\{ - \frac{v}{k' - v} \int_{0}^{T}(G_{r})^{\top}dW_{r} + \frac{v}{2(k' - v)}\int_{0}^{T}|G_{r}|^{2}dr\Big\}\right]\right\}^{\frac{k' - v}{k'}}.
\end{align}
Applying H\"older's inequality to the second term on the right-hand side of \eqref{e:for the zeta^v}, we have for  $\theta > 1$,
\begin{equation}\label{e:for the Holder}
\begin{aligned}
&\left\{\E\left[ \exp\Big\{ - \frac{v}{k' - v}\int_{0}^{T}(G_{r})^{\top}dW_{r} + \frac{v}{2(k' - v)}\int_{0}^{T}|G_{r}|^{2}dr\Big\}\right]\right\}^{\frac{k' - v}{k'}}\\
&\le \left\{\E\left[\exp\Big\{ -\frac{\theta v}{k' - v}\int_{0}^{T}(G_{r})^{\top}dW_{r} - \frac{\theta^2 {v}^{2}}{2(k' - v)^{2}}\int_{0}^{T}|G_{r}|^{2}dr\Big\}\right]\right\}^{\frac{k' - v}{\theta k'}}\\
&\quad \cdot \left\{E\left[\exp\Big\{\Big(\frac{\theta^{2}{v}^{2}}{2(k' - v)^{2}(\theta-1) } + \frac{\theta v}{2(k'-v)(\theta-1)}\Big)\int_{0}^{T}|G_{r}|^{2}dr\Big\}\right]\right\}^{\frac{(\theta -1)(k' - v)}{\theta k'}}\\
&\le \left\{\E\left[\exp\Big\{\Big(\frac{\theta^{2}{v}^{2}}{2(k' - v)^{2}(\theta-1) } + \frac{\theta v}{2(k'-v)(\theta-1)}\Big)\int_{0}^{T}|G_{r}|^{2}dr\Big\}\right]\right\}^{\frac{(\theta-1)(k' - v)}{\theta k'}}.
\end{aligned}
\end{equation}
Letting $\theta = 1 + \sqrt{\frac{k'}{v}}$ in \eqref{e:for the Holder}, by \eqref{e:ine for gamma} and \eqref{e:G-exp} we have that 
\begin{align}\label{e:exp gamma}
&\E\left[ \exp\Big\{ - \frac{v}{k' - v}\int_{0}^{T}(G_{r})^{\top}dW_{r} + \frac{v}{2(k' - v)}\int_{0}^{T}|G_{r}|^{2}dr\Big\}\right]\\ \nonumber
&\le \left\{\E\bigg[\exp\Big\{\frac{1}{2}\left(\frac{\sqrt{v}}{\sqrt{k'} - \sqrt{v}}\right)^2 \int_{0}^{T}|G_{r}|^2 dr \Big\}\bigg]\right\}^ \frac{\sqrt{k'}}{\sqrt v+\sqrt{k'}}\le\left\{ \E\left[\exp\Big\{\gamma \int_{0}^{T}|G_{r}|^2 dr \Big\}\right]\right\}^\frac{\sqrt{k'}}{\sqrt v+\sqrt{k'}}
\end{align}
is finite. Then, combining \eqref{e:for the zeta^v} and \eqref{e:exp gamma}, we have $\E[|\zeta|^{v}]<\infty$,
which implies $\tilde{\mathfrak{H}}_{p,k'}(0,T)\subset \mathfrak{H}_{p,v}(0,T)$.

Now we  prove the uniqueness of the solution $(Y,Z)$ in the space $\mathfrak{H}_{p,v}(0,T)$. Let $(Y',Z')$ be another solution to Eq.~\eqref{e:BSDE-linear} in $\mathfrak{H}_{p,v}(0,T)$. We will show that $(Y',Z')\in\tilde{\mathfrak{H}}_{p,v'}(0,T)$ for some $v'\in (1,v)$, which implies that $(Y',Z')$ is also a solution to Eq.~\eqref{e:new equation} in $\tilde{\mathfrak{H}}_{p,v'}(0,T)$. Then the uniqueness of the solution to Eq.~\eqref{e:BSDE-linear} follows from the uniqueness of the solution to Eq.~\eqref{e:new equation}.

To show $(Y',Z')\in\tilde{\mathfrak{H}}_{p,v'}(0,T)$ for some $v'\in(1,v)$, it suffices to show
$\mathfrak{H}_{p,v}(0,T) \subset \tilde{\mathfrak{H}}_{p,v'}(0,T)$ for some $v'\in(1,v)$.  
By \eqref{e:for the gamma}, there exists $v' > 1$ such that
\begin{equation}\label{e:ine for gamma2}
\gamma > \frac{1}{2}\Big(\frac{\sqrt{v}}{\sqrt{v} - \sqrt{v'}}\Big)^{2}.
\end{equation}
Then, for  $\zeta\in\mathcal{F}_{T}$ with $\E[|\zeta|^{v}]<\infty$, by H\"older's inequality we have
\begin{equation}\label{e:for the zeta^v'}
\begin{aligned}
\tilde{\E}\left[|\zeta|^{v'}\right] &= \E\left[|\zeta|^{v'} \exp\Big\{\int_{0}^{T}(G_{r})^{\top}dW_{r} -  \frac{1}{2}\int_{0}^{T}  |G_{r}|^{2} dr\Big\}\right]\\
&\le \left\{\E\left[|\zeta|^{v}\right]\right\}^{\frac{v'}{v}} \left\{\E\left[ \exp\Big\{ \frac{v}{v - v'}\int_{0}^{T}(G_{r})^{\top}dW_{r} - \frac{v}{2(v - v')}\int_{0}^{T}|G_{r}|^{2}dr\Big\}\right]\right\}^{\frac{v - v'}{v}}.
\end{aligned}
\end{equation}
In addition, by H\"older's inequality, for $\theta' := 1 + \sqrt{\frac{v'}{v}}$,
\begin{equation}\label{e:for the Holder2}
\begin{aligned}
& \left\{\E\left[ \exp\Big\{ \frac{v}{v - v'}\int_{0}^{T}(G_{r})^{\top}dW_{r} - \frac{v}{2(v - v')}\int_{0}^{T}|G_{r}|^{2}dr\Big\}\right]\right\}^{\frac{v - v'}{v}}\\
&\le  \left\{\E\bigg[ \exp\Big\{ \frac{\theta' v}{v - v'}\int_{0}^{T}(G_{r})^{\top}dW_{r} - \frac{{\theta'}^2 v^2}{2(v - v')^2}\int_{0}^{T}|G_{r}|^{2}dr\Big\}\bigg]\right\}^{\frac{v - v'}{\theta' v}}\\
&\quad \cdot \left\{\E\bigg[ \exp\Big\{\Big(\frac{{\theta'}^{2} v^{2}}{2(\theta' - 1)(v - v')^2} - \frac{\theta' v}{2(\theta' - 1)(v - v')}\Big)\int_{0}^{T}|G_{r}|^{2}dr\Big\}\bigg]\right\}^{\frac{(\theta' - 1)(v - v')}{\theta' v}}\\
&\le \left\{\E\bigg[ \exp\Big\{ \frac{1}{2}\Big(\frac{\sqrt{v}}{\sqrt{v} - \sqrt{v'}}\Big)^{2} \int_{0}^{T}|G_{r}|^{2}dr\Big\}\bigg]\right\}^{\frac{(\theta' - 1)(v - v')}{\theta' v}}.
\end{aligned}
\end{equation}
Combining \eqref{e:G-exp}, \eqref{e:ine for gamma2}, \eqref{e:for the zeta^v'}, and \eqref{e:for the Holder2}, we have $\tilde{\E}[|\zeta|^{v'}]<\infty$,
and hence $\mathfrak{H}_{p,v}(0,T)\subset\tilde{\mathfrak{H}}_{p,v'}(0,T)$. 
\end{proof}

\begin{remark}\label{rem:N=1 vs. N>1}
In $(3)$ of Assumption~\ref{(A2)}, a stronger condition is imposed for $N>1$ compared to $N=1$. This is because the solution $\Gamma^{t}_{\cdot}$ of  Eq.~\eqref{e:gamma'} admits an explicit formula in the one dimension case.
More precisely, denoting $dx^{i}_{r}:=\eta_{i}(dr,X_r)$, we have 
$
\Gamma^{t}_{s} = I_{N} + \sum_{i=1}^{M}\int_{t}^{s}(\alpha^{i}_{r})^{\top}\Gamma^{t}_{r} dx^{i}_{r} \quad \text{for } s\in[t,T].
$
When $N=1$, It\^o's formula established in \cite[Proposition~2.2]{BSDEYoung-I} yields $\Gamma^{t}_{s} = \exp\left\{\sum_{i=1}^{M} \int_{t}^{s}\alpha^{i}_{r}dx^{i}_{r}\right\}$. Thus, the estimate for $\Gamma^{t}_{s}$ can be derived from the estimate for $\int_{t}^{s}\alpha^{i}_{r}dx^{i}_{r},$ which is bounded by  $C \|\alpha\|_{p\text{-}\mathrm{var}}\|x\|_{\frac{1}{\tau}\text{-}\mathrm{var}}$ for a generic constant $C$, in contrast to the bound in the multi-dimensional case $\log |\Gamma^{t}_{s}|\lesssim (\|\alpha\|_{p\text{-}\mathrm{var}}\|x\|_{\frac{1}{\tau}\text{-}\mathrm{var}})^{\frac{1}{\tau}}$ (see \eqref{e:Gamma 1/tau}).

\end{remark}

\begin{remark}\label{rem:H1-H2} 
A typical example of $X$ satisfying \eqref{e:X-exp} is the Brownian motion, in which case $\chi=2$. In addition, a function $\eta$ with $\|\eta\|_{\tau,\lambda;\beta}<\infty$ can be taken as  a realization of a fractional Brownian sheet (see \cite[Lemma~A.4]{BSDEYoung-I}).   

\end{remark}

\begin{remark}[Comparison with the result in \cite{HuLiMi}]\label{rem:comparison-HLM}
When $\eta$ is a realization of  $(1+d)$-dimensional fractional Brownian sheet $B(t,x)$ with Hurst parameters $(H_0,H,...,H)$, our requirement of the well-posedness for BSDE~\eqref{e:BSDE-linear} is weaker than that in \cite{HuLiMi}. More precisely, the result in \cite{HuLiMi} only applies to the case $N=1$ and $d H<1$ (see the condition (2.4) therein), while we allow $N>1$, and moreover, $dH<2$ is sufficient for our requirement $\lambda+\beta<2$ (for the case $X = W$, $N=1$, and $\chi = 2$) in \ref{(A2)} (see \cite[Lemma~A.4]{BSDEYoung-I}). 
This is due to the fact that we only consider a realization of B(t,x) without taking the randomization, which ignores the integrability with respect to the fractional Brownian sheet. 

\end{remark}

Now we provide a probabilistic representation for the solution of  the following Young PDE:
\begin{equation}\label{e:linear Young PDE Cauchy}
\partial_t u+L u+u \partial_t \eta=0, \quad u(T, x)=u_T(x),
\end{equation}
where $Lv(t,x) := \frac{1}{2} \text{tr}[\sigma\sigma^{\top}(x) \nabla^{2}v(t,x)] + b^{\top}(x)\nabla v(t,x)$.
Here, we assume strict uniform ellipticity of $\sigma$, i.e., there exists a positive constant $\nu$ such that
\begin{equation}\label{e:uniformly elliptic}
(\sigma\sigma^{\top}(x))_{i,j}a_i a_j \ge \nu |a|^2 , \ \text{ for all }a\in\mathbb{R}^d \text{ and } x\in\mathbb{R}^d.
\end{equation}

\begin{proposition}\label{prop:Feynman-Kac formula and linear BSDE}
Assume $N=1$ and $\chi = 2$. Suppose that $X^{t,x}$ is given by \eqref{e:forward system}, with $b\in C^{1}_{b}$, $\sigma\in C^{3}_{b}$, and $\sigma\sigma^{\top}$ satisfying the strict uniform ellipticity condition \eqref{e:uniformly elliptic}. Assume $u_{T}(\cdot)\in C^{\mathrm{Lip}}(\R^d;\R)$. Assume further that $(\tau,\lambda,\beta)\in(\frac{1}{2},1]\times (0,1] \times [0,\infty)$ satisfies $\beta + \lambda < 2$ and $\lambda + 4\tau > 4.$ Let $(Y^{t,x},Z^{t,x})$ be the unique solution to BSDE~\eqref{e:BSDE-linear} with $\alpha \equiv 1$, $f = G = 0$, $\xi = u_{T}(X^{t,x}_{T})$, and $X = X^{t,x}$.
Define the function $u$ by
\begin{equation}\label{e:u(t,x)}
u(t,x):= Y^{t,x}_{t}.
\end{equation}
Then, for any $\tau' \in (0,\frac{\lambda}{2} + \tau - 1)$, $u(\cdot,x)\in C^{\tau'\text{-}\mathrm{H\ddot ol}}([0,T];\R)$. Moreover, $u$
is a weak solution to \eqref{e:linear Young PDE Cauchy} in the following sense:
\begin{equation}\label{e:weak-sol-linear}
\begin{aligned}
\int_{\R^d}u(t,x)\varphi(x) dx &= \int_{\R^d}u_{T}(x)\varphi(x) dx + \int_{\R^d} \int_{t}^{T} u(s,x)L^{*}\varphi(x) ds dx\\
&\quad + \int_{\R^d}\int_{t}^{T} u(s,x) \varphi(x)\eta(ds,x) dx,\quad t\in[0,T],
\end{aligned}
\end{equation}
where $\varphi$ is any smooth function with compact support, $L^{*}$ is the adjoint  operator  of $L$, and the inner integral in the last term is a Young integral. Further more, we have
\begin{equation}\label{e:u(s,X_s) = Y_s}
u(s,X^{t,x}_{s}) = Y^{t,x}_{s},\quad (s,x)\in[t,T]\times\R^d.
\end{equation}
\end{proposition}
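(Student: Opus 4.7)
The plan is to start from the explicit linear Feynman-Kac formula supplied by Theorem~\ref{thm:linear BSDE}. With $N=1$, $\alpha\equiv1$, $f\equiv 0$, and $G\equiv 0$, Remark~\ref{rem:N=1 vs. N>1} gives $\Gamma^t_s=\exp\{\int_t^s\eta(dr,X^{t,x}_r)\}$, so \eqref{e:in Theorem 5} specializes to
\begin{equation}\label{e:u-rep-plan}
Y^{t,x}_s=\mathbb{E}_s\Big[u_T(X^{t,x}_T)\exp\Big\{\int_s^T\eta(dr,X^{t,x}_r)\Big\}\Big],\qquad s\in[t,T],
\end{equation}
and in particular $u(t,x)=\mathbb{E}[u_T(X^{t,x}_T)\exp\{\int_t^T\eta(dr,X^{t,x}_r)\}]$. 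I would first derive the Markov identity \eqref{e:u(s,X_s) = Y_s}, then use it to prove the time-H\"older regularity, and finally establish the weak formulation \eqref{e:weak-sol-linear} by mollifying $\eta$.

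\textbf{Flow identity and H\"older regularity.} Because $b,\sigma$ are time-homogeneous and $\eta$ is deterministic, the strong Markov property of $X^{t,x}$ gives that, conditionally on $\mathcal{F}_s$, the path $(X^{t,x}_r)_{r\ge s}$ has the same law as $(X^{s,X^{t,x}_s}_r)_{r\ge s}$. Inserting this into \eqref{e:u-rep-plan} and recalling the definition of $u$ yields $Y^{t,x}_s=u(s,X^{t,x}_s)$, proving \eqref{e:u(s,X_s) = Y_s}. For the time regularity, I would write, for $s>t$,
\begin{equation*}
u(s,x)-u(t,x)=\bigl[u(s,x)-u(s,X^{t,x}_s)\bigr]+\bigl[Y^{t,x}_s-Y^{t,x}_t\bigr],
\end{equation*}
estimating the first bracket by the space H\"older regularity of $u(s,\cdot)$ (obtained from \eqref{e:u-rep-plan} together with Lemma~\ref{lem:estim of Gamma} and the Young-integral sensitivity estimate of \cite[Proposition~2.1]{BSDEYoung-I}) combined with $\mathbb{E}|X^{t,x}_s-x|^k\lesssim|s-t|^{k/2}$. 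For the second bracket, the BSDE on $[t,s]$ gives
\begin{equation*}
Y^{t,x}_s-Y^{t,x}_t=-\int_t^s Y^{t,x}_r\,\eta(dr,X^{t,x}_r)+\int_t^s Z^{t,x}_r\,dW_r,
\end{equation*}
so the Young integral may be controlled via \cite[Proposition~2.1]{BSDEYoung-I} on the short interval $[t,s]$, while the martingale part is handled through an $L^2$ estimate on $Z^{t,x}$. Balancing the resulting exponents and invoking $\lambda+4\tau>4$, which forces $\lambda/2+\tau-1>0$, should deliver the advertised rate $\tau'<\lambda/2+\tau-1$.

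\textbf{Weak solution via mollification.} For \eqref{e:weak-sol-linear}, I would use the time mollification $\eta^\delta(t,x):=\int_{\mathbb{R}}\rho^\delta(t-r)\eta(r,x)\,dr$ already appearing in the proof of Theorem~\ref{thm:uniqueness of the unbounded BSDE}, which satisfies $\|\eta^\delta-\eta\|_{\tau',\lambda;\beta}\to 0$ for every $\tau'<\tau$ while $\|\eta^\delta\|_{\tau,\lambda;\beta}\lesssim\|\eta\|_{\tau,\lambda;\beta}$. Let $u^\delta(t,x):=Y^{\delta;t,x}_t$ be the function defined from \eqref{e:BSDE-linear} with $\eta^\delta$ in place of $\eta$. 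Since $\partial_t\eta^\delta$ is now a bounded smooth function with polynomial spatial growth, the resulting Markovian BSDE has the Lipschitz driver $y\mapsto y\,\partial_r\eta^\delta(r,x)$; the strict uniform ellipticity \eqref{e:uniformly elliptic} combined with classical nonlinear Feynman-Kac theory then shows $u^\delta\in C^{1,2}([0,T)\times\mathbb{R}^d)$ is a classical solution of $\partial_t u^\delta+Lu^\delta+u^\delta\,\partial_t\eta^\delta=0$. Testing against $\varphi$ yields \eqref{e:weak-sol-linear} with $\eta^\delta$ replacing $\eta$ (the Young integral becoming a Lebesgue one). Passage $\delta\downarrow0$: the first three terms follow by dominated convergence together with the pointwise stability $u^\delta\to u$, which follows from \cite[Corollary~3.1]{BSDEYoung-I} localized as in the proof of Theorem~\ref{thm:uniqueness of the unbounded BSDE}; the Young-integral term is handled by continuity of the Young integral under convergence of $(u^\delta(\cdot,x),\eta^\delta(\cdot,x))\to(u(\cdot,x),\eta(\cdot,x))$ in a suitable $p$-variation / weighted H\"older norm.

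\textbf{Main obstacle.} The delicate step I anticipate is obtaining \emph{uniform in $\delta$} $p$-variation estimates on $s\mapsto u^\delta(s,x)$, locally uniform in $x$, which is needed both to apply Young-integral stability in the final term and to exchange the $\delta\downarrow 0$ limit with the space integration against $\varphi$. I expect these bounds to follow from the Feynman-Kac representation of $u^\delta$ together with the moment estimates of Lemma~\ref{lem:estim of Gamma} (which depend only on $\|\eta^\delta\|_{\tau,\lambda;\beta}$) and the time-H\"older estimate from the second paragraph, applied uniformly in $\delta$. A secondary delicate point is verifying that the precise H\"older gain $\lambda/2+\tau-1$ is indeed attained, which appears to require exploiting the interplay between the Young and martingale components of the BSDE increment rather than a blunt triangle-inequality split.
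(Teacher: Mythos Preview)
Your derivation of the Feynman--Kac representation \eqref{e:u-rep-plan} and the Markov identity \eqref{e:u(s,X_s) = Y_s} matches the paper exactly. The divergence is in how the H\"older regularity of $u(\cdot,x)$ and the weak formulation \eqref{e:weak-sol-linear} are obtained. The paper does \emph{not} prove either of these from scratch: once the representation $u(t,x)=\mathbb{E}[u_T(X^{t,x}_T)\exp\{\int_t^T\eta(dr,X^{t,x}_r)\}]$ is in hand, both statements are obtained in a single line by invoking \cite[Theorem~4.15]{HuLe}, which handles precisely this Feynman--Kac functional. The condition $\lambda+4\tau>4$ (equivalently $\tau+\lambda/4>1$) is imposed exactly to place $u$ in the regime of that external result; see Remark~\ref{rem:compare to HuLe}.

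Your self-contained program via the BSDE increment and mollification is a genuinely different route. It has the merit of keeping the argument internal to the paper's machinery, but the two ``delicate points'' you flag are real and not minor. Extracting the sharp exponent $\tau'<\lambda/2+\tau-1$ from the decomposition $[u(s,x)-u(s,X^{t,x}_s)]+[Y^{t,x}_s-Y^{t,x}_t]$ requires a spatial H\"older estimate on $u(s,\cdot)$ with a specific exponent, which in turn requires Young-integral sensitivity estimates for $x\mapsto\int_s^T\eta(dr,X^{s,x}_r)$; this is essentially what Hu--L\^e's semigroup analysis provides, and reproducing it from the BSDE side would amount to redoing a substantial part of their work. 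Likewise, the uniform-in-$\delta$ $p$-variation bounds you need for the mollification argument are obtainable but nontrivial. The paper sidesteps all of this by citation, which is why the proof there is only a few lines.
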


\begin{proof} 
Note that \ref{(A2)} holds for every $p\in(2,\frac{\lambda}{1-\tau}),$  $\rho>0$, $q>\frac{2}{2-(\lambda+\beta)}$, and $k>1$. By Theorem~\ref{thm:linear BSDE}, we have the following representation for $Y^{t,x}$:
\begin{equation}\label{e:Ystx}
Y^{t,x}_{s} = \mathbb{E}_{s}\left[u_{T}(X^{t,x}_{T})\exp\Big\{\int_{s}^{T}\eta(dr,X^{t,x}_{r})\Big\}\right],~ t\le s\le T.
\end{equation}
and hence $u(t,x) = \mathbb{E}\left[u_{T}(X^{t,x}_{T})\exp\left\{\int_{t}^{T}\eta(dr,X^{t,x}_{r})\right\}\right]$.
Then, for any $\tau'\in(0,\frac{\lambda}{2} + \tau -1)$, by \cite[Theorem~4.15]{HuLe}, we have $u(\cdot,x)\in C^{\tau'\text{-H\"ol}}(0,T)$, and that $u(t,x)$ is a weak solution to Eq.~\eqref{e:linear Young PDE Cauchy}.

By the Markov property of $X^{t,x}$ and the fact $X_r^{t,x}=X_r^{s, X_s^{t,x}}$ for $t\le s\le r\le T$, we have
\begin{equation*}
\begin{aligned}
\mathbb{E}\left[ u_{T}(X^{s,x}_{T})\exp\Big\{\int_s^T \eta(dr,X^{s,x}_{r})\Big\} \right]\Bigg|_{x = X^{t,x}_{s}} &= \mathbb{E}_{s}\left[ u_{T}(X^{s,X^{t,x}_{s}}_{T})\exp\Big\{\int_{s}^{T}\eta(dr,X^{s,X^{t,x}_{s}}_{r})\Big\} \right] \\
&= \mathbb{E}_{s}\left[ u_{T}(X^{t,x}_{T})\exp\Big\{\int_{s}^{T}\eta(dr,X^{t,x}_{r})\Big\}\right].
\end{aligned}
\end{equation*}
Combining \eqref{e:u(t,x)} and \eqref{e:Ystx}, we have
\begin{equation*}
\begin{aligned}
u(s,X^{t,x}_{s}) = \mathbb{E}_{s}\left[ u_{T}(X^{t,x}_{T})\exp\Big\{\int_{s}^{T}\eta(dr,X^{t,x}_{r})\Big\}\right] = Y^{t,x}_{s}.
\end{aligned}
\end{equation*}
This proves \eqref{e:u(s,X_s) = Y_s}.
\end{proof}

\begin{remark}\label{rem:compare to HuLe}
In Proposition~\ref{prop:Feynman-Kac formula and linear BSDE}, we assume $\lambda+\beta<2$ and $ \tau+\frac\lambda 4 > 1$ to validate the Feynman-Kac formula. In contrast, for the well-posedness of BSDE~\eqref{e:BSDE-linear} (see Theorem~\ref{thm:linear BSDE}), we only require $\lambda+\beta<2$ and $\tau + \frac{\lambda}{2} > 1$ (see Remark~\ref{rem:kappa = 2}). 
This is because the verification of weak solutions requires the temporal H\"older continuity of $u(t,x)=Y_t^{t,x}$ in order to make sense of the Young integral in \eqref{e:weak-sol-linear}, and this in turn requires more regularity of $\eta$ (see \cite[Proposition~2.11]{HuLe}).
\end{remark}



\section{Applications}\label{sec:applications}

\subsection{Nonlinear Feynman-Kac formula}\label{subsec:Nonlinear-FK} 
In this section, we study the following Young PDE on $[0,T]\times \mathbb{R}^{d}$: 
\begin{equation}\label{e:SPDE-app}
\begin{cases}
 \displaystyle -\partial_{t} u= \mathcal{L} u (t,x) + g_{i}(u)\partial_{t}\eta_{i}(t,x), \quad (t,x)\in(0,T)\times \R^d, \\
 \displaystyle u(T,x)=h(x), \quad  x\in \R^d,
\end{cases}
\end{equation}
where $\mathcal{L} u := \frac{1}{2}\text{tr}\left[\sigma\sigma^{\top} \nabla^{2}u\right] + b^{\top}(x)\nabla u +  f(t,x,u,\sigma^{\top}\nabla u)$.
Here, $g\in C^{2}_{b} (\mathbb{R};\mathbb{R}^{M}), \eta\in C^{\tau,\lambda;\beta}([0,T]\times \mathbb{R}^{d};\mathbb{R}^{M}),\sigma\in C(\R^d;\R^{d\times d})$, $b\in C(\R^d;\R^d)$, $f\in C([0,T]\times \R^d\times \R \times \R^d;\R)$, and $h\in C(\R^d;\R)$ 
via our nonlinear Young BSDE.  We also assume strict uniform ellipticity \eqref{e:uniformly elliptic} of $\sigma$.

To address the nonlinear Young driver, we shall study the Young PDE \eqref{e:SPDE-app} via approximation. We first consider a Cauchy-Dirichlet problem on a bounded domain with the noise $\eta$ smooth in time. Let $\{\eta^m\}_{n\in \mathbb N}$ with $\eta^m\in C^{\tau,\lambda;\beta}([0,T]\times\R^d;\R^M)$ be a sequence approximation of $\eta$ such that  $\eta^m$  is smooth in time and $\partial_{t}\eta^{m}(t,x)$ is continuous in $(t,x)$. Let  $\{D_{n}\}_{n\in\mathbb N}$ be a sequence of bounded open subsets of $\R^d$ with  smooth boundaries $\partial D_n$.  We refer to \cite[Appendix~C.1]{Evans} for the definition of smooth boundary (or $C^\infty$-boundary). For some fixed $n,m\in\mathbb N$,  consider the following PDE on $[0,T]\times \bar{D}_{n}$, with $\bar{D}_n$ being the closure of $D_{n}$,
\begin{equation}\label{e:SPDE-app0}
\begin{cases}
\displaystyle - \partial_{t} u(t,x) = \mathcal{L} u(t,x) +  g_{i}(u)\partial_{t}\eta^{m}_{i}(t,x), \ t\in(0,T),\quad x\in D_{n},\\
u(t,x)=h(x), \quad (t,x)\in \{T\}\times \bar{D}_{n}\cup (0,T)\times \partial D_{n}.
\end{cases}
\end{equation}

\begin{definition}\label{def:vis}
We call $u\in C([0,T]\times \bar{D}_n;\R)$ a viscosity solution of \eqref{e:SPDE-app0} if $u$ satisfies
\begin{equation*}
u(t,x) = h(x)\ \text{ for }(t,x)\in \{T\}\times \bar{D}_n\text{ or } (0,T)\times \partial D_n,
\end{equation*} 
and the following conditions hold for every $(t,x)\in (0,T)\times D_{n}:$
\begin{itemize}
\item[(i)]  
$
a + F(t,x,u(t,x),q,\Lambda) \ge 0\text { if }(a,q,\Lambda)\in \mathscr{P}^{2,+}u(t,x);
$
\item[(ii)]  
$
a + F(t,x,u(t,x),q,\Lambda) \le 0\text { if }(a,q,\Lambda)\in \mathscr{P}^{2,-}u(t,x).
$
\end{itemize}
Here, $F:[0,T]\times\R^{d}\times\R\times\R^d\times \mathcal{S}(d)\rightarrow \R$ is defined by ($\mathcal{S}(d)$ is the set of symmetric $d\times d$ matrices) 
\begin{equation}\label{e:F}
F(t,x,y,q,\Lambda) := \frac{1}{2}\text{tr}\left[\sigma(x)\sigma^{\top}(x) \Lambda\right] + b^{\top}(x)q + f(t,x,y,\sigma^{\top}(x)q) + \sum_{i=1}^{M}g_{i}(y)\partial_{t}\eta^{m}_{i}(t,x),
\end{equation}
and  $\mathscr{P}^{2,+}u(t,x)$ (resp. $\mathscr{P}^{2,-}u(t,x)$) is the set of $(a,q,\Lambda)\in \R\times \R^{d}\times \mathcal{S}(d)$ such that the following inequality holds as $(s,x')\rightarrow (t,x):$
\begin{equation*}
u(s,x') - u(t,x)\le (\text{resp. }\ge)\  a(s - t) + (x' - x)^{\top}q + \frac{1}{2} (x' - x)^{\top}\Lambda (x' - x) + o(|s - t| + |x' - x|^{2}).
\end{equation*}
\end{definition}

\begin{proposition}\label{prop:vis-sol}
Assuming the same conditions as in Theorem~\ref{thm:uniqueness of the unbounded BSDE} together with \eqref{e:uniformly elliptic}, Eq.~\eqref{e:SPDE-app0} has a unique viscosity solution.
\end{proposition}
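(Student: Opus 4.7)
The plan is to construct the viscosity solution explicitly through the Markovian BSDE naturally attached to Eq.~\eqref{e:SPDE-app0}, and to obtain uniqueness via the standard comparison principle for second-order parabolic equations on bounded cylinders.

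For existence, fix $(t,x)\in[0,T]\times\bar{D}_n$ and let $\tau^{t,x}:=\inf\{s>t:X^{t,x}_{s}\notin D_n\}\wedge T$ denote the first exit time of $X^{t,x}$ from $D_n$. Since $\eta^{m}$ is smooth in time, the map
\[
\tilde f(r,x,y,z):=f(r,x,y,z)+\sum_{i=1}^{M}g_{i}(y)\partial_{r}\eta^{m}_{i}(r,x)
\]
is bounded on $[0,T]\times\bar{D}_n\times\R\times\R^{d}$ and globally Lipschitz in $(y,z)$, so the BSDE with stopping terminal time $\tau^{t,x}$, terminal value $h(X^{t,x}_{\tau^{t,x}})$ and generator $\tilde f$ admits a unique solution $(Y^{t,x},Z^{t,x})$ by the classical Pardoux--Peng theory (this is a particular case of \cite[Proposition~4.1]{BSDEYoung-I}). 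Define $u(t,x):=Y^{t,x}_{t}$. The strict uniform ellipticity \eqref{e:uniformly elliptic}, together with the smoothness of $\partial D_n$, guarantees via the Lions--Menaldi barrier construction referenced in Remark~\ref{rem:exit time} that every boundary point of $D_n$ is regular, so $(t,x)\mapsto\tau^{t,x}$ is continuous in probability; combined with BSDE stability, this yields $u\in C([0,T]\times\bar{D}_n;\R)$ with $u=h$ on $\{T\}\times\bar{D}_n\cup(0,T)\times\partial D_n$. The viscosity property in the sense of Definition~\ref{def:vis} then follows from the standard Peng/Pardoux--Peng argument: at an interior point $(t_0,x_0)\in(0,T)\times D_n$ where a smooth test function $\varphi$ touches $u$ from above, one applies It\^o's formula to $\varphi(\cdot,X^{t_0,x_0}_{\cdot})$ up to a small stopping time $t_0+h\wedge\tau^{t_0,x_0}$, identifies the resulting decomposition with a linearized BSDE, and uses the BSDE comparison theorem together with the Markov representation $u(s,X^{t_0,x_0}_{s})=Y^{t_0,x_0}_{s}$ on $[t_0,\tau^{t_0,x_0}]$ to derive the subsolution inequality in terms of the elements of $\mathscr{P}^{2,-}\varphi(t_0,x_0)$; the supersolution case is symmetric.

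For uniqueness, the operator $F$ in \eqref{e:F} is continuous in all its arguments, degenerate elliptic, and Lipschitz in $(y,q)$ with a Lipschitz constant controlled by $\|\partial_{t}\eta^{m}\|_{\infty;[0,T]\times\bar{D}_n}$, $\|g\|_{C^{1}}$, $C_{\mathrm{Lip}}$ and $\|\sigma\|_{\infty}$; together with the continuity of the terminal-boundary datum $h$ on $\bar{D}_n$, these are exactly the structural hypotheses under which the standard second-order comparison principle (Crandall--Ishii--Lions user's guide) applies on the bounded cylinder $(0,T)\times D_n$. Comparison then forces any two viscosity solutions to coincide.

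I expect the main technical point to be the continuity of $u$ up to the lateral boundary $(0,T)\times\partial D_n$, which requires quantitative control of $\tau^{t,x}$ as $x$ approaches $\partial D_n$. This is where the strict uniform ellipticity is genuinely used: local barrier functions built from $\sigma\sigma^{\top}$ on a smooth boundary yield the required modulus of continuity for the exit time, after which the continuity of $u$ follows from a routine BSDE stability estimate.
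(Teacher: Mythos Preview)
Your proposal follows essentially the same route as the paper: existence via the BSDE with random terminal time $T^{t,x}_{D_n}$ and the nonlinear Feynman--Kac representation of Pardoux--Peng (the paper cites \cite[Theorem~4.6]{pardoux1998backward} directly, and obtains continuity of $u$ from Lemma~\ref{lem:u(t,x) is continuous}, which packages exactly the exit-time continuity and BSDE stability you describe), and uniqueness via the parabolic comparison principle on the bounded cylinder.

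There is one point where the paper is more careful than your sketch, and it is worth noting. You write that the structural hypotheses of the Crandall--Ishii--Lions comparison theorem are satisfied because $F$ is Lipschitz in $(y,q)$. The paper explicitly observes that $F$ is \emph{not} proper in the sense of \cite[Theorem~8.2]{crandall1992user}: neither $f(t,x,y,z)$ nor $g(y)\partial_t\eta^m(t,x)$ need be monotone in $y$, so the hypothesis (8.4) there fails as stated. The fix is the one you implicitly rely on---uniform Lipschitz dependence in $y$ (valid because $D_n$ is bounded and $u$ is continuous, hence bounded) allows the standard exponential-weight trick or the argument in \cite[Section~5.3]{BuckdahnZhang} to recover comparison---but your sentence ``these are exactly the structural hypotheses'' overstates the match with the black-box theorem. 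Also, a minor slip: in your existence paragraph you write ``the subsolution inequality in terms of the elements of $\mathscr{P}^{2,-}\varphi(t_0,x_0)$''; the semijets are attached to $u$, not to the test function $\varphi$, and touching from above yields an element of $\mathscr{P}^{2,+}u(t_0,x_0)$.
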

\begin{proof}
The uniqueness can be obtained by a proper adaption of   the comparison theorem \cite[Theorem~8.2]{crandall1992user}. First, note that the function $F$ defined by Eq. \eqref{e:F} actually does \emph{not} fully meet the assumption of \cite[Theorem~8.2]{crandall1992user} (see Eq. (8.4) therein), since $F$ is not ``proper" noting that  both $f(t,x,y,z)$ and $g(y)\partial_{t}\eta^{m}(t,x)$ may not be monotonic  in $y$. However, since $D_n$ is bounded and so is $\{y=u(t,x), x\in D_n\}$ due to the continuity of $u$,  we have that $F$ is uniformly Lipschitz in $y$, and then we can follow the argument in \cite[Section~5.3]{BuckdahnZhang} to prove the comparison result for our equation. Also note that the condition (BC) in Eq. (8.4) in \cite{crandall1992user} can be removed if the viscosity subsolution and supersolution are assumed to be continuous (see also the comparison theorem \cite[Theorem~7.9]{crandall1992user} for elliptic equations), which is our situation.

Now, we prove the existence. For $n,m\in\mathbb{N}$, let $(Y^{n,m;t,x}_{s},Z^{n,m;t,x}_{s})$ be the unique solution  to the following BSDEs (see \cite{PardouxPeng1990} or \cite{zhang2017backward} for the well-posedness of BSDEs),
\begin{equation}\label{e:Y^n,m}
\begin{cases}
dY_{s} = - f(s,X^{t,x}_{s},Y_{s},Z_{s})ds -  g_{i}(Y_{s}) \eta^{m}_{i}(ds,X^{t,x}_{s}) - Z_{s}dW_{s},\quad s\in[0,T^{t,x}_{D_n}],\\
Y_{T^{t,x}_{D_n}} = h(X^{t,x}_{T^{t,x}_{D_{n}}}),
\end{cases}
\end{equation}
where $X^{t,x}$ is given by \eqref{e:forward system}, and 
\begin{equation}\label{e:T-ntx}
T^{t,x}_{D_n}:= T \land \inf\{s > t;\ X^{t,x}_{s}\notin\bar{D}_{n}\}.
\end{equation}

Then, the function 
\begin{equation}\label{e:unm}
u^{n,m}(t,x) := Y^{n,m;t,x}_{t}
\end{equation}
is continuous in $(t,x)\in[0,T]\times \bar{D}_n$ by Lemma~\ref{lem:u(t,x) is continuous}, 
and it is a viscosity solution of \eqref{e:SPDE-app0} by \cite[Theorem~4.6]{pardoux1998backward}, noting that  the set of regular points $\Lambda_n$ defined in \eqref{e:regular set} coincides with the closed set $\partial D_n$ by the condition \eqref{e:uniformly elliptic} and Remark~\ref{rem:exit time}.
Therefore,  $u = u^{n,m}$ is a viscosity solution of \eqref{e:SPDE-app0}. 
\end{proof}

Now we are ready to define the solution of the Young PDE~\eqref{e:SPDE-app}.
\begin{definition}\label{def:rough PDEs}
Assume $\eta\in C^{\tau,\lambda;\beta}([0,T]\times \R^d ;\R^{M})$. We call $u\in C([0,T]\times \mathbb{R}^{d})$ a solution of Eq.~\eqref{e:SPDE-app} if there exists a sequence of functions $\{\eta^{m}\}_{m\in\mathbb{N}}\subset C^{\tau,\lambda;\beta}([0,T]\times \R^d ;\R^{M})$ and a sequence of bounded open domains $\{D_{n}\}_{n\in\mathbb{N}}\subset\R^d$ such that 
\begin{itemize}
\item[(i)] $\eta^m$ is smooth in time, $\partial_{t}\eta^{m}(t,x)$ is continuous in $(t,x)$, and 
\[\eta^{m}\rightarrow\eta\text{ in }C^{\tau,\lambda;\beta}([0,T]\times \R^d ;\R^{M})\text{ as }m\rightarrow\infty;\]
\item[(ii)] the boundary $\partial D_n$ of $D_n$ is smooth, and $D_{n}\uparrow \mathbb{R}^d$ as $n\rightarrow \infty$;\\
\item[(iii)] for each $n$ and $m$, Eq.~\eqref{e:SPDE-app0} admits a unique viscosity solution $u^{n,m},$ and for every $ (t,x)\in [0,T]\times \mathbb{R}^{d}$ and every $ \varepsilon>0$, there exist $n_\varepsilon,m_\varepsilon\in\mathbb N$ such that $|u^{n,m}(t,x) - u(t,x)|\le \varepsilon$ for all $n\ge n_\varepsilon$ and $m\ge m_\varepsilon$.
\end{itemize}
We call $u$ a unique solution of Eq.~\eqref{e:SPDE-app} if $u$ is independent of the choice of $\left\{D_{n}\right\}_n$ and $\left\{\eta^{m}\right\}_m$.
\end{definition}

\begin{remark}\label{rem:reference of varying domains}
We explain Definition~\ref{def:rough PDEs} for Young PDEs, which involves two approximations: the approximation of the noise $\eta^m\to \eta$ and the approximation of the domain $D_n\to\R^d$. 

The  first approximation heuristically says that the solution of the PDE with an irregular driver $\eta(dt,x)$ can be defined as the limit of the solutions to the PDEs with  smooth drivers $\partial_{t}\eta^{m}(t,x)dt$. This idea was already used, for instance, to define the solution to BSDEs with rough drivers in \cite{DiehlFriz}. 

The second approximation reflects the intuition that 
the solution of the PDE on the whole space $\R^d$ can be obtained by taking a limit of a sequence of solutions to the PDEs on bounded domains. This idea has appeared, for instance, in \cite{candil2022localization} where    the convergence of solutions on bounded domains to the solution on the whole space $\R^d$ was investigated for  heat equation driven by Gaussian noise which is white in time and correlated in space. 
\end{remark}

We introduce some notations related to the approximations in Definition \eqref{def:rough PDEs}. Assume the same conditions as  in Theorem~\ref{thm:uniqueness of the unbounded BSDE}, and  let $D_n$ and $\eta^m$ be as in Definition \ref{def:rough PDEs}.  Denote  $$u^{\infty,m}(t,x):=Y^{\infty,m;t,x}_{t}\text{ and  } u^{n,\infty}(t,x):=Y^{n,\infty;t,x}_{t},$$ where, $(Y^{\infty,m;t,x},Z^{\infty,m;t,x})\in\bigcap_{q>1}\mathfrak{H}_{p,q}(0,T)$ is the unique solution to the following BSDE satisfying the condition \eqref{e:assumption of widetilde Y} (see Theorem~\ref{thm:existence of solution of unbounded BSDEs}  and Theorem~\ref{thm:uniqueness of the unbounded BSDE}),
\begin{equation}\label{e:Y^infty,m}
Y_{s} = h(X^{t,x}_{T}) + \int_{s}^{T}f(r,X^{t,x}_{r},Y_{r},Z_{r})dr + \sum_{i=1}^{M}\int_{s}^{T}g_{i}(Y_{r}) \eta^{m}_{i}(dr,X^{t,x}_{r}) - \int_{s}^{T}Z_{r}dW_{r},\quad s\in[t,T],
\end{equation}
and 
$(Y^{n,\infty;t,x},Z^{n,\infty;t,x})\in\bigcap_{q>1}\mathfrak{B}_{p,q}(0,T)$ is the unique solution to BSDE~\eqref{e:Y^n,m} with $\eta^m$ replaced by $\eta$ (the well-posedness can be seen in \cite[Proposition~4.1]{BSDEYoung-I}).
In addition, denote by $(Y^{\infty,\infty}, Z^{\infty,\infty})$ the unique solution to BSDE~\eqref{e:Y^n,m} where $\eta^m$ and $T^{t,x}{D_n}$ are replaced by $\eta$ and $T$, respectively, and which belongs to $\bigcap_{q>1}\mathfrak{H}_{p,q}(0,T)$ and satisfies \eqref{e:assumption of widetilde Y}. Denote  $$u^{\infty,\infty}(t,x):=Y^{\infty,\infty;t,x}_t.$$
We remark that similar to $Y^{n,m;t,x}_{t}$, $Y^{n,\infty;t,x}_{t}$ is a.s. a constant, and by Corollary~\ref{cor:u(t,x)}, $Y^{\infty,m;t,x}_{t}$ and $Y^{\infty,\infty;t,x}_t$ are a.s. constants. Hence, functions $u^{n,\infty},$ $u^{\infty,m},$ and $u^{\infty,\infty}$ are well defined.

We also introduce the following space for the Young driver $\eta$: for $(\tau,\lambda,\beta)\in(0,1]\times (0,1]\times [0,\infty)$,
\begin{equation*}
\begin{aligned}
&C^{\tau,\lambda;\beta}_{0}([0,T]\times\R^d;\R^{M}) \\
&:= \Big\{\eta\in C^{\tau,\lambda;\beta}([0,T]\times\R^d;\R^{M}): \text{ there exists }
\{\eta^{m}\}_{m\ge 1} \subset C^{\tau,\lambda;\beta} \text{ such that }\\
&\quad\qquad  \eta^{m}\text{ is smooth in time,  }\partial_{t}\eta^{m}(t,x)\text{ is continuous, and  }\lim_{m}\|\eta^{m} - \eta\|_{\tau,\lambda;\beta} = 0\Big\}.
\end{aligned}
\end{equation*}
Clearly, $C^{\tau,\lambda;\beta}_{0}([0,T]\times\R^d;\R^{M})$ is a subset of $C^{\tau,\lambda;\beta}([0,T]\times\R^d;\R^{M})$. Moreover, for every $\hat{\tau}\in(0,\tau)$, we have $C^{\tau,\lambda;\beta}([0,T]\times\R^{d};\R^{M})\subset C^{\hat\tau,\lambda;\beta}_{0}([0,T]\times\R^d;\R^{M})$ by approximation result proved in \cite[Lemma~A.3]{BSDEYoung-I}.

\

Now, we are ready to prove the existence and uniqueness of the solution to the Young PDE~\eqref{e:SPDE-app}. 

\begin{proposition}\label{prop:rough PDE}
Assume the same conditions as  in Theorem~\ref{thm:uniqueness of the unbounded BSDE}. In addition, assume that $f$ is jointly continuous in $(t,x,y,z)$, $\sigma$ satisfies condition \eqref{e:uniformly elliptic}, and $\eta\in C^{\tau,\lambda;\beta}_{0}([0,T]\times\R^d;\R^{M})$. Then, $u := u^{\infty,\infty}$ is the unique solution of the Young PDE~\eqref{e:SPDE-app} in the sense of Definition~\ref{def:rough PDEs}. Furthermore, the solution map $S: C^{\tau,\lambda;\beta}_{0}
\to C([0,T]\times \R^d)$ defined by $S (\eta) = u$ is continuous, where the topology of  $C^{\tau,\lambda;\beta}_{0}$ is induced by the norm $\|\cdot\|_{\tau,\lambda;\beta}$ and $C([0,T]\times \R^d)$ is equipped with  the locally uniform convergence topology.
\end{proposition}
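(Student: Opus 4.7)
The plan is to set $u:=u^{\infty,\infty}$ (with $u^{\infty,\infty}(t,x):=Y^{\infty,\infty;t,x}_t$) and to prove all three assertions---existence, uniqueness and continuity of the solution map---by a single double-approximation argument. The well-posedness of $u^{\infty,\infty}$ itself is guaranteed by Theorem~\ref{thm:existence of solution of unbounded BSDEs}, Theorem~\ref{thm:uniqueness of the unbounded BSDE} and Corollary~\ref{cor:u(t,x)}, once we note that Assumption~\ref{(T)} is verified via Example~\ref{ex:assump (T)}(a) with terminal value $h(X^{t,x}_T)$, and that condition~\eqref{e:regular set} holds by the uniform ellipticity~\eqref{e:uniformly elliptic} together with Remark~\ref{rem:exit time}. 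The two building blocks of the argument are Proposition~\ref{prop:independent of choice of stopping times} (to localize in space from $D_n$ to $\R^d$) and the stability result \cite[Corollary~3.1]{BSDEYoung-I} (to mollify the driver $\eta^m\to\eta$ on a fixed bounded domain).

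For existence, fix $(t,x)\in[0,T]\times\R^d$ and $\varepsilon>0$, and telescope
\[
u^{n,m}-u = (u^{n,m}-u^{\infty,m}) + (u^{\infty,m}-u^{n^*,m}) + (u^{n^*,m}-u^{n^*,\infty}) + (u^{n^*,\infty}-u^{\infty,\infty}),
\]
where $n^*$ is an auxiliary index to be chosen. The first, second and fourth differences fall within the scope of Proposition~\ref{prop:independent of choice of stopping times} applied with $\eta^m$ (respectively $\eta$) and stopping times $S_j:=T^{t,x}_{D_j}$: the decay sequence $p_j:=\mathbb P\{T^{t,x}_{D_j}\neq T\}$ is controlled by Lemma~\ref{lem:existence of X} independently of the driver, while $\|\eta^m\|_{\tau,\lambda;\beta}$ stays bounded because $\eta^m\to\eta$ in $C^{\tau,\lambda;\beta}$. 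Hence there is a threshold $n_\varepsilon$ for which the three terms are each at most $\varepsilon/4$ whenever $n,n^*\ge n_\varepsilon$, \emph{uniformly in} $m$. Fixing $n^*=n_\varepsilon$, the domain $D_{n^*}$ is bounded and $\|\eta^m-\eta\|_{\tau,\lambda}\to 0$ on $[0,T]\times\bar D_{n^*}$ (a consequence of $\|\eta^m-\eta\|_{\tau,\lambda;\beta}\to 0$ on the whole space and the boundedness of $D_{n^*}$), so \cite[Corollary~3.1]{BSDEYoung-I} applied to the BSDE stopped at $T^{t,x}_{D_{n^*}}$ produces $m_\varepsilon$ making the third difference also at most $\varepsilon/4$ for $m\ge m_\varepsilon$. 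This yields the convergence required by Definition~\ref{def:rough PDEs}, thereby identifying $u^{\infty,\infty}$ as the solution.

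For uniqueness, the same argument shows that for any admissible approximating sequences the limit is $u^{\infty,\infty}$, so the solution is intrinsically defined. For the continuity of $S$, given $\eta^{(k)}\to\eta$ in $C^{\tau,\lambda;\beta}_0$, I would reuse the telescopic decomposition with $u^{n,\infty}_{\eta^{(k)}}$ and $u^{\infty,\infty}_{\eta^{(k)}}$ in place of $u^{n,m}$ and $u^{\infty,m}$: Proposition~\ref{prop:independent of choice of stopping times} again controls the localization terms uniformly in $k$ (since $\|\eta^{(k)}\|_{\tau,\lambda;\beta}$ is bounded), and \cite[Corollary~3.1]{BSDEYoung-I} on the fixed bounded domain $D_{n^*}$ handles the middle perturbation. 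Local uniform convergence in $(t,x)$ then follows from the polynomial growth of the bounds in Corollary~\ref{cor:u(t,x)} together with Lemma~\ref{lem:existence of X}, which remain locally bounded on compact subsets of $[0,T]\times\R^d$.

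The principal obstacle is to secure \emph{joint} uniformity in $n$ and $m$, since sending each index to infinity separately gives only iterated convergence; this is precisely what the auxiliary index $n^*$ in the telescopic decomposition is designed to handle. The main technical checks are therefore that both the tail sequence $\{p_j\}$ and the parameter vector $\Theta_2^m$ appearing in Proposition~\ref{prop:independent of choice of stopping times} can be chosen uniformly in $m$, and that the restriction of $\|\eta^m-\eta\|_{\tau,\lambda}$ to $[0,T]\times\bar D_{n^*}$ vanishes even though the assumption is stated only in the weighted $C^{\tau,\lambda;\beta}$-topology on the whole space.
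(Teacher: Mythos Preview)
Your proposal is correct and follows essentially the same route as the paper: the same two key ingredients (Proposition~\ref{prop:independent of choice of stopping times} for uniform-in-$m$ localization, and \cite[Corollary~3.1]{BSDEYoung-I} for driver stability on a fixed bounded domain), the same auxiliary-index telescoping through a frozen $n^*$, and the same uniformity checks on $\Theta_2$ and $\{p_j\}$. Your four-term decomposition is a mild reshuffling of the paper's three-term one (your first two terms collapse to the paper's $u^{n,m}-u^{n',m}$), and your final remark about local uniform convergence is the mechanism by which the paper deduces $u^{\infty,\infty}\in C([0,T]\times\R^d)$---though note that the relevant uniformity in $(t,x)$ on compacts comes directly from the dependence of $J_\delta$ in Proposition~\ref{prop:independent of choice of stopping times} on $C_X$ and $\{p_j\}$, not from the growth bound of Corollary~\ref{cor:u(t,x)}.
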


\begin{proof}
Recall that, from the proof Proposition \ref{prop:vis-sol},  $u^{n,m}(t,x) := Y^{n,m;t,x}_{t}$ given by Eq. \eqref{e:unm}  is a unique viscosity solution of Eq.~\eqref{e:SPDE-app0}.
In order to prove $u^{\infty, \infty}$ solves \eqref{e:SPDE-app}  in the sense of  Definition~\ref{def:rough PDEs}, it suffices to show that $u^{n,m}(t,x)\to u^{\infty,\infty}(t,x)$ as $n,m\rightarrow \infty$.

We aim to establish an estimate on $|u^{n,m}(t,x) - u^{\infty,m}(t,x)|$ which is uniform in $m\in\mathbb N$ and $(t,x)\in[0,T]\times K$, where $K$ is a compact subset of $\R^d$. To this end, we shall apply Proposition~\ref{prop:independent of choice of stopping times}, and  we need to  verify that for all $m\ge 1$ and $(t,x)\in [0,T]\times K$, the family of equations ~\eqref{e:Y^infty,m} share a common parameter ${\Theta_2}$, where ${\Theta_2}$ is defined in the beginning of Section~\ref{sec:BSDE with unbounded X}. Firstly, the terminal time $T$ and the function $(f,g)$ in BSDE~\eqref{e:Y^n,m} do not depend on $m$ or $(t,x)$. Secondly, since $
\sup_{m\ge 1}\|\eta^m\|_{\tau,\lambda;\beta}<\infty$, 
BSDEs~\eqref{e:Y^n,m} share the same parameter $(\tau,\lambda,\beta,p,k,\varepsilon)$ in Assumption~\ref{(A1)}. Finally,  according to Example~\ref{ex:assump (T)}, $\Xi_{\cdot} := h(X^{t,x}_{\cdot})$ satisfies Assumption~\ref{(T)} with $k=2$ and the same constant $C_{1}$ as in \eqref{e:conditions of xi} for all $(t,x)\in [0,T]\times K$. 

To apply Proposition~\ref{prop:independent of choice of stopping times}, we still need to show that
there exists a sequence of numbers $\{p_{n}\}_{n\ge 1}$ with $p_{n} \downarrow 0$ such that
\[\mathbb{P}\left\{T^{t,x}_{D_{n}} \neq T\right\}\le p_{n},\text { for all }n\ge 1 \text{ and } (t,x)\in[0,T]\times K,\]  where $T_{D_n}^{t,x}$ is defined in \eqref{e:T-ntx}. Indeed, by the proof of Lemma~\ref{lem:existence of X}, 
there exists  positive constants $C$ and $C_K=C(K)$ such that, for $n$ sufficiently large satisfying $K\subset \left\{y\in \R^d;\  |y|\le \inf_{z\in \partial D_{n}}|z|\right\}$,
\[\sup_{(t,x)\in [0,T]\times K}\mathbb{P}\{T^{t,x}_{D_n} \neq T\}\le  \sup_{x\in K}\mathbb{P}\{T^{0,x}_{D_n} \neq T\} \le  C_K  \exp\{-C \inf_{y\in \partial D_n}|y|^2\} =: p_{n},\]
where the second inequality follows from the same argument used in the proof of Lemma~\ref{lem:existence of X} which leads to \eqref{e:moment-T-Tn}.

Now we can apply Proposition~\ref{prop:independent of choice of stopping times} to \eqref{e:Y^infty,m} and get that, for every $\delta>0$, there exists $n'\ge 1$, independent of $m\ge 1$ and $(t,x)\in[0,T]\times K$, such that for all $n\ge n'$,
\begin{equation*}
\left|Y^{n,m;t,x}_{t} - Y^{\infty,m;t,x}_{t}\right| + \left|Y^{n,\infty;t,x}_{t} - Y^{\infty,\infty;t,x}_{t}\right|\le \frac{\delta}{4}.
\end{equation*}
Hence, for every $\delta>0$, there exists an $n'\ge 1$ such that, for all $n\ge n'$ and $m\ge 1$,

\begin{equation}\label{e:u^n m - u^infty m}
|u^{n,m}(t,x) - u^{n',m}(t,x)| + |u^{n',\infty}(t,x) - u^{\infty,\infty}(t,x)|\le \frac{\delta}{2},\ \forall (t,x)\in[0,T]\times K.
\end{equation}

By the stability of solutions to BSDEs with different drivers, which is established in \cite[Corollary~3.1]{BSDEYoung-I}, for every fixed $n\ge 1$, we have $\|Y^{n,m;t,x}_{t} -  Y^{n,\infty;t,x}_{t}\|_{L^{\infty}(\Omega)}\rightarrow 0$ uniformly in $(t,x)\in[0,T]\times K$, as $m\rightarrow \infty$. 
Hence, we have
$u^{n,m}(t,x)\rightarrow u^{n,\infty}(t,x)$ uniformly in $(t,x)\in[0,T]\times K$ as $m\rightarrow \infty$. Then,  there exists $m'(n')\ge 1$ such that 
$|u^{n',m}(t,x) - u^{n',\infty}(t,x)|\le \frac{\delta}{2}$ for all $m\ge m'(n')$.
Hence, by \eqref{e:u^n m - u^infty m}, we have for all $n\ge n'$ and $m\ge m'(n')$, 
\begin{equation*}
\begin{aligned}
|u^{n,m}(t,x)-u^{\infty,\infty}(t,x)|&\le|u^{n,m}(t,x) - u^{n',m}(t,x)| + |u^{n',m}(t,x) - u^{n',\infty}(t,x)| \\
&\quad + |u^{n',\infty}(t,x) - u^{\infty,\infty}(t,x)|\le \delta.
\end{aligned}
\end{equation*}
This implies the  uniform convergence $u^{n,m}\to u^{\infty, \infty}(t,x)$ on $[0,T]\times K$, as $n,m\to\infty$. Noting the continuity of $u^{n,m},$ we also have $u^{\infty,\infty}\in C([0,T]\times\mathbb{R}^{d})$. Furthermore, since $\{\eta^m\}_{m}$ and $\{D_n\}_{n}$ are arbitrarily chosen, $u^{\infty,\infty}$ is the unique solution in the sense of Definition~\ref{def:rough PDEs}.

Finally, we prove the continuity of the solution map. Let $\{\eta^{(j)}\}_{j\ge 1}$ be a sequence of functions converging to $\eta$ in $C^{\tau,\lambda;\beta}_{0}([0,T]\times;\R^M)$, and 
denote by $u^{\infty,(j)}$ the solution to Eq.~\eqref{e:SPDE-app} with $\eta$ replaced by $\eta^{(j)}$. Similarly, let  $(Y^{n,(j);t,x}, Z^{n,(j);t,x})$ be  the unique solution to Eq.~\eqref{e:Y^n,m} with $\eta^m$ replaced by $\eta^{(j)}$, and denote $u^{n,(j)}(t,x) := Y^{n,(j);t,x}_{t}$ for $n\ge 1$. By the same argument leading to the inequality \eqref{e:u^n m - u^infty m}, for every $\delta>0$, there exists an $n'\ge 1$ independent of $j$ such that, for all $j\ge 1$, 
\begin{equation*}
|u^{n',(j)}(t,x) - u^{\infty,(j)}(t,x)| + |u^{n',\infty}(t,x) - u^{\infty,\infty}(t,x)|\le \frac{\delta}{2},\ \forall (t,x)\in[0,T]\times K.
\end{equation*}
In addition, by \cite[Corollary~3.1]{BSDEYoung-I} again, there exists $j'(n')$ such that for all $j\ge j'(n')$,
\begin{equation*}
|u^{n',(j)}(t,x) - u^{n',\infty}(t,x)|\le \frac{\delta}{2},\ \forall (t,x)\in[0,T]\times K.
\end{equation*}
Hence, for all $j\ge j'(n')$, we have
\begin{equation*}
|u^{\infty,(j)}(t,x) - u^{\infty,\infty}(t,x)|\le \delta,\ \forall (t,x)\in[0,T]\times K,
\end{equation*}
and this yields the continuity of  the solution map.
\end{proof}

The following PDE driven by fractional Brownian sheet (which is thus an SPDE)
\begin{equation}\label{e:SPDE-app2}
\begin{cases}
\displaystyle- \partial_{t} u(t,x) = \mathcal{L}u(t,x) + g(u)\partial_{t}B(t,x), (t,x)\in(0,T)\times \R^d,\\[10pt]
\displaystyle u(T,x)=h(x),\ x\in \R^d,
\end{cases}
\end{equation}
can be solved pathwisely as a Young PDE \eqref{e:SPDE-app}, provided that the sample paths of $B(t,x)$ lives in some $C^{\tau,\lambda;\beta}([0,T]\times\mathbb{R}^d)$. Indeed, by \cite[Lemma~A.4]{BSDEYoung-I}, the  paths of $B(t,x)$ belong to $C^{\tau,\lambda;\beta}([0,T]\times\mathbb{R}^d)$ a.s.,  where $(\tau,\lambda,\beta)$ essentially satisfies
$\tau<H_0,$ $\lambda<\min_{1\le j\le d}H_j,$ and $\beta> -\lambda + \sum_{j=1}^{d}H_j$. If we assume further $H_j\equiv H$ for $j=1,2,\dots,d$ for simplicity, then,  to ensure the existence of $\varepsilon$ and $p$ such that $(\tau,\lambda,\beta,\varepsilon,p)$ satisfies Assumption~\ref{(A1)}, it suffices to assume
\begin{equation}\label{e:condition of H}
H_{0} + \frac{H}{2} >1 \ \text{ and } d H < 2 H_0 - 1,
\end{equation}
as explained in Remark~\ref{rem:Hurst condition}. The shaded regions of $(H,H_0)$ satisfying \eqref{e:condition of H} are demonstrated in Figure~\ref{fig1} and Figure~\ref{fig2} for $1$-dimensional and $d$-dimensional respectively.

\begin{figure}[htbp]
\centering
\begin{minipage}{0.49\linewidth}
\caption{$1$-dim region of $(H,H_0)$}\label{fig1}
\centering
\includegraphics[width=7.5cm,height=7cm]{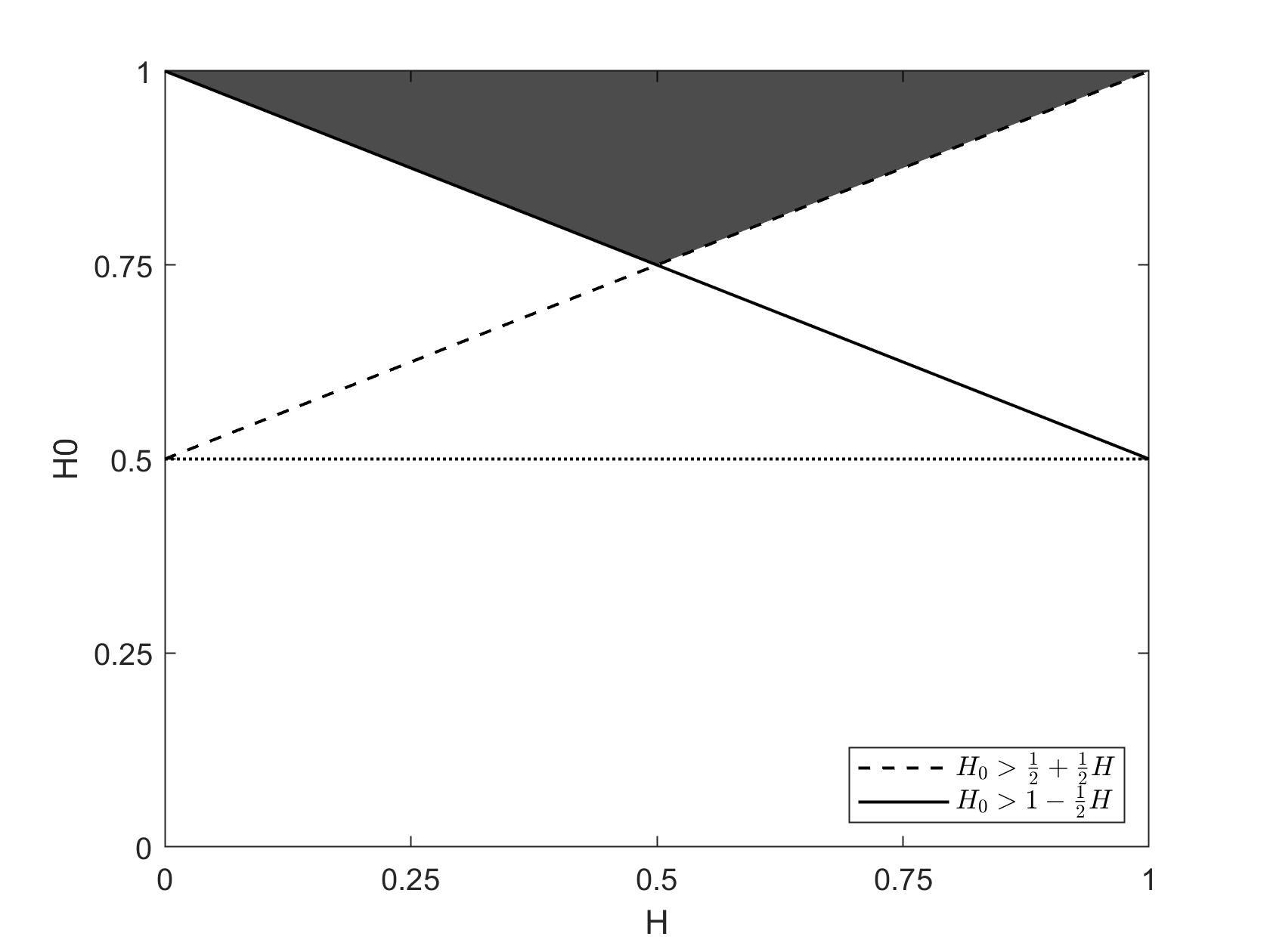}
\end{minipage}
\begin{minipage}{0.49\linewidth}
\caption{$d$-dim region of $(H,H_0)$}\label{fig2}
\centering  
\includegraphics[width=7.5cm,height=7cm]{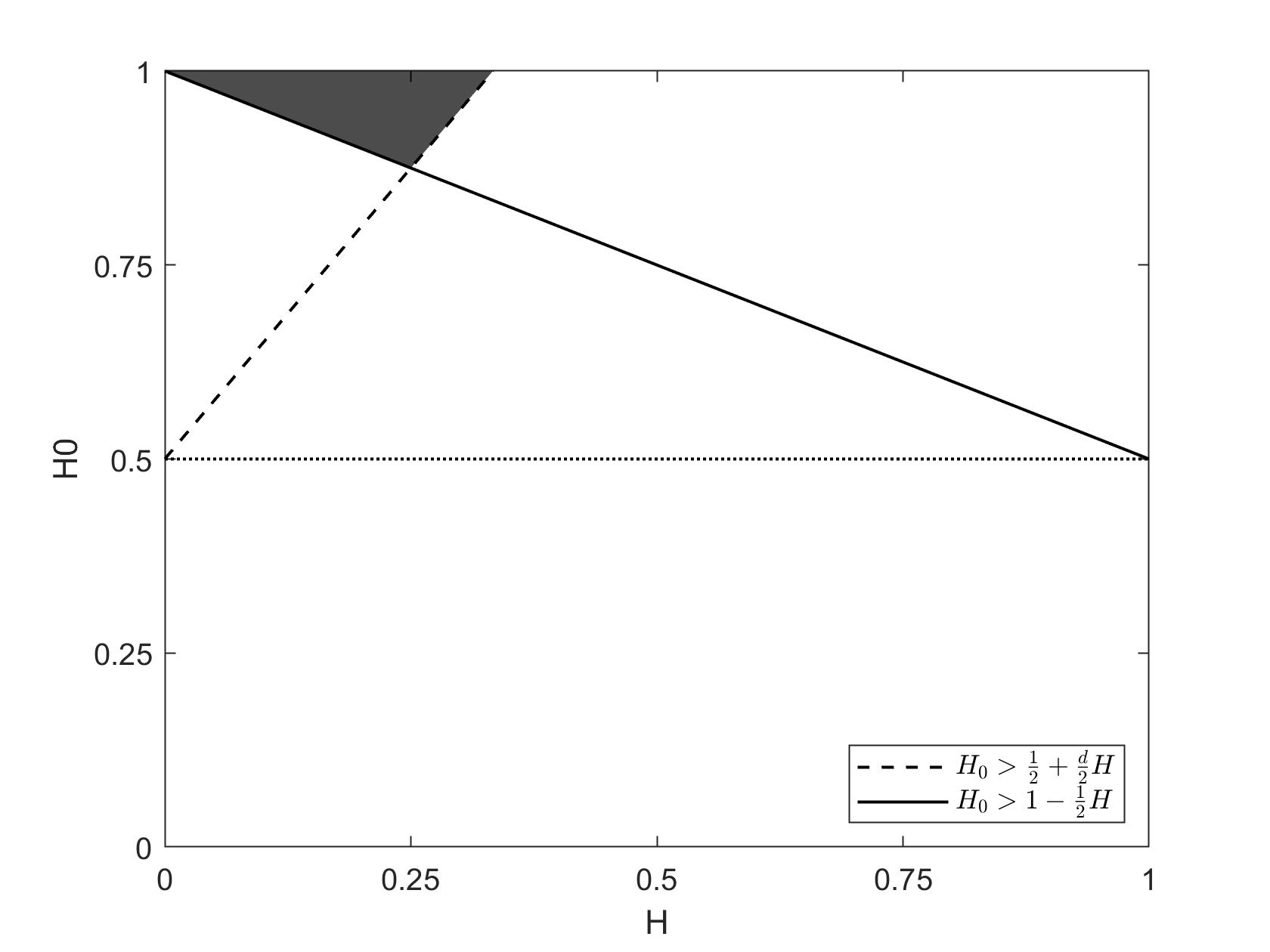}
\end{minipage}
\end{figure}

\begin{remark}
In the literature of SPDE, if an SPDE is solved pathwisely via deterministic PDEs, it is usually called  a \emph{Stratonovich} equation. In contrast, in the analysis of the so-called \emph{It\^o-Skorohod} equation, probabilistic instruments such as It\^o calculus and Malliavin calculus play a critical role. For related results, we refer to \cite{HuNualartSong-2011, hu2012feynman}, where the Feynman-Kac formulae for linear heat equations driven by fractional Brownian sheets were investigated. 
\end{remark}

\subsection{Localization error estimate for non-Lipschitz PDEs}\label{subsec:BSDEs with non-Lip}
In this section, we study the following BSDE with a non-Lipschitz coefficient $F:[0,T]\times\R^d\times \R^N \times\R^{N\times d}\rightarrow \R^N$: 
\begin{equation}\label{e:BSDE-example 1'}
Y_{t} = h(X_{T}) + \int_{t}^{T} F(r,X_{r},Y_{r},Z_{r}) dr - \int_{t}^{T}Z_{r}dW_r,\quad t\in[0,T].
\end{equation} 

\begin{remark}\label{rem:local-error}
Consider $F(t,x,y,z) = v(x)g(y)$, where $g\in C^{2}_{b}(\mathbb{R}^N;\R^N)$ and $v:\R^d\rightarrow \R$ satisfies 
\[|v(x_1)-v(x_2)|\lesssim (1+|x_1|+|x_2|)^{\beta}|x_1 - x_2|^{\lambda},\ \text{for all } x_1,x_2\in\mathbb{R}^{d},\]
for some $(\lambda,\beta)\in(0,1]\times[0,\infty)$ with $\lambda+\beta<1$.
The above equation can be viewed as a BSDE driven by a nonlinear Young integral. More precisely, suppose $X$ satisfies \ref{(A0)} and $h\in C^{\text{Lip}}(\mathbb{R}^d;\mathbb{R}^N)$.
Then, the well-posedness of Eq.~\eqref{e:BSDE-example 1'} follows directly from Theorem~\ref{thm:existence of solution of unbounded BSDEs} and Theorem~\ref{thm:uniqueness of the unbounded BSDE}, noting that $\eta(t,x):=v(x)t$ belongs to $C^{1,\lambda;\beta}([0,T]\times\mathbb{R}^{d})$. 
\end{remark}

In the following, we will apply a new localization method to solve stochastic Lipschitz BSDEs. This method will be applied to estimate localization errors for  PDEs with non-Lipschitz coefficients (see Proposition \ref{prop:localization error}).

\begin{proposition}\label{prop:non-Lip BSDE}
Assume $X$ satisfies Assumption~\ref{(A0)}. Let $0\le\theta_{1}<1$, $0\le \theta_{2}<2$, $\theta_{3}\ge 0$, $\mu\in(0,1]$,  and $\mu'\ge 0$. Assume $F = f_{0} + F_0$ and for all $ x,x_1,x_2\in\mathbb{R}^{d}, z,z_1,z_2\in\mathbb{R}^{N\times d}, y,y_1,y_2\in\mathbb{R}^{N},$ and $ t\in[0,T]$, the functions $h$, $f_0$, and $F_0$ satisfy
\begin{equation}\label{e:condi of F}
\begin{cases}
h\in C^{\mathrm{Lip}}(\mathbb{R}^{d};\mathbb{R}^N);\ |f_{0}(t,x,0,0)|\le C\left(1 + |x|^{\theta_{3}}\right);\\
|f_{0}(t,x,y_1,z_1) - f_{0}(t,x,y_2,z_2)|\le C\left(|y_1 - y_2| + |z_1 - z_2|\right);\\
|F_0(t,x,y,z)|\le C\left(1 + |x|^{\theta_{3}}\right);\\
|F_0(t,x,y_1,z) - F_0(t,x,y_2,z)|\le C(1+|x|^{\theta_{2}})|y_1 - y_2|;\\
|F_0(t,x,y,z_1) - F_0(t,x,y,z_2)|\le C(1+|x|^{\theta_{1}})|z_1 - z_2|;\\
|F(t,x_1,y,z) - F(t,x_2,y,z)|\le C |x_1 - x_2|^{\mu}(1 + |x_1|^{\mu'} + |x_{2}|^{\mu'});
\end{cases}
\end{equation}
where $C$ is a positive constant. Then, BSDE~\eqref{e:BSDE-example 1'} has a unique solution $(Y,Z)\in\mathfrak{H}^{2}(0,T)$,
where
\begin{equation}\label{e:def of H^q}
\mathfrak{H}^{q}(0,T):=\left\{(Y,Z)\in\mathfrak{B}([0,T]);\ \mathbb{E}\left[\|Y\|^{q}_{\infty;[0,T]}\right] + \mathbb{E}\Big[\Big|\int_{0}^{T}|Z_{r}|^{2}dr\Big|^{\frac{q}{2}}\Big]<\infty\right\}
\end{equation}
for $q\ge 1$.
\end{proposition}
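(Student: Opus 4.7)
The plan is to mimic the localization strategy developed in the proof of Theorem~\ref{thm:existence of solution of unbounded BSDEs}, but replacing the nonlinear Young integral by the Lebesgue integral against $F$ and the H\"older weights $|X_r|^{\lambda+\beta}$ by the stochastic Lipschitz bounds $C(1+|X_r|^{\theta_1})$, $C(1+|X_r|^{\theta_2})$. Let $T_n := T\wedge \inf\{t>0: |X_t|>n\}$ as in \eqref{e:Tn}. On $[0,T_n]$ the driver is globally Lipschitz in $(y,z)$ with constants bounded by $C(1+n^{\theta_1\vee\theta_2})$ and satisfies $|F(r,X_r,0,0)|\le C(1+n^{\theta_3})$, so the classical Pardoux--Peng theory produces a unique solution $(Y^n,Z^n)$ in $\mathfrak H^2(0,T)$ to the truncated BSDE with terminal value $h(X_{T_n})$, together with polynomial-in-$n$ a priori bounds.

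Next I set $\delta Y^n := Y^{n+1}-Y^n$, $\delta Z^n := Z^{n+1}-Z^n$ on $[0,T_n]$. A mean-value linearization yields progressively measurable processes $a^n,b^n$ with
\[|a^n_r|\le C(1+|X_r|^{\theta_2}),\qquad |b^n_r|\le C(1+|X_r|^{\theta_1}),\]
such that
\[\delta Y^n_t = \delta Y^n_{T_n} + \int_{t\wedge T_n}^{T_n}\bigl(a^n_r\delta Y^n_r + b^n_r\cdot \delta Z^n_r\bigr)dr - \int_{t\wedge T_n}^{T_n}\delta Z^n_r\,dW_r,\]
with terminal datum $\delta Y^n_{T_n}=Y^{n+1}_{T_n}-h(X_{T_n})$. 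Because $2\theta_1<2$, Assumption~\ref{(A0)} gives Gaussian tails for $X$ and hence uniform Novikov moments $\mathbb E[\exp\{q\int_0^T|b^n_r|^2dr\}]<\infty$ for all $q>0$, so a Girsanov change of measure $d\tilde{\mathbb P}^n/d\mathbb P=\mathcal E(\int b^n\cdot dW)_T$ removes the $\delta Z^n$ term and delivers the Feynman--Kac formula
\[\delta Y^n_t = \tilde{\mathbb E}^n_t\Bigl[(Y^{n+1}_{T_n}-h(X_{T_n}))\exp\Bigl\{\int_{t\wedge T_n}^{T_n}a^n_r\,dr\Bigr\}\Bigr].\]

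Applying H\"older's inequality under $\tilde{\mathbb P}^n$ followed by a switch back to $\mathbb P$ via the uniform bound on the stochastic exponential, and using $\theta_2<2$ to ensure $\mathbb E[\exp\{C\int_0^T(1+|X_r|^{\theta_2})dr\}]<\infty$, I obtain for some $q>1$
\[\mathbb E\bigl[\|\delta Y^n\|_{\infty;[0,T_n]}\bigr]\lesssim \|Y^{n+1}_{T_n}-h(X_{T_n})\|_{L^q(\mathbb P)}\cdot \exp\{Cn^{\theta_2}\}.\]
The terminal factor decays like $\exp(-cn^2)$: indeed $Y^{n+1}_{T_{n+1}}=h(X_{T_{n+1}})$, so by  standard BSDE regularity (with polynomial-in-$n$ Lipschitz data) the $L^q$ norm of $Y^{n+1}_{T_n}-Y^{n+1}_{T_{n+1}}$ is controlled by moments of $|T_{n+1}-T_n|$ and $|h(X_T)-h(X_{T_n})|$, which Lemma~\ref{lem:existence of X} shows decay super-exponentially in $n$. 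Since $\theta_2<2$, these estimates are summable, and $\{Y^n_{\cdot\wedge T_n}\}$ is Cauchy in $L^2(\Omega;C[0,T])$. The limit $Y$ is continuous and adapted; the process $Z$ is then constructed via the martingale representation theorem (applied locally on $[0,T_n]$ and then globalized using the uniform $\mathfrak H^2$-bounds), as in Step 2 of the proof of Theorem~\ref{thm:existence of solution of unbounded BSDEs} or Proposition~\ref{prop:limit of solutions is also a solution}. Uniqueness is handled by the same Girsanov representation applied directly on $[0,T]$ to the difference of two $\mathfrak H^2$-solutions, which has terminal value zero and hence vanishes identically.

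The main obstacle is the simultaneous control of both Girsanov absorptions: the condition $\theta_1<1$ is precisely what validates Novikov's condition for the $\delta Z^n$-term, while $\theta_2<2$ is exactly the threshold that keeps the exponential weight $\exp\{\int a^n_r dr\}$ integrable against the Gaussian-type distribution of $X$, so that the decay of the terminal datum wins over the polynomial growth $\exp(Cn^{\theta_2})$ incurred by the change of measure. Verifying uniform-in-$n$ moment bounds on $\mathcal E(\int b^n dW)$ and on $\exp\{\int a^n dr\}$ simultaneously, which requires a careful triple-H\"older splitting, is the technical core.
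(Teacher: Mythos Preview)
Your overall strategy coincides with the paper's: localize via $T_n$, linearize $\delta Y^n=Y^{n+1}-Y^n$ into a linear BSDE with coefficients $a^n,b^n$, represent $\delta Y^n$ through an exponential weight, and play the Gaussian decay $\exp(-cn^2)$ of the terminal datum $\|Y^{n+1}_{T_n}-h(X_{T_n})\|$ against the growth of that weight.

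The execution differs in one respect. The paper does \emph{not} appeal to global Novikov moments or to $\mathbb E[\exp\{C\int_0^T(1+|X_r|^{\theta_2})dr\}]<\infty$. Instead it uses the deterministic local bound $|X_{\cdot\wedge T_n}|\le n$ on $[0,T_n]$: writing $\Gamma^n_t:=\mathbb E_t[\exp\{\int_{t\wedge T_n}^{T_n}2(a^n_r-\tfrac12|b^n_r|^2)dr+\int_{t\wedge T_n}^{T_n}2b^n_r\,dW_r\}]$, one has $|a^n_r|+|b^n_r|^2\lesssim 1+n^{(2\theta_1)\vee\theta_2}$ on $[0,T_n]$, hence $\esssup_{t,\omega}\Gamma^n_t\le\exp\{Cn^{(2\theta_1)\vee\theta_2}\}$. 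This $L^\infty$ bound is what makes the H\"older step immediate and avoids your ``triple H\"older splitting'' entirely. Your displayed bound $\exp\{Cn^{\theta_2}\}$ is actually inconsistent with the global-integrability route you describe (that route would give a constant independent of $n$) and omits the contribution $\exp\{Cn^{2\theta_1}\}$ from the Girsanov density; the paper's local argument gives $\exp\{Cn^{(2\theta_1)\vee\theta_2}\}$ in one line, and $(2\theta_1)\vee\theta_2<2$ is exactly the hypothesis $\theta_1<1,\ \theta_2<2$.

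For uniqueness the paper also localizes: given any $\mathfrak H^2$-solution $(\tilde Y,\tilde Z)$, it first observes (via the decomposition $F=f_0+F_0$ with $f_0$ uniformly Lipschitz and $|F_0|\le C(1+|X|^{\theta_3})$) that $(\tilde Y,\tilde Z)\in\mathfrak H^q$ for all $q$, then repeats the $\Gamma^n$-estimate for $\tilde Y-Y^n$. Your direct Girsanov argument on $[0,T]$ (terminal value zero, hence zero) is a legitimate shortcut for $N=1$, provided you check that $\int A\,\delta Z\,dW$ is a true martingale; this follows from the same sub-Gaussian moments you invoke, so it does work and is slightly cleaner than the paper's route here.
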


\begin{proof} The proof follows the approach used in the proof of Theorem~\ref{thm:existence of solution of unbounded BSDEs}. We only consider the case when $N=1$ (the multi-dimensional case can be proved in the same manner as \nameref{proof:3'} of Theorem~\ref{thm:existence of solution of unbounded BSDEs}). We also assume $d=1$ without loss of generality.

Let $T_{n}$ be defined in \eqref{e:Tn}. Since $F(t,X_t,y,z)\mathbf{1}_{[0,T_n]}(t)$ is uniformly Lipschitz in $(y,z)$, the following BSDEs have a unique solution in $\mathfrak{H}^{2}(0,T)$, for $n\ge1$,
\begin{equation}\label{e:dYn}
\begin{cases}
dY^n_t = - F(t,X_t,Y^n_t,Z^n_t)dt + Z^n_t dW_t,\ t\in[0,T_n],\\[3pt]
 Y^n_{T_n} = h(X_{T_{n}}).
 \end{cases}
\end{equation}
For $t\in[0,T]$, denote
\begin{equation*}
\left\{\begin{aligned}
a^{n}_t&:= \frac{F(t,X_t,Y^{n+1}_t,Z^{n+1}_{t}) - F(t,X_t,Y^{n}_t,Z^{n+1}_{t})}{Y^{n+1}_t - Y^{n}_t}\mathbf{1}_{\{Y^{n+1}_{t}\neq Y^{n}_{t}\}}(t),\\ b^{n}_{t}&:= \frac{F(t,X_t,Y^{n}_t,Z^{n+1}_{t}) - F(t,X_t,Y^{n}_t,Z^{n}_{t})}{Z^{n+1}_t - Z^{n}_t}\mathbf{1}_{\{Z^{n+1}_{t}\neq Z^{n}_{t}\}}(t),
\end{aligned}\right.
\end{equation*}
and 
\[\Gamma^{n}_{t} := \mathbb{E}_{t}\left[\exp\left\{\int_{t\land T_n}^{T_{n}}2\left(a^{n}_{r} - \frac{1}{2}|b^{n}_{r}|^{2}\right)dr + \int_{t\land T_n}^{T_{n}} 2b^{n}_{r} dW_{r}\right\}\right].\]
By \eqref{e:condi of F}, it follows that $|b^{n}_{t}|^{2} + |a^{n}_{t}| \lesssim 1 + |X_t|^{(2\theta_1)\vee\theta_2}$. This estimate, combined with the boundedness of $|X_{\cdot\land T_n}|\le n$ when $n\ge |x|$, yields that there exists a constant $C^{\diamond}>0$ such that
\begin{equation}\label{e:esti of Gamma n}
\esssup_{t\in[0,T],\omega\in\Omega} |\Gamma^{n}_{t}|\lesssim \exp\left\{C^{\diamond}n^{(2\theta_{1})\vee \theta_{2}}\right\},\text{ for all }n\ge |x|.
\end{equation}

On the other hand, by the triangular inequality we have
\[\|Y^{n}_{T_n} - Y^{n+1}_{T_n}\|_{L^{2}(\Omega)} \le \|h(X_{T_{n}}) - h(X_{T_{n+1}})\|_{L^{2}(\Omega)} + \|Y^{n+1}_{T_n} - h(X_{T_{n+1}})\|_{L^{2}(\Omega)}.\]
For the first term of the right-hand side, we have $\|h(X_{T_{n}}) - h(X_{T_{n+1}})\|_{L^{2}(\Omega)} \lesssim \|X_{T_{n}} - X_{T_{n+1}}\|_{L^{2}(\Omega)}\lesssim \left\|T_{n+1} - T_{n}\right\|^{1/2}_{L^{1}(\Omega)}.$ For the second one, by \cite[Proposition~4.3]{BSDEYoung-I} with $S_0 = T_{n+1},$ $S_{1} = T_{n},$ $(Y,Z) = (Y^{n+1},Z^{n+1})$,  $\xi = h(X_{T_{n+1}}),$ $\eta(\cdot,\cdot) \equiv 0,$ $g(\cdot)\equiv 0,$ $p=3,$ $\tau = 1,$ $\lambda = \frac{1}{2},$ $\beta = 0,$ $\varepsilon = \frac{1}{2(\theta_3 \vee 1)},$ $f(t,y,z) = f_0(t,X_{t},y,z) + F_{0}(t,X_{t},Y^{n+1}_{t},Z^{n+1}_{t})$, and $q=2$, and by the inequality $\|h(X_{T_{n+1}})\|^2_{L^{4}(\Omega)}\lesssim 1 + |x|^2$ for all $n\ge |x|,$ we have
\begin{equation*}
\left\|Y^{n+1}_{T_n} - h(X_{T_{n+1}})\right\|^2_{L^{2}(\Omega)}\lesssim (1 + |x|^{2\theta_3 \vee 2})\left\{\mathbb{E}\left[|T_{n+1} - T_{n}|^{8}\right]\right\}^{\frac{1}{4}} + \mathbb{E}\left[\|\mathbb{E}_{\cdot}[h(X_{T_{n+1}})]\|^{2}_{3\text{-var};[T_{n},T_{n+1}]}\right].
\end{equation*}
In addition, by the BDG inequality for $p$-variation, Doob's maximal inequality, and the same calculation as in Example~\ref{ex:assump (T)}, we have
\begin{equation*}
\begin{aligned}
\mathbb{E}\left[\left\|\mathbb{E}_{\cdot}\left[h(X_{T_{n+1}})\right]\right\|^{2}_{3\text{-var};[T_n,T_{n+1}]}\right]
&\lesssim \mathbb{E}\left[\left|h(X_{T_{n+1}}) - h(X_{T})\right|^2\right] + \mathbb{E}\left[\left\|\mathbb{E}_{\cdot}\left[h(X_{T})\right]\right\|^{2}_{3\text{-var};[T_n,T]}\right]\\
&\lesssim \{\mathbb{E}\left[|T - T_{n+1}|\right]\}^{\frac{1}{2}}.
\end{aligned}
\end{equation*}
In addition, by Lemma~\ref{lem:existence of X}, there exists a constant $C^{\star}$ such that $\left\|T-T_{n}\right\|_{L^{1}(\Omega)}\lesssim \exp\left\{-C^{\star} (n-|x|)^{2}\right\}.$ Hence, there exists a constant $C'>0$ such that 
\begin{equation}\label{e:Y^n - Y^n+1 < -C' n^2}
\|Y^{n}_{T_n} - Y^{n+1}_{T_n}\|_{L^{2}(\Omega)}\lesssim (1 + |x|^{\theta_{3}\vee 1})\exp\left\{-C' (n - |x|)^{2}\right\},\text{ for all }n\ge |x|.
\end{equation}

By \eqref{e:esti of Gamma n}, \eqref{e:Y^n - Y^n+1 < -C' n^2}, and the same procedure leading to \eqref{e:dY<=h(Y)Gamma}, we have for all  $n\ge |x|,$
\begin{equation}\label{e:rate convergence}
\begin{aligned}
\mathbb{E}\left[\|Y^{n} - Y^{n+1}\|_{\infty;[0,T_n]}\right] &\lesssim \|Y^{n}_{T_n} - Y^{n+1}_{T_{n}}\|_{L^{2}(\Omega)}\bigg\{\esssup_{t\in[0,T],\omega\in\Omega}|\Gamma^{n}_{t}|\bigg\}^{\frac{1}{2}}\\
&\lesssim (1 + |x|^{\theta_{3}\vee 1}) \exp\left\{C^{\diamond}n^{(2\theta_{1}) \vee \theta_{2}} - \frac{C'}{2}n^{2} + 2C'|x|^2\right\}.
\end{aligned}
\end{equation}
In view of the  \eqref{e:rate convergence} and the fact that $(2\theta_1)\vee \theta_2<2$,  there exists a continuous  adapted  process $Y$ on $[0,T]$ such that
\begin{equation}\label{e:lim_n E[Y^n - Y]}
\lim_{n\rightarrow \infty}\mathbb{E}\left[\|Y^{n} - Y\|_{\infty;[0,T_{n}]}\right] = 0.
\end{equation}
Thus, by the same argument leading to \eqref{e:(Y^n,Z^n) -> (Y,Z)}, there exists a process $Z$ such that $(Y,Z)$ solves Eq.~\eqref{e:BSDE-example 1'} and $(Y,Z)\in \mathfrak{H}^{q}(0,T)$ for every $q>1$.

The uniqueness can be proved in a similar way to  the existence. Assume the pair  $(\tilde{Y},\tilde{Z})\in\mathfrak{H}^{2}(0,T)$ is a solution of \eqref{e:BSDE-example 1'}. Let
\begin{equation*}
\left\{\begin{aligned}
\tilde{a}^{n}_t&:= \frac{F(t,X_t,\tilde{Y}_t,\tilde{Z}_{t}) - F(t,X_t,Y^{n}_t,\tilde{Z}_{t})}{\tilde{Y}_t - Y^{n}_t} \mathbf{1}_{\{\tilde{Y}_{t}\neq Y^{n}_{t}\}}(t),\\ \tilde{b}^{n}_{t}&:= \frac{F(t,X_t,Y^{n}_t,\tilde{Z}_{t}) - F(t,X_t,Y^{n}_t,Z^{n}_{t})}{\tilde{Z}_t - Z^{n}_t} \mathbf{1}_{\{\tilde{Z}_{t}\neq Z^{n}_{t}\}}(t),
\end{aligned}\right.
\end{equation*}
and 
$\tilde{\Gamma}^{n}_{t} := \mathbb{E}_{t}\left[\exp\left\{\int_{t\land T_n}^{T_{n}}2\left(\tilde{a}^{n}_{r} - \frac{1}{2}|\tilde{b}^{n}_{r}|^{2}\right)dr + \int_{t\land T_n}^{T_{n}} 2\tilde{b}^{n}_{r} dW_{r}\right\}\right].$ From \eqref{e:condi of F} it follows that
\begin{equation*}
\log\Big(\esssup_{s\in[0,T],\omega\in\Omega} |\tilde{\Gamma}^{n}_{s}|\Big)\lesssim 1 + n^{(2\theta_{1})\vee \theta_{2}}.
\end{equation*}
Note that $(\tilde{Y},\tilde{Z})$ solves the following BSDE, 
\[\tilde{Y}_{t} = h(X_{T}) + \int_{t}^{T}\tilde{F}(r,\tilde{Y}_{r},\tilde{Z}_{r})dr - \int_{t}^{T}\tilde{Z}_{r}dW_{r},\quad t\in[0,T],\]
where $\tilde{F}(t,y,z) := f_0(t,X_{t},y,z) + F_{0}(t,X_{t},\tilde{Y}_{t},\tilde{Z}_{t})$ is uniformly Lipschitz in $(y,z)$. Also note that $\|\int_{0}^{T}|\tilde{F}(t,0,0)|^2dt\|_{L^{q/2}(\Omega)}<\infty$ for every $q>1.$ Then, by El Karoui et al.~\cite[Theorem~5.1]{ElKaroui1997}, $(\tilde{Y},\tilde{Z})\in\mathfrak{H}^{q}(0,T)$ for every $q>1.$ Thus, since $\tilde{Y}$ is a semimartingale, by the BDG inequality for $p$-variation, $(\tilde{Y},\tilde{Z})\in \mathfrak{H}_{3,q}(0,T)$ for every $q>1.$ Therefore, by \cite[Corollary~4.2]{BSDEYoung-I} with $S = T_n$, $(Y,Z) = (\tilde{Y},\tilde{Z}),$ $\eta(\cdot,\cdot) \equiv 0,$ $g(\cdot)\equiv 0,$ $p=3,$ $\tau = 1,$ $\lambda = \frac{1}{2},$ $\beta = 0,$ $\varepsilon = \frac{1}{2(\theta_3 \vee 1)},$ $f(t,y,z) = \tilde{F}(t,y,z),$ and $q=2$, we have
\begin{equation*}
\|\tilde{Y}_{T_n} - h(X_{T})\|^{2}_{L^{2}(\Omega)}\le \mathbb{E}\left[\|\tilde{Y}\|^{2}_{\infty;[T_n,T]}\right]\lesssim_{x} \left\{\mathbb{E}\left[|T - T_{n}|^{8}\right]\right\}^{\frac{1}{4}} + \mathbb{E}\left[\|\mathbb{E}_{\cdot}[h(X_{T})]\|^2_{3\text{-var};[T_{n},T]}\right].
\end{equation*}
Then, by repeating the procedure that leads to \eqref{e:lim_n E[Y^n - Y]}, we have $\lim_{n\rightarrow\infty}\mathbb{E}\left[\|Y^{n} - \tilde{Y}\|_{\infty;[0,T_n]}\right] = 0$.
Recall that $(Y,Z)$ is the solution satisfying \eqref{e:lim_n E[Y^n - Y]}. Then we have $\tilde{Y} = Y$. Furthermore, noting 
\begin{equation*}
\int_{0}^{t}\left(F(r,X_r,\tilde{Y}_{r},\tilde{Z}_{r})- F(r,X_r,Y_{r},Z_{r})\right)dr = \int_{0}^{t}\left(\tilde{Z}_{r} - Z_{r}\right)dW_{r},
\end{equation*}
it follows that $\tilde{Z} = Z$. 
\end{proof}

We now study the localization error for the following PDE,
\begin{equation}\label{e:PDE u}
\begin{cases}
-\partial_{t} u(t,x) = Lu(t,x) + F(t,x,u(t,x),(\sigma^{\top}\nabla u)(t,x)),\  t\in(0,T)\times\R^d,\\[3pt]
u(T,x)= h(x),
\end{cases}
\end{equation}
where $Lu(t,x):= \frac{1}{2}\text{tr}\left\{\sigma(x)\sigma^{\top}(x)\nabla^{2}u(t,x)\right\} + b^{\top}(x)\nabla u(t,x)$.
According to Li et al.~\cite[Theorem~3.17]{li2025random}, $u(t,x):=Y^{t,x}_{t}$, where $Y^{t,x}$ is the unique solution of Eq.~\eqref{e:BSDE-example 1'} on $[t,T]$ with $X$ replaced by $X^{t,x}$ ($X^{t,x}$ is given by \eqref{e:forward system}), is a viscosity solution to~\eqref{e:PDE u}.

Let
$D_{n} := \left\{x\in\R^d;|x|\le n\right\}$ and assume strict uniform ellipticity \eqref{e:uniformly elliptic} for $\sigma$. Note that $D_n$ is compact. By Proposition~\ref{prop:vis-sol}, the following Dirichlet problem admits a unique viscosity solution in $C([0,T]\times \bar{D}_{n}),$
\begin{equation}\label{e:PDE: u^n}
\begin{cases}
- \partial_{t} u^n(t,x) = Lu^n(t,x) + F(t,x,u^n(t,x),(\sigma^{\top}\nabla u^n)(t,x)), \ (t,x)\in (0,T)\times D_n,\\[3pt]
u^n(t,x) = h(x), \ (t,x)\in \{T\}\times \bar{D}_{n}\cup (0,T)\times \partial D_{n},
\end{cases}
\end{equation}
and $u^{n}(t,x) = Y^{n;t,x}_{t},$ where $(Y^{n;t,x},Z^{n;t,x})$ is the unique solution of Eq.~\eqref{e:dYn} on $[t,T]$ with $X$ replaced by $X^{t,x},$ and $T_n$ replaced by $T^{t,x}_{n}:=T\land \inf\left\{r>t;X^{t,x}_{r}\notin \bar{D}_{n}\right\}$. Then $u^n$ converges to $u$ as $n\to \infty$, and the convergence rate is provided in the proposition below.

\begin{proposition}\label{prop:localization error}
Assume that $(F,h)$ satisfies the assumptions of Proposition~\ref{prop:non-Lip BSDE}, $F$ is jointly continuous in $(t,x,y,z)$, $(b , \sigma)$ is independent of $t$ and satisfies Assumption~\ref{(A0)}. Assume further that \eqref{e:uniformly elliptic} holds. Let $u(t,x):=Y^{t,x}_t$ (resp. $u^{n}$) be the viscosity solution of Eq.~\eqref{e:PDE u} (resp. Eq.~\eqref{e:PDE: u^n}) mentioned above. Then there exist constants $C^{\star},C'>0$ such that for all $n\ge |x|,$
\begin{equation*}
|u^{n}(t,x) - u(t,x)|\le C^{\star}\exp\left\{-n^{2}/C^{\star} + C^{\star}|x|^2\right\}, \text{ and }\ 
 |u(t,x)|\le C'(1+|x|^{\theta_{3}\vee 1}).
\end{equation*}
\end{proposition}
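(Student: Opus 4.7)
The plan is to mirror the localization strategy developed in the proof of Proposition~\ref{prop:non-Lip BSDE}, but applied to the difference $Y^{n;t,x}-Y^{t,x}$ rather than to the telescoping sequence $Y^{n+1}-Y^n$. Throughout, $(Y^{t,x},Z^{t,x})$ denotes the unique $\mathfrak{H}^2(0,T)$-solution of the BSDE \eqref{e:BSDE-example 1'} on $[t,T]$ with forward process $X^{t,x}$, so that $u(t,x)=Y_t^{t,x}$, and $(Y^{n;t,x},Z^{n;t,x})$ is the corresponding localized solution on $[t,T_n^{t,x}]$ with terminal value $h(X_{T_n^{t,x}}^{t,x})$, giving $u^n(t,x)=Y_t^{n;t,x}$.

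\textbf{Step 1: Linearize the difference.} Setting $\delta Y:=Y^{t,x}-Y^{n;t,x}$ and $\delta Z:=Z^{t,x}-Z^{n;t,x}$ on $[t,T_n^{t,x}]$, I will introduce the linearizing coefficients
\[
a_r:=\frac{F(r,X_r^{t,x},Y_r^{t,x},Z_r^{t,x})-F(r,X_r^{t,x},Y_r^{n;t,x},Z_r^{t,x})}{\delta Y_r}\mathbf{1}_{\{\delta Y_r\neq 0\}},\quad b_r:=\frac{F(r,X_r^{t,x},Y_r^{n;t,x},Z_r^{t,x})-F(r,X_r^{t,x},Y_r^{n;t,x},Z_r^{n;t,x})}{\delta Z_r}\mathbf{1}_{\{\delta Z_r\neq 0\}},
\]
so that $\delta Y$ solves a linear BSDE on $[t,T_n^{t,x}]$ with terminal value $Y_{T_n^{t,x}}^{t,x}-h(X_{T_n^{t,x}}^{t,x})$. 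The hypotheses \eqref{e:condi of F} give $|a_r|\lesssim 1+|X_r^{t,x}|^{\theta_2}$ and $|b_r|^2\lesssim 1+|X_r^{t,x}|^{2\theta_1}$, which on $[t,T_n^{t,x}]$ are bounded by $1+n^{(2\theta_1)\vee\theta_2}$.

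\textbf{Step 2: Feynman--Kac representation and control of the exponential martingale.} By the product rule argument used repeatedly in Section~\ref{subsec:existence} (cf.\ \eqref{e:deltaYni}), $\delta Y$ admits the representation
\[
\delta Y_t=\mathbb{E}_t\Big[\big(Y_{T_n^{t,x}}^{t,x}-h(X_{T_n^{t,x}}^{t,x})\big)\exp\Big\{\int_t^{T_n^{t,x}}\!\!\bigl(a_r-\tfrac12|b_r|^2\bigr)dr+\int_t^{T_n^{t,x}}\!\! b_r\,dW_r\Big\}\Big].
\]
Applying Cauchy--Schwarz as in \eqref{e:dY<=h(Y)Gamma} yields
\[
|u^n(t,x)-u(t,x)|\le\Big\|Y_{T_n^{t,x}}^{t,x}-h(X_{T_n^{t,x}}^{t,x})\Big\|_{L^2(\Omega)}\cdot\esssup_{\omega,s}|\Gamma_s^n|^{1/2},
\]
where $\Gamma^n$ is defined exactly as in the proof of Proposition~\ref{prop:non-Lip BSDE}. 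The bound \eqref{e:esti of Gamma n} carries over, giving $\esssup|\Gamma_s^n|\le\exp\{C^\diamond n^{(2\theta_1)\vee\theta_2}\}$ for $n\ge|x|$.

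\textbf{Step 3: Terminal-value estimate.} The main place where I need to work is the $L^2$-norm of $Y_{T_n^{t,x}}^{t,x}-h(X_{T_n^{t,x}}^{t,x})=Y_{T_n^{t,x}}^{t,x}-Y_T^{t,x}+(h(X_T^{t,x})-h(X_{T_n^{t,x}}^{t,x}))$. The second piece is $\lesssim\|T-T_n^{t,x}\|_{L^1(\Omega)}^{1/2}$ by the Lipschitzness of $h$ and the BDG inequality. For the first, I will apply the moment/regularity estimate \cite[Proposition~4.3]{BSDEYoung-I} (with $\eta\equiv 0$, $g\equiv 0$, the same choice of exponents as in the proof of Proposition~\ref{prop:non-Lip BSDE}, and $(S_0,S_1)=(T,T_n^{t,x})$), producing a bound of the form $\lesssim(1+|x|^{\theta_3\vee 1})(\mathbb{E}[|T-T_n^{t,x}|^8])^{1/8}+(\mathbb{E}[\|\mathbb{E}_{\cdot}[h(X_T^{t,x})]\|_{3\text{-var};[T_n^{t,x},T]}^2])^{1/2}$, and a Clark--Ocone-style computation as in Example~\ref{ex:assump (T)}(b) controls the last term by $\|T-T_n^{t,x}\|_{L^1(\Omega)}^{1/4}$. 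Lemma~\ref{lem:existence of X} then gives $\mathbb{E}[|T-T_n^{t,x}|^q]\lesssim\exp\{-(n-|x|)^2/C\}$ for every $q\ge 1$.

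\textbf{Step 4: Combining.} Inserting everything and absorbing $(n-|x|)^2\ge n^2/2-|x|^2$-type inequalities, one obtains
\[
|u^n(t,x)-u(t,x)|\lesssim(1+|x|^{\theta_3\vee 1})\exp\Big\{\tfrac{C^\diamond}{2}n^{(2\theta_1)\vee\theta_2}-\tfrac{(n-|x|)^2}{C}\Big\},
\]
and since $(2\theta_1)\vee\theta_2<2$, absorbing the polynomial prefactor $(1+|x|^{\theta_3\vee 1})$ and enlarging the constant gives the desired bound $C^\star\exp\{-n^2/C^\star+C^\star|x|^2\}$. The pointwise bound on $u$ will then be obtained either by taking $n\to\infty$ in $u=u^n+(u-u^n)$ together with a standard estimate $|u^n(t,x)|=|Y_t^{n;t,x}|\lesssim 1+|x|^{\theta_3\vee 1}$ (coming from Lemma~\ref{lem:growth of (Y^n,Z^n)}-type bounds specialized to the present setting where $f(r,x,0,0)$ grows like $|x|^{\theta_3}$), or directly from the standard BSDE a priori estimate applied to \eqref{e:BSDE-example 1'} with $X=X^{t,x}$ after noting $\mathbb{E}[\|X^{t,x}\|_{\infty;[t,T]}^q]\lesssim 1+|x|^q$.

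The main obstacle is Step~3: because the coefficient $F$ is only stochastically Lipschitz with growth $|x|^{\theta_2}$ and $|x|^{\theta_1}$, the regularity estimate for $Y^{t,x}$ near the stopping time $T_n^{t,x}$ must be set up so that the polynomial prefactor does not exceed $|x|^{\theta_3\vee 1}$ and so that the small-time factor is a genuine power of $\mathbb{E}[|T-T_n^{t,x}|]$; this requires choosing the auxiliary exponents $(\tau,\lambda,\beta,\varepsilon,p)$ in the application of \cite[Proposition~4.3]{BSDEYoung-I} exactly as in the proof of Proposition~\ref{prop:non-Lip BSDE}, so that the ensuing computation is identical in structure to the telescoping argument already carried out there.
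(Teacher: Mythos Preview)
Your approach is correct but takes a more laborious route than the paper's. The paper does not re-linearize the difference $Y^{t,x}-Y^{n;t,x}$ at all; instead it observes that the telescoping estimate \eqref{e:rate convergence} already proved inside Proposition~\ref{prop:non-Lip BSDE} gives, for each $m\ge |x|$,
\[
\mathbb{E}\big[\|Y^{m;t,x}-Y^{m+1;t,x}\|_{\infty;[t,T_m^{t,x}]}\big]\lesssim (1+|x|^{\theta_3\vee 1})\exp\{-C_1 m^{2}+C_2|x|^2\},
\]
so that $|u^n(t,x)-u(t,x)|\le\sum_{m\ge n}|Y^{m;t,x}_t-Y^{m+1;t,x}_t|$ is bounded by a convergent tail of Gaussian type, yielding the first estimate in two lines. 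The growth bound is obtained by a single application of \cite[Corollary~4.1]{BSDEYoung-I} with the same auxiliary exponents $(p,\tau,\lambda,\beta,\varepsilon)=(3,1,\tfrac12,0,\tfrac{1}{2(\theta_3\vee 1)})$.

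Your direct route (linearize $Y^{t,x}-Y^{n;t,x}$, control the exponential factor on $[t,T_n^{t,x}]$, and estimate the terminal mismatch $Y^{t,x}_{T_n^{t,x}}-h(X^{t,x}_{T_n^{t,x}})$) does work, but it essentially replays the \emph{uniqueness} argument of Proposition~\ref{prop:non-Lip BSDE} with explicit tracking of the $x$-dependence. One technical point: in Step~3 the correct reference is \cite[Corollary~4.2]{BSDEYoung-I} rather than \cite[Proposition~4.3]{BSDEYoung-I}, since $Y^{t,x}$ lives on the full interval $[t,T]$ and you must first record that $(Y^{t,x},Z^{t,x})\in\mathfrak{H}_{3,q}(t,T)$ for all $q>1$ (exactly as in the uniqueness part of Proposition~\ref{prop:non-Lip BSDE}) before invoking the regularity estimate; the resulting bound then carries an explicit polynomial prefactor in $|x|$ as in \eqref{e:apply Cor 4.2}. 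The paper's telescoping shortcut avoids this repetition by recycling the existence estimate verbatim.
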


\begin{proof}
By \eqref{e:rate convergence}, there exist constants $C_1,C_2>0$ such that for all $n\ge |x|,$
\begin{equation*}
|Y^{n;t,x}_{t} - Y^{t,x}_{t}|\le \sum_{m =n}^{\infty}|Y^{m;t,x}_{t} - Y^{m+1;t,x}_{t}| \lesssim (1+|x|^{\theta_3  \vee 1}) \sum_{m=n}^{\infty}\exp\left\{-C_1 m^{2} + C_2 |x|^2\right\}.
\end{equation*}
Thus, $|u^{n}(t,x) - u(t,x)| = |Y^{n;t,x}_{t} - Y^{t,x}_{t}| \lesssim (1+|x|^{\theta_3  \vee 1})\exp\left\{-C_{3} n^{2} + C_{2}|x|^2\right\}$ for some $C_{3}>0$. This proves the first estimate.  

By \cite[Corollary~4.1]{BSDEYoung-I} with $X = X^{t,x},$ $(Y,Z) = (Y^{t,x},Z^{t,x}),$ $\eta(\cdot,\cdot) \equiv 0,$ $g(\cdot)\equiv 0,$ $p=3,$ $\tau = 1,$ $\lambda = \frac{1}{2},$ $\beta = 0,$ $\varepsilon = \frac{1}{2(\theta_3 \vee 1)},$ $f(s,y,z) = f_{0}(s,X^{t,x}_{s},y,z) + F_{0}(s,X^{t,x}_{s},Y^{t,x}_{s},Z^{t,x}_{s}),$ and $q=2$,  it follows that $
|u(t,x)| = |Y^{t,x}_{t}|\lesssim 1 + |x|^{\theta_3 \vee 1}$, which proves the second estimate. \hfill 
\end{proof}

\appendix

\section{Some facts on processes with space-time regularity}\label{miscellaneous results}

The tower rule presented the following two lemmas are used in \nameref{proof:5} of Theorem~\ref{thm:linear BSDE}.

\begin{lemma}\label{lem:tower-law}
Suppose Assumption~\ref{(H0)} holds. Assume $\eta\in C^{\tau,\lambda}([0,T]\times\mathbb{R}^{d})$. Let $A:\Omega\times[t,T]\rightarrow\mathbb{R}$ be a continuous process with finite p-variation a.s., such that  $\mathbb{E}\left[|A_{r}|\right]<\infty$ for every $r\in[t,T]$.
Assume that the process $(\omega,r)\mapsto\mathbb{E}_{r}\left[A_{r}\right](\omega)$ has a continuous modification that has finite p-variation a.s.  Let $X:\Omega\times[t,T]\rightarrow\mathbb{R}^{d}$ and $B:\Omega\times[t,T]\rightarrow\mathbb{R}$ be two continuous adapted  processes with finite p-variation a.s. Assume 
\begin{equation}\label{e:con1}
\mathbb{E}\left[\|AB\|_{\infty;[t,T]}\left(1+\|X\|^{\lambda}_{p\text{-}\mathrm{var};[t,T]}\right) + \|AB\|_{p\text{-}\mathrm{var};[t,T]}\right]<\infty.
\end{equation}
In addition, let $C_r=\E_r[A_r]B_r$ and assume further
\begin{equation}\label{e:con2}
\mathbb{E}\left[\|C\|_{\infty;[t,T]}\left(1+\|X\|^{\lambda}_{p\text{-}\mathrm{var};[t,T]}\right) + \|C\|_{p\text{-}\mathrm{var};[t,T]}\right]<\infty.
\end{equation}
Then, we have 
\begin{equation}\label{e:before tower-law}
\mathbb{E}\left[\Big\|\int_{\cdot}^{T}A_{r}B_{r}\eta(dr,X_{r})\Big\|_{p\text{-}\mathrm{var};[t,T]}\right]<\infty.
\end{equation}
Moreover, the following tower rule holds,
\begin{equation}\label{e:tower-law}
\mathbb{E}_{t}\left[\int_{t}^{T}A_{r}B_{r}\eta(dr,X_{r})\right] = \mathbb{E}_{t}\left[\int_{t}^{T}\mathbb{E}_{r}\left[A_{r}\right]B_{r}\eta(dr,X_{r})\right].
\end{equation}
\end{lemma}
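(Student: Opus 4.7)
The plan is to establish \eqref{e:before tower-law} as a direct consequence of the nonlinear Young integral estimate and then prove the tower rule \eqref{e:tower-law} by discrete-partition approximation combined with the standard tower property of conditional expectation.

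First, I would apply \cite[Proposition~2.1]{BSDEYoung-I} with integrand $AB$ against $\eta(dr,X_r)$, which yields the pathwise $p$-variation bound
\[
\Big\|\int_{\cdot}^{T}A_rB_r\eta(dr,X_r)\Big\|_{p\text{-var};[t,T]}\lesssim \|\eta\|_{\tau,\lambda}T^{\tau}\Big(\|AB\|_{\infty;[t,T]}(1+\|X\|^{\lambda}_{p\text{-var};[t,T]})+\|AB\|_{p\text{-var};[t,T]}\Big).
\]
Taking expectations and using \eqref{e:con1} delivers \eqref{e:before tower-law}. The very same estimate applied with integrand $C_r=\mathbb{E}_r[A_r]B_r$, combined with \eqref{e:con2}, shows that $\int_t^T \mathbb{E}_r[A_r]B_r\eta(dr,X_r)$ is likewise integrable.

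Next, I would work with Riemann-sum approximations. Given a partition $\pi=\{t=t_0<t_1<\cdots<t_n=T\}$ of $[t,T]$, set
\[
S_\pi := \sum_{i=0}^{n-1} A_{t_i}B_{t_i}\bigl(\eta(t_{i+1},X_{t_i})-\eta(t_i,X_{t_i})\bigr),\qquad S_\pi':=\sum_{i=0}^{n-1} \mathbb{E}_{t_i}[A_{t_i}]\,B_{t_i}\bigl(\eta(t_{i+1},X_{t_i})-\eta(t_i,X_{t_i})\bigr).
\]
By the sewing-lemma construction of the nonlinear Young integral recalled in \cite[Section~2]{BSDEYoung-I}, $S_\pi$ and $S_\pi'$ converge almost surely to $\int_t^T A_rB_r\eta(dr,X_r)$ and $\int_t^T \mathbb{E}_r[A_r]B_r\eta(dr,X_r)$, respectively, as $|\pi|\to 0$. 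On the other hand, adaptedness of $B$ and $X$ together with the determinism of $\eta$ makes the increment $B_{t_i}(\eta(t_{i+1},X_{t_i})-\eta(t_i,X_{t_i}))$ an $\mathcal{F}_{t_i}$-measurable random variable, so the usual tower property gives
\[
\mathbb{E}_{t_i}\bigl[A_{t_i}B_{t_i}(\eta(t_{i+1},X_{t_i})-\eta(t_i,X_{t_i}))\bigr]=\mathbb{E}_{t_i}[A_{t_i}]\,B_{t_i}(\eta(t_{i+1},X_{t_i})-\eta(t_i,X_{t_i})),
\]
and, consequently, $\mathbb{E}_t[S_\pi]=\mathbb{E}_t[S_\pi']$ for every $\pi$.

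Finally, to conclude \eqref{e:tower-law}, I would upgrade the almost-sure convergence of $S_\pi$ and $S_\pi'$ to convergence in $L^1(\Omega)$, so that the partition-level identity of conditional expectations survives the passage to the limit. The key ingredient is a partition-uniform dominating majorant, which is extracted from the proof of \cite[Proposition~2.1]{BSDEYoung-I}: inspecting the sewing argument shows that $\sup_\pi\|S_\pi\|_{\infty;[t,T]}$ and $\sup_\pi\|S_\pi'\|_{\infty;[t,T]}$ are pathwise bounded by the same right-hand side that appears in the $p$-variation estimate of the first step, which is integrable by \eqref{e:con1}--\eqref{e:con2}. The main technical obstacle is precisely this uniform-in-$\pi$ pathwise bound: the naive triangle estimate $\sum_i|t_{i+1}-t_i|^\tau$ is useless when $\tau<1$, so one has to rely on the sewing comparison between the Riemann sums and the integral itself. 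Once this dominating bound is in hand, dominated convergence under $\mathbb{E}_t[\cdot]$ applied to $\mathbb{E}_t[S_\pi]=\mathbb{E}_t[S_\pi']$ yields \eqref{e:tower-law}.
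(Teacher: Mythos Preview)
Your proposal is correct and follows essentially the same route as the paper: both argue via Riemann-sum approximation, apply the ordinary tower property termwise at the discrete level, and justify the passage to the limit by exhibiting a partition-uniform integrable majorant coming from the sewing/Young remainder estimate. The paper makes your ``sewing comparison'' step explicit by bounding $\bigl|\int_t^T A_rB_r\,\eta(dr,X_r)-S_\pi\bigr|$ via super-additivity of the local remainder in \cite[Proposition~2.1]{BSDEYoung-I}, obtaining a single integrable bound independent of $\pi$, which (combined with integrability of the integral itself) gives uniform integrability of $\{S_\pi\}_\pi$; this is precisely the dominating majorant you describe.
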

\begin{proof}
For $[u,v]\subset [t,T],$ by \cite[Proposition~2.1]{BSDEYoung-I} with $y_{r} = A_{r} B_{r}$ and $x_{r} = X_{r},$  we have 
\begin{equation*}
\begin{aligned}
&\left|\int^{v}_{u} A_{r} B_{r} \eta(dr,X_{r}) - A_{u} B_{u}\big(\eta(v,X_{u}) - \eta(u,X_{u})\big)\right|\\
&\lesssim_{\tau,\lambda,p} \|\eta\|_{\tau,\lambda}|u-v|^{\tau} \left(\|AB\|_{\infty;[u,v]}\|X\|^{\lambda}_{p\text{-var};[u,v]} + \|AB\|_{p\text{-var};[u,v]}\right).
\end{aligned}
\end{equation*}
Then, for a partition $\pi$ on $[t,T]$, by  the super-additivity of the right-hand side, we have,  
\begin{equation*}
\begin{aligned}
&\bigg|\int_{t}^{T}A_{r}B_{r}\eta(dr,X_{r})-\sum_{[t_{i},t_{i+1}]\in\pi}\big(A_{t_{i}}B_{t_{i}}\eta(t_{i+1},X_{t_i}) - A_{t_{i}}B_{t_{i}}\eta(t_{i},X_{t_i})\big)\bigg|\\
&\lesssim_{\tau,\lambda,p} \|\eta\|_{\tau,\lambda}|T-t|^{\tau}\left(\|AB\|_{\infty;[t,T]}\|X\|^{\lambda}_{p\text{-var};[t,T]}+\|AB\|_{p\text{-var};[t,T]}\right),
\end{aligned}
\end{equation*} the right-hand side of which is integrable by \eqref{e:con1}. 		Moreover, \cite[Proposition~2.1]{BSDEYoung-I} and \eqref{e:con1} yields 
\[\mathbb{E}\left[\left\|\int_{\cdot}^{T}A_{r}B_{r}\eta(dr,X_{r})\right\|_{p\text{-var};[t,T]}\right]<\infty.\] 
Hence, the set of random variables $\left\{\sum_{[t_{i},t_{i+1}]\in\pi}\big(A_{t_{i}}B_{t_{i}}\eta(t_{i+1},X_{t_i}) - A_{t_{i}}B_{t_{i}}\eta(t_{i},X_{t_i})\big)\right\}_{\pi\in \Pi}$, where $\Pi$ is the collection of all partitions on $[t, T]$,  is uniformly integrable. Therefore, 
\begin{equation*}
\begin{aligned}
\mathbb{E}_{t}\left[\int_{t}^{T}A_{r}B_{r}\eta(dr,X_{r})\right] &= \mathbb{E}_{t}\bigg[\lim_{|\pi|\downarrow 0}\sum_{[t_{i},t_{i+1}]\in\pi}A_{t_{i}}B_{t_{i}}\big(\eta(t_{i+1},X_{t_i}) - \eta(t_{i},X_{t_i})\big)\bigg]\\
&=\lim_{|\pi|\downarrow 0}\mathbb{E}_{t}\bigg[\sum_{[t_{i},t_{i+1}]\in\pi}A_{t_{i}}B_{t_{i}}\big(\eta(t_{i+1},X_{t_i}) - \eta(t_{i},X_{t_i})\big)\bigg].
\end{aligned}
\end{equation*}

Similarly, by replacing $A_r$ with $\E_r[A_r]$ and using \eqref{e:con2},  we can obtain that
\begin{equation*}
\mathbb{E}_{t}\left[\int_{t}^{T}\mathbb{E}_{r}\left[A_{r}\right]B_{r}\eta(dr,X_{r})\right] = \lim_{|\pi|\downarrow 0}\mathbb{E}_t\bigg[\sum_{[t_{i},t_{i+1}]\in\pi}\mathbb{E}_{t_{i}}\left[A_{t_{i}}\right]B_{t_{i}}\big(\eta(t_{i+1},X_{t_i}) - \eta(t_{i},X_{t_i})\big)\bigg].
\end{equation*}
The desired equation \eqref{e:tower-law} follows from the last two equations and the tower rule.		
\end{proof}

If we replace the condition $\|\eta\|_{\tau,\lambda}<\infty$ in Lemma~\ref{lem:tower-law} by $\|\eta\|_{\tau,\lambda;\beta}<\infty$, we have the following  result, of which the proof  is similar to that of Lemma~\ref{lem:tower-law} and hence is omitted.
\begin{lemma}\label{lem:tower-law 2}
Assume \ref{(H0)} holds and $\eta\in C^{\tau,\lambda;\beta}([0,T]\times\mathbb{R}^{d})$ for some $\beta\ge 0$. Let $A:\Omega\times[t,T]\rightarrow\mathbb{R}$ be a continuous process with finite p-variation a.s. such that   $\mathbb{E}\left[|A_{r}|\right]<\infty$ for every $ r\in[t,T]$.
Assume that the process $(\omega,r)\mapsto\mathbb{E}_{r}\left[A_{r}\right](\omega)$ has a continuous modification that has finite p-variation a.s.  Let $X:\Omega\times[t,T]\rightarrow\mathbb{R}^{d}$ and $B:\Omega\times[t,T]\rightarrow\mathbb{R}$ be two continuous adapted  processes with finite p-variation a.s. such that
$$\mathbb{E}\left[\left(1 + \|X\|_{\infty;[t,T]} + \|X\|_{p\text{-}\mathrm{var};[t,T]}\right)^{\lambda+\beta} \left(\|AB\|_{\infty;[t,T]}+\|AB\|_{p\text{-}\mathrm{var};[t,T]}\right)\right]<\infty.$$
In addition, let $C_r := \E_r[A_r]B_r$ and assume
$$\mathbb{E}\left[\left(1+\|X\|_{\infty;[t,T]} +\|X\|_{p\text{-}\mathrm{var};[t,T]}\right)^{\lambda+\beta}\left(\|C\|_{\infty;[t,T]}+ \|C\|_{p\text{-}\mathrm{var};[t,T]}\right)\right]<\infty.$$
Then \eqref{e:before tower-law} and \eqref{e:tower-law} hold.
\end{lemma}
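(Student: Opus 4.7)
The plan is to mimic the proof of Lemma~\ref{lem:tower-law} line by line, replacing each appeal to the unweighted sewing estimate by its weighted counterpart. The only substantive change is that, because we now work with the seminorm $\|\eta\|_{\tau,\lambda;\beta}$, the local Young-integral remainder acquires an extra factor $(1+\|X\|_{\infty}+\|X\|_{p\text{-var}})^{\lambda+\beta}$, and the two integrability hypotheses of the lemma have been phrased precisely to absorb this factor.

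First, for any subinterval $[u,v]\subset[t,T]$, I would apply the nonlinear Young integral estimate \cite[Proposition~2.1]{BSDEYoung-I} in its weighted form, taking the integrand $y_r=A_rB_r$ and the path $x_r=X_r$. This produces a bound of the shape
\begin{equation*}
\Bigl|\int_u^v A_rB_r\,\eta(dr,X_r)-A_uB_u\bigl(\eta(v,X_u)-\eta(u,X_u)\bigr)\Bigr|\lesssim_{\tau,\lambda,\beta,p}\|\eta\|_{\tau,\lambda;\beta}|u-v|^{\tau}\Psi(X;AB)[u,v],
\end{equation*}
where $\Psi(X;AB)[u,v]$ is a super-additive control that, after telescoping over a partition $\pi$ of $[t,T]$, is dominated by a constant multiple of
$(1+\|X\|_{\infty;[t,T]}+\|X\|_{p\text{-var};[t,T]})^{\lambda+\beta}(\|AB\|_{\infty;[t,T]}+\|AB\|_{p\text{-var};[t,T]})$.
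The first integrability hypothesis of the lemma makes this quantity $L^1(\Omega)$-integrable, which by the same Proposition~2.1 also yields
\begin{equation*}
\mathbb{E}\Bigl[\Bigl\|\int_{\cdot}^{T}A_rB_r\,\eta(dr,X_r)\Bigr\|_{p\text{-var};[t,T]}\Bigr]<\infty,
\end{equation*}
i.e. the analogue of \eqref{e:before tower-law}.

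The telescoping bound just derived is uniform over partitions $\pi\in\Pi$ of $[t,T]$, so the family of Riemann sums $\{\sum_{[t_i,t_{i+1}]\in\pi}A_{t_i}B_{t_i}(\eta(t_{i+1},X_{t_i})-\eta(t_i,X_{t_i}))\}_{\pi\in\Pi}$ is uniformly integrable and converges pointwise to $\int_t^T A_rB_r\,\eta(dr,X_r)$. Consequently, conditional expectation and the mesh-size limit may be interchanged, giving
\begin{equation*}
\mathbb{E}_t\Bigl[\int_t^T A_rB_r\,\eta(dr,X_r)\Bigr]=\lim_{|\pi|\downarrow 0}\mathbb{E}_t\Bigl[\sum_{[t_i,t_{i+1}]\in\pi}A_{t_i}B_{t_i}\bigl(\eta(t_{i+1},X_{t_i})-\eta(t_i,X_{t_i})\bigr)\Bigr].
\end{equation*}
Repeating the same argument with $A_r$ replaced by the continuous modification of $\mathbb{E}_r[A_r]$---this is where the second integrability hypothesis on $C_r=\mathbb{E}_r[A_r]B_r$ is invoked to guarantee the parallel uniform integrability---gives the corresponding identity for the conditional version.

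Finally, inside each finite Riemann sum the classical (scalar) tower rule $\mathbb{E}_{t_i}[A_{t_i}]=\mathbb{E}_{t_i}[\mathbb{E}_{t_i}[A_{t_i}]]$ applied termwise (using that $X_{t_i},B_{t_i}$ and the increment $\eta(t_{i+1},X_{t_i})-\eta(t_i,X_{t_i})$ are $\mathcal F_{t_i}$-measurable) identifies the two limits, yielding \eqref{e:tower-law}. The only genuine technical point---and the one I would verify most carefully---is that the weighted Proposition~2.1 really does produce a super-additive local bound whose supremum over partitions is controlled by $(1+\|X\|_\infty+\|X\|_{p\text{-var}})^{\lambda+\beta}(\|AB\|_\infty+\|AB\|_{p\text{-var}})$; everything else is a mechanical transcription of the proof of Lemma~\ref{lem:tower-law}.
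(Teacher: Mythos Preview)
Your proposal is correct and follows exactly the approach the paper indicates: the paper omits the proof, stating only that it ``is similar to that of Lemma~\ref{lem:tower-law},'' and your outline is precisely that transcription with the weighted remainder from \cite[Proposition~2.1]{BSDEYoung-I} replacing the unweighted one. The super-additivity concern you flag is handled by that proposition, and the two integrability hypotheses are, as you note, exactly what is needed to make the corresponding Riemann sums uniformly integrable.
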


\section{SDEs and BSDEs on bounded domains}\label{append:B}

Throughout this section, we assume that  the coefficient functions $b(\cdot)$ and $\sigma(\cdot)$ in Eq. \eqref{e:X_t = x + ...}  are independent of the time variable $t$ and satisfy Assumption~\ref{(A0)}.

Let $D$ be a bounded open subset of $\mathbb{R}^{d}$, with $\bar{D}$ being its closure and $\partial D$ its boundary. For a point $x\in \bar D$ and an $\mathcal F_t$-measurable random vector $\zeta\in \bar D$, denote by $X^{t,x}$ (resp. $X^{t,\zeta}$) the  unique solution to Eq.~\eqref{e:forward system} with the initial value $x$ (resp. $\zeta$).  Denote 
\begin{equation}\label{e:def of T^t,zeta}
T^{t,x}:=\inf\left\{s> t;\ X^{t,x}_s \notin \bar{D}\right\}\land T\ \text{ and }\ T^{t,\zeta}:=\inf\left\{s> t;\ X^{t,\zeta}_s \notin \bar{D}\right\}\land T.
\end{equation} 

\begin{lemma}\label{lem:T t,zeta}
Suppose  the set 
$\Lambda:=\left\{x\in\partial D;\mathbb{P}\left\{T^{0,x}>0\right\}=0\right\}$ of regular points is  closed. Let $\{\zeta_{m}\}_{m\ge1}$ be a sequence of $\mathcal F_t$-measurable random vectors taking values in $\bar D$ converging to $\zeta\in \bar D$ in probability as $m\to\infty$. Then,
$T^{t,\zeta_m}\rightarrow T^{t,\zeta}$ in probability as $m\to \infty$.
\end{lemma}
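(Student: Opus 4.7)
The plan is to prove the two semicontinuity statements $\limsup_m T^{t,\zeta_m}\le T^{t,\zeta}$ and $\liminf_m T^{t,\zeta_m}\ge T^{t,\zeta}$ (both in probability) separately, after first using the Lipschitz dependence of $X^{t,x}$ on its initial condition. Concretely, the Gronwall/BDG estimate for SDEs with Lipschitz coefficients yields
\[
\mathbb{E}\Bigl[\sup_{s\in[t,T]}|X^{t,\zeta_m}_s - X^{t,\zeta}_s|^{2}\wedge 1\Bigr] \longrightarrow 0,
\]
since $\zeta,\zeta_m$ take values in the bounded set $\bar D$ and $\zeta_m\to\zeta$ in probability; therefore $\sup_{s\in[t,T]}|X^{t,\zeta_m}_s - X^{t,\zeta}_s|\to 0$ in probability as well.

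For the upper semicontinuity I would fix $\delta>0$ and work on $\{T^{t,\zeta}<T\}$. By the definition of the infimum and the continuity of paths, there is a random $s\in(T^{t,\zeta},T^{t,\zeta}+\delta)$ at which $X^{t,\zeta}_s$ lies at positive distance from $\bar D$, i.e.\ in the open set $\mathbb{R}^d\setminus\bar D$; the uniform flow convergence then forces $X^{t,\zeta_m}_s\in\mathbb{R}^d\setminus\bar D$ for all sufficiently large $m$, and hence $T^{t,\zeta_m}\le s<T^{t,\zeta}+\delta$. The complementary event $\{T^{t,\zeta}=T\}$ is trivial, and choosing $s$ from a fixed countable dense set of times converts the pointwise argument into convergence in probability.

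The lower semicontinuity is the genuine obstacle, and this is where the regularity hypothesis on $\Lambda$ enters. Arguing by contradiction, suppose there is a subsequence $\{m_k\}$ and an event $A$ of positive probability on which $T^{t,\zeta_{m_k}}\to\tau<T^{t,\zeta}$ almost surely. Since each $X^{t,\zeta_{m_k}}_{T^{t,\zeta_{m_k}}}$ lies on the closed set $\partial D$ (the process exits the closed set $\bar D$ continuously) and $X^{t,\zeta_{m_k}}\to X^{t,\zeta}$ uniformly, one deduces $X^{t,\zeta}_\tau\in\partial D$ on $A$. In the applications one has $\partial D\subseteq \Lambda$ (for instance under the strict uniform ellipticity of Remark~\ref{rem:exit time}), so $X^{t,\zeta}_\tau$ is a regular point. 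Applying the strong Markov property of $X$ at the stopping time $\tau$ then gives $\mathbb{P}(T^{t,\zeta}>\tau\mid \mathcal{F}_\tau)=0$ on $A$, contradicting $\tau<T^{t,\zeta}$.

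The main technical difficulty is precisely this last step: one has to make sense of the strong Markov transition from the random exit point $X^{t,\zeta}_\tau$ to the subsequent trajectory and argue that its regular-point property (a pointwise zero-probability statement) yields an almost-sure statement on $A$. This is where closedness of $\Lambda$ is used, both to ensure that the limit $X^{t,\zeta}_\tau$ of boundary-valued random variables lies in $\Lambda$ whenever $\partial D\subseteq\Lambda$, and to keep $\{X^{t,\zeta}_\tau\in\Lambda\}\in\mathcal{F}_\tau$ so that a regular-conditional-probability formulation of the strong Markov property is legitimate. Secondary care is needed in extracting the subsequence so that all of the above limits sit on a single almost-sure set.
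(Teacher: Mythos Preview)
Your overall architecture (uniform flow convergence, then upper and lower semicontinuity of the exit time) is exactly the one in Pardoux--Zhang \cite[Proposition~4.1]{pardoux1998backward}, to which the paper simply refers. The upper-semicontinuity part is fine.

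There is, however, a genuine gap in your lower-semicontinuity step. You conclude $X^{t,\zeta}_\tau\in\partial D$ and then invoke the extra hypothesis $\partial D\subseteq\Lambda$ to get regularity of $X^{t,\zeta}_\tau$. The lemma does \emph{not} assume $\partial D\subseteq\Lambda$; it only assumes $\Lambda$ is closed. Your subsequent explanation of how closedness is used (``to ensure that the limit \dots\ lies in $\Lambda$ whenever $\partial D\subseteq\Lambda$'') is also off: if $\partial D\subseteq\Lambda$ then closedness is irrelevant for that conclusion.

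The missing observation is that the first exit point from $\bar D$ is \emph{automatically} a regular point almost surely. Indeed, on $\{T^{t,\zeta_{m_k}}<T\}$ one has $\inf\{s>T^{t,\zeta_{m_k}}:X^{t,\zeta_{m_k}}_s\notin\bar D\}=T^{t,\zeta_{m_k}}$; by the strong Markov property and Blumenthal's $0$--$1$ law this forces
\[
\mathbb{P}\bigl(T^{0,y}=0\bigr)\big|_{y=X^{t,\zeta_{m_k}}_{T^{t,\zeta_{m_k}}}}=1,
\]
i.e.\ $X^{t,\zeta_{m_k}}_{T^{t,\zeta_{m_k}}}\in\Lambda$ a.s. Now the closedness of $\Lambda$ is exactly what lets you pass to the limit: $X^{t,\zeta_{m_k}}_{T^{t,\zeta_{m_k}}}\to X^{t,\zeta}_\tau$ and $\Lambda$ closed give $X^{t,\zeta}_\tau\in\Lambda$, and your strong-Markov contradiction (with $\tau$ a stopping time in the right-continuous augmented filtration) then goes through. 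With this correction your sketch matches the intended argument.
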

\begin{proof}
The proof is similar to that of \cite[Proposition~4.1]{pardoux1998backward}, hence is omitted.
\end{proof}

Consider the following BSDEs with random terminal times,
\begin{equation}\label{e:A12}
\left\{\begin{aligned}
dY_s^{t, x}=& - f\left(s, X_s^{t, x}, Y_s^{t, x}, Z_s^{t, x}\right) d s +  Z_s^{t, x} d W_s,\ s\in[t,T^{t,x}],\\
Y^{t,x}_{T^{t,x}} =& h\left(X_{T^{t,x}}^{t, x}\right) ; \\
d\mathscr{Y}_s^{t, \zeta}=& - f\left(s, X_s^{t, \zeta}, \mathscr{Y}_s^{t, \zeta}, \mathscr{Z}_s^{t, \zeta}\right) d s +  \mathscr{Z}_s^{t, \zeta} d W_s ,\ s\in[t,T^{t,\zeta}],\\
\mathscr{Y}_{T^{t,\zeta}}^{t, \zeta} = & h\left(X_{T^{t,\zeta}}^{t, \zeta}\right),
\end{aligned}\right.
\end{equation}
where $x\in \bar D$ is a point, $\zeta\in \bar D$ is an $\mathcal F_t$-measurable random vector, and  $T^{t,x}$ and $T^{t,\zeta}$ are defined in \eqref{e:def of T^t,zeta}. Assume $h\in C^{\text{Lip}}(\mathbb{R}^d;\mathbb{R}^{N})$ and  $f$ satisfies \eqref{e:assump of f} in \ref{(A1)} with $\lambda\in(0,1],$ $\beta\ge 0,\varepsilon\in(0,1)$. Then, by Theorem~4.2.1, Theorem~4.3.1, and Remark~4.3.2 in 
\cite{zhang2017backward},  each BSDE in Eq.~\eqref{e:A12} admits a unique solution in $\mathfrak{H}^{2}(t,T)$ (recall that $\mathfrak{H}^{2}(t,T)$ is defined in \eqref{e:def of H^q}). Below, we prove the Markov property for the process $Y^t$ in Eq.~\eqref{e:A12} (see also \cite[Theorem~5.1.3] {zhang2017backward} for the Markov property for BSDEs with deterministic terminal time). 

\begin{lemma}\label{lem:flow property}
Assume the set 
$\Lambda:=\left\{x\in\partial D;\ \mathbb{P}\left\{T^{0,x}>0\right\}=0\right\}$ of regular points is closed. 
Then, for every fixed $t\in[0,T]$ and $\mathcal{F}_t$-measurable random variable $\zeta\in\bar{D}$, we have for $s\in[t,T]$,
\begin{equation}\label{e:Y-markov}
\mathscr{Y}_s^{t, \zeta}(\omega) = Y_s^{t, \zeta(\omega)}(\omega) :=  Y_s^{t,x}(\omega)|_{x=\zeta(\omega)} \text { a.s.},
\end{equation}
where $(x,s,\omega)\mapsto Y^{t,x}_s(\omega)$ is a progressively measurable version of $Y^{t,x}_s$ for each $x\in\bar{D}$ (which is still denoted by $Y^{t,x}_s$). Consequently, $Y^{t,x}_\cdot$ is Markov in the sense that for any bounded Borel measurable function $\varphi$ and $t\le s\le r \le T$,  $
\E_{s}[\varphi(Y^{t,x}_{r})] = \E[\varphi(Y^{s,y}_{r})]\big|_{y=\bar{X}^{t,x}_{s}}$, where  $\bar{X}^{t,x}_s := X^{t,x}_{s\land T^{t,x}}.$ In particular, we have $Y_s^{t,x}=u(s,\bar{X}_s^{t,x})$ a.s., where $u(s,x) = Y_s^{s,x}$.
\end{lemma}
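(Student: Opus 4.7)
The plan is to establish \eqref{e:Y-markov} first for simple $\zeta$, then extend to general $\mathcal{F}_t$-measurable $\zeta\in\bar{D}$ by approximation, and finally extract the Markov property and the representation $Y^{t,x}_s=u(s,\bar{X}^{t,x}_s)$. The key preliminary step is to produce a version of $(x,s,\omega)\mapsto Y^{t,x}_s(\omega)$ which is jointly measurable, which I would obtain from $L^2$-continuity in $x$. For this, I would combine (i) the classical $L^k$-continuity of $x\mapsto X^{t,x}_\cdot$ under Assumption~\ref{(A0)}, (ii) continuity in probability of $x\mapsto T^{t,x}$, which is Lemma~\ref{lem:T t,zeta} applied to the deterministic sequence $\zeta_m=x_m\to x$, (iii) Lipschitz continuity of $h$, which turns (i)--(ii) into convergence of the terminal values $h(X^{t,x_m}_{T^{t,x_m}})\to h(X^{t,x}_{T^{t,x}})$ in $L^2$, and (iv) the standard a priori stability estimate for BSDEs with random terminal times and Lipschitz coefficients (see \cite{zhang2017backward}), which transports this into $L^2$-continuity of $Y^{t,x}_\cdot$ uniformly in $s\in[t,T]$. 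A continuous-in-$x$ version then admits a progressively measurable modification in $(x,s,\omega)$.

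For the simple case, write $\zeta=\sum_{i=1}^k x_i\mathbf{1}_{A_i}$ with $\{A_i\}\subset\mathcal{F}_t$ a finite partition and $x_i\in\bar{D}$. Uniqueness for \eqref{e:forward system} gives $\mathbf{1}_{A_i}X^{t,\zeta}_\cdot=\mathbf{1}_{A_i}X^{t,x_i}_\cdot$ and $\mathbf{1}_{A_i}T^{t,\zeta}=\mathbf{1}_{A_i}T^{t,x_i}$, so the pair $(\mathbf{1}_{A_i}\mathscr{Y}^{t,\zeta},\mathbf{1}_{A_i}\mathscr{Z}^{t,\zeta})$ satisfies the same BSDE in \eqref{e:A12} as $(\mathbf{1}_{A_i}Y^{t,x_i},\mathbf{1}_{A_i}Z^{t,x_i})$ on the corresponding random interval. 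Uniqueness in $\mathfrak{H}^2(t,T)$ forces $\mathbf{1}_{A_i}\mathscr{Y}^{t,\zeta}_s=\mathbf{1}_{A_i}Y^{t,x_i}_s$, and summing over $i$ yields \eqref{e:Y-markov}. For a general $\mathcal{F}_t$-measurable $\zeta\in\bar{D}$, take simple random variables $\zeta_m\to\zeta$ a.s. The right-hand side $Y^{t,\zeta_m(\omega)}_s(\omega)\to Y^{t,\zeta(\omega)}_s(\omega)$ in probability by the joint continuity established above; the left-hand side $\mathscr{Y}^{t,\zeta_m}_s\to\mathscr{Y}^{t,\zeta}_s$ in probability by the same stability estimate applied to random initial data, with Lemma~\ref{lem:T t,zeta} now controlling $T^{t,\zeta_m}\to T^{t,\zeta}$. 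Passing to the limit in the identity $\mathscr{Y}^{t,\zeta_m}_s=Y^{t,\zeta_m}_s$ proves \eqref{e:Y-markov}.

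For the Markov property, fix $t\le s\le r\le T$ and apply \eqref{e:Y-markov} at time $s$ with $\zeta=\bar{X}^{t,x}_s$, which is $\mathcal{F}_s$-measurable and valued in $\bar{D}$: this gives $\mathscr{Y}^{s,\bar{X}^{t,x}_s}_r(\omega)=Y^{s,y}_r(\omega)|_{y=\bar{X}^{t,x}_s(\omega)}$. To identify the left side with $Y^{t,x}_r$, note that on $\{s<T^{t,x}\}$ the strong Markov property of $X$ gives $X^{t,x}_\cdot=X^{s,X^{t,x}_s}_\cdot$ on $[s,T]$, hence $T^{t,x}=T^{s,X^{t,x}_s}$, and uniqueness in $\mathfrak{H}^2(s,T)$ of the BSDE in \eqref{e:A12} forces $Y^{t,x}_r=\mathscr{Y}^{s,\bar{X}^{t,x}_s}_r$; on $\{s\ge T^{t,x}\}$, both sides equal $h(\bar{X}^{t,x}_s)$ by the convention of continuing the solution past the exit time. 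Taking conditional expectation $\E_s$ and using that $y\mapsto Y^{s,y}_r$ is measurable with respect to $\sigma(W_u-W_s:u\in[s,T])$, and hence independent of $\mathcal{F}_s$ for each deterministic $y$, yields $\E_s[\varphi(Y^{t,x}_r)]=\E[\varphi(Y^{s,y}_r)]|_{y=\bar{X}^{t,x}_s}$. The choice $r=s$ recovers $Y^{t,x}_s=u(s,\bar{X}^{t,x}_s)$.

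The principal technical obstacle is the $L^2$-continuity of $x\mapsto Y^{t,x}$ underlying the measurable version: when $x_m\to x$ the random terminal times $T^{t,x_m}$ can cluster pathologically near $\partial D$, and a priori neither the terminal value nor the BSDE stability estimate converge. The regular-points hypothesis \eqref{e:regular set}, routed through Lemma~\ref{lem:T t,zeta}, is precisely what excludes this behaviour, after which the argument reduces to the standard stability theory of Lipschitz BSDEs on random intervals.
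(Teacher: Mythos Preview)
Your proposal is correct and follows essentially the same route as the paper: first establish \eqref{e:Y-markov} for simple $\zeta$ by uniqueness, then extend to general $\zeta$ via approximation using a stability estimate for $\mathscr{Y}^{t,\zeta}$ in the initial datum together with Lemma~\ref{lem:T t,zeta}, and finally deduce the Markov property from the flow relation $Y^{t,x}_r=\mathscr{Y}^{s,\bar{X}^{t,x}_s}_r$ combined with the independence of $Y^{s,y}_\cdot$ from $\mathcal{F}_s$. The only difference is one of detail: the paper proves the stability estimate \eqref{e:conti of initi value} explicitly via It\^o's formula on $|\delta\mathscr{Y}^m|^2$ (isolating the cross-terminal-time term $\int_{T^{t,\zeta}\wedge T^{t,\zeta^m}}^{T^{t,\zeta}\vee T^{t,\zeta^m}}\cdots$ and controlling it through Lemma~\ref{lem:T t,zeta}), whereas you cite it as standard, and the paper handles the passage $Y^{t,\zeta^m}_s\to Y^{t,\zeta}_s$ by writing $\mathbb{E}[|Y^{t,\zeta^m}_s-Y^{t,\zeta}_s|^2]=\mathbb{E}\big[\mathbb{E}[|Y^{t,x'}_s-Y^{t,x}_s|^2]\big|_{x'=\zeta^m,x=\zeta}\big]$ via the $\mathcal{F}_t$-independence of $Y^{t,x}$ rather than through a pathwise continuous version.
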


\begin{proof} 
Firstly, we assume that $\zeta(\omega) = \sum_{n=1}^{\infty} a_n \bm{1}_{A_{n}}(\omega)$ takes countably many values, where $a_{n} \in\bar{D}$ is a constant and $\{A_{n}\}_{n}\subset\mathcal{F}_{t}$ is a partition of $\Omega$. Then we have, noting $\sum_{n=1}^{\infty}\bm{1}_{A_{n}}(\omega)\equiv 1$,
\begin{equation*}
\begin{aligned}
Y^{t,\zeta(\omega)}_{s}(\omega) &= \sum_{n=1}^{\infty} \bm{1}_{A_{n}}(\omega)\cdot Y^{t,a_n}_{s}(\omega)\\
& = h(X^{t,\zeta(\omega)}_{T^{t,\zeta(\omega)}(\omega)}(\omega)) + \int_{s\land T^{t,\zeta(\omega)}(\omega)}^{T^{t,\zeta(\omega)}(\omega)}f(r,X^{t,\zeta(\omega)}_{r}(\omega),Y^{t,\zeta(\omega)}_{r}(\omega),\sum_{n=1}^{\infty}(\bm{1}_{A_{n}}Z^{t,a_n}_{r}(\omega)))dr \\
&\qquad \qquad \qquad \qquad \qquad \qquad \qquad \qquad \qquad \qquad - \int_{s\land T^{t,\zeta}}^{T^{t,\zeta}} \sum_{n=1}^{\infty}(\bm{1}_{A_{n}}Z^{t,a_n}_{r}) dW_{r} (\omega),
\end{aligned}
\end{equation*}
and 
\begin{equation*}
\int_{t}^{T} \mathbb{E}\bigg[ \Big| 
\sum_{n=1}^{\infty}(\bm{1}_{A_{n}}Z^{t,a_n}_{r}) \Big|^{2} \bigg] dr  \le \sup_{x\in\bar{D}} \mathbb{E}\left[\int_{t}^{T}\left| 
Z^{t,x}_{r} \right|^{2}dr\right]<\infty.
\end{equation*}
Therefore, $\big(Y^{t,\zeta}_{s},\sum_{n=1}^{\infty}(\bm{1}_{A_{n}} Z^{t,a_{n}}_{s})\big)$ solves the second equation in \eqref{e:A12}, and hence $\mathscr Y_s^{t, \zeta}= Y_s^{t,\zeta}$ a.s.

To deal with the general situation, we claim that
\begin{equation}\label{e:conti of initi value}
\left\|\left\|\mathscr{Y}^{t,\zeta^{m}} - \mathscr{Y}^{t,\zeta}\right\|_{\infty;[t,T]}\right\|_{L^{2}(\Omega)}\rightarrow 0, \text{ if }\left\|\zeta^{m} - \zeta\right\|_{L^{2}(\Omega)}\rightarrow 0\text{, as }m\rightarrow\infty,
\end{equation}
which will be proved afterwards. For an $\mathcal F_t$-measurable  random vector $\zeta\in \bar D$, let $\zeta^{m}\in\mathcal{F}_t$ take countably many values in $\bar D$ such that $\left\|\zeta^{m} - \zeta\right\|_{L^{2}(\Omega)}\rightarrow 0$ as $ m\rightarrow\infty.$
Then,  it follows from \eqref{e:conti of initi value} that
\begin{equation}\label{e:scr Y}
\lim_{m\to\infty}\left\|\mathscr{Y}^{t,\zeta^m}_s - \mathscr{Y}^{t,\zeta}_s\right\|_{L^{2}(\Omega)}=0.
\end{equation}
Moreover,
\begin{equation*}
\mathbb{E}\left[\big|Y^{t,\zeta^m}_{s} - Y^{t,\zeta}_s\big|^{2}\right]=\mathbb{E}\left[\mathbb{E}_{t}\left[\left|Y^{t,\zeta^m}_{s} - Y^{t,\zeta}_s\right|^2\right]\right]=\mathbb{E}\left[\left.\mathbb{E}\left[\left|Y^{t,x'}_{s} - Y^{t,x}_{s}\right|^2\right]\right|_{x'=\zeta^m,x=\zeta}\right],
\end{equation*}
where we use the fact that $\left|Y^{t,x'}_{s} - Y^{t,x}_{s}\right|^2\in\mathcal{F}^{t}_{T}$ is independent of $\mathcal{F}_t$. By  \eqref{e:conti of initi value} again, we have 
\[\mathbb{E}\left[\left|Y^{t,x'}_{s} - Y^{t,x}_{s}\right|^2\right]\rightarrow 0,\text{ whenever }x'\rightarrow x.\]
Noting that (see \cite[Theorem~4.2.1]{zhang2017backward}) $$  \sup_{x\in\bar{D}}\|Y^{t,x}_s\|_{L^{2}(\Omega)}\le \sup_{x\in\bar{D}}\|\|Y^{t,x}_{\cdot}\|_{\infty;[t,T]}\|_{L^{2}(\Omega)} < \infty,$$ we can apply dominated convergence theorem and get
\[\mathbb{E}\left[\left|Y^{t,\zeta^m}_{s} - Y^{t,\zeta}_s\right|^{2}\right]=\mathbb{E}\left[\left.\mathbb{E}\left[\left|Y^{t,x'}_{s} - Y^{t,x}_{s}\right|^2\right]\right|_{x'=\zeta^m,x=\zeta}\right]\rightarrow 0,\ \text{as }m\rightarrow \infty.\]
Noting $\mathscr{Y}^{t,\zeta_m}_{s} = Y^{t,\zeta_m}_{s}$ and combining the above convergence with \eqref{e:scr Y}, we get $\mathscr{Y}^{t,\zeta}_{s} = Y^{t,\zeta}_{s}$, which proves \eqref{e:Y-markov}. This implies the Markov property. Indeed, by the uniqueness of the solution, for $t\le s< r\le T$ we have
\begin{equation}\label{e:before the markov}
Y^{t,x}_{r} = \mathscr{Y}^{s,\bar{X}^{t,x}_{s}}_{r} = Y^{s,\bar{X}^{t,x}_{s}}_{r}.
\end{equation}
Noting that $Y_\cdot^{s,y}$ is independent of $\mathcal F_s$, we have for any bounded Borel measurable function $\varphi$, 
\begin{equation*}
\mathbb{E}_{s}\left[\varphi(Y^{t,x}_{r})\right] = \mathbb{E}\left[\varphi(Y^{s,y}_{r})\right]\big|_{y = \bar{X}^{t,x}_{s}}.
\end{equation*}

We have proven our results assuming \eqref{e:conti of initi value}. We now prove \eqref{e:conti of initi value}, to complete the proof. Recall that $\mathscr{Y}^{t,\zeta^m}$ solves Eq.~\eqref{e:A12} with $\zeta$ replaced by $\zeta^m$. Denote $(\delta\mathscr{Y}^{m}, \delta\mathscr{Z}^{m}) := (\mathscr{Y}^{t,\zeta^m} - \mathscr{Y}^{t,\zeta},\mathscr{Z}^{t,\zeta^m} - \mathscr{Z}^{t,\zeta}).$  Then, It\^o's formula yields, for $s\in[t,T]$,
\begin{equation}\label{e:delta scr Y}
\begin{aligned}
&\mathbb{E}\left[|\delta\mathscr{Y}^{m}_{s}|^{2}\right] + \mathbb{E}\left[\int_{s}^{T}|\delta\mathscr{Z}^{m}_{r}|^{2}dr\right]\\
&\le 2\int_{s}^{T}\mathbb{E}\left[|\delta \mathscr{Y}^{m}_{r}|\left|f(r,X^{t,\zeta}_r,\mathscr{Y}^{t,\zeta^m}_{r},\mathscr{Z}^{t,\zeta^m}_{r}) - f(r,X^{t,\zeta}_r,\mathscr{Y}^{t,\zeta}_{r},\mathscr{Z}^{t,\zeta}_{r})\right|\right]dr\\
&\quad + 2\int_{s}^{T}\mathbb{E}\left[|\delta \mathscr{Y}^{m}_{r}|\left|f(r,X^{t,\zeta^m}_r,\mathscr{Y}^{t,\zeta^m}_{r},\mathscr{Z}^{t,\zeta^m}_{r}) - f(r,X^{t,\zeta}_r,\mathscr{Y}^{t,\zeta^m}_{r},\mathscr{Z}^{t,\zeta^m}_{r})\right|\right]dr \\
&\quad + 2\mathbb{E}\left[\int_{T^{t,\zeta}\land T^{t,\zeta^m}}^{T^{t,\zeta}\vee T^{t,\zeta^m}} |\delta \mathscr{Y}^{m}_{r}|\left(\left|f(r,X^{t,\zeta^m}_r,\mathscr{Y}^{t,\zeta^m}_{r},\mathscr{Z}^{t,\zeta^m}_{r})\right| + \left|f(r,X^{t,\zeta}_r,\mathscr{Y}^{t,\zeta}_{r},\mathscr{Z}^{t,\zeta}_{r})\right|\right)dr\right] \\
&\quad + \mathbb{E}\left[\left|h(X^{t,\zeta^m}_{T^{t,\zeta^m}}) - h(X^{t,\zeta}_{T^{t,\zeta}})\right|^{2}\right].
\end{aligned}
\end{equation}

For the first term on the right-hand side of \eqref{e:delta scr Y}, by condition~\eqref{e:assump of f} and the Young's inequality ($2|a||b|\le a^{2} + b^{2}$), we get
\begin{equation}\label{e:continuity of u(t,x)''}
\begin{aligned}
&2\int_{s}^{T} \mathbb{E}\left[|\delta \mathscr{Y}^{m}_{r}|\left|f(r,X^{t,\zeta}_{r},\mathscr{Y}^{t,\zeta^m}_{r},\mathscr{Z}^{t,\zeta^{m}}_{r}) - f(r,X^{t,\zeta}_{r},\mathscr{Y}^{t,\zeta}_{r},\mathscr{Z}^{t,\zeta}_{r})\right|\right]dr\\
&\le 2C_{\text{Lip}} \mathbb{E}\left[\int_{s}^{T}|\delta\mathscr{Y}^{m}_{r}|^{2} + |\delta\mathscr{Y}^{m}_{r}|\cdot|\delta\mathscr{Z}^{m}_{r}| dr\right]\\
&\le 2C_{\text{Lip}}\mathbb{E}\left[\int_{s}^{T}|\delta \mathscr{Y}^{m}_{r}|^{2}dr\right] + 2|C_{\text{Lip}}|^{2}\mathbb{E}\left[\int_{s}^{T}|\delta \mathscr{Y}^{m}_{r}|^{2}dr\right] + \frac{1}{2}\mathbb{E}\left[\int_{s}^{T}|\delta \mathscr{Z}^{m}_{r}|^{2} dr\right].
\end{aligned}
\end{equation}

For the second term, in view of the local H\"older continuity of $x\mapsto f(t,x,y,z)$ assumed in \ref{(A1)}, by the inequality $\|\|X^{t,\zeta^m}_{\cdot} - X^{t,\zeta}_{\cdot}\|_{\infty;[t,T]}\|_{L^{2}(\Omega)}\lesssim\|\zeta^m - \zeta\|_{L^2(\Omega)}$ (see \cite[Theorem~3.2.4]{zhang2017backward}), 
it follows that
\begin{equation}\label{e:continuity of u(t,x)'}
\begin{aligned}
&\int_{s}^{T}\mathbb{E}\left[|\delta \mathscr{Y}^{m}_{r}|\left|f(r,X^{t,\zeta^m}_r,\mathscr{Y}^{t,\zeta^m}_{r},\mathscr{Z}^{t,\zeta^m}_{r}) - f(r,X^{t,\zeta}_r,\mathscr{Y}^{t,\zeta^m}_{r},\mathscr{Z}^{t,\zeta^m}_{r})\right|\right]dr\\
& \lesssim \int_{s}^{T} \mathbb{E}\left[|\delta \mathscr{Y}^{m}_{r}|^2\right] dr + \int_{s}^{T} \mathbb{E}\left[|X^{t,\zeta^m}_{r} - X^{t,\zeta}_{r}|^{\mu}(1 + |X^{t,\zeta^m}_{r}|^{\mu' + \frac{\mu}{2}} + |X^{t,\zeta}_{r}|^{\mu' + \frac{\mu}{2}})^2\right] dr \\
& \lesssim \int_{s}^{T} \mathbb{E}\left[|\delta \mathscr{Y}^{m}_{r}|^2\right] dr + \|\zeta^{m} - \zeta\|^{\mu}_{L^{2}(\Omega)}.
\end{aligned}
\end{equation}

For the third term, it follows that 
\begin{align}\label{e:continuity of u(t,x)}
&\mathbb{E}\left[\int_{T^{t,\zeta}\land T^{t,\zeta^m}}^{T^{t,\zeta}\vee T^{t,\zeta^m}} |\delta \mathscr{Y}^{m}_{r}|\left(\left|f(r,X^{t,\zeta^m}_r,\mathscr{Y}^{t,\zeta^m}_{r},\mathscr{Z}^{t,\zeta^m}_{r})\right| + \left|f(r,X^{t,\zeta}_r,\mathscr{Y}^{t,\zeta}_{r},\mathscr{Z}^{t,\zeta}_{r})\right|\right)dr\right]\nonumber\\
&\lesssim   \Big(1 + \|(\mathscr{Y}^{t,\zeta},\mathscr{Z}^{t,\zeta})\|^2_{\mathfrak{H}^{4}(t,T)} + \|(\mathscr{Y}^{t,\zeta^m},\mathscr{Z}^{t,\zeta^m})\|^2_{\mathfrak{H}^{4}(t,T)}\Big) \|T^{t,\zeta} - T^{t,\zeta^{m}}\|^{\frac{1}{2}}_{L^{2}(\Omega)}
\nonumber\\
&\lesssim \|T^{t,\zeta} - T^{t,\zeta^{m}}\|^{\frac{1}{2}}_{L^{2}(\Omega)},
\end{align}
where the last inequality follows from  
\cite[Proposition~4.2]{BSDEYoung-I}.

For the last term on the right-hand side of \eqref{e:delta scr Y}, by \cite[Theorems~5.2.1 and 5.2.2]{zhang2017backward} we have 
\begin{equation}\label{e:continuity of u(t,x)0}
\mathbb{E}\left[\left|h(X^{t,\zeta^m}_{T^{t,\zeta^m}}) - h(X^{t,\zeta}_{T^{t,\zeta}})\right|^{2}\right]\lesssim \|T^{t,\zeta} - T^{t,\zeta^{m}}\|_{L^{1}(\Omega)} + \|\zeta^m - \zeta\|^{2}_{L^{2}(\Omega)}.
\end{equation}

Combining the estimates \eqref{e:delta scr Y}-\eqref{e:continuity of u(t,x)0}, and applying  H\"older's inequality and Gronwall's inequality, we obtain, for $s\in[t,T]$,
\begin{equation}\label{e:est of delta mathscr Y}
\mathbb{E}\left[|\delta\mathscr{Y}^{m}_{s}|^{2}\right] + \mathbb{E}\left[\int_{s}^{T}|\delta\mathscr{Z}^{m}_{r}|^{2}dr\right]\lesssim \|\zeta^m - \zeta\|^{\mu}_{L^{2}(\Omega)} + \|T^{t,\zeta} - T^{t,\zeta^m}\|^{\frac{1}{2}}_{L^{2}(\Omega)}.
\end{equation}
Furthermore, by \eqref{e:A12} and the BDG inequality, we have
\begin{equation}\label{e:est of delta mathscr Y'}
\begin{aligned}
&\mathbb{E}\left[\|\delta\mathscr{Y}^{m}\|^{2}_{\infty;[t,T]}\right]\\
&\lesssim \mathbb{E}\left[\left|\int_{t}^{T}\left|f(r,X^{t,\zeta^m}_r,\mathscr{Y}^{t,\zeta^m}_{r},\mathscr{Z}^{t,\zeta^m}_{r}) - f(r,X^{t,\zeta}_r,\mathscr{Y}^{t,\zeta}_{r},\mathscr{Z}^{t,\zeta}_{r})\right|dr\right|^{2}\right] \\
&\quad + \mathbb{E}\left[\left|\int_{T^{t,\zeta}\land T^{t,\zeta^m}}^{T^{t,\zeta}\vee T^{t,\zeta^m}} \left(\left|f(r,X^{t,\zeta^m}_r,\mathscr{Y}^{t,\zeta^m}_{r},\mathscr{Z}^{t,\zeta^m}_{r})\right| + \left|f(r,X^{t,\zeta}_r,\mathscr{Y}^{t,\zeta}_{r},\mathscr{Z}^{t,\zeta}_{r})\right|\right)dr \right|^{2}\right] \\
&\quad + \mathbb{E}\left[\int_{t}^{T} |\delta \mathscr{Z}^{m}_{r}|^{2}dr\right] + \mathbb{E}\left[\left|h(X^{t,\zeta^m}_{T^{t,\zeta^m}}) - h(X^{t,\zeta}_{T^{t,\zeta}})\right|^{2}\right].
\end{aligned}
\end{equation}
Then by a similar analysis leading to \eqref{e:est of delta mathscr Y}, from \eqref{e:est of delta mathscr Y'}  we get 
\begin{equation}\label{e:est of delta mathscr Y''}
\begin{aligned}
&\mathbb{E}\left[\|\delta\mathscr{Y}^{m}\|^{2}_{\infty;[t,T]}\right]\\
&\lesssim \int_{t}^{T}\mathbb{E}\left[|\delta \mathscr{Y}^{m}_r|^2\right] dr + \mathbb{E}\left[\int_{t}^{T}|\delta \mathscr{Z}^{m}_{r}|^2 dr\right] + \|\zeta^{m} - \zeta\|^{\mu}_{L^{2}(\Omega)} + \|T^{t,\zeta} - T^{t,\zeta^m}\|^{\frac{1}{2}}_{L^{2}(\Omega)}.
\end{aligned}
\end{equation}
Then, the desired \eqref{e:conti of initi value} follows from \eqref{e:est of delta mathscr Y}, \eqref{e:est of delta mathscr Y''}, and Lemma~\ref{lem:T t,zeta}, and the proof is concluded.
\end{proof}

Since the coefficient functions $f$ and $h$ in  Eq.~\eqref{e:A12} are deterministic, $Y^{t,x}_{t}\in \mathcal F_t^t$ is a constant a.s. We denote $$u(t,x) := Y^{t,x}_{t}, ~(t,x)\in [0,T]\times\bar{D}.$$

\begin{lemma}\label{lem:u(t,x) is continuous}
Under the same conditions as 
in Lemma~\ref{lem:flow property}, the function $(t,x)\mapsto u(t,x)$ is continuous on $[0,T]\times \bar D$.
\end{lemma}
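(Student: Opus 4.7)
The strategy is to combine the $L^2$-stability estimate \eqref{e:conti of initi value} from Lemma~\ref{lem:flow property} with the flow/Markov identity $Y_s^{t,x} = u(s, \bar X_s^{t,x})$ also established there. Let $(t_n, x_n) \to (t,x)$ in $[0,T] \times \bar D$; after passing to a subsequence it suffices to treat $t_n \uparrow t$ and $t_n \downarrow t$ separately. First, spatial continuity $y \mapsto u(s,y)$ on $\bar D$ for each fixed $s \in [0,T]$ is immediate from \eqref{e:conti of initi value} applied to the deterministic sequence $\zeta^m = y_m \to y$: since $\mathscr{Y}^{s,y_m}_s$ and $\mathscr{Y}^{s,y}_s$ are a.s.\ constants, $L^2$-convergence gives $u(s, y_m) \to u(s, y)$. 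Being continuous on the compact set $\bar D$, the function $u(s, \cdot)$ is bounded there.

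For $t_n \uparrow t$, I would apply the Markov identity to $Y^{t_n, x_n}$ to obtain $Y_t^{t_n, x_n} = u(t, \bar X_t^{t_n, x_n})$ a.s., and then decompose
\begin{equation*}
u(t_n, x_n) - u(t,x) = \mathbb{E}\bigl[Y^{t_n,x_n}_{t_n} - Y^{t_n,x_n}_t\bigr] + \mathbb{E}\bigl[u(t, \bar X^{t_n,x_n}_t) - u(t,x)\bigr].
\end{equation*}
The first term is controlled via the BSDE in \eqref{e:A12} by $\mathbb{E}\bigl[\int_{t_n}^{t \wedge T^{t_n, x_n}} |f(r, X^{t_n,x_n}_r, Y^{t_n,x_n}_r, Z^{t_n,x_n}_r)|\, dr\bigr]$, which is $O(\sqrt{t-t_n})$ by Cauchy--Schwarz together with uniform (in $n$) $L^2$-bounds on $(Y^{t_n,x_n}, Z^{t_n,x_n})$ from standard BSDE theory on a bounded domain. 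The second term vanishes because $\bar X^{t_n,x_n}_t \to x$ in $L^2$ by SDE-flow continuity, $u(t, \cdot)$ is continuous and bounded on $\bar D$, and $\bar X^{t_n,x_n}_t \in \bar D$ permits bounded convergence.

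For $t_n \downarrow t$, I would apply the Markov identity instead to $Y^{t,x}$, yielding $Y^{t,x}_{t_n} = u(t_n, \bar X^{t,x}_{t_n}) = \mathscr{Y}^{t_n, \bar X^{t,x}_{t_n}}_{t_n}$ a.s.; writing also $u(t_n, x_n) = \mathscr{Y}^{t_n, x_n}_{t_n}$, this gives
\begin{equation*}
u(t_n, x_n) - u(t,x) = \mathbb{E}\bigl[Y^{t,x}_{t_n} - Y^{t,x}_t\bigr] + \mathbb{E}\bigl[\mathscr{Y}^{t_n, x_n}_{t_n} - \mathscr{Y}^{t_n, \bar X^{t,x}_{t_n}}_{t_n}\bigr].
\end{equation*}
The first term tends to zero as in the previous case. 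For the second, I would apply the quantitative form of the stability estimate extracted from the proof of Lemma~\ref{lem:flow property} at starting time $t_n$, with initial data $x_n$ (deterministic) and $\bar X^{t,x}_{t_n}$ ($\mathcal{F}_{t_n}$-measurable). Inspection of that proof shows all constants depend only on Lipschitz bounds of $f, h$, bounds on $b, \sigma$, and $\mathrm{diam}(\bar D)$, and are therefore independent of $t_n \in [t, T]$; moreover $\|x_n - \bar X^{t,x}_{t_n}\|_{L^2} \to 0$ is immediate.

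The main obstacle is the exit-time factor $\|T^{t_n, x_n} - T^{t_n, \bar X^{t,x}_{t_n}}\|_{L^2} \to 0$, since Lemma~\ref{lem:T t,zeta} is stated only at a fixed starting time. The plan is to exploit the SDE-flow identity $T^{t_n, \bar X^{t,x}_{t_n}} = T^{t,x}$ on $\{T^{t,x} > t_n\}$, and on the complementary event -- whose probability tends to $\mathbb{P}(T^{t,x} \le t)$ -- to use the regularity of every point of $\partial D$ (assured by the hypothesis that $\Lambda$ is closed, in combination with Remark~\ref{rem:exit time} in the setting of interest) to deduce $T^{t_n, \bar X^{t,x}_{t_n}} = t_n$ a.s.\ there. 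When $x$ lies in the interior of $\bar D$, $\mathbb{P}(T^{t,x} \le t) = 0$ by path continuity; when $x \in \partial D$, regularity forces $T^{t,x} = t$ a.s., so that $Y^{t,x}_t = h(x) = u(t,x)$, and the claim reduces to the convergence $Y^{t_n, x_n}_{t_n} \to h(x)$, which follows from the standard terminal-time estimate $\|Y^{t_n, x_n}_{t_n} - h(X^{t_n,x_n}_{T^{t_n,x_n}})\|_{L^2} \to 0$ combined with Lemma~\ref{lem:T t,zeta} applied at starting time $t$ to the approximating sequence. A minor adaptation of Lemma~\ref{lem:T t,zeta} allowing both the initial time and the initial value to vary -- whose proof parallels that of Lemma~\ref{lem:T t,zeta} via stability of the SDE in $(s, y)$ jointly and the regularity of $\partial D$ -- then yields the required $L^2$-convergence of exit times and closes the argument.
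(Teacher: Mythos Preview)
Your overall strategy coincides with the paper's: both use the Markov identity $Y^{t,x}_{s}=u(s,\bar X^{t,x}_{s})$ from Lemma~\ref{lem:flow property}, control the time increment $\mathbb{E}[Y_{t}-Y_{s}]$ by the $dr$-integral of $f$, and control the initial-data dependence via the quantitative stability estimate extracted from the proof of Lemma~\ref{lem:flow property}, which ultimately reduces everything to the convergence of exit times. Your handling of $t_n\uparrow t$ via spatial continuity of $u(t,\cdot)$ and bounded convergence is a legitimate variant.

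The one place where the paper is sharper is precisely the exit-time step you flag as the ``main obstacle''. You propose either assuming every point of $\partial D$ is regular (stronger than the hypothesis $\Lambda$ closed) or an unspecified ``minor adaptation'' of Lemma~\ref{lem:T t,zeta} to varying initial times. The paper instead exploits that $b,\sigma$ are time-independent (stated at the opening of Appendix~\ref{append:B}): by Yamada--Watanabe, $(X^{s,x},X^{s,y})$ has the same law as $(X^{0,x},X^{0,y})$, so
\[
\mathbb{E}\big[|T^{s,y}-T^{s,x}|^{2}\big]\le \mathbb{E}\big[|T^{0,y}-T^{0,x}|^{2}\big],
\qquad
\mathbb{E}\big[|T^{s,\bar X^{t,x}_{s}}-T^{s,x}|^{2}\big]\le \mathbb{E}\big[|(S^{\delta,\bar X^{\delta-s+t,x}_{\delta}}\wedge 2T)-(S^{\delta,x}\wedge 2T)|^{2}\big],
\]
for fixed $\delta>s-t$. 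This reduces the varying-$s$ problem to a \emph{fixed} initial time, where Lemma~\ref{lem:T t,zeta} applies directly and no extra regularity hypothesis on $\partial D$ is needed. Your ``minor adaptation'' would in fact go through by exactly this time-homogeneity argument; identifying it is the missing ingredient that makes the proof work under the stated hypotheses.
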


\begin{proof}
Let $T^{t,\zeta}$ be given in \eqref{e:def of T^t,zeta} and denote $S^{t,\zeta} := \inf\{r > t;\ X^{t,\zeta}_r \notin \bar{D}\}$.  Denote
$\bar{X}^{t,x}_{s} := X^{t,x}_{s\land T^{t,x}}.$ 
Since $(X^{t,x},X^{t,y})^{\top}$ solves a time-homogeneous SDE for which the pathwise uniqueness holds, by the Yamada-Watanabe theorem (see, e.g., \cite[Proposition~5.3.20]{karatzas1991brownian}), $(X^{t,x},X^{t,y})^{\top}$ has the same distribution as $(X^{0,x},X^{0,y})^{\top}$. Therefore,
for $x,y\in\bar{D}$, $ 0\le t< s < T$, and $\delta\in(s-t,T-t]$, we have that 
\begin{equation*}
\left(S^{s,y}\land T\right) - \left(S^{s,x}\land T\right) \overset{\text{d}}{=} \left(S^{0,y}\land (T-s)\right) - \left(S^{0,x}\land (T-s)\right),
\end{equation*}
where the symbol ``$\overset{\text{d}}{=}$'' denotes equality in  distribution.
Similarly, we also have
\begin{equation*}
\left(S^{s,\bar{X}^{t,x}_{s}}\land T\right) - \left(S^{s,x}\land T\right) \overset{\text{d}}{=} \left(S^{\delta,\bar{X}^{\delta-s+t,x}_{\delta}}\land (\delta + T-s)\right) - \left(S^{\delta,x}\land (\delta + T-s)\right).
\end{equation*} 
Noting that $T^{t,\zeta}=S^{t,\zeta}\wedge T$ and that for $s\in(t,t+\delta)\subset[0,T]$,
\begin{equation*}
\begin{cases}
\left|\big(a\land (T-s)\big) - \big(b\land(T-s)\big)\right|\le \left|\big(a\land T \big) - \big(b\land T\big)\right|,
\\
\left|\big(a\land (\delta + T-s)\big) - \big(b\land(\delta + T-s)\big)\right|\le \left|\big(a\land 2T\big) - \big(b\land 2T\big)\right|,
\end{cases} 
\end{equation*}
we get
\begin{equation}\label{e:T-T,=T-T0}
\mathbb{E}\left[\big|T^{s,y} - T^{s,x}\big|^2\right] \le \mathbb{E}\left[\big|T^{0,y} - T^{0,x}\big|^2\right]
\end{equation}
and
\begin{equation}\label{e:T-T,=T-T}
\mathbb{E}\left[\big|T^{s,\bar{X}^{t,x}_{s}} - T^{s,x}\big|^2\right] \le \mathbb{E}\left[\big|(S^{\delta,\bar{X}^{\delta-s+t,x}_{\delta}}\land 2T) - (S^{\delta,x}\land 2T)\big|^2\right].
\end{equation}
On the other hand, we claim that $\bar{X}^{\delta-s+t,x}_{\delta} \rightarrow x$  in probability as $s\downarrow t$. Indeed, by \cite[Theorem~5.2.2]{zhang2017backward},  
\begin{equation}\label{e:X^delta-s+t}
\begin{aligned}
\|\bar{X}^{\delta-s+t,x}_{\delta} - x\|^2_{L^{2}(\Omega)}\le \left\|\|X^{\delta-s+t,x}_{\cdot} - x\|_{\infty;[\delta-s+t,\delta]}\right\|^2_{L^{2}(\Omega)}\lesssim s - t,
\end{aligned}
\end{equation}
which implies our claim. Thus by Lemma~\ref{lem:T t,zeta}, we have 
\begin{equation}\label{e:2T0}
\mathbb{E}\left[\big|T^{0,y} - T^{0,x}\big|^2\right]\to 0,\ \text{ as }y\to x,
\end{equation}
\begin{equation}\label{e:2T}
\mathbb{E}\left[\big|(S^{\delta,\bar{X}^{\delta-s+t,x}_{\delta}}\land 2T) - (S^{\delta,x}\land 2T)\big|^2\right]\to 0,\ \text{ as } s\downarrow t.
\end{equation}
Then, \eqref{e:T-T,=T-T0} and \eqref{e:2T0} imply
\begin{equation}\label{e:after 2T0}
\sup_{s\in(t,t+\delta)}\mathbb{E}\left[\big|T^{s,y} - T^{s,x}\big|^2\right]\to 0,\ \text{ if }y\rightarrow x, 
\end{equation}
while \eqref{e:T-T,=T-T} and \eqref{e:2T} yield
\begin{equation}\label{e:after 2T}
\mathbb{E}\left[\big|T^{s,\bar{X}^{t,x}_{s}} - T^{s,x}\big|^2\right]\to 0,
\text{ as }s\downarrow t.
\end{equation}

Now, we are ready to prove the continuity of $u.$  By \eqref{e:before the markov}, we have $Y^{t,x}_{s} = Y^{s,\bar{X}^{t,x}_{s}}_{s}$. Then we have
\begin{equation*}
|u(t,x) - u(s,x)| \le \left|\mathbb{E}\left[Y^{t,x}_{t} - Y^{t,x}_{s}\right]\right| + \mathbb{E}\left[\big|Y^{s,\bar{X}^{t,x}_{s}}_{s} - Y^{s,x}_{s}\big|\right].
\end{equation*}
By the equality $$
\left|\mathbb{E}\left[Y^{t,x}_{t} - Y^{t,x}_{s}\right]\right| = \bigg|\mathbb{E}\Big[\int_{t}^{s\land T^{t,x}}f(r,X^{t,x}_{r},Y^{t,x}_{r},Z^{t,x}_{r})dr\Big]\bigg|,$$
we obtain that $\left|\mathbb{E}\left[Y^{t,x}_{t} - Y^{t,x}_{s}\right]\right|\lesssim|t-s|^{1/2}$.  Meanwhile, by \eqref{e:est of delta mathscr Y} and \cite[Theorem~5.2.2]{zhang2017backward}, 
\begin{equation*}
\begin{aligned}
\mathbb{E}\left[\big|Y^{s,\bar{X}^{t,x}_{s}}_{s} - Y^{s,x}_{s}\big|\right]&\lesssim \|\bar{X}^{t,x}_{s} - x\|^{\mu}_{L^{2}(\Omega)} + \|T^{s,\bar{X}^{t,x}_{s}} - T^{s,x}\|^{\frac{1}{4}}_{L^{2}(\Omega)}\\
&\le |t - s|^{\frac{\mu}{2}} + \|T^{s,\bar{X}^{t,x}_{s}} - T^{s,x}\|^{\frac{1}{4}}_{L^{2}(\Omega)}.
\end{aligned}
\end{equation*}
Thus, we have 
\begin{equation}\label{e:u(t,x)-u(s,x)}
|u(t,x)-u(s,x)|\lesssim |t - s|^{\frac{\mu}{2}} + \|T^{s,\bar{X}^{t,x}_{s}} - T^{s,x}\|^{\frac{1}{4}}_{L^{2}(\Omega)}.    
\end{equation}

In a similar way, we can also get  
\begin{equation}\label{e:u(s,x)-u(s,y)}
|u(s,x) - u(s,y)|\lesssim |x-y|^{\mu} + \|T^{s,x} - T^{s,y}\|^{\frac{1}{4}}_{L^{2}(\Omega)}.
\end{equation}
Noting \eqref{e:u(t,x)-u(s,x)} and \eqref{e:u(s,x)-u(s,y)}, by the triangular inequality we get
\begin{equation}\label{e:u(t,x)-u(s,y)}
|u(t,x) - u(s,y)|\lesssim |t-s|^{\frac{\mu}{2}} + |x-y|^{\mu} + \|T^{s,x} - T^{s,y}\|^{\frac{1}{4}}_{L^{2}(\Omega)} + \|T^{s,\bar{X}^{t,x}_{s}} - T^{s,x}\|^{\frac{1}{4}}_{L^{2}(\Omega)}.
\end{equation}
Combining \eqref{e:u(t,x)-u(s,y)} with   \eqref{e:after 2T0}, \eqref{e:after 2T} and \eqref{e:X^delta-s+t}, we get
$$\lim\limits_{s\downarrow t,y\rightarrow x}u(s,y) = u(t,x).$$

It remains to show the left continuity in time. In view of the triangular inequality 
\[|u(t,x) - u(s,y)|\le |u(t,x) - u(t,y)| + |u(t,y) - u(s,y)|,\]
and the above argument, we have
\begin{equation*}
|u(t,x) - u(s,y)|\lesssim |t-s|^{\frac{\mu}{2}} + |x-y|^{\mu} + \|T^{t,x} - T^{t,y}\|^{\frac{1}{4}}_{L^{2}(\Omega)} + \|T^{s,\bar{X}^{t,y}_{s}} - T^{s,y}\|^{\frac{1}{4}}_{L^{2}(\Omega)},
\end{equation*}
which yields $$\lim\limits_{t\uparrow s,x\rightarrow y}u(t,x) = u(s,y).$$
\end{proof}


\section{The proof of Theorem~\ref{thm:existence of solution of unbounded BSDEs} for the general case}\label{append:C}

The proof of Theorem~\ref{thm:existence of solution of unbounded BSDEs} for general cases when $N=1$ and $f\equiv 0$ is not trivial. Indeed, there are two technical problems. First, the condition~$f\neq 0$ may yield an additional drift term in Eq.~\eqref{e:deltaY}, which forces $A^{t;n}_s$ in \eqref{e:def of A^t;n} to solve a Young SDE rather than a Young ODE. Second, the dimension $N>1$ makes the arguments in \eqref{e:Gamma<=2'}--\eqref{e:e:existence-7'} fail, since $|A^{t;n}_s|^{k/(k-1)}$ cannot be represented by an exponential function in general. 

We first give two lemmas that will be useful for our proof. Denote 
\begin{equation*}
\mathrm{H}^{2} := \left\{A: A \text{ is a continuous, adapted process such that }\left\|\|A\|_{L^{2}([0,T])}\right\|_{L^{2}(\Omega)}<\infty\right\},
\end{equation*}
and 
\begin{equation*}
\begin{aligned}
\mathrm{H}^{2}_{\text{loc}}&:=\Big\{A:\text{there exists a sequence of increasing stopping times }\\
&\{S_{m}\}_{m\ge 1} \text{ such that }A_r \mathbf{1}_{[0,S_m]}(r)\in \mathrm{H}^2 \text{ and }\lim_{m}\mathbb{P}\{S_{m} = T\}=1\}\Big\}.
\end{aligned}
\end{equation*}

\begin{lemma}\label{lem:SDE}
Suppose Assumption~\ref{(A0)} holds, and $v\ge 1$ is an integer. Assume $\eta\in C^{\tau,\lambda;\beta}([0,T]\times \R^d; \R)$ for some $(\tau,\lambda,\beta)\in (0,1] \times (0,1] \times [0,\infty)$, where $\partial_{t}\eta_{i}(t,x)$ exists and is continuous in $(t,x)\in[0,T]\times \R^d$; and assume $\alpha^{i},\iota,c:[0,T]\times\Omega\rightarrow\R^{v\times v}$ are bounded adapted processes, $i=1,2,...,M$. Let $S$ be a stopping time with $S\le T$. Then the following stochastic differential equation (SDE) admits a unique solution $A^{S}\in\mathrm{H}^{2}_{\mathrm{loc}}$ 
\begin{equation}\label{e:SDE A}
A^{S}_{s} = I_{v} + \sum_{i=1}^{M}\int_{S}^{s\vee S} (\alpha^{i}_{r})^{\top}A^{S}_{r}\partial_r\eta_{i}(r,X_{r})dr + \int_{S}^{s\vee S} (\iota_{r})^{\top}A^{S}_{r} dr + \int_{S}^{s\vee S} (c_{r})^{\top} A^{S}_{r} dW_{r},\ s\in[0,T].
\end{equation}
In addition, for stopping time $S'$ with $S\le S'\le T,$ we have a.s., $A^{S'}_{t} A^{S}_{S'} = A^{S}_{t}$ holds for all $t\in[S',T]$.
\end{lemma}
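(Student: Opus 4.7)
Equation~\eqref{e:SDE A} is a linear SDE in $A^S$, but its drift coefficient $\partial_r\eta_i(r,X_r)$ need not be bounded in $(r,\omega)$: the space $C^{\tau,\lambda;\beta}$ only controls $\eta$ up to a weight $(1+|x|^{\beta+\lambda})$, while $X$ is unbounded. My plan is to reduce~\eqref{e:SDE A} to the classical globally Lipschitz SDE theory by localizing along the exit times of $X$, assemble the localized pieces by uniqueness, and then derive the flow property $A^{S'}_t A^S_{S'} = A^S_t$ as a further uniqueness argument.

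For the localization, I would take $T_n := T\wedge\inf\{t>0:|X_t|>n\}$ as in~\eqref{e:Tn}, so $T_n\uparrow T$ a.s.\ by continuity of $X$. Joint continuity of $\partial_r\eta_i$ gives boundedness on the compact set $[0,T]\times\{|x|\le n\}$, whence the truncated coefficient $\mathbf{1}_{\{r\le T_n\}}\partial_r\eta_i(r,X_r)$ is uniformly bounded on $\Omega\times[0,T]$. Inserting this indicator into~\eqref{e:SDE A} produces an SDE whose coefficients are linear in $A$ with uniformly bounded Lipschitz constants, so standard Lipschitz SDE theory yields a unique solution $A^{S,n}\in\mathrm{H}^2$ with moments of every order. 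Since $\mathbf{1}_{\{r\le T_n\}}\equiv 1$ on $[S,S\vee T_n]$, uniqueness forces $A^{S,n+1}\equiv A^{S,n}$ there; setting $A^S_s := A^{S,n}_s$ on $[S,S\vee T_n]$ is thus consistent and defines a continuous adapted process on $[S,T]$ solving~\eqref{e:SDE A}. By construction $A^S\in\mathrm{H}^2_{\mathrm{loc}}$ with localizing sequence $\{S\vee T_n\}_n$, and uniqueness in $\mathrm{H}^2_{\mathrm{loc}}$ follows the same way: any two global candidates, once stopped at $T_n$, must both coincide with $A^{S,n}$.

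For the flow property, fix $S\le S'\le T$ and consider $B_t := A^{S'}_t A^S_{S'}$ on $[S',T]$. The factor $A^S_{S'}$ is $\mathcal{F}_{S'}$-measurable, so right-multiplying the SDE satisfied by $A^{S'}$ by it---this factor passes through both Lebesgue and It\^o integrals over $[S',\,\cdot\,]$---gives
\[
B_t = A^S_{S'} + \sum_i \int_{S'}^t (\alpha^i_r)^{\top} B_r\,\partial_r\eta_i(r,X_r)\,dr + \int_{S'}^t (\iota_r)^{\top} B_r\,dr + \int_{S'}^t (c_r)^{\top} B_r\,dW_r.
\]
This is precisely equation~\eqref{e:SDE A} restarted at time $S'$ with initial value $A^S_{S'}$, so uniqueness applied at time $S'$ yields $B=A^S$ on $[S',T]$.

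The main obstacle I foresee is purely bookkeeping around the localizing stopping times---confirming that the gluing step produces a process solving the \emph{untruncated} equation~\eqref{e:SDE A} and that $\mathrm{H}^2_{\mathrm{loc}}$-uniqueness passes through the stopping argument. The substantive content of the lemma is mild once the drift is observed to be bounded on each truncated interval.
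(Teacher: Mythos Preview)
Your proposal is correct and follows essentially the same route as the paper: localize along the exit times $T_n$ of $X$ so that the drift $\partial_r\eta_i(r,X_r)$ becomes bounded, invoke standard Lipschitz SDE theory on each truncated interval, and patch together by uniqueness to obtain existence and uniqueness in $\mathrm{H}^2_{\mathrm{loc}}$. For the flow property, the paper refers to the argument of Lemma~\ref{lem:For Gamma}, which is exactly the uniqueness trick you describe (right-multiply the equation for $A^{S'}$ by the $\mathcal{F}_{S'}$-measurable matrix $A^S_{S'}$ and compare with $A^S$).
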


\begin{proof}
Let $T_n$ be given by \eqref{e:Tn}. Since the processes $\alpha^{i}_t\partial_{t}\eta_{i}(t,X_{t})\mathbf{1}_{[S\land T_{n},T_n]}(t),$ $\iota_t \mathbf{1}_{[S\land T_{n},T_n]}(t),$ and $c_t \mathbf{1}_{[S\land T_{n},T_n]}(t)$ are all bounded, the parameters of Eq.~\eqref{e:SDE A} are Lipschitz on $[0,T_n]$. Therefore, the equation admits a unique solution in $\mathrm{H}^2$ on $[0,T_n]$ (see \cite[Theorem~5.2.1]{Oksendal}). Thus, by the equality~$\lim_{n}\mathbb{P}\left\{T_{n} = T\right\} = 1$ and the localization argument, the uniqueness of Eq.~\eqref{e:SDE A} in $\mathrm{H}^{2}_{\text{loc}}$ holds on the entire interval $[0,T]$.

The second assertion can be proved in the same way as for Lemme~\ref{lem:For Gamma}, and hence we omit it.
\end{proof}

For integers $v,j\ge 1,$ and matrix $\Lambda\in \R^{v^{2^{j}}\times v^{2^{j}}}$, let $I'(\Lambda) := \big((\Lambda e^{v;j}_{1})^{\top},...,\Lambda e^{v;j}_{v^{2^{j}}}\big)^{\top}\in \R^{v^{2^{j}}\times 1}$,
\begin{equation*}
\tilde{I}_{v^2\times 1}:= I'(I_{v}),\quad \tilde{I}_{v^{2^{j+1}}\times 1} := I'(\tilde{I}_{v^{2^{j}}\times 1} \tilde{I}_{v^{2^{j}}\times 1}^{\top}),
\end{equation*}
where $\{e^{v;j}_{1},...,e^{v;j}_{v^{2^{j}}}\}$ is the coordinate system of $\R^{v^{2^{j}}}$, and $I':\R^{v^{2^{j}}\times v^{2^{j}}}\rightarrow \R^{v^{2^{j+1}}\times 1}.$ 

Next, we will present a lemma to bound the higher-order moments of the solution to a matrix-valued SDE using the expectation of the solution to a vector-valued SDE.

\begin{lemma}\label{lem:(a,b,c)}
Under the same assumptions as in Lemma~\ref{lem:SDE}, assume further that $m_{p,2}(\alpha^{i};[0,T])<\infty,$ $i=1,2,...,M$. Denote by $A^{S}$ the unique solution of the SDE~\eqref{e:SDE A} with a stopping time $S$ satisfying $S\le T$. Then for any $j\ge 1$, there exist progressively measurable processes $\tilde{\alpha}^{i},$ $\tilde{\iota},$ $\tilde{c}:[0,T]\times\Omega\rightarrow \R^{v^{2^{j}}\times v^{2^{j}}},$ and a constant $C>0$ such that
\begin{equation}\label{e:(i) of alpha,b,c}
|\tilde{\alpha}^{i}_t|\le C |\alpha^{i}_t|,\ |\tilde{\iota}_t|\le C |\iota_t|,\ |\tilde{c}_t|\le C |c_t|;
\end{equation}
\begin{equation}\label{e:mpq tilde alpha}
m_{p,k}(\tilde{\alpha}^{i};[0,T])\le C\cdot m_{p,k}(\alpha^{i};[0,T]) ;
\end{equation}
and for any stopping time $S'$ satisfying $S\le S'\le T$, it follows that 
\begin{equation}\label{e:alpha,b,c}
\mathbb{E}_{S}\left[|A^{ S}_{S'}|^{2^{j}}\right] \le C \left|\mathbb{E}_{S}\left[\tilde{B}^{ S}_{S'}\right]\right|,
\end{equation}
where $\tilde{B}^{S}$ is the unique solution of the following $v^{2^{j}}$-dimensional vector-valued SDE
\begin{equation*}
\tilde{B}^{S}_{s} = \tilde{I}_{v^{2^{j}}\times 1} + \sum_{i=1}^{M}\int_{S}^{s\vee S}(\tilde{\alpha}^{i}_{r})^{\top}\tilde{B}^{S}_{r}\partial_{r}\eta_{i}(r,X_r)dr + \int_{S}^{s\vee S}(\tilde{\iota}_{r})^{\top}\tilde{B}^{S}_{r} dr + \int_{S}^{s\vee S} (\tilde{c}_{r})^{\top}\tilde{B}^{S}_{r}d W_{r}.
\end{equation*}
\end{lemma}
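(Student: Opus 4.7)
The plan is to proceed by iterative vectorization of outer products, reducing the $2^j$-th conditional moment of $|A^S|$ to the norm of the conditional expectation of a linear SDE solution living in a larger tensor space. I define inductively
$$\tilde B^{(0),S}_s := A^S_s \in \R^{v\times v}, \qquad \tilde B^{(k+1),S}_s := I'\!\bigl(\tilde B^{(k),S}_s \bigl(\tilde B^{(k),S}_s\bigr)^{\!\top}\bigr), \qquad k\ge 0,$$
where at $k=0$ the outer product $A^S (A^S)^{\!\top}$ is understood as a $v\times v$ matrix and $I'$ is the vectorization operator introduced before the lemma. For $k\ge 1$, $\tilde B^{(k),S}_s$ is a vector in $\R^{v^{2^k}}$, and at $s=S$ a direct induction using $A^S_S=I_v$ gives $\tilde B^{(k),S}_S = \tilde I_{v^{2^k}\times 1}$, matching the recursive definition of $\tilde I_{v^{2^k}\times 1}$ in the statement. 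I then set $\tilde B^S := \tilde B^{(j),S}$.

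For the bound \eqref{e:alpha,b,c}, the key algebraic identity, proved by induction on $k$, is
$$\bigl|A^S_s\bigr|^{2^k} = \bigl\langle \tilde I_{v^{2^k}\times 1},\ \tilde B^{(k),S}_s \bigr\rangle.$$
The base case $k=1$ reads $\langle I'(I_v),\, I'(A^S(A^S)^{\!\top})\rangle = \langle I_v,\, A^S(A^S)^{\!\top}\rangle_F = \mathrm{tr}(A^S (A^S)^{\!\top}) = |A^S|^2$. The inductive step uses the identity $\langle u, v\rangle^2 = \langle I'(uu^{\!\top}), I'(vv^{\!\top})\rangle$ (valid since $I'$ preserves the Frobenius inner product), applied with $u=\tilde I_{v^{2^k}\times 1}$ and $v = \tilde B^{(k),S}$. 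Taking $\E_S[\cdot]$ of both sides and applying Cauchy--Schwarz yields
$$\E_S\!\bigl[|A^S_{S'}|^{2^j}\bigr] = \bigl\langle \tilde I_{v^{2^j}\times 1},\ \E_S[\tilde B^S_{S'}]\bigr\rangle \le \bigl|\tilde I_{v^{2^j}\times 1}\bigr|\cdot\bigl|\E_S[\tilde B^S_{S'}]\bigr|,$$
with $|\tilde I_{v^{2^j}\times 1}|$ a constant depending only on $v$ and $j$, which is \eqref{e:alpha,b,c}.

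To verify that $\tilde B^{(j),S}$ satisfies a linear SDE of the required form with the claimed coefficient bounds, I apply It\^o's formula inductively. If $\tilde B^{(k),S}$ satisfies $d\tilde B^{(k)} = \sum_i (\bar\alpha^{i,(k)})^{\!\top} \tilde B^{(k)} \partial_r\eta_i\,dr + (\bar\iota^{(k)})^{\!\top}\tilde B^{(k)}\,dr + (\bar c^{(k)})^{\!\top}\tilde B^{(k)}\,dW_r$, then, upon identifying $I'(uu^{\!\top})\simeq u\otimes u$, the process $\tilde B^{(k+1),S}$ satisfies a linear SDE whose coefficient matrices are, up to transposition, the Kronecker sums $\bar\alpha^{i,(k)}\otimes I + I\otimes \bar\alpha^{i,(k)}$ and $\bar c^{(k)}\otimes I + I\otimes \bar c^{(k)}$, together with an It\^o-correction term $\bar c^{(k)}\otimes \bar c^{(k)}$ added to the drift coefficient $\bar\iota^{(k)}\otimes I + I\otimes \bar\iota^{(k)}$. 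By induction this produces coefficients in $\R^{v^{2^{k+1}}\times v^{2^{k+1}}}$ satisfying $|\bar\alpha^{i,(k+1)}|\lesssim |\bar\alpha^{i,(k)}|$, $|\bar c^{(k+1)}|\lesssim |\bar c^{(k)}|$, and $|\bar\iota^{(k+1)}|\lesssim |\bar\iota^{(k)}|+|\bar c^{(k)}|^2$; the last term is absorbed using the boundedness of $c$ from Lemma~\ref{lem:SDE} via $|c|^2\le \|c\|_\infty|c|$, so that after iterating $j$ times we obtain \eqref{e:(i) of alpha,b,c}. The $p$-variation bound \eqref{e:mpq tilde alpha} follows from the same Kronecker structure: $\tilde\alpha^{i,(j)}$ is a deterministic linear functional of $\alpha^i$, so $|\tilde\alpha^{i,(j)}_r-\tilde\alpha^{i,(j)}_s|\lesssim |\alpha^i_r-\alpha^i_s|$, hence $m_{p,k}(\tilde\alpha^{i,(j)};[0,T])\lesssim m_{p,k}(\alpha^i;[0,T])$; the existence of $\tilde B^{(j),S}\in\mathrm{H}^2_{\text{loc}}$ is then a direct application of Lemma~\ref{lem:SDE} to its linear SDE.

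The main technical burden is the careful bookkeeping of the Kronecker structure and It\^o corrections through the $j$ iterations, in particular ensuring that the $|\bar c^{(k)}|^2$ correction at each step is absorbed via boundedness of $c$ without deteriorating the form of \eqref{e:(i) of alpha,b,c}; once this is handled, the identity $|A^S_{s}|^{2^j} = \langle \tilde I_{v^{2^j}\times 1}, \tilde B^{(j),S}_s\rangle$ combined with Cauchy--Schwarz yields the moment bound essentially for free.
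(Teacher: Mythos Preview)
Your proposal is correct and follows essentially the same approach as the paper: iterated vectorization of outer products $v \mapsto I'(vv^{\top})$ to lift the $2^j$-th moment of $A^S$ to the conditional expectation of a linear SDE in dimension $v^{2^j}$, with It\^o's formula producing Kronecker-sum coefficients plus a quadratic-in-$c$ It\^o correction absorbed via boundedness. Your exact identity $|A^S_s|^{2^k} = \langle \tilde I_{v^{2^k}\times 1}, \tilde B^{(k),S}_s\rangle$ is a slightly cleaner bookkeeping device than the paper's trace-then-norm chain $\mathbb E_S[|A^S|^2]=\mathrm{tr}\,\mathbb E_S[C^S]\lesssim|\mathbb E_S[\tilde C^S]|$ followed by $|A^S|^4\lesssim|\tilde C^S|^2$, but the content is the same.
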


\begin{proof}
Assume $M=d=1$ without loss of generality. For $j = 1,$ set  $ C^{S}_{s}:=A^{S}_{s}(A^{S}_{s})^{\top}$; then it satisfies that 
\begin{equation*}
\begin{aligned}
C^{S}_{s} = I_{v} &+ \int_{S}^{s\vee S} \left( (\alpha_{r})^{\top} C^{S}_r + C^{S}_{r}\alpha_r \right)\partial_{r}\eta(r,X_r) + \int_{S}^{s\vee S} \Big( (\iota_{r})^{\top} C^{S}_r + C^{S}_{r}\iota_r \\
& + (c_{r})^{\top} C^{S}_{r}c_{r}\Big)dr + \int_{S}^{s\vee S}\left( (c_{r})^{\top} C^{S}_r + C^{S}_{r} c_r\right)d W_r,\ s\in[0,T]. 
\end{aligned}
\end{equation*}
Let $\tilde{C}^{S}_{s} := I'(C^{S}_{s})$. Similar to the proof of \cite[Proposition~3.5]{BSDEYoung-I}, it can be shown that there exist processes $\alpha^{\prime}_{r},\iota^{\prime}_{r},c^{\prime}_{r} \in \R^{v^2\times v^2}$ satisfying the following conditions.
\begin{itemize}
\item[(1)] 
Each entry of $\alpha^{\prime}_r$ (resp. $c^{\prime}_r$) is a linear combination of the elements of $\alpha_r$ (resp. $c_r$), and each entry of $\iota^{\prime}_{r}$ is a linear combination of the elements of $(\iota_{r},c_{r}).$

\item[(2)]
For $i,i'=1,2,...,v,$ the process $\alpha^{\prime}$ satisfies that for every $\Lambda\in\R^{v\times v}$,
\begin{equation*}
(\underbrace{\overbrace{ 0,\ 0,\ \cdots,\ 0}^{(i'-1)v},\ e^{\top}_{i},}_{i'v}\ 0,\ \cdots,\ 0
)_{1\times v^{2}}\cdot(\alpha^{\prime}_{r})^{\top} I'(\Lambda) = \left((\alpha_r)^{\top}\Lambda + \Lambda \alpha_r\right)_{ii'},
\end{equation*}
and $c^{\prime}$ satisfies the above equality with $(\alpha_{r},\alpha^{\prime}_{r})$ replaced by $(c_{r},c^{\prime}_{r}).$

\item[(3)] For $i,i'=1,2,...,v,$ the process $\iota^{\prime}$ satisfies that for every $\Lambda\in\R^{v\times v}$,
\begin{equation*}
(\underbrace{\overbrace{ 0,\ 0,\ \cdots,\ 0}^{(i'-1)v},\ e^{\top}_{i},}_{i'v}\ 0,\ \cdots,\ 0
)_{1\times v^{2}}\cdot (\iota^{\prime}_r)^{\top} I'(\Lambda) = \left( (\iota_r)^{\top}\Lambda 
 + \Lambda \iota_r + (c_{r})^{\top}\Lambda c_{r}\right)_{ii'}.
\end{equation*}
\end{itemize}
Then, $\tilde{C}^{S}_{s}$ uniquely solves the following $v^{2}$-dimensional SDE,
\begin{equation*}
\tilde{C}^{S}_{s} = \tilde{I}_{v^2\times 1} + \int_{S}^{s\vee S}(\alpha^{\prime}_{r})^{\top}\tilde{C}^{S}_{r}\eta(dr,X_r) + \int_{S}^{s\vee S}(\iota^{\prime}_{r})^{\top}\tilde{C}^{S}_{r} dr + \int_{S}^{s\vee S} (c^{\prime}_{r})^{\top}\tilde{C}^{S}_{r}d W_{r},\ s\in[0,T].
\end{equation*}
By condition~(1), both \eqref{e:(i) of alpha,b,c} and  \eqref{e:mpq tilde alpha} hold with $(\tilde{\alpha},\tilde{\iota},\tilde{c})$ replaced by $(\alpha^{\prime},\iota^{\prime},c^{\prime})$. Furthermore, \eqref{e:alpha,b,c} follows from the following fact, with $j=1$, and $\tilde{B}^{S}$ replaced by $\tilde{C}^{S}$,
\begin{equation*}
\mathbb{E}_{S}\left[|A^{S}_{S'}|^2\right] = \text{tr}\left\{\mathbb{E}_{S}\left[C^{S}_{S'}\right]\right\} \lesssim \left|\mathbb{E}_{S}\left[C^{S}_{S'}\right]\right|  = \big|\mathbb{E}_{S}\big[\tilde{C}^{S}_{S'}\big]\big|.
\end{equation*}

For $j = 2$, let $D^{S}_s := \tilde{C}^{S}_s(\tilde{C}^{S}_s)^{\top},$ then $D^{S}$ satisfies
\begin{equation*}
\begin{aligned}
D^{S}_{s} = \tilde{I}_{v^{2}\times 1}(\tilde{I}_{v^{2}\times 1})^{\top} &+ \int_{S}^{s\vee S} \left( (\alpha^{\prime}_{r})^{\top}D^{S}_r + D^{S}_{r}\alpha^{\prime}_r \right)\partial_{r}\eta(r,X_r)dr + \int_{S}^{s\vee S} \Big( (\iota^{\prime}_{r})^{\top}D^{S}_r + D^{S}_{r}\iota^{\prime}_r \\
& + (c^{\prime}_r)^{\top} D^{S}_{r}c^{\prime}_{r}\Big)dr + \int_{S}^{s\vee S}\left( (c^{\prime}_{r})^{\top} D^{S}_r + D^{S}_{r}c^{\prime}_r\right)d W_r,\ s\in[0,T]. 
\end{aligned}
\end{equation*}
Let $\tilde{D}^{S}_{s} := I'(D^{S}_{s})$. In view of the inequality $|A^{S}_{S'}|^{4}\lesssim |\tilde{C}^{S}_{S'}|^2$, by repeating the procedure for $j = 1$, there exists $(\alpha^{\star},\iota^{\star},c^{\star})$ so that \eqref{e:(i) of alpha,b,c}--\eqref{e:alpha,b,c} hold with $j = 2$ and $(\tilde{\alpha},\tilde{\iota},\tilde{c},\alpha,\iota,c,\tilde{B}^{S})$ replaced by $(\alpha^{\star},\iota^{\star},c^{\star},\alpha',\iota',c',\tilde{D}^{S})$. By iterating the same argument, the case for $j\ge 3$ can be proved.
\end{proof}

Now we give the full proof of Theorem~\ref{thm:existence of solution of unbounded BSDEs}. Let $\Theta_{2} := (\tau,\lambda,\beta,p,k,l,\varepsilon,T, C_{1},C_{\text{Lip}},L,\|\eta\|_{\tau,\lambda;\beta})$ be as defined in Section~\ref{subsec:existence}.

\begin{proof}[Proof of Theorem~\ref{thm:existence of solution of unbounded BSDEs}]
\makeatletter\def\@currentlabelname{the full proof}\makeatother\label{proof:3'}
Assume $M=d=1$ without loss of generality. First, we additionally assume that $\eta$ is smooth in time and that $\partial_{t}\eta(t,x)$ is continuous in $(t,x)$. The general cases will be proved at the end of the proof. By \cite[Proposition~4.1]{BSDEYoung-I}, for $n\ge 1$, the following BSDE admits a unique solution in $\bigcap_{q>1}\mathfrak{B}_{p,q}(0,T)$,
\begin{equation*}
\begin{cases}
d Y_{t} = - f^{n}_{t}dt - g(Y_{t})\partial_{t}\eta(t,X_{t})dt + Z_{t}dW_{t},\ t\in[0,T_n],\\
Y_{T_{n}} = \Xi_{T_{n}},
\end{cases}
\end{equation*}
where $f^{n}_{t}:=f(t,X_{t},Y_{t},Z_{t})$, and we denote the solution by $(Y^n,Z^n)$. For $i=1,2,...,N$, denote $Y^{n}_{t} = (Y^{n;1}_{t},Y^{n;2}_{t},...,Y^{n;N}_{t})^{\top},$ $Z^{n}_{t} = (Z^{n;1}_{t},Z^{n;2}_{t},...,Z^{n;N}_{t})^{\top},$ $\delta Y^{n}_{t} := Y^{n+1}_{t} - Y^{n}_{t},$ $\delta Z^{n}_{t} := Z^{n+1}_{t} - Z^{n}_{t}$, 
\begin{equation*}
\begin{aligned}
\delta g^{n;i}_{t} &:= g(\overbrace{Y^{n;1}_{t},...,Y^{n;i-1}_{t},}^{i-1}Y^{n+1;i}_{t},Y^{n+1;i+1}_{t},...,Y^{n+1;N}_{t}) \\
&\quad - g(\overbrace{Y^{n;1}_{t},...,Y^{n;i-1}_{t},Y^{n;i}_{t},}^{i}Y^{n+1;i+1}_{t},...,Y^{n+1;N}_{t}), 
\end{aligned}
\end{equation*}
and 
\begin{equation*}
\alpha^{n}_{t} := \begin{pmatrix}
\frac{\delta g^{n;1}_t}{\delta Y^{n;1}_{t}}\mathbf 1_{\{\delta Y^{n;1}_{t}\neq 0\}},\ \frac{\delta g^{n;2}_t}{\delta Y^{n;2}_{t}}\mathbf 1_{\{\delta Y^{n;2}_{t}\neq 0\}},...,\ \frac{\delta g^{n;N}_t}{\delta Y^{n;N}_{t}}\mathbf 1_{\{\delta Y^{n;N}_{t}\neq 0\}}
\end{pmatrix}\in\mathbb{R}^{N\times N}.
\end{equation*}
For $z\in\mathbb{R}^{N}$ and $i=1,2,...,N$, set
\begin{equation*}
\begin{aligned}
\delta f^{n;i}_{t}(z) &:= f(t,X_t,\overbrace{Y^{n;1}_{t},...,Y^{n;i-1}_{t},}^{i-1}Y^{n+1;i}_{t},Y^{n+1;i+1}_{t},...,Y^{n+1;N}_{t},z) \\
&\quad - f(t,X_t,\overbrace{Y^{n;1}_{t},...,Y^{n;i-1}_{t},Y^{n;i}_{t},}^{i}Y^{n+1;i+1}_{t},...,Y^{n+1;N}_{t},z), 
\end{aligned}
\end{equation*}
and 
\begin{equation*}
\begin{aligned}
\delta \tilde{f}^{n;i}_{t} &:= f^{n}(t,X_t,Y^{n}_t,\overbrace{Z^{n;1}_{t},...,Z^{n;i-1}_{t},}^{i-1}Z^{n+1;i}_{t},Z^{n+1;i+1}_{t},...,Z^{n+1;N}_{t}) \\
&\quad - f^{n}(t,X_t,Y^{n}_t,\overbrace{Z^{n;1}_{t},...,Z^{n;i-1}_{t},Z^{n;i}_{t},}^{i}Z^{n+1;i+1}_{t},...,Z^{n+1;N}_{t}). 
\end{aligned}
\end{equation*}
Let $(\iota^{n}_{r},c^{n}_{r})$ be defined as follows,
\begin{equation*}
\begin{cases}
\iota^{n}_{r} := \begin{pmatrix}
\frac{\delta f^{n;1}_{r}(Z^{n+1}_r)}{\delta Y^{n;1}_{r}}\mathbf 1_{\{\delta Y^{n;1}_{r}\neq 0\}},\ \frac{\delta f^{n;2}_{r}(Z^{n+1}_r)}{\delta Y^{n;2}_{r}}\mathbf 1_{\{\delta Y^{n;2}_{r}\neq 0\}},...,\ \frac{\delta f^{n;N}_{r}(Z^{n+1}_r)}{\delta Y^{n;N}_{r}}\mathbf 1_{\{\delta Y^{n;N}_{r}\neq 0\}}
\end{pmatrix}\in\mathbb{R}^{N\times N},\\
c^{n}_{r} := \begin{pmatrix}
\frac{\delta \tilde{f}^{n;1}_{r}}{\delta Z^{n;1}_{r}}\mathbf 1_{\{\delta Z^{n;1}_{r}\neq 0\}},\ \frac{\delta \tilde{f}^{n;2}_{r}}{\delta Z^{n;2}_{r}}\mathbf 1_{\{\delta Z^{n;2}_{r}\neq 0\}},...,\ \frac{\delta \tilde{f}^{n;N}_{r}}{\delta Z^{n;N}_{r}}\mathbf 1_{\{\delta Z^{n;N}_{r}\neq 0\}}
\end{pmatrix}\in\mathbb{R}^{N\times N}.
\end{cases}
\end{equation*}
It follows that  
\begin{equation}\label{e:delta Y'}
\delta Y^n_{t} = Y^{n+1}_{T_{n}} - \Xi_{T_n} + \int_{t\land T_{n}}^{T_{n}}\alpha^{n}_{r} \delta Y^{n}_{r}\eta(dr,X_{r}) + \int_{t\land T_{n}}^{T_{n}} \left(\iota^{n}_{r}\delta Y^{n}_{r} + c^{n}_{r} \delta Z^{n}_{r}\right)dr - \int_{t\land T_{n}}^{T_{n}} \delta Z^{n}_{r}dW_{r}.
\end{equation}

\textbf{Step 1.} For the stopping time $S\le T,$ denote by $A^{S;n}_s$ the unique solution of the following SDE, 
\begin{equation*}
A^{S;n}_{s} = I_{N} + \int_{S}^{s\vee S} (\alpha^{n}_r)^{\top} A^{S;n}_{r} \partial_{r}\eta(r,X_r)dr + \int_{S}^{s\vee S}(\iota^{n}_r)^{\top} A^{S;n}_{r}dr + \int_{S}^{s\vee S} (c^{n}_{r})^{\top} A^{S;n}_{r} dW_r ,\  s\in[0,T].
\end{equation*}
By Assumption~\ref{(A1)}, $\alpha^n,$ $\iota^n,$ and $c^n$ are uniformly bounded in $n$. In addition, by  the boundedness of the first derivative of $g$, the fact $m_{p,2}(Y^n;[0,T])<\infty$ implies that $m_{p,2}(\alpha^n;[0,T])<\infty$ (following the calculation as in \cite[(3.41)]{BSDEYoung-I}). Now set $\kappa := \lfloor \frac{\ln(k/(k-1))}{\ln 2} \rfloor  + 1$. By Lemma~\ref{lem:(a,b,c)} with $(v,j,\alpha,\iota,c,A^{S}) = (N,\kappa,\alpha^n,\iota^n,c^n,A^{S;n})$, there exists $(\tilde{\alpha}^n,\tilde{\iota}^n,\tilde{c}^{n})$ that satisfies \eqref{e:(i) of alpha,b,c}--\eqref{e:mpq tilde alpha} with $(\alpha,\iota,c)$ replaced by $(\alpha^n,\iota^n,c^n)$. Furthermore, denote by $\tilde{B}^{S;n}$ the unique solution of the following SDE,
\begin{equation*}
\tilde{B}^{S;n}_{t} = \tilde{I}_{N^{2^{\kappa}}\times 1} + \int_{S}^{t\vee S}(\tilde{\alpha}^{n}_{r})^{\top}\tilde{B}^{S;n}_{r}\partial_{r}\eta(r,X_r)dr + \int_{S}^{t\vee S}(\tilde{\iota}^{n}_{r})^{\top}\tilde{B}^{S;n}_{r} dr + \int_{S}^{t\vee S} (\tilde{c}^{n}_{r})^{\top}\tilde{B}^{S;n}_{r}d W_{r},
\end{equation*}
where $t\in[0,T].$ Then, by the fact that $2^{\kappa}\ge k/(k-1)$, and by \eqref{e:alpha,b,c} in Lemma~\ref{lem:(a,b,c)}, we have for every stopping time $S'$ that satisfies $S\le S'\le T,$ the following inequalities hold,
\begin{equation}\label{e:k/k-1 moment <= tilde B}
\left\{\mathbb{E}_{S}\left[|A^{S;n}_{S'}|^{\frac{k}{k-1}}\right]\right\}^{\frac{2^{\kappa}(k-1)}{k}} \le \mathbb{E}_{S}\left[|A^{S;n}_{S'}|^{2^{\kappa}}\right]\lesssim \left|\mathbb{E}_{S}\left[\tilde{B}^{S;n}_{S'}\right]\right|.
\end{equation}
Let $G^{S;n}_t$ denote the unique solution to the following SDE, which takes values in $\R^{N^{2^{\kappa}}\times N^{2^{\kappa}}}$.
\begin{equation*}
G^{S;n}_{t} = I_{N^{2^{\kappa}}} + \int_{S}^{t\vee S}(\tilde{\alpha}^{n}_{r})^{\top}G^{S;n}_{r}\partial_{r}\eta(r,X_r)dr + \int_{S}^{t\vee S}(\tilde{\iota}^{n}_{r})^{\top}G^{S;n}_{r}dr + \int_{S}^{t\vee S}(\tilde{c}^{n}_{r})^{\top}G^{S;n}_{r}dW_{r}.
\end{equation*}
Then, by the uniqueness shown in Lemma~\ref{lem:SDE},  $\tilde{B}^{S;n}_t = G^{S;n}_{t} \tilde{I}_{N^{2^{k}}\times 1}.$ Again, by Lemma~\ref{lem:(a,b,c)} with $(v,j,\alpha,\iota,c,A^{S}) = (N^{2^{\kappa}},1,\tilde{\alpha}^{n},\tilde{\iota}^{n},\tilde{c}^{n},G^{S;n})$, there exists a triplet $(\hat{\alpha}^{n},\hat{\iota}^{n},\hat{c}^{n})$ that satisfies \eqref{e:(i) of alpha,b,c}--\eqref{e:mpq tilde alpha} with $(\alpha,\iota,c)$ replaced by $(\tilde{\alpha}^n,\tilde{\iota}^n,\tilde{c}^n)$. Let $\tilde{H}^{S;n}$ be the unique solution of 
\begin{equation*}
\tilde{H}^{S;n}_{t} = \tilde{I}_{N^{2^{\kappa+1}}\times 1} + \int_{S}^{t\vee S}(\hat{\alpha}^{n}_{r})^{\top}\tilde{H}^{S;n}_{r}\partial_{r}\eta(r,X_r)dr + \int_{S}^{t\vee S}(\hat{\iota}^{n}_{r})^{\top}\tilde{H}^{S;n}_{r} dr + \int_{S}^{t\vee S} (\hat{c}^{n}_{r})^{\top}\tilde{H}^{S;n}_{r}d W_{r}.
\end{equation*}
It follows from \eqref{e:alpha,b,c} that, for every stopping time $S'$ such that $S\le S'\le T,$ 
\begin{equation}\label{e:tilde B <= tilde H}
\mathbb{E}_{S}\left[|G^{S;n}_{S'}|^{2}\right]\lesssim \left|\mathbb{E}_{S}\left[\tilde{H}^{S;n}_{S'}\right]\right|.
\end{equation}

Next, we will estimate $\E_{t\land S}[|G^{t\land S;n}_{S}|]$ for some specific stopping time $S$. By the well-posedness for linear BSDEs with bounded drivers
\cite[Corollary 3.3]{BSDEYoung-I}, there exists a unique solution $(\Psi^{n}_{t},\psi^{n}_t)$ of the following linear BSDE,
\begin{equation}\label{e:equation of Gamma, gamma}
\begin{cases}
d\Psi^{n}_t = -\hat{\alpha}^{n}_{t}\Psi^{n}_{t}\partial_{t}\eta(t,X_{t})dt - \left[\hat{\iota}^{n}_{t}\Psi^{n}_{t} + \hat{c}^{n}_{t}\psi^{n}_{t}\right]d t + \psi^{n}_{t}d W_{t},\ t\in[0,T_n],\\
\Psi^{n}_{T_n} = I_{N^{2^{\kappa +1}}}.
\end{cases}
\end{equation}
Note that for every $t\in[0,T]$,
\begin{equation*}
\mathbb{E}\bigg[\Big|\int_{0}^{T_n}|(\psi^{n}_{r})^{\top}(\hat{c}^{n}_{r})^{\top}\tilde{H}^{t\land T_n;n}_{r}|^{2}dr\Big|^{\frac{1}{2}}\bigg] \lesssim_{\Theta_{2}} \mathbb{E}\left[\|\tilde{H}^{t\land T_n;n}_{\cdot}\|_{\infty;[0,T_n]}^{2} + \int_{0}^{T_n}|\psi^{n}_{r}|^{2}dr\right]<\infty.
\end{equation*} 
Then applying It\^o's formula to $(\Psi^{n}_{\cdot})^{\top}\tilde{H}^{t\land T_{n};n}_{\cdot},$ and taking conditional expectation $\E_{t\land T_n}[\cdot]$, we have $(\Psi^{n}_{t})^{\top}\tilde{I}_{N^{2^{\kappa + 1}}\times 1} = (\Psi^{n}_{t\land T_{n}})^{\top} \tilde{I}_{N^{2^{\kappa + 1}}\times 1}  = \mathbb{E}_{t\land T_n}[\tilde{H}^{t\land T_{n};n}_{T_{n}}].$ To estimate $\Psi^{n},$ by the boundedness of $\hat{\iota}$ and \eqref{eq:ess-mpk}, we have
\begin{equation}\label{e:unbounded BSDEs-4'}
\begin{aligned}
\mathbb{E}_{t}\bigg[\Big\|\int_{\cdot}^{T_n}\hat{\iota}^{n}_r\Psi^{n}_r dr\Big\|^2_{p\text{-var};[t\land T_n,T_n]}\bigg]&\lesssim_{\Theta_2} \mathbb{E}_{t}\bigg[\Big|\int_{t\land T_{n}}^{T_{n}}|\Psi^{n}_{r}|dr\Big|^{2}\bigg]\\
&\lesssim |T-t|^{2}\left(1 + m_{p,2}(\Psi^n;[t,T])^2\right).
\end{aligned}
\end{equation}
In addition, it follows from the boundedness of $\hat{c}$ and the BDG inequality \cite[Lemma~A.1]{BSDEYoung-I} that 
\begin{equation}\label{e:unbounded BSDEs-5'}
\begin{aligned}
\mathbb{E}_{t}\bigg[\Big\|\int_{\cdot}^{T_n}\hat{c}^{n}_r \psi^{n}_r dr\Big\|^2_{p\text{-var};[t\land T_n,T_n]}\bigg]&\lesssim_{\Theta_{2}} \mathbb{E}_{t}\bigg[\Big|\int_{t\land T_{n}}^{T_{n}}|\psi^{n}_{r}|dr\Big|^{2}\bigg]\\
&\lesssim |T-t|\mathbb{E}_{t}\left[\int_{t\land T_{n}}^{T_{n}}|\psi^{n}_{r}|^{2}dr\right]\\
&\lesssim_{\Theta_{2}} |T-t| \mathbb{E}_{t}\bigg[\Big\|\int_{\cdot}^{T_n}\psi^{n}_{r}dW_r\Big\|^2_{p\text{-var};[t\land T_n,T_{n}]}\bigg].
\end{aligned}
\end{equation}
Furthermore, by Lemma~\ref{lem:estimate for Z} with 
$(q,f_t,g_t,Y_t,Z_t,S,\xi) = (2,\hat{\iota}^n_t \Psi^n_t + \hat{c}^n_{t}\psi^{n}_{t},\hat{\alpha}^{n}_t \Psi^n_t,\Psi^n_t,\psi^n_t,T_n,I_{N^{2^{\kappa+1}}}),$ 
\begin{equation}\label{e:estimate for psi^n}
\begin{aligned}
&\mathbb{E}_{t}\left[\Big\|\int_{\cdot}^{T_n}\psi^{n}_{r}dWr\Big\|^2_{p\text{-var};[t\land T_n,T_{n}]}\right]\\
&\lesssim_{\Theta_2} \mathbb{E}_{t}\left[\Big\|\int_{\cdot}^{T_n}\hat{c}^{n}_r \psi^{n}_r dr\Big\|^2_{p\text{-var};[t\land T_n,T_n]}\right] + \mathbb{E}_{t}\left[\Big\|\int_{\cdot}^{T_n}\hat{\iota}^{n}_r\Psi^{n}_r dr\Big\|^2_{p\text{-var};[t\land T_n,T_n]}\right]\\
&\quad + \mathbb{E}\left[\Big\|\int_{\cdot}^{T_{n}}\hat{\alpha}^{n}_{r}\Psi^{n}_{r}\eta(dr,X_{r})\Big\|^{2}_{p\text{-var};[t\land T_{n},T_{n}]}\right].
\end{aligned}
\end{equation}
Also, by the same calculation leading to \eqref{e:unbounded BSDEs-4}, with $(\alpha^n,\Gamma^n)$ replaced by $(\hat{\alpha}^n,\Psi^n),$ we have 
\begin{equation}\label{e:unbounded BSDEs-4+}
\begin{aligned}
&\mathbb{E}_{t}\left[\Big\|\int_{\cdot}^{T_{n}}\hat{\alpha}^n_{r}\Psi^{n}_{r}\eta(dr,X_{r})\Big\|^{2}_{p\text{-var};[t\land T_{n},T_{n}]}\right]\\
&\lesssim_{\Theta_{2}} |T-t|^{2\tau}n^{2(\lambda + \beta)}\left(1 + m_{p,2}(\hat{\alpha}^n;[0,T])^{2}\right)\left(1+ m_{p,2}(\Psi^{n};[t,T])^{2}\right).
\end{aligned}
\end{equation}
Moreover, by Lemma~\ref{lem:growth of (Y^n,Z^n)} we have for all $n\ge |x|,$ $m_{p,2}(Y^{n};[0,T])\lesssim_{\Theta_{2}} n^{\frac{\lambda + \beta}{\varepsilon}\vee 1}$. Then,
by the calculation leading to \eqref{e:m_p2 alpha''}, we have $m_{p,2}(\alpha^{n};[0,T])\lesssim_{\Theta_{2}} n^{\frac{\lambda + \beta}{\varepsilon}\vee 1}$ for all $n\ge |x|.$ Noting that
\begin{equation*}
m_{p,2}(\hat{\alpha}^{n};[0,T])\lesssim m_{p,2}(\tilde{\alpha}^{n};[0,T])\lesssim m_{p,2}(\alpha^{n};[0,T]),
\end{equation*}
we have $m_{p,2}(\hat{\alpha}^{n};[0,T])\lesssim_{\Theta_{2}} n^{\frac{\lambda + \beta}{\varepsilon}\vee 1}.$  Hence, by Eq.~\eqref{e:equation of Gamma, gamma}, and combining inequalities~\eqref{e:unbounded BSDEs-4'}, \eqref{e:unbounded BSDEs-5'}, \eqref{e:estimate for psi^n}, and \eqref{e:unbounded BSDEs-4+},  there exists a constant $C>0$ such that for all $n\ge |x|,$ 
\begin{equation*}
\begin{aligned}
&m_{p,2}(\Psi^n;[t,T]) + \|\psi^n\|_{2\text{-BMO};[t,T]}\\
&\le  C |T-t|^{\frac{1}{2}}\|\psi^n\|_{2\text{-BMO};[t,T]} + C|T-t|^{\tau}n^{\frac{(1+\varepsilon)(\lambda + \beta)}{\varepsilon}\vee (\lambda + \beta + 1)}\left(1 + m_{p,2}(\Psi^n;[t,T])\right).
\end{aligned}
\end{equation*}
Then we have $m_{p,2}(\Psi^{n};[T-\delta,T])\le 1,$ where 
\begin{equation}\label{e:delta value'}
\delta := C^{-2}\land \left((2C)^{-\frac{1}{\tau}}  n^{-(\frac{(1+\varepsilon)(\lambda + \beta)}{\tau\varepsilon}\vee\frac{\lambda+\beta+1}{\tau})}\right).
\end{equation} 
Therefore, by \eqref{eq:ess-mpk}, there exists a constant $C^{\dagger}>1$ such that $\esssup_{t\in[T-\delta,T],\omega\in\Omega}|\Psi^n_t|\le C^{\dagger}.$
For $0\le i\le \lfloor T/\delta\rfloor,$ denote $T^{i}_{n} := T_n\land (T-i\delta),$ and denote by $(\Psi^{n;i}_t,\psi^{n;i}_t)$ the unique solution of
\begin{equation*}
\begin{cases}
d\Psi^{n;i}_{t} = -\hat{\alpha}^{n}
_{t}\Psi^{n;i}_{t}\partial_{t}\eta(t,X_{t})dt - (\hat{\iota}^{n}_t \Psi^{n;i}_t + \hat{c}^{n}_t \psi^{n;i}_t) dt + \psi^{n;i}_{t}dW_{t},\ t\in[0,T^{i}_{n}],\\
\Psi^{n;i}_{T^{i}_{n}} = I_{N^{2^{\kappa+1}}}.
\end{cases}
\end{equation*}
In the following, we always assume $n\ge|x|$. By the procedure leading to $|\Psi^n_t|\le C^{\dagger},$ we have
\begin{equation}\label{e:Gamma<=2''}
\esssup_{t\in[T-(i+1)\delta,T - i\delta],\omega\in\Omega}|\Psi^{n;i}_t|\le C^{\dagger}.
\end{equation}
In addition, by It\^o's formula again, we have $(\Psi^{n;i}_{t})^{\top} \tilde{I}_{N^{2^{\kappa+1}}\times 1} = \mathbb{E}_{t\land T^{i}_{n}}[\tilde{H}^{t\land T^{i}_{n};n}_{T^{i}_{n}}].$ Then, by \eqref{e:tilde B <= tilde H} and \eqref{e:Gamma<=2''}, it follows that there exits a constant $C^{\diamond}>1$ such that for all $t\in [T-(i+1)\delta,T-i\delta],$
\begin{equation}\label{e:E[G] <= Gamma^n,i}
\mathbb{E}_{t\land T^{i}_{n}}\left[|G^{t\land T^{i}_{n};n}_{T^{i}_{n}}|\right]\le C^{\diamond} |\Psi^{n;i}_{t}|^{\frac{1}{2}}\le C^{\diamond} |C^{\dagger}|^{\frac{1}{2}}.
\end{equation}

Now we are ready to estimate $\E_{t\land T_n}[|A^{t\land T_n;n}_{T_{n}}|^{\frac{k}{k-1}}]$. Denote by $(\Gamma^{n;i},\gamma^{n;i})$ the unique solution of 
\begin{equation*}
\begin{cases}
d\Gamma^{n;i}_{t} = -\tilde{\alpha}^{n}_{t}\Gamma^{n;i}_{t}\partial_{t}\eta(t,X_{t})dt - (\tilde{\iota}^{n}_{t}\Gamma^{n;i}_{t} + \tilde{c}^{n}_{t}\gamma^{n;i}_{t})dt + \gamma^{n;i}_{t}dW_{t},\ t\in[0,T^{i}_{n}],\\
\Gamma^{n;i}_{T^{i}_{n}} = I_{N^{2^{\kappa}}}.
\end{cases}
\end{equation*}
By It\^o's formula, we have $(\Gamma^{n;i}_{t})^{\top} = \mathbb{E}_{t\land T^{i}_{n}}[G^{t\land T^{i}_{n};n}_{T^{i}_{n}}].$ 
Then, by \eqref{e:k/k-1 moment <= tilde B} and the fact that $\tilde{B}^{t\land T_n;n}_{T_{n}} = G^{t\land T_n;n}_{T_{n}}\tilde{I}_{N^{2^{\kappa}}\times 1}$, we obtain
\begin{equation}\label{e:k/k-1 moment <= Gamma}
\mathbb{E}_{t\land T_{n}}\left[|A^{t\land T_n;n}_{T_{n}}|^{\frac{k}{k-1}}\right]\lesssim |\Gamma^{n;0}_{t}|^{\frac{k}{2^{\kappa}(k-1)}}.
\end{equation}
For $i' \le i$, by Lemma~\ref{lem:SDE}, the equality $(G^{t\land T^{i}_{n};n}_{T^{i'}_{n}})^{\top} = (G^{t\land T^{i}_{n};n}_{ T^{i}_{n}})^{\top}(G^{T^{i}_{n};n}_{T^{i'}_{n}})^{\top}$ holds. Hence, by the fact that $(\Gamma^{n;i'}_{t})^{\top} = \mathbb{E}_{t\land T^{i}_{n}}[G^{t\land T^{i}_{n};n}_{T^{i'}_{n}}]$ for $t\le T-i\delta$ (noting $t\land T^{i}_{n} = t\land T^{i'}_{n}$), and by \eqref{e:E[G] <= Gamma^n,i}, we have 
\begin{equation*}
\begin{aligned}
\esssup_{t\in[T-(i+1)\delta,T-i\delta],\omega\in\Omega}\left|\Gamma^{n;i'}_{t}\right| &= \sup_{t\in[T-(i+1)\delta,T-i\delta]}\left\|\mathbb{E}_{t\land T^{i}_{n}}\left[(G^{t\land T^{i}_{n};n}_{T^{i'}_{n}})^{\top}\right]\right\|_{L^{\infty}(\Omega)} \\
&\le \sup_{t\in[T-(i+1)\delta,T-i\delta]}\left\|\mathbb{E}_{t\land T^{i}_{n}}\left[\left|(G^{t\land T^{i}_{n};n}_{ T^{i}_{n}})^{\top}\right|\cdot \left|\mathbb{E}_{T^{i}_{n}}\left[(G^{T^{i}_{n};n}_{T^{i'}_{n}})^{\top}\right]\right|\right]\right\|_{L^{\infty}(\Omega)}\\
&\le \sup_{t\in[T-(i+1)\delta,T-i\delta]}\left\|\mathbb{E}_{t\land T^{i}_{n}}\left[\left|(G^{t\land T^{i}_{n};n}_{ T^{i}_{n}})^{\top}\right|\right] \right\|_{L^{\infty}(\Omega)} \cdot \|\Gamma^{n;i'}_{T^{i}_{n}}\|_{L^{\infty}(\Omega)}\\
&\le C^{\diamond} |C^{\dagger}|^{\frac{1}{2}} \esssup_{t\in[T-i\delta,T-(i-1)\delta],\omega\in\Omega}|\Gamma^{n;i'}_{t}|.
\end{aligned}
\end{equation*}
Let $i'=0.$ Then, by the following inequality, 
\begin{equation*}
\esssup_{t\in[T-\delta,T],\omega\in\Omega}|\Gamma^{n;0}_{t}| = \sup_{t\in[T-\delta,T]}\left\|\mathbb{E}_{t\land T_{n}}\left[(G^{t\land T_{n};n}_{T_{n}})^{\top}\right]\right\|_{L^{\infty}(\Omega)} \le C^{\diamond}|C^{\dagger}|^{\frac{1}{2}},
\end{equation*}
it follows that for all $i=0,1,...,\lfloor T/\delta\rfloor$ and $t\in [T-(i+1)\delta,T-i\delta],$
\begin{equation}\label{e:e:existence-7''}
|\Gamma^{n;0}_{t}| \le C^{\diamond}|C^{\dagger}|^{\frac{1}{2}} \esssup_{t\in[T-i\delta,T-(i-1)\delta],\omega\in\Omega}|\Gamma^{n;0}_{t}| \le \cdots \le |C^{\diamond}|^{\frac{T}{\delta} + 1} |C^{\dagger}|^{\frac{T}{2\delta} + \frac{1}{2}}.
\end{equation}
Finally, by \eqref{e:delta value'} and \eqref{e:e:existence-7''}, there exists a constant $C^*>0$ such that for all $n\ge |x|,$
\begin{equation}\label{e:exis unbounded BSDE-7'}
\esssup_{t\in[0,T],\omega\in\Omega}|\Gamma^{n;0}_{t}|\le \exp\left\{C^* n^{\frac{(1+\varepsilon)(\lambda + \beta)}{\tau\varepsilon}\vee\frac{\lambda+\beta+1}{\tau}}\right\}.
\end{equation} 

\textbf{Step 2.} Since $(\delta Y^{n}, \delta Z^{n})$ solves Eq.~\eqref{e:delta Y'}, by It\^o's formula again, we have 
\begin{equation}\label{e:delta Y = A (Y - Xi)'}
\delta Y^{n}_{t} = \mathbb{E}_{t\land T_{n}} \left[(A^{t\land T_{n};n}_{T_{n}})^{\top}(Y^{n+1}_{T_{n}} - \Xi_{T_{n}})\right].
\end{equation}
On the other hand, by following the same procedure that led to \eqref{e:<= exp -1/C n^2}, we have
\[\|Y^{n+1}_{T_n} - \Xi_{T_n}\|_{L^{k}(\Omega)}\le \|\Xi_{T_n} - \Xi_{T_{n+1}}\|_{L^{k}(\Omega)} +  \|Y^{n+1}_{T_n} - \Xi_{T_{n+1}}\|_{L^{k}(\Omega)}\lesssim_{\Theta_{2},x} \exp\left\{-C^{\star}n^{2}\right\}, \]
where $C^{\star}>0$ is a constant. Combining \eqref{e:k/k-1 moment <= Gamma}, \eqref{e:exis unbounded BSDE-7'}, and the above estimate, and applying H\"older's inequality to \eqref{e:delta Y = A (Y - Xi)'}, it follows that
\begin{equation}\label{e:delta Y^n <= exp}
\begin{aligned}
&\mathbb{E}\left[\|\delta Y^{n}_{\cdot}\|_{\infty;[0,T_{n}]}\right]\\
&\le \mathbb{E}\left[\Big\{\sup_{t\in[0,T]}\left|\mathbb{E}_{t\land T_n}\left[|Y^{n+1}_{T_n} - \Xi_{T_{n}}|^{k}\right]\right|\Big\}^{\frac{1}{k}}\Big\{\sup_{t\in[0,T]}\mathbb{E}_{t\land T_n }\left[|A^{t\land T_{n};n}_{T_{n}}|^{\frac{k}{k-1}}\right]\Big\}^{\frac{k-1}{k}}\right]\\
&\lesssim_{\Theta_2} \|Y^{n+1}_{T_{n}} - \Xi_{T_{n}}\|_{L^{k}(\Omega)}\Big\{\esssup_{t\in[0,T],\omega\in\Omega}|\Gamma^{n;0}_{t}|\Big\}^{\frac{1}{2^{\kappa}}}\\
&\lesssim_{\Theta_{2},x} \exp\Big\{-C^{\star}n^{2} + \frac{C^{*}}{2^{\kappa}}n^{\frac{(1+\varepsilon)(\lambda + \beta)}{\tau\varepsilon}\vee\frac{\lambda+\beta+1}{\tau}}\Big\}.
\end{aligned}
\end{equation}
Then the existence follows from the same procedure as in \eqref{e:dY<=h(Y)Gamma}--\eqref{e:(Y^n,Z^n) -> (Y,Z)}.

By approximation results \cite[Corollary~3.1 and Lemma~A.3]{BSDEYoung-I}, the inequality
\begin{equation*}
\mathbb{E}\left[\|\delta Y^{n}_{\cdot}\|_{\infty;[0,T_{n}]}\right]\lesssim_{\Theta_{2},x} \exp\Big\{-C^{\star}n^{2} + \frac{C^{*}}{2^{\kappa}}n^{\frac{(1+\varepsilon)(\lambda + \beta)}{\tau\varepsilon}\vee\frac{\lambda+\beta+1}{\tau}}\Big\}
\end{equation*}
implied by \eqref{e:delta Y^n <= exp} still holds for general $\eta\in C^{\tau,\lambda;\beta}([0,T]\times \R^n;\R^M)$. Therefore, the existence of solutions also follows.
\end{proof}

{\bf Acknowledgment}  
J. Song is partially supported by National Natural Science Foundation of China (No. 12471142); J. Song and H. Zhang are supported by the Fundamental Research Funds for the Central Universities. H. Zhang is partially supported by NSF of China and Shandong (no. 12031009, ZR2023MA026); Young Research Project of Tai-Shan (no. tsqn202306054);
DFG CRC/TRR 388 ``Rough Analysis, Stochastic Dynamics and Related Fields'', Projects B04 and B05.

\bibliographystyle{plain}
\bibliography{Reference-BSDE}

\end{document}